%% file: ModSubcat.tex
\documentclass[11pt]{amsart}

\input{ModSubCatMacros}

\begin{document}

\title{Classifying submodules over monoidal categories}

\author{Hadi Salmasian}
\address[H.S.]{
    Department of Mathematics and Statistics \\
    University of Ottawa \\
    Ottawa, ON, K1N 6N5, Canada
}
\urladdr{\href{https://mysite.science.uottawa.ca/hsalmasi/}{mysite.science.uottawa.ca/hsalmasi/}, \textrm{\textit{ORCiD}:} \href{https://orcid.org/0000-0002-1073-7183}{orcid.org/0000-0002-1073-7183}}
\email{hadi.salmasian@uottawa.ca}

\author{Alistair Savage}
\address[A.S.]{
    Department of Mathematics and Statistics \\
    University of Ottawa \\
    Ottawa, ON, K1N 6N5, Canada
}
\urladdr{\href{https://alistairsavage.ca}{alistairsavage.ca}, \textrm{\textit{ORCiD}:} \href{https://orcid.org/0000-0002-2859-0239}{orcid.org/0000-0002-2859-0239}}
\email{alistair.savage@uottawa.ca}

\author{Yaolong Shen}
\address[Y.S.]{
    Department of Mathematics and Statistics \\
    University of Ottawa \\
    Ottawa, ON, K1N 6N5, Canada
}
\urladdr{\href{https://sites.google.com/virginia.edu/yaolongshen}{sites.google.com/virginia.edu/yaolongshen}, \textrm{\textit{ORCiD}:} \href{https://orcid.org/0000-0002-8840-3394}{orcid.org/0000-0002-8840-3394}}
\email{yshen5@uottawa.ca}

\begin{abstract}
    We study the classification of submodules of module categories over monoidal categories, extending ideas of Coulembier on the classification of tensor ideals in monoidal categories.  We develop a framework that applies to module categories equipped with a twisted cylinder twist, a structure closely related to the twisted reflection equation and quantum symmetric pairs.  Under mild assumptions, we establish an order-preserving bijection between submodules  of a module category $\mathcal{M}$ and submodules of the path-algebra module $\mathcal{M}(\mathds{1},-)$. We show that this correspondence is compatible with idempotent completion and analyze its behavior under decategorification to the split Grothendieck group, giving criteria for classification in terms of indecomposable objects.  As an application, we study the disoriented skein category as a module category over the oriented skein category, describe its indecomposable objects, and obtain a complete classification of its submodules.
\end{abstract}

\subjclass[2020]{Primary 18M15, 18M30; Secondary 16D90, 17B37}

\keywords{Monoidal categories; module categories; submodule classification; braided modules categories; tensor ideals; skein categories; quantum symmetric pairs}

\ifboolexpr{togl{comments} or togl{details}}{%
  {\color{magenta}DETAILS OR COMMENTS ON}
}{%
}

\maketitle
\thispagestyle{empty}

\tableofcontents

\section{Introduction}

Module categories over monoidal categories play a central role in modern representation theory, categorification, and low-dimensional topology. They arise naturally in a wide range of contexts, including cyclotomic quotients of diagrammatic categories such as Brauer categories, Kauffman categories, Heisenberg categories, and their quantum analogues, as well as in categories of modules over quantum symmetric pairs. From a structural perspective, module categories provide a flexible
framework for encoding boundary conditions, reflection phenomena, and symmetry breaking in tensor-categorical settings.

A fundamental problem in this area is the classification of submodules of a given module category.  Such questions are closely related to the theory of tensor ideals in monoidal categories, to the structure theory of diagrammatic categories, and to the analysis of decategorified invariants. In the monoidal setting, this problem was studied systematically by Coulembier~\cite{Cou18}, who developed a powerful correspondence between tensor ideals of a rigid monoidal category and submodules of a
certain module over the path algebra of the monoidal category.  This correspondence reduces a categorical classification problem to a more tractable ring-theoretic one and has since found numerous applications.

The present paper is inspired by the approach of~\cite{Cou18}, but our focus is fundamentally different. Instead of working in a monoidal category, we study \emph{module categories} over a monoidal category. This shift introduces substantial new difficulties, since module categories lack an internal tensor product, and the techniques of~\cite{Cou18} do not apply directly. In order to adapt these ideas, we introduce and systematically study module categories equipped with a
\emph{twisted cylinder twist}. This structure, which generalizes the notion of a twist in a monoidal category, provides the precise amount of additional compatibility needed to recover a useful diagrammatic calculus and to extend Coulembier's methods to the module category setting.  The notion is closely related to the twisted reflection equation and to structures arising in the theory of quantum symmetric pairs~\cite{BK19}.

\Cref{sec:twists,sec:submod,sec:Kar,sec:decat} contain the most general results of the paper. In \cref{sec:twists} we recall the definition of a twisted cylinder twist and develop general tools for verifying and constructing such twists, with particular attention to the rigid and ribbon settings. In \cref{sec:submod} we establish our main structural result: under mild assumptions, there is an order-preserving bijection between submodules of a module category $\cM$ and submodules of the path-algebra module $\cM(\one,-)$. This correspondence generalizes the main result of~\cite{Cou18} to the module category context. \Cref{sec:Kar} shows that this correspondence is compatible with idempotent completion, allowing one to work in Krull--Schmidt settings. In \cref{sec:decat} we study the decategorification map to the split Grothendieck group and give general criteria under which submodules can be classified purely in terms of indecomposable objects.

In \cref{sec:pagodas} we introduce the notion of a \emph{pagoda}, a technical structure that packages a module category together with auxiliary data controlling its tensor-theoretic behavior. While somewhat technical, pagodas provide concrete and verifiable conditions that ensure the applicability of the general theory and lead to strong finiteness and rigidity results for submodules. 

Finally, in \cref{sec:DSindec,sec:DSapp} we apply the general framework developed in this paper to the disoriented skein category $\DScat = \DScat(q,t)$.  This category, introduced in \cite{SSS25}, is a module category over the oriented skein category $\OScat = \OScat(q,t)$ (also known as the framed HOMFLYPT skein category), which underpins the HOMFLYPT link invariant.  For $m,n \in \N$, $d=m-2n$, we have a commutative diagram of functors
\[
    \begin{tikzcd}
        \DScat(q,q^d) \times \OScat(q,q^d) \arrow[r,"\otimes"] \arrow[d,"\bR_\DScat \times \bR_\OScat"'] & \DScat(q,q^d) \arrow[d,"\bR_\DScat"]
        \\
        \Uis\tmod \times \Us\tmod \arrow[r,"\otimes"] & \Uis\tmod
    \end{tikzcd}
\]
where $\Us$ is $U_q(\fgl(m|2n))$, together with a diagram automorphism $\sigma$, and $\Uis = \Uis(m|2n)$ is a coideal subalgebra (sometimes called an iquantum superalgebra), so that $(\Us,\Uis)$ is a quantum supersymmetric pair of type AI-II.  The horizontal arrows are the module category actions, and `$\tmod$' denotes the category of tensor modules.  It was shown in \cite[Th.~8.1]{SSS25} that the functor
\begin{equation} \label{wizard}
    \bR_\DScat \colon \DScat(q,q^d) \to \Uis(m|2n)\tmod
\end{equation}
is full.  The results of the current paper allow us to say more.  We describe the indecomposable objects of $\DScat$, construct an explicit pagoda structure, and obtain a complete classification of its $\OScat$-submodules.  Precisely, in \cref{ginger}, we show that \emph{every} $\OScat$-submodule of $\DScat$ is the kernel of \cref{wizard} for some $m,n$, and these kernels form a strictly decreasing chain.  In addition, each kernel is generated by a single idempotent.  This allows us, in \cref{presentation}, to give a complete diagrammatic presentation of the category of tensor modules of $\Uis(m|2n)$.

We expect that the results of the current paper will have broad applications to the classification of submodules over tensor categories in other settings of interest in categorical representation theory.  In particular, in current work we are developing analogues of the results of \cite{SSS25} for quantum symmetric pairs of other types and we hope that we will be able to classify the module subcategories appearing in that theory using the results of the current paper.

\subsection*{Acknowledgments}

The research of A.S.\ and H.S.\ was supported by Discovery Grants RGPIN-2023-03842 and RGPIN-2024-04030 from the Natural Sciences and Engineering Research Council of Canada.  Y.S.\ was supported by these grants and the Fields Institute for Research in Mathematical Sciences.  The authors would like to thank Linliang Song for helpful conversations.

\section{Cylinder twists\label{sec:twists}}

In the papers \cite{Har01,tom98,tH98}, tom Dieck and Häring-Oldenburg developed the notion of a monoidal category with a cylinder twist, which is closely related to the concept of a braided module category \cite{Enr07,Bro13}.  In this section, we recall a twisted analogue introduced by Balagovi\'c and Kolb \cite{BK19}.  We then develop some techniques for defining and verifying such a structure, before considering an example of interest to us: the disoriented skein category defined in \cite{SSS25}.  Throughout this section $\kk$ denotes an arbitrary commutative ring.  All categories are assumed to be $\kk$-linear.  For a category $\cC$, we let $\cC(X,Y)$ denote the class of morphisms from $X$ to $Y$, and $1_X$ denote the identity endomorphism of $X$.  We will use the usual string diagram calculus for strict monoidal categories and module categories.  We let $\N$ denote the set of nonnegative integers.

\subsection{Definition and first properties}

Let $\cC$ be a $\kk$-linear strict monoidal category, and let $\cM$ be a $\kk$-linear category.  Let $\cEnd(\cM)$ denote the strict monoidal category of $\kk$-linear endofunctors of $\cM$.  A \emph{right action} of $\cC$ on $\cM$ is a monoidal functor $\bA \colon \cC \to \cEnd(\cM)^\rev$, where $\cD^\rev$ denotes the \emph{reverse} of a monoidal category $\cD$, i.e., we reverse the tensor product.  Given such a right action, we also say that $\cM$ is a \emph{right $\cC$-module category}, or \emph{$\cC$-module} for short; see \cite[\S 7.1]{ENGO15}.  For objects $C \in \Ob(\cC)$ and $M \in \Ob(\cM)$, we set
\begin{equation} \label{beaver}
    M \otimes C := \bA(C)(M).
\end{equation}
We say that the $\cC$-module $\cM$ is \emph{strict} if $\bA$ is a strict monoidal functor.  Equivalently, $\cM$ is strict if
\[
    M \otimes (C_1 \otimes C_2) = (M \otimes C_1) \otimes C_2,\qquad
    \text{for all } M \in \Ob(\cM),\ C_1,C_2 \in \Ob(\cC),
\]
and similarly for morphisms.

Suppose that $(\cM,\cC)$ is a \emph{strict tensor pair}.  This means that
\begin{itemize}
    \item $\cC$ is a braided strict monoidal category,
    \item $\cM$ is a strict $\cC$-module,
    \item $\cC$ is a subcategory of $\cM$ with $\Ob(\cC) = \Ob(\cM)$,
    \item when restricted to $\cC \times \cC$, the module category action is given by the tensor product of $\cC$.
\end{itemize}
An \emph{involution} of $\cC$ is an isomorphism of braided monoidal categories
\[
    \natural \colon \cC \to \cC,\qquad
    X \mapsto X^\natural,\qquad
    f \mapsto f^\natural,\qquad
    X \in \Ob(\cC),\ f \in \Mor(\cC).
\]
such that
\[
    \natural^2 = \id_\cC.
\]
(Our $\natural$ corresponds to the braided tensor equivalence $tw$ in \cite[\S4.2]{BK19}.  We make the slightly stronger assumption that $\natural$ is an isomorphism that squares to the identity.)

A family $\tau = (\tau_X)_{X \in \Ob(\cC)}$ of morphisms
\[
    \tau_X
    =
    \begin{tikzpicture}[centerzero]
        \draw (0,-0.3) \botlabel{X} -- (0,0.3) \toplabel{X^\natural};
        \coupon{0,0}{\tau};
    \end{tikzpicture}
    \in \cM(X,X^\natural)
\]
is called an \emph{$(\cM,\natural)$-endomorphism} of $\cC$ if
\begin{equation} \label{chifrijo}
    \begin{tikzpicture}[centerzero]
        \draw (0,-0.6) \botlabel{X} -- (0,0.6) \toplabel{Y^\natural};
        \coupon{0,-0.2}{f};
        \coupon{0,0.23}{\tau};
    \end{tikzpicture}
    =
    \begin{tikzpicture}[centerzero]
        \draw (0,-0.6) \botlabel{X} -- (0,0.6) \toplabel{Y^\natural};
        \coupon{0,0.2}{f^\natural};
        \coupon{0,-0.3}{\tau};
    \end{tikzpicture}
\end{equation}
for all $f \in \cC(X,Y)$, $X,Y \in \Ob(\cC)$.  In other words, $\tau$ is an $(\cM,\natural)$-endomorphism of $\cC$ if it is a natural transformation $\operatorname{Inc}^\cM_\cC \Rightarrow \operatorname{Inc}^\cM_\cC \circ \natural$, where $\operatorname{Inc}^\cM_\cC$ is the inclusion functor from $\cC$ to $\cM$.  If $\tau_X$ is an isomorphism for all $X \in \Ob(\cC)$, then we say that $\tau$ is an \emph{$(\cM,\natural)$-automorphism} of $\cC$.

\begin{defin}[{\cite[Def.~4.6]{BK19}}]
    Suppose that $(\cM,\cC)$ is a strict tensor pair, and that $\natural$ is an involution of $\cC$.  A \emph{$\natural$-cylinder twist} for $(\cM,\cC)$ is an $(\cM,\natural)$-automorphism $\tau$ of $\cC$ such that
    \begin{equation} \label{taudub}
        \begin{tikzpicture}[centerzero,scale=1.5]
            \draw (-0.15,-0.6) \botlabel{X} -- (-0.15,0.6) \toplabel{X^\natural};
            \draw (0.15,-0.6) \botlabel{Y} -- (0.15,0.6) \toplabel{Y^\natural};
            \genbox{-0.25,-0.1}{0.25,0.1}{\tau};
        \end{tikzpicture}
        =
        \begin{tikzpicture}[centerzero,scale=1.5]
            \draw (0.2,-0.6) \botlabel{Y} -- (0.2,-0.4) \braidup (-0.2,0);
            \draw[wipe] (-0.2,-0.6) -- (-0.2,-0.4) \braidup (0.2,0);
            \draw (-0.2,-0.6) \botlabel{X} -- (-0.2,-0.4) \braidup (0.2,0);
            \draw (0.2,0) \braidup (-0.2,0.4) -- (-0.2,0.6) \toplabel{X^\natural};
            \draw[wipe] (-0.2,0) \braidup (0.2,0.4) -- (0.2,0.6);
            \draw (-0.2,0) \braidup (0.2,0.4) -- (0.2,0.6) \toplabel{Y^\natural};
            \coupon{-0.2,-0.4}{\tau};
            \coupon{-0.2,0}{\tau};
        \end{tikzpicture}
        \qquad \text{for all } X,Y \in \Ob(\cC).
    \end{equation}
\end{defin}

As shown in \cite[(4.6)]{BK19}, it follows from \cref{taudub}, together with the fact that $\tau$ is an $(\cM,\natural)$-automorphism that
\begin{equation} \label{reflection}
    \begin{tikzpicture}[centerzero,scale=1.5]
        \draw (0.2,-0.6) \botlabel{Y} -- (0.2,-0.4) \braidup (-0.2,0);
        \draw[wipe] (-0.2,-0.6) -- (-0.2,-0.4) \braidup (0.2,0);
        \draw (-0.2,-0.6) \botlabel{X} -- (-0.2,-0.4) \braidup (0.2,0);
        \draw (0.2,0) \braidup (-0.2,0.4) -- (-0.2,0.6) \toplabel{X^\natural};
        \draw[wipe] (-0.2,0) \braidup (0.2,0.4) -- (0.2,0.6);
        \draw (-0.2,0) \braidup (0.2,0.4) -- (0.2,0.6) \toplabel{Y^\natural};
        \coupon{-0.2,-0.4}{\tau};
        \coupon{-0.2,0}{\tau};
    \end{tikzpicture}
    =
    \begin{tikzpicture}[centerzero,scale=1.5]
        \draw (0.2,-0.6) \botlabel{Y} -- (0.2,-0.4) \braidup (-0.2,0);
        \draw[wipe] (-0.2,-0.6) -- (-0.2,-0.4) \braidup (0.2,0);
        \draw (-0.2,-0.6) \botlabel{X} -- (-0.2,-0.4) \braidup (0.2,0);
        \draw (0.2,0) \braidup (-0.2,0.4) -- (-0.2,0.6) \toplabel{X^\natural};
        \draw[wipe] (-0.2,0) \braidup (0.2,0.4) -- (0.2,0.6);
        \draw (-0.2,0) \braidup (0.2,0.4) -- (0.2,0.6) \toplabel{Y^\natural};
        \coupon{-0.2,0.4}{\tau};
        \coupon{-0.2,0}{\tau};
    \end{tikzpicture}
\end{equation}
for all $X,Y \in \Ob(\cC)$.  (See also \cref{pulpo}.)  The equation \cref{reflection} is a twisted version of the \emph{reflection equation} for braided module categories.

\begin{rem}
    Taking $X=Y=\one$ in \cref{taudub} shows that $\tau_\one^2 = \tau_\one$.  Since $\tau_\one$ is invertible by assumption, this implies that $\tau_\one = 1_\one$.
\end{rem}

The following example shows how the notion of a cylinder twist generalizes the concept of a balanced monoidal category.

\begin{eg} \label{candy}
    Recall that a \emph{twist} on a monoidal category $\cC$ is a natural family of isomorphisms
    \[
        \theta_X
        =
        \begin{tikzpicture}[centerzero]
            \draw (0,-0.4) \botlabel{X} -- (0,0.4) \toplabel{X};
            \coupon{0,0}{\theta};
        \end{tikzpicture}
        \in \cC(X,X)
    \]
    such that $\theta_\one = 1_\one$ and
    \[
        \begin{tikzpicture}[centerzero,scale=1.5]
            \draw (-0.1,-0.6) \botlabel{X} -- (-0.1,0.6);
            \draw (0.1,-0.6) \botlabel{Y} -- (0.1,0.6);
            \genbox{-0.2,-0.1}{0.2,0.1}{\theta};
        \end{tikzpicture}
        =
        \begin{tikzpicture}[centerzero,scale=1.5]
            \draw (0.2,-0.6) \botlabel{Y} -- (0.2,-0.4) \braidup (-0.2,0);
            \draw[wipe] (-0.2,-0.6) -- (-0.2,-0.4) \braidup (0.2,0);
            \draw (-0.2,-0.6) \botlabel{X} -- (-0.2,-0.4) \braidup (0.2,0);
            \draw (0.2,0) \braidup (-0.2,0.4) -- (-0.2,0.6);
            \draw[wipe] (-0.2,0) \braidup (0.2,0.4) -- (0.2,0.6);
            \draw (-0.2,0) \braidup (0.2,0.4) -- (0.2,0.6);
            \coupon{0.2,0}{\theta};
            \coupon{-0.2,0}{\theta};
        \end{tikzpicture}
        \quad
        \left(
            =
            \begin{tikzpicture}[centerzero,scale=1.5]
                \draw (0.2,-0.6) \botlabel{Y} -- (0.2,-0.4) \braidup (-0.2,0);
                \draw[wipe] (-0.2,-0.6) -- (-0.2,-0.4) \braidup (0.2,0);
                \draw (-0.2,-0.6) \botlabel{X} -- (-0.2,-0.4) \braidup (0.2,0);
                \draw (0.2,0) \braidup (-0.2,0.4) -- (-0.2,0.6);
                \draw[wipe] (-0.2,0) \braidup (0.2,0.4) -- (0.2,0.6);
                \draw (-0.2,0) \braidup (0.2,0.4) -- (0.2,0.6);
                \coupon{-0.2,-0.4}{\theta};
                \coupon{-0.2,0}{\theta};
            \end{tikzpicture}
            =
            \begin{tikzpicture}[centerzero,scale=1.5]
                \draw (0.2,-0.6) \botlabel{Y} -- (0.2,-0.4) \braidup (-0.2,0);
                \draw[wipe] (-0.2,-0.6) -- (-0.2,-0.4) \braidup (0.2,0);
                \draw (-0.2,-0.6) \botlabel{X} -- (-0.2,-0.4) \braidup (0.2,0);
                \draw (0.2,0) \braidup (-0.2,0.4) -- (-0.2,0.6);
                \draw[wipe] (-0.2,0) \braidup (0.2,0.4) -- (0.2,0.6);
                \draw (-0.2,0) \braidup (0.2,0.4) -- (0.2,0.6);
                \coupon{-0.2,0.4}{\theta};
                \coupon{-0.2,0}{\theta};
            \end{tikzpicture}
        \right)
        \qquad \text{for all } X,Y \in \Ob(\cC),
    \]
    where the last two equalities follow from the naturality of the braiding.  A \emph{balanced monoidal category} is a braided monoidal category equipped with a twist.  If $\cC$ is a braided strict monoidal category, then $\cC$ is a strict $\cC$-module with action given by the tensor product, and $(\cC,\cC)$ is a strict tensor pair.  A twist on $\cC$ is the same as an $\id_\cC$-cylinder twist for $(\cC,\cC)$. 
\end{eg}

For the remainder of this subsection, we suppose that $(\cM,\cC)$ is a strict tensor pair, that $\natural$ is an involution of $\cC$, and that $\tau$ is a $\natural$-cylinder twist for $(\cM,\cC)$.  For $X,Y \in \Ob(\cC)$, define
\begin{equation} \label{patacones}
    \begin{tikzpicture}[centerzero]
        \draw (0,-0.5) \botlabel{X} -- (0,0.5);
        \draw (0.4,-0.5) \botlabel{Y} -- (0.4,0.5) \toplabel{Y^\natural};
        \coupon{0.4,0}{\tau};
    \end{tikzpicture}
    :=
    \begin{tikzpicture}[centerzero]
        \draw (0.4,-0.5) \botlabel{Y} \braidup (0,0);
        \draw[wipe] (0,-0.5) \braidup (0.4,0);
        \draw (0,-0.5) \botlabel{X} \braidup (0.4,0) \braidup (0,0.5);
        \draw[wipe] (0,0) \braidup (0.4,0.5);
        \draw (0,0) \braidup (0.4,0.5) \toplabel{Y^\natural};
        \coupon{0,0}{\tau};
    \end{tikzpicture}
    \ .
\end{equation}
Then, by \cref{taudub,reflection}, we have
\[
    \begin{tikzpicture}[centerzero]
        \draw (0,-0.5) \botlabel{X} -- (0,0.5) \toplabel{X^\natural};
        \draw (0.4,-0.5) \botlabel{Y} -- (0.4,0.5) \toplabel{Y^\natural};
        \genbox{-0.2,-0.2}{0.6,0.2}{\tau};
    \end{tikzpicture}
    =
    \begin{tikzpicture}[centerzero]
        \draw (0,-0.5) \botlabel{X} -- (0,0.5);
        \draw (0.4,-0.5) \botlabel{Y} -- (0.4,0.5);
        \coupon{0,0}{\tau};
        \coupon{0.4,0}{\tau};
    \end{tikzpicture}
    =
    \begin{tikzpicture}[centerzero]
        \draw (0,-0.5) \botlabel{X} -- (0,0.5);
        \draw (0.4,-0.5) \botlabel{Y} -- (0.4,0.5);
        \coupon{0,-0.15}{\tau};
        \coupon{0.4,0.15}{\tau};
    \end{tikzpicture}
    \qquad \text{for all } X,Y \in \Ob(\cC).
\]

It follows from \cref{patacones} and the naturality of the braiding on $\cC$ that
\begin{equation} \label{tauinterchange}
    \begin{tikzpicture}[centerzero]
        \draw (0,-0.5) \botlabel{X} -- (0,0.5) \toplabel{Y};
        \draw (0.4,-0.5) \botlabel{Z} -- (0.4,0.5);
        \coupon{0,0.15}{f};
        \coupon{0.4,-0.15}{\tau};
    \end{tikzpicture}
    =
    \begin{tikzpicture}[centerzero]
        \draw (0,-0.5) \botlabel{X} -- (0,0.5) \toplabel{Y};
        \draw (0.4,-0.5) \botlabel{Z} -- (0.4,0.5);
        \coupon{0,-0.15}{f};
        \coupon{0.4,0.15}{\tau};
    \end{tikzpicture}
    \qquad \text{for all } X,Y,Z \in \Ob(\cC),\ f \in \cC(X,Y).
\end{equation}
It also follows from \cref{patacones}, \cref{chifrijo}, and the naturality of the braiding on $\cC$ that
\begin{equation}
    \begin{tikzpicture}[centerzero]
        \draw (0,-0.6) \botlabel{X} -- (0,0.6);
        \draw (0.5,-0.6) \botlabel{Y} -- (0.5,0.6) \toplabel{Z^\natural};
        \coupon{0.5,-0.25}{f};
        \coupon{0.5,0.3}{\tau};
    \end{tikzpicture}
    =
    \begin{tikzpicture}[centerzero]
        \draw (0,-0.6) \botlabel{X} -- (0,0.6);
        \draw (0.5,-0.6) \botlabel{Y} -- (0.5,0.6) \toplabel{Z^\natural};
        \coupon{0.5,0.25}{f^\natural};
        \coupon{0.5,-0.3}{\tau};
    \end{tikzpicture}
    \qquad \text{for all } X,Y,Z \in \Ob(\cC),\ f \in \cC(Y,Z).
\end{equation}

\subsection{Verifying cylinder twists}

Throughout this subsection, we suppose that $(\cM,\cC)$ is a strict tensor pair and that $\natural$ is an involution of $\cC$.  Furthermore, we fix a collection
\[
    \tau = (\tau_X)_{X \in \Ob(C)},\qquad
    \tau_X \text{ is an isomorphism in } \cM(X,X^\natural) \text{ for all } X \in \Ob(\cC),
\]
such that \cref{taudub} is satisfied.  Define the category $\cC_\tau$ of $\cC$ by $\Ob(\cC_\tau) = \Ob(\cC)$ and
\[
    \Mor(\cC_\tau)
    := \{f \in \Mor(\cC) : \cref{chifrijo} \text{ is satisfied}\}.
\]
(It is clear that $\Mor(\cC_\tau)$ is closed under linear combinations and composition.  It is also clear that $1_X \in \Mor(\cC_\tau)$ for all $X \in \Ob(\cC)$.)
Then $\tau$ is a $\natural$-cylinder twist if and only if $\cC_\tau = \cC$.  Our goal in this subsection is to develop some tools for verifying, in special cases, that $\tau$ is a $\natural$-cylinder twist.

\begin{lem} \label{platano}
    The category $\cC_\tau$ is a monoidal subcategory of $\cC$.
\end{lem}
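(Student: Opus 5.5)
Since the paper already notes that $\Mor(\cC_\tau)$ contains identities and is closed under linear combinations and composition, the only remaining point is closure under tensor products. That is, for $f \in \cC_\tau(X,Y)$ and $g \in \cC_\tau(X',Y')$ we must show $f \otimes g \in \Mor(\cC_\tau)$, meaning $\tau_{Y \otimes Y'} \circ (f \otimes g) = (f\otimes g)^\natural \circ \tau_{X\otimes X'}$. Since $\natural$ is strict monoidal, $(f\otimes g)^\natural = f^\natural \otimes g^\natural$ and $(X\otimes X')^\natural = X^\natural \otimes X'^\natural$. Also $f \otimes g = (f \otimes 1_{Y'}) \circ (1_X \otimes g)$, and $\cC_\tau$ is closed under composition. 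It therefore suffices to treat the two cases $f \otimes 1_Z$ and $1_Z \otimes g$ separately.

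The main tool is \cref{taudub}, which expresses $\tau_{X\otimes Z}$ as a composite: $\tau_X \otimes 1_Z$, then the braiding $c_{X^\natural,Z}$, then $\tau_Z \otimes 1_{X^\natural}$, then the braiding $c_{Z^\natural,X^\natural}$. We slide the morphism through this composite one factor at a time, using three facts: the defining relation \cref{chifrijo} for $f$, naturality of the braiding in $\cC$, and the interchange law in the strict module category $\cM$.

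\textbf{Case $f\otimes 1_Z$.} Start with $\tau_{Y\otimes Z}\circ(f\otimes 1_Z)$ and push $f$ upward.
\begin{itemize}
\item At the bottom, $(\tau_Y\otimes 1_Z)\circ(f\otimes 1_Z) = (f^\natural\otimes 1_Z)\circ(\tau_X\otimes 1_Z)$ by \cref{chifrijo}. This step is functoriality of $-\otimes 1_Z$ on $\cM$, which is a module-category action.
\item Next, $f^\natural$ passes through $c_{-,Z}$ by naturality of the braiding, since $f^\natural\in\cC$.
\item It then sits on the right strand, where it commutes with $\tau_Z \otimes 1$ by the interchange law. Some care is needed here: $\tau_Z\otimes 1_{X^\natural}$ is a morphism of $\cM$ tensored with an identity of $\cC$, while $1\otimes f^\natural$ has $f^\natural$ acting on the right. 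Interchange holds because $\bA(f^\natural)$ is a natural transformation $\bA(X^\natural)\Rightarrow\bA(Y^\natural)$, whose naturality square at $\tau_Z$ is exactly the needed identity.
\item Finally, $f^\natural$ passes through the top braiding by naturality again.
\end{itemize}
The result is $(f^\natural\otimes 1_{Z^\natural})\circ\tau_{X\otimes Z}$, as required.

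\textbf{Case $1_Z\otimes g$.} The argument is symmetric. At the bottom, $g$ passes through $\tau_Z\otimes 1$ by the interchange law just described, then through the first braiding by naturality. It then becomes the input of $\tau$ in the factor $\tau\otimes 1_{Z^\natural}$, where \cref{chifrijo} turns it into $g^\natural$. Finally $g^\natural$ crosses the top braiding by naturality.

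\textbf{Main obstacle.} The only delicate point is keeping track of which interchange laws are legitimate when morphisms of $\cM$ not lying in $\cC$ appear. These are the morphisms $\tau$, which may only be tensored on the right by morphisms of $\cC$. In the diagrams above, every morphism tensored to the right of a $\tau$ lies in $\cC$ (namely $f^\natural$ or $g$), so the interchange law follows from naturality of $\bA$ as noted. With that checked, the rest is routine diagram sliding.
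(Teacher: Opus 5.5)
Your proof is correct and follows the paper's argument: expand $\tau$ on a tensor product via \cref{taudub}, slide the morphisms upward using \cref{chifrijo}, naturality of the braiding and the module-category interchange law, then reassemble with \cref{taudub}. The only differences are that the paper slides $f$ and $g$ through simultaneously instead of reducing to $f\otimes 1_Z$ and $1_Z\otimes g$ via closure under composition, and that your justification of the interchange step (naturality of $\bA(f^\natural)$ at $\tau_Z$) spells out a point the paper leaves implicit.
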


\begin{proof}
    We must show that $\Mor(\cC_\tau)$ is closed under tensor product.  Suppose $f,g \in \Mor(\cC_\tau)$.  Then,
    \[
        \begin{tikzpicture}[centerzero]
            \draw (-0.3,-0.8) -- (-0.3,0.8);
            \draw (0.3,-0.8) -- (0.3,0.8);
            \coupon{-0.3,-0.3}{f};
            \coupon{0.3,-0.3}{g};
            \genbox{-0.5,0.1}{0.5,0.4}{\tau};
        \end{tikzpicture}
        \overset{\cref{taudub}}{=}
        \begin{tikzpicture}[anchorbase,scale=1.5]
            \draw (0.2,-1) -- (0.2,-0.4) \braidup (-0.2,0);
            \draw[wipe] (-0.2,-0.6) -- (-0.2,-0.4) \braidup (0.2,0);
            \draw (-0.2,-1) -- (-0.2,-0.4) \braidup (0.2,0);
            \draw (0.2,0) \braidup (-0.2,0.4);
            \draw[wipe] (-0.2,0) \braidup (0.2,0.4);
            \draw (-0.2,0) \braidup (0.2,0.4);
            \coupon{-0.2,-0.4}{\tau};
            \coupon{-0.2,0}{\tau};
            \coupon{-0.2,-0.7}{f};
            \coupon{0.2,-0.7}{g};
        \end{tikzpicture}
        =
        \begin{tikzpicture}[anchorbase,scale=1.5]
            \draw (0.2,-0.6) -- (0.2,-0.4) \braidup (-0.2,0);
            \draw[wipe] (-0.2,-0.6) -- (-0.2,-0.4) \braidup (0.2,0);
            \draw (-0.2,-0.6) -- (-0.2,-0.4) \braidup (0.2,0);
            \draw (0.2,0) \braidup (-0.2,0.4) -- (-0.2,0.9);
            \draw[wipe] (-0.2,0) \braidup (0.2,0.4);
            \draw (-0.2,0) \braidup (0.2,0.4) -- (0.2,0.9);
            \coupon{-0.2,-0.4}{\tau};
            \coupon{-0.2,0}{\tau};
            \coupon{-0.2,0.6}{f^\natural};
            \coupon{0.2,0.6}{g^\natural};
        \end{tikzpicture}
        \overset{\cref{taudub}}{=}
        \begin{tikzpicture}[anchorbase]
            \draw (-0.3,-0.8) -- (-0.3,0.8);
            \draw (0.3,-0.8) -- (0.3,0.8);
            \coupon{-0.3,0.3}{f^\natural};
            \coupon{0.3,0.3}{g^\natural};
            \genbox{-0.5,-0.1}{0.5,-0.4}{\tau};
        \end{tikzpicture},
    \]
    where the second equality follows from the naturality of the braiding and the assumption that $f,g \in \Mor(\cC_\tau)$.
\end{proof}

The usefulness of \cref{platano} is that, to verify that $\tau$ is a $\natural$-cylinder twist, we only need to check that \cref{chifrijo} holds for all $f$ in a set of morphisms that generate $\cC$ as a monoidal category.

\begin{lem} \label{pulpo}
    For $X,Y \in \Ob(\cC)$, the subcategory $\cC_\tau$ contains the braiding morphism
    \[
        \begin{tikzpicture}[centerzero]
            \draw (0.2,-0.3) \botlabel{Y} \braidup (-0.2,0.3);
            \draw[wipe] (-0.2,-0.3) \braidup (0.2,0.3);
            \draw (-0.2,-0.3) \botlabel{X} \braidup (0.2,0.3);
        \end{tikzpicture}
    \]
    if and only if $\tau$ satisfies the twisted reflection equation \cref{reflection}.  In particular, $\cC_\tau$ is a braided monoidal subcategory of $\cC$ if and only if $\tau$ satisfies the twisted reflection equation \cref{reflection} for all $X,Y \in \Ob(\cC)$.
\end{lem}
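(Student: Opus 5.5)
We write $\beta_{X,Y} \in \cC(X \otimes Y, Y \otimes X)$ for the braiding and compare both sides of \cref{chifrijo} for $f=\beta_{X,Y}$. The plan is to rewrite each side using \cref{taudub} until the equivalence with \cref{reflection} is visible. Throughout we use that $\natural$ is an isomorphism of braided monoidal categories, so $(X\otimes Y)^\natural = X^\natural \otimes Y^\natural$ and $\beta_{X,Y}^\natural = \beta_{X^\natural,Y^\natural}$. We also use naturality of the braiding in $\cC$.

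\emph{The left side} $\tau_{Y\otimes X}\circ \beta_{X,Y}$. Expand $\tau_{Y\otimes X}$ via \cref{taudub} as
\[
\beta_{X^\natural,Y^\natural}\circ(\tau_X\otimes 1_{Y^\natural})\circ\beta_{Y^\natural,X}\circ(\tau_Y\otimes 1_X).
\]
This gives a word of the form $\beta\,\tau\,\beta\,\tau\,\beta$, with the leftmost $\tau$ applied to $Y$ at the bottom.

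\emph{The right side} $\beta_{X^\natural,Y^\natural}\circ\tau_{X\otimes Y}$. Expand $\tau_{X\otimes Y}$ as
\[
\beta_{Y^\natural,X^\natural}\circ(\tau_Y\otimes 1_{X^\natural})\circ\beta_{X^\natural,Y}\circ(\tau_X\otimes 1_Y).
\]
Both sides now begin (reading from the top) with a braiding $\beta_{X^\natural,Y^\natural}$ or $\beta_{Y^\natural,X^\natural}$. Since braidings are invertible, we cancel or compose on the left with $\beta_{X^\natural,Y^\natural}^{-1}$. Equality of the two sides then becomes the equality
\[
(\tau_X\otimes 1)\,\beta_{Y^\natural,X}\,(\tau_Y\otimes 1)\,\beta_{X,Y}
\;=\;
\beta_{X^\natural,Y^\natural}^{-1}\beta_{X^\natural,Y^\natural}\beta_{Y^\natural,X^\natural}(\tau_Y\otimes1)\beta_{X^\natural,Y}(\tau_X\otimes1).
\]
We then tidy this up. Naturality of the braiding moves $\tau$'s past crossings only on the far strand, so we cannot move the $\tau$'s freely. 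However, the left side already has the shape ``$\tau$, cross, $\tau$, cross'', which is one side of \cref{reflection} with the roles of $X,Y$ arranged suitably. The right side has the shape ``cross, cross, $\tau$, cross, $\tau$''. Composing on the right with the inverse braiding puts both sides into the shape of the two sides of \cref{reflection}, one with the $\tau$'s at heights $-0.4,0$ and the other at $0,0.4$. The main step is carefully tracking which labels appear, so that the resulting identity is \cref{reflection} for the pair $(X,Y)$, or for $(Y,X)$, which is equivalent since the statement is quantified over all pairs only in the ``in particular'' part.

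The main obstacle is this bookkeeping. The first part of the statement is for a \emph{fixed} pair $(X,Y)$, so we must make sure the manipulations land exactly on \cref{reflection} for that same pair, rather than a relabelled version. If the direct route lands on the swapped pair, we would instead expand $\tau_{Y\otimes X}$ and $\tau_{X\otimes Y}$ via the alternative form \cref{patacones}. We would also use the consequence of \cref{taudub} that $\tau_X\otimes\tau_Y$ can be written with the $\tau$'s staggered. Every step is an invertible manipulation (composition with isomorphisms $\tau$ and $\beta$), so each rewriting is reversible, and this gives both directions of the ``if and only if''.

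The ``in particular'' statement then follows quickly. By \cref{platano}, $\cC_\tau$ is a monoidal subcategory with the same objects. It is braided, i.e.\ closed under the braidings, exactly when it contains every $\beta_{X,Y}$. Note that $\beta^{-1}_{X,Y}$ lies in $\cC_\tau$ automatically once $\beta_{X,Y}$ does, because $\natural$ preserves inverses. Hence $\cC_\tau$ is braided if and only if \cref{reflection} holds for all $X,Y$.
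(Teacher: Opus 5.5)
Your route is the paper's: write \cref{chifrijo} for $f=\beta_{X,Y}$, expand via \cref{taudub}, and cancel the common outer braiding $\beta_{X^\natural,Y^\natural}$. Your expansions are correct.

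What you miss is that the equation you display after cancelling already \emph{is} \cref{reflection} for the same pair $(X,Y)$.
\begin{itemize}
\item Its left side, $(\tau_X\otimes 1_{Y^\natural})\circ\beta_{Y^\natural,X}\circ(\tau_Y\otimes 1_X)\circ\beta_{X,Y}$, is exactly the right-hand side of \cref{reflection}.
\item Its right side, after the trivial cancellation $\beta_{X^\natural,Y^\natural}^{-1}\beta_{X^\natural,Y^\natural}=1$, is $\beta_{Y^\natural,X^\natural}\circ(\tau_Y\otimes 1_{X^\natural})\circ\beta_{X^\natural,Y}\circ(\tau_X\otimes 1_Y)=\tau_{X\otimes Y}$. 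By \cref{taudub}, this is the left-hand side of \cref{reflection}.
\end{itemize}
The paper never expands this second side. It simply reads $\beta_{X^\natural,Y^\natural}\circ\tau_{X\otimes Y}$ as $\beta_{X^\natural,Y^\natural}$ composed with the left-hand side of \cref{reflection}. Since braidings are invertible, this gives the ``if and only if'' for the fixed pair, with no bookkeeping left.

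The gap is that the proposal treats this identification as an unresolved ``main obstacle'' and proposes finishing moves that would not work.
\begin{itemize}
\item Composing on the right with an inverse braiding would break the exact match you already have.
\item Your fallback is circular. With $1_X\otimes\tau_Y$ defined by \cref{patacones}, the staggered identity $\tau_{X\otimes Y}=(\tau_X\otimes 1_{Y^\natural})\circ(1_X\otimes\tau_Y)$ is literally \cref{reflection} for $(X,Y)$. Only the other staggering, $(1_{X^\natural}\otimes\tau_Y)\circ(\tau_X\otimes 1_Y)=\tau_{X\otimes Y}$, is \cref{taudub}. The paper derives the staggered forms using \cref{reflection}, and in this subsection only \cref{taudub} is assumed.
\item \cref{reflection} for $(X,Y)$ and for $(Y,X)$ are not interchangeable for a fixed pair. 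By the first part they correspond to $\beta_{X,Y}\in\cC_\tau$ and $\beta_{Y,X}\in\cC_\tau$, and $\beta_{Y,X}\ne\beta_{X,Y}^{-1}$ unless the braiding is symmetric.
\end{itemize}

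Replace everything after your display by the identification above. Your treatment of the ``in particular'' clause, via \cref{platano} and the closure of $\cC_\tau$ under inverses of isomorphisms, is fine as written.
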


\begin{proof}
    The subcategory $\cC_\tau$ is a braided monoidal subcategory of $\cC$ if and only if it contains all the braiding morphisms.  Since $\natural$ is an isomorphism of braided monoidal categories, we have
    \[
        \left(
            \begin{tikzpicture}[centerzero]
                \draw (0.2,-0.3) \botlabel{Y} \braidup (-0.2,0.3);
                \draw[wipe] (-0.2,-0.3) \braidup (0.2,0.3);
                \draw (-0.2,-0.3) \botlabel{X} \braidup (0.2,0.3);
            \end{tikzpicture}
        \right)^\natural
        =
        \begin{tikzpicture}[centerzero]
            \draw (0.2,-0.3) \botlabel{Y^\natural} \braidup (-0.2,0.3);
            \draw[wipe] (-0.2,-0.3) \braidup (0.2,0.3);
            \draw (-0.2,-0.3) \botlabel{X^\natural} \braidup (0.2,0.3);
        \end{tikzpicture}.
    \]
    Then the result follows from the fact that 
    \[
        \begin{tikzpicture}[centerzero,scale=1.5]
            \draw (0.15,-0.6) \botlabel{Y} \braidup (-0.15,0) -- (-0.15,0.6) \toplabel{Y^\natural};
            \draw[wipe] (-0.15,-0.6) \braidup (0.15,0);
            \draw (-0.15,-0.6) \botlabel{X} \braidup (0.15,0) -- (0.15,0.6) \toplabel{X^\natural};
            \genbox{-0.25,-0.1}{0.25,0.1}{\tau};
        \end{tikzpicture}
        \overset{\cref{taudub}}{=}
        \begin{tikzpicture}[anchorbase,scale=1.5]
            \draw (0.2,-0.4) \botlabel{Y} \braidup (-0.2,0);
            \draw[wipe] (-0.2,-0.4) \braidup (0.2,0);
            \draw (-0.2,-0.4) \botlabel{X} \braidup (0.2,0);
            \draw (0.2,0) \braidup (-0.2,0.4);
            \draw[wipe] (-0.2,0) \braidup (0.2,0.4);
            \draw (-0.2,0) \braidup (0.2,0.4) \braidup (-0.2,0.8) \toplabel{Y^\natural};
            \draw[wipe] (-0.2,0.4) \braidup (0.2,0.8);
            \draw (-0.2,0.4) \braidup (0.2,0.8) \toplabel{X^\natural};
            \coupon{-0.2,0.4}{\tau};
            \coupon{-0.2,0}{\tau};
        \end{tikzpicture}
        \qquad \text{and} \qquad
        \begin{tikzpicture}[centerzero,scale=1.5]
            \draw (0.15,-0.6) \botlabel{Y} -- (0.15,0) \braidup (-0.15,0.6) \toplabel{Y^\natural};
            \draw[wipe] (-0.15,0) \braidup (0.15,0.6);
            \draw (-0.15,-0.6) \botlabel{X} -- (-0.15,0) \braidup (0.15,0.6) \toplabel{X^\natural};
            \genbox{-0.25,-0.1}{0.25,0.1}{\tau};
        \end{tikzpicture}
        =
        \begin{tikzpicture}[anchorbase,scale=1.5]
            \draw (0.2,-0.6) \botlabel{Y} -- (0.2,-0.4) \braidup (-0.2,0);
            \draw[wipe] (-0.2,-0.6) -- (-0.2,-0.4) \braidup (0.2,0);
            \draw (-0.2,-0.6) \botlabel{X} -- (-0.2,-0.4) \braidup (0.2,0);
            \draw (0.2,0) \braidup (-0.2,0.4);
            \draw[wipe] (-0.2,0) \braidup (0.2,0.4);
            \draw (-0.2,0) \braidup (0.2,0.4) \braidup (-0.2,0.8) \toplabel{Y^\natural};
            \draw[wipe] (-0.2,0.4) \braidup (0.2,0.8);
            \draw (-0.2,0.4) \braidup (0.2,0.8) \toplabel{X^\natural};
            \coupon{-0.2,-0.4}{\tau};
            \coupon{-0.2,0}{\tau};
        \end{tikzpicture}
        \ .\qedhere
    \]
\end{proof}

\subsection{Constructing cylinder twists\label{subsec:constiwst}}

Throughout this subsection, we suppose that $(\cM,\cC)$ is a strict tensor pair and that $\natural$ is an involution of $\cC$.  In this subsection, we discuss some general strategies for defining a $\natural$-cylinder twist for $(\cM,\cC)$.

Suppose that $\Ob(\cC)$ is freely generated, under tensor product, by a set $S$ of objects.  For $X_1,\dotsc,X_r \in S$, we say that the object $X = X_1 \otimes \dotsb \otimes X_r$ has \emph{length} $r$, which we denote by $\ell(X)$.  Define $\tau_\one = 1_\one$ and suppose that we have isomorphisms
\[
    \tau_X \in \cM(X,X^\natural),\qquad X \in S,
\]
such that \cref{reflection} is satisfied for all $X,Y \in S$.  We then extend the definition of $\tau$ recursively as follows: assuming $\tau_X$ has been defined for objects of length $r$, we define $\tau_{X \otimes Y}$ by \eqref{taudub} for $Y \in S$, $X \in \Ob(\cC)$, $\ell(X) = r$.

\begin{lem}
    For $\tau$ defined as above, the twisted reflection equation \cref{reflection} holds for all $X,Y \in \Ob(\cC)$.
\end{lem}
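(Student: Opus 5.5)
Write $\beta_{X,Y}$ for the braiding and $R(X,Y)$ for the twisted reflection equation \cref{reflection}, regarded as an identity of morphisms $X\otimes Y \to Y^\natural\otimes X^\natural$ in $\cM$. By construction, \cref{taudub} holds for all pairs $(X,Y)$ with $Y\in S$ (and trivially when $Y=\one$). Each side of \cref{reflection} is a composite of braidings and copies of $\tau_X$, $\tau_Y$ placed in the leftmost slot. Recall that $\tau_Y$ in the second slot is defined via \cref{patacones}; as a composite it is $\beta^{-1}\circ(\tau_Y\otimes 1)\circ\beta$.

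The plan is a double induction, first on $\ell(Y)$ and then on $\ell(X)$.

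\textbf{Step 1: $X\in S$ and $Y\in S$.} This is exactly the hypothesis.

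\textbf{Step 2: $Y\in S$ and arbitrary $X$.} We induct on $\ell(X)$. Write $X=X'\otimes Z$ with $Z\in S$. By definition,
\[
\tau_{X'\otimes Z}=\beta\circ(\tau_Z\otimes 1)\circ\beta\circ(\tau_{X'}\otimes 1),
\]
with the braidings taken between $X'^\natural$ and $Z$. Substitute this into both sides of $R(X'\otimes Z,Y)$. Naturality of the braiding of $\cC$, together with the hexagon axioms to split $\beta_{X'\otimes Z,Y}$ into $\beta_{X',Y}$ and $\beta_{Z,Y}$, lets us slide strands. We then apply $R(X',Y)$ (inductive hypothesis) and $R(Z,Y)$ (Step 1). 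The braidings $\beta$ appearing in $\tau_{X'\otimes Z}$ lie in $\cC$ and commute past $\tau$'s sitting in other slots, by \cref{tauinterchange} and its analogue for the $\natural$-image; this uses $\beta^\natural=\beta$ since $\natural$ is braided.

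\textbf{Step 3: arbitrary $Y$.} We induct on $\ell(Y)$. Write $Y=Y'\otimes W$ with $W\in S$, so $\tau_{Y'\otimes W}$ is again given by \cref{taudub}. Then $R(X,Y'\otimes W)$ reduces to two moves. First, $\tau_X$ commutes past $\tau_{Y'}$ placed in the next slot, which is $R(X,Y')$. Second, after braiding, $\tau_X$ must commute past $\tau_W$ in its slot, which is $R(X,W)$, known from Step 2.

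Equivalently, and more cleanly, it should suffice to check the consequence of $R$ in the form
\[
(\tau_X\otimes 1)(1\otimes\tau_Y)=(1\otimes\tau_Y)(\tau_X\otimes 1),
\]
with $\tau_Y$ in the second slot as in \cref{patacones}, and then observe that this commutation relation is multiplicative in each argument.

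\textbf{Main obstacle.} I expect the hardest part to be Step 2. The reflection equation mixes the braiding $\beta_{X,Y}$ with its conjugate, so the diagram bookkeeping is delicate. In particular, we must ensure that the braidings introduced by the recursive definition of $\tau_{X'\otimes Z}$ line up correctly with $\beta_{X'\otimes Z,Y}$ via the hexagon identity. To manage this, I would first reformulate \cref{reflection} as the statement that $\tau_X\otimes 1_Y$ commutes with $\beta_{Y,X^\natural}^{-1}\circ(\tau_Y\otimes 1)\circ\beta_{Y,X}$ in the appropriate sense. Once the identity is phrased as a commutation relation, closure under tensoring in the first argument becomes a formal consequence of naturality, and the same holds for the second argument.
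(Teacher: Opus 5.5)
Your double induction takes a different route from the paper's. The paper does no diagram chasing at this point. With $\cC_\tau$ the class of morphisms satisfying \cref{chifrijo}, \cref{pulpo} says that \cref{reflection} for $(X,Y)$ holds exactly when the braiding of $X$ past $Y$ lies in $\cC_\tau$. Next, \cref{platano} says $\cC_\tau$ is closed under $\otimes$ and $\circ$. Finally, the hexagon axioms build every braiding from braidings of objects of $S$. So the bookkeeping you are worried about is handled once and for all in \cref{platano}.

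Your skeleton is logically sound: Step~2 by induction on $\ell(X)$ from $R(X',Y)$ and $R(Z,Y)$, and Step~3 by induction on $\ell(Y)$ from $R(X,Y')$ and $R(X,W)$. It also has a mild advantage: it uses only the recursive definition of $\tau$. By contrast, \cref{platano} tacitly uses \cref{taudub} for arbitrary pairs. For the recursively defined $\tau$, that holds by construction only when the second object lies in $S$; the general case needs a short induction based on the second identity displayed below.

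However, two statements in your proposal are wrong and must be fixed.

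First, \cref{patacones} is not a conjugation. In both of its crossings, the strand moving to the right passes over. Writing $\beta_{U,V}\colon U\otimes V\to V\otimes U$ for that crossing, we get $1_X\otimes\tau_Y=\beta_{Y^\natural,X}\circ(\tau_Y\otimes 1_X)\circ\beta_{X,Y}$: the strand $Y$ goes behind $X$, around the pole, and back in front of $X$. In a non-symmetric $\cC$ this is in general different from $\beta^{-1}\circ(\tau_Y\otimes 1)\circ\beta$. So the reformulation in your last paragraph, where $\tau_X\otimes 1$ is to commute with a \emph{conjugate} of $\tau_Y\otimes 1$, is a different equation from \cref{reflection}. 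The correct reformulation is the one you also wrote, $(1_{X^\natural}\otimes\tau_Y)(\tau_X\otimes 1_Y)=(\tau_X\otimes 1_{Y^\natural})(1_X\otimes\tau_Y)$. That is literally \cref{reflection}, not a consequence of it.

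Second, the ``$\natural$-analogue'' of \cref{tauinterchange} slides $f$ through $\tau$ on its own strand, turning it into $f^\natural$. It is derived from \cref{chifrijo}, which is not available in this construction. For $f$ a braiding it is \emph{equivalent} to the reflection equation by \cref{pulpo}, so using it is circular unless the relevant instance of $R$ is already known.

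It is also unnecessary. What you need is the following pair of identities, which is what your phrase ``multiplicative in each argument'' asserts without proof:
\[
    1_{A\otimes B}\otimes\tau_Y=(1_A\otimes\beta_{Y^\natural,B})\circ(1_A\otimes\tau_Y\otimes 1_B)\circ(1_A\otimes\beta_{B,Y}),
\]
\[
    1_X\otimes\tau_{Y'\otimes W}=(1_{X\otimes (Y')^\natural}\otimes\tau_W)\circ(1_X\otimes\tau_{Y'}\otimes 1_W)\qquad (W\in S).
\]
These follow from four facts: the hexagon axioms, naturality of $\beta$ in $\cC$, the recursive definition of $\tau_{Y'\otimes W}$, and the fact that a morphism of $\cM$ in the leftmost slot commutes with morphisms of $\cC$ acting on the strands to its right. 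With these:
\begin{itemize}
\item Step~3 reduces to $R(X,Y')\otimes 1_W$ and $R(X,W)\otimes 1_{(Y')^\natural}$.
\item In Step~2, commuting $\tau_{X'}$ with the third-slot $\tau_Y$ reduces to $R(X',Y)\otimes 1_Z$.
\item Also in Step~2, commuting the second-slot $\tau_Z$ past the third-slot $\tau_Y$ reduces to $R(Z,Y)\otimes 1_{(X')^\natural}$, after using the braid relation to move the $(X')^\natural$ strand to the far right.
\end{itemize}
That last computation is the real content of your Step~2 and should be written out.
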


\begin{proof}
    By assumption and \cref{pulpo}, $\cC_\tau$ contains the braidings for objects in $S$.  By \cref{platano}, $\cC_\tau$ is a monoidal subcategory of $\cC$.  Since arbitrary braidings are built from the braidings of the generating objects via tensor product and composition, it follows that $\cC_\tau$ contains all the braidings.  Then the result follows by another application of \cref{pulpo}.
    \details{
        We give an alternative proof by induction on the lengths of $X$ and $Y$.  It is trivial for $X = \one$ or $Y = \one$, and it holds for $\ell(X) = \ell(Y)=1$ by assumption.  Now suppose that \cref{reflection} holds for all $X,Y \in \Ob(\cC)$ with $\ell(X), \ell(Y) \le r$.  Then, for $Z \in S$ and $X,Y \in \Ob(\cC)$ with $\ell(X) \le r$ and $\ell(Y)=r$, we have
        \[
            \begin{tikzpicture}[centerzero,scale=2]
                \draw (0.2,-0.6) \botlabel{Y \otimes Z} -- (0.2,-0.4) \braidup (-0.2,0);
                \draw[wipe] (-0.2,-0.6) -- (-0.2,-0.4) \braidup (0.2,0);
                \draw (-0.2,-0.6) \botlabel{X} -- (-0.2,-0.4) \braidup (0.2,0);
                \draw (0.2,0) \braidup (-0.2,0.4) -- (-0.2,0.6);
                \draw[wipe] (-0.2,0) \braidup (0.2,0.4) -- (0.2,0.6);
                \draw (-0.2,0) \braidup (0.2,0.4) -- (0.2,0.6);
                \coupon{-0.2,0.4}{\tau};
                \coupon{-0.2,0}{\tau};
            \end{tikzpicture}
            =
            \begin{tikzpicture}[centerzero]
                \draw (0.4,-1.6) \botlabel{Z} -- (0.4,-1.2) \braidup (0,-0.8) \braidup (-0.4,-0.4) -- (-0.4,0);
                \draw[wipe] (-0.4,-0.8) \braidup (0,-0.4);
                \draw (0,-1.6) \botlabel{Y} \braidup (-0.4,-1.2) -- (-0.4,-0.8) \braidup (0,-0.4) -- (0,0) \braidup (-0.4,0.4) -- (-0.4,0.8);
                \draw[wipe] (-0.4,-1.6) \braidup (0,-1.2) \braidup (0.4,-0.8);
                \draw (-0.4,-1.6) \botlabel{X} \braidup (0,-1.2) \braidup (0.4,-0.8) -- (0.4,0.4) \braidup (0,0.8) \braidup (-0.4,1.2) -- (-0.4,1.6);
                \draw[wipe] (-0.4,0) \braidup (0,0.4) \braidup (0.4,0.8);
                \draw (-0.4,0) \braidup (0,0.4) \braidup (0.4,0.8) -- (0.4,1.6);
                \draw[wipe] (-0.4,0.8) \braidup (0,1.2);
                \draw (-0.4,0.8) \braidup (0,1.2) -- (0,1.6);
                \coupon{-0.4,-1}{\tau};
                \coupon{-0.4,-0.2}{\tau};
                \coupon{-0.4,1.4}{\tau};
            \end{tikzpicture}
            \overset{\textup{BR}}{=}
            \begin{tikzpicture}[centerzero]
                \draw (0.4,-1.6) \botlabel{Z} -- (0.4,-1.2) \braidup (0,-0.8) \braidup (-0.4,-0.4) -- (-0.4,0);
                \draw[wipe] (-0.4,-0.8) \braidup (0,-0.4);
                \draw (0,-1.6) \botlabel{Y} \braidup (-0.4,-1.2) -- (-0.4,-0.8) \braidup (0,-0.4) -- (0,0);
                \draw[wipe] (-0.4,-1.6) \braidup (0,-1.2) \braidup (0.4,-0.8);
                \draw (-0.4,-1.6) \botlabel{X} \braidup (0,-1.2) \braidup (0.4,-0.8) -- (0.4,0) \braidup (0,0.4) \braidup (-0.4,0.8) -- (-0.4,1.6);
                \draw[wipe] (0,0) \braidup (0.4,0.4);
                \draw (0,0) \braidup (0.4,0.4) -- (0.4,0.8) \braidup (0,1.2) -- (0,1.6);
                \draw[wipe] (-0.4,0.4) \braidup (0,0.8) \braidup (0.4,1.2);
                \draw (-0.4,0) -- (-0.4,0.4) \braidup (0,0.8) \braidup (0.4,1.2) -- (0.4,1.6);
                \coupon{-0.4,-1}{\tau};
                \coupon{-0.4,-0.2}{\tau};
                \coupon{-0.4,1.4}{\tau};
            \end{tikzpicture}
            \overset{\textup{BR}}{=}
            \begin{tikzpicture}[centerzero]
                \draw (0.4,-1.6) \botlabel{Z} -- (0.4,-0.4) \braidup (0,0) \braidup (-0.4,0.4) -- (-0.4,0.8);
                \draw (0,-1.6) \botlabel{Y} \braidup (-0.4,-1.2) -- (-0.4,-0.8);
                \draw[wipe] (-0.4,-1.6) \braidup (0,-1.2);
                \draw (-0.4,-1.6) \botlabel{X} \braidup (0,-1.2) -- (0,-0.8) \braidup (-0.4,-0.4) -- (-0.4,0);
                \draw[wipe] (-0.4,-0.8) \braidup (0,-0.4) \braidup (0.4,0);
                \draw (-0.4,-0.8) \braidup (0,-0.4) \braidup (0.4,0) -- (0.4,1.2) \braidup (0,1.6);
                \draw[wipe] (-0.4,0) \braidup (0,0.4);
                \draw (-0.4,0) \braidup (0,0.4) -- (0,0.8) \braidup (-0.4,1.2) -- (-0.4,1.6);
                \draw[wipe] (-0.4,0.8) \braidup (0,1.2) \braidup (0.4,1.6);
                \draw (-0.4,0.8) \braidup (0,1.2) \braidup (0.4,1.6);
                \coupon{-0.4,1.4}{\tau};
                \coupon{-0.4,0.6}{\tau};
                \coupon{-0.4,-1}{\tau};
            \end{tikzpicture}
            \overset{\textup{IH}}{=}
            \begin{tikzpicture}[centerzero]
                \draw (0.4,-1.6) \botlabel{Z} -- (0.4,-0.4) \braidup (0,0) \braidup (-0.4,0.4) -- (-0.4,0.8);
                \draw (0,-1.6) \botlabel{Y} \braidup (-0.4,-1.2) -- (-0.4,-0.8);
                \draw[wipe] (-0.4,-1.6) \braidup (0,-1.2);
                \draw (-0.4,-1.6) \botlabel{X} \braidup (0,-1.2) -- (0,-0.8) \braidup (-0.4,-0.4) -- (-0.4,0);
                \draw[wipe] (-0.4,-0.8) \braidup (0,-0.4) \braidup (0.4,0);
                \draw (-0.4,-0.8) \braidup (0,-0.4) \braidup (0.4,0) -- (0.4,1.2) \braidup (0,1.6);
                \draw[wipe] (-0.4,0) \braidup (0,0.4);
                \draw (-0.4,0) \braidup (0,0.4) -- (0,0.8) \braidup (-0.4,1.2) -- (-0.4,1.6);
                \draw[wipe] (-0.4,0.8) \braidup (0,1.2) \braidup (0.4,1.6);
                \draw (-0.4,0.8) \braidup (0,1.2) \braidup (0.4,1.6);
                \coupon{-0.4,-0.2}{\tau};
                \coupon{-0.4,0.6}{\tau};
                \coupon{-0.4,-1}{\tau};
            \end{tikzpicture}
            \overset{\textup{IH}}{=}
            \begin{tikzpicture}[centerzero]
                \draw (0.4,-1.8) \botlabel{Z} -- (0.4,-0.2) \braidup (0,0.2) \braidup (-0.4,0.6) -- (-0.4,1);
                \draw (0,-1.8) \botlabel{Y} -- (0,-1.4) \braidup (-0.4,-1) -- (-0.4,-0.6);
                \draw[wipe] (-0.4,-1.4) \braidup (0,-1);
                \draw (-0.4,-1.8) \botlabel{X} -- (-0.4,-1.4) \braidup (0,-1) -- (0,-0.6) \braidup (-0.4,-0.2) -- (-0.4,0.2);
                \draw[wipe] (-0.4,-0.6) \braidup (0,-0.2) \braidup (0.4,0.2);
                \draw (-0.4,-0.6) \braidup (0,-0.2) \braidup (0.4,0.2) -- (0.4,1.4) \braidup (0,1.8);
                \draw[wipe] (-0.4,0.2) \braidup (0,0.6);
                \draw (-0.4,0.2) \braidup (0,0.6) -- (0,1) \braidup (-0.4,1.4) -- (-0.4,1.8);
                \draw[wipe] (-0.4,1) \braidup (0,1.4) \braidup (0.4,1.8);
                \draw (-0.4,1) \braidup (0,1.4) \braidup (0.4,1.8);
                \coupon{-0.4,0.8}{\tau};
                \coupon{-0.4,-0.8}{\tau};
                \coupon{-0.4,-1.6}{\tau};
            \end{tikzpicture}
            \overset{\textup{BR}}{=}
            \begin{tikzpicture}[centerzero]
                \draw (0.4,-1.6) \botlabel{Z} -- (0.4,-0.8) \braidup (0,-0.4) \braidup (-0.4,0) -- (-0.4,0.4);
                \draw (0,-1.6) \botlabel{Y} -- (0,-1.2) \braidup (-0.4,-0.8) -- (-0.4,-0.4);
                \draw[wipe] (-0.4,-1.2) \braidup (0,-0.8) \braidup (0.4,-0.4);
                \draw (-0.4,-1.6) \botlabel{X} -- (-0.4,-1.2) \braidup (0,-0.8) \braidup (0.4,-0.4) -- (0.4,0.8) \braidup (0,1.2) \braidup (-0.4,1.6);
                \draw[wipe] (-0.4,-0.4) \braidup (0,0);
                \draw[wipe] (-0.4,1.2) \braidup (0,1.6);
                \draw (-0.4,-0.4) \braidup (0,0) -- (0,0.4) \braidup (-0.4,0.8) -- (-0.4,1.2) \braidup (0,1.6);
                \draw[wipe] (-0.4,0.4) \braidup (0,0.8) \braidup (0.4,1.2);
                \draw (-0.4,0.4) \braidup (0,0.8) \braidup (0.4,1.2) -- (0.4,1.6);
                \coupon{-0.4,0.2}{\tau};
                \coupon{-0.4,-0.6}{\tau};
                \coupon{-0.4,-1.4}{\tau};
            \end{tikzpicture}
            =
            \begin{tikzpicture}[centerzero,scale=2]
                \draw (0.2,-0.6) \botlabel{Y \otimes Z} -- (0.2,-0.4) \braidup (-0.2,0);
                \draw[wipe] (-0.2,-0.6) -- (-0.2,-0.4) \braidup (0.2,0);
                \draw (-0.2,-0.6) \botlabel{X} -- (-0.2,-0.4) \braidup (0.2,0);
                \draw (0.2,0) \braidup (-0.2,0.4) -- (-0.2,0.6);
                \draw[wipe] (-0.2,0) \braidup (0.2,0.4) -- (0.2,0.6);
                \draw (-0.2,0) \braidup (0.2,0.4) -- (0.2,0.6);
                \coupon{-0.2,-0.4}{\tau};
                \coupon{-0.2,0}{\tau};
            \end{tikzpicture},
        \]
        where 
        the first and last equalities follow from the recursive definition of $\tau$, the equalities labeled `BR' follow from the braid relation, and the equalities labeled `IH' follow from the induction hypothesis.  Thus, \cref{reflection} holds for $\ell(X) \le r$, $\ell(Y) \le r+1$.  A similar argument now shows that it holds for $\ell(X), \ell(Y) \le r+1$, completing the proof of the inductive step.
    }
\end{proof}

\subsection{The ribbon setting\label{subsec:ribbon}}

Throughout this subsection, we suppose that $(\cM,\cC)$ is a strict tensor pair and that $\cC$ is \emph{rigid}.  We also assume that $\natural$ is an involution of $\cC$ that is an isomorphism of \emph{rigid} braided monoidal categories.

We adopt the common convention of orienting strands in rigid monoidal categories.  For $X \in \cC$, we denote its left dual by $X^\vee$.  This means that we have morphisms
\begin{equation} \label{lcps}
    \begin{tikzpicture}[centerzero={0,-0.1}]
        \draw[<-] (-0.2,0.1) \toplabel{X} -- (-0.2,0) to[out=down,in=down,looseness=2] (0.2,0) -- (0.2,0.1);
    \end{tikzpicture}
    \ \colon \one \to X \otimes X^\vee,\qquad
    \begin{tikzpicture}[centerzero={0,0.1}]
        \draw[<-] (-0.2,-0.1) -- (-0.2,0) to[out=up,in=up,looseness=2] (0.2,0) -- (0.2,-0.1) \botlabel{X};
    \end{tikzpicture}
    \ \colon X^\vee \otimes X \to \one,
\end{equation}
such that
\begin{equation} \label{zigleft}
    \begin{tikzpicture}[centerzero]
        \draw[->] (0.3,-0.5) \botlabel{X} to (0.3,0) to[out=up,in=up,looseness=2] (0,0) to[out=down,in=down,looseness=2] (-0.3,0) to (-0.3,0.5);
    \end{tikzpicture}
    =
    \begin{tikzpicture}[centerzero]
        \draw[->] (0,-0.5) \botlabel{X} to (0,0.5);
    \end{tikzpicture}
    \ ,\qquad
    \begin{tikzpicture}[centerzero]
        \draw[->] (0.3,0.5) to (0.3,0) to[out=down,in=down,looseness=2] (0,0) to[out=up,in=up,looseness=2] (-0.3,0) to (-0.3,-0.5) \botlabel{X};
    \end{tikzpicture}
    \ =
    \begin{tikzpicture}[centerzero]
        \draw[<-] (0,-0.5) \botlabel{X} to (0,0.5);
    \end{tikzpicture}
    \ ,
\end{equation}
where the final string diagram denotes the identity morphism of $X^\vee$.  Our assumption on $\natural$ implies that
\[
    (X^\vee)^\natural = (X^\natural)^\vee,
    \qquad
    \left(
        \begin{tikzpicture}[centerzero]
            \draw[<-] (-0.2,0.1) \toplabel{X} -- (-0.2,0) to[out=down,in=down,looseness=2] (0.2,0) -- (0.2,0.1);
        \end{tikzpicture}
    \right)^\natural
    =\,
    \begin{tikzpicture}[centerzero={0,-0.1}]
        \draw[<-] (-0.2,0.1) \toplabel{X^\natural} -- (-0.2,0) to[out=down,in=down,looseness=2] (0.2,0) -- (0.2,0.1);
    \end{tikzpicture}
    ,\qquad \text{and} \qquad
    \left(
        \begin{tikzpicture}[centerzero]
            \draw[<-] (-0.2,-0.1) -- (-0.2,0) to[out=up,in=up,looseness=2] (0.2,0) -- (0.2,-0.1) \botlabel{X};
        \end{tikzpicture}
    \right)^\natural
    =
    \begin{tikzpicture}[centerzero]
        \draw[<-] (-0.2,-0.1) -- (-0.2,0) to[out=up,in=up,looseness=2] (0.2,0) -- (0.2,-0.1) \botlabel{X^\natural};
    \end{tikzpicture}
    \ ,
\]
for all $X \in \Ob(\cC)$.

Recall that a pivotal braided monoidal category is the same as a balanced rigid monoidal category; see \cite[Cor.4.21]{Sel11}.  If $\cC$ is a pivotal braided monoidal category, then the balanced structure of $\cC$ is given by the twist
\[
    \begin{tikzpicture}[centerzero]
        \draw[->] (0,-0.4) \botlabel{X} -- (0,0.4) \toplabel{X};
        \coupon{0,0}{\theta};
    \end{tikzpicture}
    :=
    \begin{tikzpicture}[centerzero,xscale=-1]
        \draw (0,-0.4) \botlabel{X} to[out=up,in=180] (0.25,0.15) to[out=0,in=up] (0.4,0);
        \draw[wipe] (0.25,-0.15) to[out=180,in=down] (0,0.4);
        \draw[->] (0.4,0) to[out=down,in=0] (0.25,-0.15) to[out=180,in=down] (0,0.4);
    \end{tikzpicture}
    \ ,\qquad X \in \Ob(\cC),
\]
We assume for the remainder of this subsection that $\cC$ is a \emph{ribbon category}, which means that $\cC$ is a pivotal braided monoidal category and
\begin{equation} \label{ribbon}
    \begin{tikzpicture}[centerzero,xscale=-1]
        \draw (0,-0.4) \botlabel{X} to[out=up,in=180] (0.25,0.15) to[out=0,in=up] (0.4,0);
        \draw[wipe] (0.25,-0.15) to[out=180,in=down] (0,0.4);
        \draw[->] (0.4,0) to[out=down,in=0] (0.25,-0.15) to[out=180,in=down] (0,0.4);
    \end{tikzpicture}
    =
    \begin{tikzpicture}[centerzero]
        \draw[->] (0.4,0) to[out=down,in=0] (0.25,-0.15) to[out=180,in=down] (0,0.4);
        \draw[wipe] (0,-0.4) to[out=up,in=180] (0.25,0.15);
        \draw (0,-0.4) \botlabel{X} to[out=up,in=180] (0.25,0.15) to[out=0,in=up] (0.4,0);
    \end{tikzpicture}
    \qquad \text{for all } X \in \Ob(\cC).
\end{equation}
We will assume that the pivotal structure is \emph{strict}, which implies that left and right duals coincide.  (Recall that every pivotal category is equivalent, as a pivotal category, to a strict pivotal category.)  Thus, we also have morphisms
\begin{equation} \label{rcps}
    \begin{tikzpicture}[centerzero={0,-0.1}]
        \draw[->] (-0.2,0.1) -- (-0.2,0) to[out=down,in=down,looseness=2] (0.2,0) -- (0.2,0.1) \toplabel{X};
    \end{tikzpicture}
    \ \colon \one \to X^\vee \otimes X,\qquad
    \begin{tikzpicture}[centerzero={0,0.1}]
        \draw[->] (-0.2,-0.1) \botlabel{X} -- (-0.2,0) to[out=up,in=up,looseness=2] (0.2,0) -- (0.2,-0.1);
    \end{tikzpicture}
    \ \colon X \otimes X^\vee \to \one,
\end{equation}
such that
\begin{equation} \label{zigright}
    \begin{tikzpicture}[centerzero,xscale=-1]
        \draw[->] (0.3,-0.5) \botlabel{X} to (0.3,0) to[out=up,in=up,looseness=2] (0,0) to[out=down,in=down,looseness=2] (-0.3,0) to (-0.3,0.5);
    \end{tikzpicture}
    =
    \begin{tikzpicture}[centerzero]
        \draw[->] (0,-0.5) \botlabel{X} to (0,0.5);
    \end{tikzpicture}
    \ ,\qquad
    \begin{tikzpicture}[centerzero,xscale=-1]
        \draw[->] (0.3,0.5) to (0.3,0) to[out=down,in=down,looseness=2] (0,0) to[out=up,in=up,looseness=2] (-0.3,0) to (-0.3,-0.5) \botlabel{X};
    \end{tikzpicture}
    \ =
    \begin{tikzpicture}[centerzero]
        \draw[<-] (0,-0.5) \botlabel{X} to (0,0.5);
    \end{tikzpicture}
    \ .
\end{equation}
We also assume, as in \cref{subsec:constiwst}, that $\Ob(\cC)$ is freely generated, under tensor product, by a set $S$ of objects.

Define $\tau_\one = 1_\one$ and suppose that we have isomorphisms
\[
    \tau_X \in \cM(X,X^\natural),\qquad X \in S,
\]
such that \cref{reflection} is satisfied for all $X,Y \in S$.  We then define $\tau = (\tau_X)_{X \in \Ob(\cC)}$ as explained in \cref{subsec:constiwst}.  Furthermore, we assume that $X^\vee,X^\natural \in S$ for all $X \in S$.

\begin{assumption} \label{donut}
    Suppose that there exist $t_X, c_X \in \kk^\times$, $X \in S$, such that, for all $X \in S$,
    \begin{gather} \label{mora}
        \begin{tikzpicture}[centerzero,xscale=-1]
            \draw (0,-0.4) \botlabel{X} to[out=up,in=180] (0.25,0.15) to[out=0,in=up] (0.4,0);
            \draw[wipe] (0.25,-0.15) to[out=180,in=down] (0,0.4);
            \draw[->] (0.4,0) to[out=down,in=0] (0.25,-0.15) to[out=180,in=down] (0,0.4);
        \end{tikzpicture}
        = t_X
        \begin{tikzpicture}[centerzero]
            \draw[->] (0,-0.4) \botlabel{X} -- (0,0.4);
        \end{tikzpicture}
        \ ,\qquad
        \begin{tikzpicture}[centerzero]
            \draw (0,-0.5) \botlabel{X} -- (0,-0.3) to[out=up,in=0] (-0.35,0.25) to[out=180,in=up] (-0.55,0.1) -- (-0.55,-0.1);
            \draw[wipe] (-0.55,-0.1) to[out=down,in=180] (-0.35,-0.25) to[out=0,in=down] (0,0.3);
            \draw[->] (-0.55,-0.1) to[out=down,in=180] (-0.35,-0.25) to[out=0,in=down] (0,0.3) -- (0,0.5);
            \coupon{-0.55,0}{\tau};
        \end{tikzpicture}
        = c_X
        \begin{tikzpicture}[centerzero]
            \draw[->] (0,-0.5) \botlabel{X} -- (0,0.5);
            \coupon{0,0}{\tau};
        \end{tikzpicture}
        \ ,\qquad
        \begin{tikzpicture}[centerzero]
            \draw[->] (0,-0.5) \botlabel{X} -- (0,0.5);
            \coupon{0,-0.2}{\tau};
            \coupon{0,0.2}{\tau};
        \end{tikzpicture}
        = c_X t_X
        \begin{tikzpicture}[centerzero]
            \draw[->] (0,-0.5) \botlabel{X} -- (0,0.5);
        \end{tikzpicture}
        \ ,
        \\ \label{sleep}
        c_X = c_{X^\vee} = c_{X^\natural}.
    \end{gather}
\end{assumption}
    
It follows from \cref{donut} that
\begin{equation} \label{fresa}
    \begin{tikzpicture}[centerzero={0,0.2}]
        \draw (0.2,0.6) \toplabel{X} \braiddown (-0.2,0) to[out=down,in=down,looseness=2] (0.2,0);
        \draw[wipe] (0.2,0) \braidup (-0.2,0.6);
        \draw[->] (0.2,0) \braidup (-0.2,0.6);
        \coupon{-0.2,0}{\tau};
    \end{tikzpicture}
    = c_X\,
    \begin{tikzpicture}[centerzero={0,0.2}]
        \draw[<-] (-0.2,0.6) -- (-0.2,0) to[out=down,in=down,looseness=2] (0.2,0) -- (0.2,0.6) \toplabel{X};
        \coupon{-0.2,0.2}{\tau};
    \end{tikzpicture}
    ,\qquad
    \begin{tikzpicture}[centerzero={0,0.2}]
        \draw (0.2,0) \braidup (-0.2,0.6) \toplabel{X};
        \draw[wipe] (0.2,0.6) \braiddown (-0.2,0);
        \draw[<-] (0.2,0.6) \braiddown (-0.2,0) to[out=down,in=down,looseness=2] (0.2,0);
        \coupon{-0.2,0}{\tau};
    \end{tikzpicture}
    = c_X^{-1}\,
    \begin{tikzpicture}[centerzero={0,0.2}]
        \draw[->] (-0.2,0.6) \toplabel{X} -- (-0.2,0) to[out=down,in=down,looseness=2] (0.2,0) -- (0.2,0.6);
        \coupon{-0.2,0.2}{\tau};
    \end{tikzpicture}
    \ ,\qquad
    \begin{tikzpicture}[centerzero={0,-0.2}]
        \draw (0.2,0) \braiddown (-0.2,-0.6) \botlabel{X};
        \draw[wipe] (0.2,-0.6) \braidup (-0.2,0);
        \draw[<-] (0.2,-0.6) \braidup (-0.2,0) to[out=up,in=up,looseness=2] (0.2,0);
        \coupon{-0.2,0}{\tau};
    \end{tikzpicture}
    = c_X\,
    \begin{tikzpicture}[centerzero={0,-0.2}]
        \draw[->] (-0.2,-0.6) \botlabel{X} -- (-0.2,0) to[out=up,in=up,looseness=2] (0.2,0) -- (0.2,-0.6);
        \coupon{-0.2,-0.2}{\tau};
    \end{tikzpicture}
    \ ,\qquad
    \begin{tikzpicture}[centerzero={0,-0.2}]
        \draw (0.2,-0.6) \botlabel{X} \braidup (-0.2,0) to[out=up,in=up,looseness=2] (0.2,0);
        \draw[wipe] (0.2,0) \braiddown (-0.2,-0.6);
        \draw[->] (0.2,0) \braiddown (-0.2,-0.6);
        \coupon{-0.2,0}{\tau};
    \end{tikzpicture}
    = c_X^{-1}\,
    \begin{tikzpicture}[centerzero={0,-0.2}]
        \draw[<-] (-0.2,-0.6) -- (-0.2,0) to[out=up,in=up,looseness=2] (0.2,0) -- (0.2,-0.6) \botlabel{X};
        \coupon{-0.2,-0.2}{\tau};
    \end{tikzpicture}
    \ .
\end{equation}

\begin{lem}
    If \cref{donut} holds, then
    \begin{equation} \label{tamarindo}
        t_X = t_{X^\vee} = t_{X^\natural}
        \qquad \text{for all } X \in S.
    \end{equation}
\end{lem}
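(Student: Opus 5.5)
The plan is to prove the two equalities separately. The equality $t_{X^\natural}=t_X$ comes from applying the involution $\natural$ to the first relation in \cref{mora}. The equality $t_{X^\vee}=t_X$ comes from taking mates (duals) of that same relation and using the ribbon condition \cref{ribbon}. Only the first relation in \cref{mora} is needed. Throughout, I assume $1_Y \neq 0$ for objects $Y \in S$ (implicit in the statement), so that scalars multiplying identities can be compared.

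\emph{The equality $t_{X^\natural}=t_X$.} Let $X\in S$. The left-hand side of the first relation in \cref{mora} is the curl $\theta_X$, which is built only from the braiding and the cups and caps of $X$. By assumption, $\natural$ is an isomorphism of rigid braided monoidal categories. Hence $\natural$ sends the braiding to the braiding and the cups and caps of $X$ to those of $X^\natural$; in particular $(X^\vee)^\natural=(X^\natural)^\vee$, with the displayed compatibilities. Therefore $(\theta_X)^\natural=\theta_{X^\natural}$. Applying the $\kk$-linear functor $\natural$ to $\theta_X=t_X 1_X$ gives $\theta_{X^\natural}=t_X 1_{X^\natural}$. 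Since $X^\natural\in S$, \cref{mora} also gives $\theta_{X^\natural}=t_{X^\natural}1_{X^\natural}$. Comparing the two yields $t_{X^\natural}=t_X$.

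\emph{The equality $t_{X^\vee}=t_X$.} I would show that $\theta_{X^\vee}$ equals the mate of $\theta_X$. This mate is obtained by attaching the cup \cref{lcps} at the bottom and the cap \cref{rcps} at the top, that is, by rotating the diagram by $180^\circ$. Its diagram is a curl on a downward-oriented strand. Using the zigzag identities \cref{zigleft,zigright} and planar isotopy, which is valid because the pivotal structure is strict, it becomes the curl on $X^\vee$ with the crossing of the opposite handedness. The ribbon relation \cref{ribbon} then identifies the two curl forms, giving $\theta_{X^\vee}=(\theta_X)^\vee$. Since taking mates is $\kk$-linear and sends $1_X$ to $1_{X^\vee}$, we get $\theta_{X^\vee}=t_X 1_{X^\vee}$. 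Because $X^\vee\in S$, \cref{mora} gives $\theta_{X^\vee}=t_{X^\vee}1_{X^\vee}$, and so $t_{X^\vee}=t_X$.

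\emph{Main obstacle.} The only delicate step is the diagrammatic identification $\theta_{X^\vee}=(\theta_X)^\vee$. Rotating the curl reverses which strand passes over at the crossing, so \cref{ribbon} is needed to convert it back into the standard form of the curl. I would carry this out by drawing the rotated diagram explicitly, then straightening it with one zigzag on each side. This is the standard fact that in a ribbon category the twist commutes with duality.
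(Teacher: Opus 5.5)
Your proposal is correct and is essentially the paper's proof: applying $\natural$ to the first relation in \cref{mora} gives $t_{X^\natural}=t_X$, and rotating that relation by $180^\circ$ and then using \cref{ribbon} gives $t_{X^\vee}=t_X$. There are two wording slips. First, a mate must use a cup and cap from the same pair, both from \cref{lcps} or both from \cref{rcps}. Second, the rotated curl has the same crossing type as $\theta_X$, not the opposite one; its loop is on the other side, which is exactly the right-hand form in \cref{ribbon}.
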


\begin{proof}
    Rotating the first equation in \cref{mora} by $180\degree$, then applying \cref{ribbon}, gives the first equality in \cref{tamarindo}.  Applying $\natural$ to the first equation in \cref{mora} gives the second equality in \cref{tamarindo}.
\end{proof}

\begin{lem} \label{yungang}
    If \cref{donut} holds, then
    \[
        \begin{tikzpicture}[centerzero={0,0.1}]
            \draw[<-] (-0.2,-0.1) -- (-0.2,0) to[out=up,in=up,looseness=2] (0.2,0) -- (0.2,-0.1) \botlabel{X};
        \end{tikzpicture}
        ,
        \begin{tikzpicture}[centerzero={0,-0.1}]
            \draw[<-] (-0.2,0.1) \toplabel{X} -- (-0.2,0) to[out=down,in=down,looseness=2] (0.2,0) -- (0.2,0.1);
        \end{tikzpicture}
        \, \in \cC_\tau
        \qquad \text{for all } X \in S.
    \]
\end{lem}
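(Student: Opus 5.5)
The plan is to verify \cref{chifrijo} directly for the two morphisms. Since $\tau_\one = 1_\one$, the statement for the cap $\operatorname{ev}_X \colon X^\vee \otimes X \to \one$ reads
\[
    \operatorname{ev}_{X^\natural} \circ \tau_{X^\vee \otimes X} = \operatorname{ev}_X .
\]
For the cup $\operatorname{coev}_X \colon \one \to X \otimes X^\vee$ it reads
\[
    \tau_{X \otimes X^\vee} \circ \operatorname{coev}_X = \operatorname{coev}_{X^\natural} .
\]
Here I use $(\operatorname{ev}_X)^\natural = \operatorname{ev}_{X^\natural}$ and $(\operatorname{coev}_X)^\natural = \operatorname{coev}_{X^\natural}$, which hold because $\natural$ is an isomorphism of rigid braided monoidal categories. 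Both $X^\vee$ and $X$ lie in $S$, so $\tau_{X^\vee \otimes X}$ is given by the recursive definition \cref{taudub}. In the notation \cref{patacones}, it is $\tau_{X^\vee}$ on the left strand followed by the "braided" $\tau_X$ on the right strand: the $X$-strand crosses the $X^\vee$-strand, receives $\tau$ in the leftmost position, and crosses back.

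Key steps for the cap, in order:
\begin{enumerate}
    \item Substitute this expansion and use naturality of the braiding to slide the cap $\operatorname{ev}_{X^\natural}$ down through the upper crossing. This produces a strand looping around a $\tau$ box. The resulting configuration is exactly one of the four pictures in \cref{fresa}, with $X$ or $X^\vee$ in the role of the generator.
    \item Apply the relevant identity in \cref{fresa} to remove the braiding, at the cost of a scalar $c_X^{\pm 1}$. The diagram becomes a cap whose two legs each carry a $\tau$, or equivalently two $\tau$'s stacked on one strand after a zig-zag \cref{zigleft}/\cref{zigright}.
    \item Use the second and third relations of \cref{mora} to replace $\tau\circ\tau$ by $c_X t_X$ and to absorb any remaining curl into $t_X$.
    \item Collect the scalars. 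The product $c_{X^\vee}^{\pm1} c_X^{\pm1} t_X^{\pm1}\cdots$ should reduce to $1$ by \cref{sleep} and \cref{tamarindo}, namely $c_X = c_{X^\vee} = c_{X^\natural}$ and $t_X = t_{X^\vee} = t_{X^\natural}$.
\end{enumerate}
The cup is handled the same way using the other two pictures in \cref{fresa}. Alternatively, one can deduce the cup statement from the cap statement: compose with $\tau^{-1}$, apply the zig-zag identities \cref{zigleft}, and use that $\Mor(\cC_\tau)$ is closed under composition and tensor product (\cref{platano}).

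The main obstacle is the bookkeeping in steps 1 and 2. Because $\cM$ is only a right $\cC$-module, a $\tau$ box can sit only on the leftmost strand. So the cap cannot simply be slid across $\tau_{X^\vee}$. Instead I must arrange the diagram so that the loop formed with $\tau$ matches a configuration in \cref{fresa} exactly, and then track which of $c_X^{\pm 1}$ appears, including over/under crossings and orientations. I would first draw $\operatorname{ev}_{X^\natural}\circ(\tau_{X^\vee}\otimes \tau'_X)$, with $\tau'_X$ the braided $\tau_X$ of \cref{patacones}. Then I would use \cref{tauinterchange} and the ribbon condition \cref{ribbon} to move $\tau_{X^\vee}$ above the crossing. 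The hope is that the composite becomes a single curl around $\tau_X$, which is evaluated by the second relation in \cref{mora}. If the scalars do not cancel, I would recheck the crossing types against the third relation of \cref{mora}, since the factor $c_X t_X$ there is what should cancel the $c_X^{-1}$ from \cref{fresa} and the $t_X^{-1}$ from the curl.
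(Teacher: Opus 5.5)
Your proposal follows the paper's proof: expand $\tau_{X^\vee\otimes X}$ by \cref{taudub}, remove the curl formed by the cap and the upper crossing (a factor $t_X^{-1}$ from the first relation in \cref{mora}), apply \cref{fresa} (a factor $c_X^{-1}$), and cancel these with the two stacked $\tau$'s using the third relation in \cref{mora} and \cref{sleep}; the cup is handled analogously. Two corrections to your bookkeeping: after \cref{fresa} both $\tau$'s already sit stacked on the left leg of the cap, so no zig-zag is needed; and you should not try to move $\tau_{X^\vee}$ through a crossing, since \cref{tauinterchange} only moves morphisms of $\cC$ past the braided $\tau$, not $\tau$ itself.
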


\begin{proof}
    For $X \in \Ob(\cC)$, we have
    \[
        \begin{tikzpicture}[centerzero]
            \draw[<-] (-0.3,-0.4) \botlabel{X} -- (-0.3,-0.15);
            \draw[<-] (-0.3,0.15) node[anchor=south east] {\strandlabel{X^\natural}} to[out=up,in=up,looseness=2] (0.3,0.15);
            \draw[<-] (0.3,-0.15) -- (0.3,-0.4) \botlabel{X};
            \genbox{-0.45,-0.15}{0.45,0.15}{\tau};
        \end{tikzpicture}
        \overset{\cref{taudub}}{=}
        \begin{tikzpicture}[centerzero]
            \draw (-0.2,0) \braiddown (0.2,-0.5) -- (0.2,-0.8) \botlabel{X};
            \draw[wipe] (-0.2,0.5) \braiddown (0.2,0) \braiddown (-0.2,-0.5) -- (-0.2,-0.8);
            \draw[->] (-0.2,0.5) \braiddown (0.2,0) \braiddown (-0.2,-0.5) -- (-0.2,-0.8);
            \draw[wipe] (-0.2,0.5) to[out=up,in=up,looseness=2] (0.2,0.5) \braiddown (-0.2,0);
            \draw (-0.2,0.5) to[out=up,in=up,looseness=2] (0.2,0.5) \braiddown (-0.2,0);
            \coupon{-0.2,0}{\tau};
            \coupon{-0.2,-0.5}{\tau};
        \end{tikzpicture}
        \overset{\cref{mora}}{=} t_X^{-1}\,
        \begin{tikzpicture}[centerzero]
            \draw (-0.2,0.25) \braiddown (0.2,-0.25) -- (0.2,-0.55) \botlabel{X};
            \draw[wipe] (-0.2,-0.55) -- (-0.2,-0.25) \braidup (0.2,0.25) to[out=up,in=up,looseness=2] (-0.2,0.25);
            \draw[<-] (-0.2,-0.55) -- (-0.2,-0.25) \braidup (0.2,0.25) to[out=up,in=up,looseness=2] (-0.2,0.25);
            \coupon{-0.2,0.25}{\tau};
            \coupon{-0.2,-0.25}{\tau};
        \end{tikzpicture}
        \overset{\cref{fresa}}{=} t_X^{-1} c_X^{-1}\,
        \begin{tikzpicture}[centerzero]
            \draw[<-] (-0.2,-0.55) -- (-0.2,0.25) to[out=up,in=up,looseness=2] (0.2,0.25) -- (0.2,-0.55) \botlabel{X};
            \coupon{-0.2,0.25}{\tau};
            \coupon{-0.2,-0.25}{\tau};
        \end{tikzpicture}
        \overset{\cref{mora}}{\underset{\cref{sleep}}{=}}\,
        \begin{tikzpicture}[centerzero]
            \draw[<-] (-0.2,-0.2) -- (-0.2,0) to[out=up,in=up,looseness=2] (0.2,0) -- (0.2,-0.2) \botlabel{X};
        \end{tikzpicture}
        \ .
    \]
    Since $\tau_\one = 1_\one$, it follows that 
    \(
        \begin{tikzpicture}[centerzero]
            \draw[<-] (-0.2,-0.1) -- (-0.2,0) to[out=up,in=up,looseness=2] (0.2,0) -- (0.2,-0.1) \botlabel{X};
        \end{tikzpicture}
        \in \cC_\tau
    \)
    The proof that
    \(
        \begin{tikzpicture}[centerzero={0,-0.1}]
            \draw[<-] (-0.2,0.1) \toplabel{X} -- (-0.2,0) to[out=down,in=down,looseness=2] (0.2,0) -- (0.2,0.1);
        \end{tikzpicture}
        \in \cC_\tau
    \)
    is analogous.
    \details{
        \[
            \begin{tikzpicture}[centerzero]
                \draw[<-] (-0.3,0.4) \toplabel{X^\natural} -- (-0.3,0.15);
                \draw[<-] (-0.3,-0.15) to[out=down,in=down,looseness=2] (0.3,-0.15) node[anchor=north west] {\strandlabel{X}};
                \draw[<-] (0.3,0.15) -- (0.3,0.4) \toplabel{X^\natural};
                \genbox{-0.45,-0.15}{0.45,0.15}{\tau};
            \end{tikzpicture}
            \overset{\cref{taudub}}{\underset{\cref{reflection}}{=}}
            \begin{tikzpicture}[centerzero]
                \draw (-0.2,-0.5) to[out=down,in=down,looseness=2] (0.2,-0.5) \braidup (-0.2,0);
                \draw[wipe] (-0.2,-0.5) \braidup (0.2,0);
                \draw[->] (-0.2,-0.5) \braidup (0.2,0) \braidup (-0.2,0.5) -- (-0.2,0.8) \toplabel{X^\natural};
                \draw[wipe] (-0.2,0) \braidup (0.2,0.5);
                \draw (-0.2,0) \braidup (0.2,0.5) -- (0.2,0.8);
                \coupon{-0.2,0}{\tau};
                \coupon{-0.2,0.5}{\tau};
            \end{tikzpicture}
            \overset{\cref{mora}}{\underset{\cref{ribbon}}{=}} t_X^{-1}\,
            \begin{tikzpicture}[centerzero]
                \draw[<-] (-0.2,0.55) \toplabel{X^\natural} -- (-0.2,0.25) \braiddown (0.2,-0.25) to[out=down,in=down,looseness=2] (-0.2,-0.25);
                \draw[wipe] (-0.2,-0.25) \braidup (0.2,0.25);
                \draw (-0.2,-0.25) \braidup (0.2,0.25) -- (0.2,0.55);
                \coupon{-0.2,-0.25}{\tau};
                \coupon{-0.2,0.25}{\tau};
            \end{tikzpicture}
            \overset{\cref{mora}}{=} t_X^{-1} c_X^{-1}\,
            \begin{tikzpicture}[centerzero]
                \draw[<-] (-0.2,0.55) \toplabel{X^\natural} -- (-0.2,-0.25) to[out=down,in=down,looseness=2] (0.2,-0.25) -- (0.2,0.55);
                \coupon{-0.2,0.25}{\tau};
                \coupon{-0.2,-0.25}{\tau};
            \end{tikzpicture}
            \overset{\substack{\cref{mora} \\ \cref{sleep}}}{\underset{\cref{tamarindo}}{=}}\,
            \begin{tikzpicture}[centerzero]
                \draw[<-] (-0.2,0.2) \toplabel{X^\natural} -- (-0.2,0) to[out=down,in=down,looseness=2] (0.2,0) -- (0.2,0.2);
            \end{tikzpicture}
        \]
    }
\end{proof}

\subsection{The disoriented skein category\label{DStwist}}

We fix elements $q,t \in \kk^\times$, $\delta \in \kk$ such that
\begin{equation} \label{delta}
    \delta (q-q^{-1}) = t-t^{-1}.
\end{equation}
Let $\OScat = \OScat(q,t)$ be the oriented skein category as defined in \cite[\S 2.1]{SSS25}, except that we replace the last relation in \cite[(2.3)]{SSS25} by the relation
\begin{equation} \label{cookie}
    \rightbub = \delta 1_\one.
\end{equation}
This category was first introduced in \cite[\S5.2]{Tur89}, where it was called the \emph{Hecke category}. In \cite[Definition 2.1]{QS19} it is called the \emph{quantized oriented Brauer category}.  It was studied in depth in \cite{Bru17}.  Let $\DScat = \DScat(q,t)$ be the disoriented skein category of \cite[Def.~2.1]{SSS25}.  This is the $\OScat(q,t)$-module generated by the morphisms
\[
    \togupdown \colon \downobj \to \upobj,\qquad
    \togdownup \colon \upobj \to \downobj,
\]
which we call \emph{toggles}, subject to the relations
\begin{gather} \label{DStoggles}
    \begin{tikzpicture}[centerzero]
        \draw[->] (0,-0.5) -- (0,-0.2);
        \draw[<-] (0,-0.2) -- (0,0.2);
        \draw[->] (0,0.2) -- (0,0.5);
        \opendot{0,-0.2};
        \opendot{0,0.2};
    \end{tikzpicture}
    \ =\
    \begin{tikzpicture}[centerzero]
        \draw[->] (0,-0.5)--(0,0.5);
    \end{tikzpicture}
    \ ,\qquad
    \begin{tikzpicture}[centerzero]
        \draw[<-] (0,-0.5) -- (0,-0.2);
        \draw[->] (0,-0.2) -- (0,0.2);
        \draw[<-] (0,0.2) -- (0,0.5);
        \opendot{0,-0.2};
        \opendot{0,0.2};
    \end{tikzpicture}
    \ =\
    \begin{tikzpicture}[centerzero]
        \draw[<-] (0,-0.5)--(0,0.5);
    \end{tikzpicture}
    \ ,\qquad
    \begin{tikzpicture}[centerzero]
        \draw[<-] (0.2,-0.6) -- (0.2,-0.4) \braidup (-0.2,0);
        \draw[wipe] (-0.2,-0.6) -- (-0.2,-0.4) \braidup (0.2,0);
        \draw[<-] (-0.2,-0.6) -- (-0.2,-0.4) \braidup (0.2,0);
        \draw[->] (0.2,0) \braidup (-0.2,0.4) -- (-0.2,0.6);
        \draw[wipe] (-0.2,0) \braidup (0.2,0.4) -- (0.2,0.6);
        \draw[->] (-0.2,0) \braidup (0.2,0.4) -- (0.2,0.6);
        \opendot{-0.2,-0.4};
        \opendot{-0.2,0};
    \end{tikzpicture}
    =
    \begin{tikzpicture}[centerzero]
        \draw[<-] (0.2,-0.6) -- (0.2,-0.4) \braidup (-0.2,0);
        \draw[wipe] (-0.2,-0.6) -- (-0.2,-0.4) \braidup (0.2,0);
        \draw[<-] (-0.2,-0.6) -- (-0.2,-0.4) \braidup (0.2,0);
        \draw[->] (0.2,0) \braidup (-0.2,0.4) -- (-0.2,0.6);
        \draw[wipe] (-0.2,0) \braidup (0.2,0.4) -- (0.2,0.6);
        \draw[->] (-0.2,0) \braidup (0.2,0.4) -- (0.2,0.6);
        \opendot{-0.2,0.4};
        \opendot{-0.2,0};
    \end{tikzpicture}
    \ ,
    \\ \label{DScurls}
    \begin{tikzpicture}[anchorbase]
        \draw[<-] (-0.2,0.2) to[out=down,in=135] (0.2,-0.3);
        \draw[wipe] (-0.2,-0.3) to[out=45,in=down] (0.2,0.2);
        \draw[->] (-0.2,-0.3) to[out=45,in=down] (0.2,0.2) arc(0:180:0.2);
        \opendot{-0.2,0.2};
    \end{tikzpicture}
    \ = q\
    \begin{tikzpicture}[anchorbase]
        \draw[->] (0.2,-0.3) -- (0.2,0.1) arc(0:180:0.2) -- (-0.2,0);
        \draw[<-] (-0.2,0) -- (-0.2,-0.3);
        \opendot{-0.2,0};
    \end{tikzpicture}
    \ ,\qquad
   \begin{tikzpicture}[anchorbase]
        \draw[<-] (-0.2,0.3) to[out=-45,in=up] (0.2,-0.2) arc(360:180:0.2);
        \draw[wipe] (-0.2,-0.2) to[out=up,in=-135] (0.2,0.3);
        \draw[->] (-0.2,-0.2) to[out=up,in=-135] (0.2,0.3);
        \opendot{-0.2,-0.2};
    \end{tikzpicture}
    \ = q\
    \begin{tikzpicture}[anchorbase]
        \draw[<-] (0.2,0.3) -- (0.2,-0.1) arc(360:180:0.2) -- (-0.2,0);
        \draw[->] (-0.2,0) -- (-0.2,0.3);
        \opendot{-0.2,0};
    \end{tikzpicture}
    \ .
\end{gather}
We refer the reader to \cite[\S2.2]{SSS25} for a detailed discussion of this category.

\begin{rem} \label{medusa}
    If $q-q^{-1}$ is invertible, then $\delta$ is determined by \cref{delta}, and \cref{cookie} is equivalent to \cite[(2.3)]{SSS25}.  It is for this reason that we do not explicitly include $\delta$ in the notation $\OScat(q,t)$.  If $q = \pm 1$, then \cref{delta} forces $t = \pm 1$.  It is straightforward to verify that we have an isomorphism of $\kk$-linear categories $\DScat(q,t) \xrightarrow{\cong} \DScat(-q,-t)$ given by multiplying each diagram by $(-1)^c$, where $c$ is the number of crossings in the diagram.  It is also straightforward to verify that we have an isomorphism of $\kk$-linear categories $\DScat(q,t) \xrightarrow{\cong} \DScat(q,-t)$ given by multiplying rightward caps and cups by $-1$.  Thus, we lose no generality in assuming $q=t=1$.  Then $\delta \in \kk$ is arbitrary, and the oriented skein category is equivalent to the oriented Brauer category considered in \cite{BCNR17}.  One easily verifies that the results of \cite[\S\S 2--4]{SSS25} hold in the slightly more general setting considered here.
\end{rem}  

The pair $(\DScat,\OScat)$ is a strict tensor pair.  Let $\natural$ be the braided monoidal isomorphism $\Theta$ of \cite[(2.12)]{SSS25}.  In particular, $\upobj^\natural = \downobj$ and $\downobj^\natural = \upobj$.  We define
\[
    \tau_\one = 1_\one,\qquad
    \tau_\upobj = \togdownup,\qquad
    \tau_\downobj = q^{-1} t \togupdown,
\]
and extend $\tau$ to arbitrary objects as described in \cref{subsec:constiwst}.  Comparing \cite[(2.3),(2.20)]{SSS25} to \cref{mora,fresa}, we see that \cref{donut} holds with $S = \{\upobj,\downobj\}$,
\[
    t_\upobj = t_\downobj = t
    \qquad \text{and} \qquad
    c_\upobj = c_\downobj = q^{-1}.
\]
Thus, by \cref{yungang},
\[
    \leftcap\, ,\ \leftcup\, ,\ \rightcap\, ,\ \rightcup \in \OScat_\tau.
\]

By \cref{pulpo}, the third relation in \cref{DStoggles} implies that $\posupcross \in \OScat_\tau$.  Hence, $\negupcross = (\posupcross)^{-1} \in \OScat_\tau$.  Thus, by \cref{platano}, $\cC_\tau = \cC$, and so $\tau$ is a $\natural$-cylinder twist on $\OScat$.

\begin{rem}
    Note that the convention \cref{patacones} does \emph{not} match our convention \cite[(2.22)]{SSS25} for the toggle $\togupdown$ on arbitrary strands:
    \begin{equation} \label{toggy}
        \begin{tikzpicture}[centerzero]
            \draw[multi] (-0.2,-0.5) \botlabel{\lambda} -- (-0.2,0.5);
            \draw[->] (0.2,0) -- (0.2,0.5);
            \draw[->] (0.2,0) -- (0.2,-0.5);
            \opendot{0.2,0};
        \end{tikzpicture}
        :=
        \begin{tikzpicture}[centerzero]
            \draw[->] (-0.15,0) to[out=up,in=-135] (0.2,0.5);
            \draw[wipe] (0.15,0) to[out=up,in=-45] (-0.2,0.5);
            \draw[multi] (-0.2,-0.5) \botlabel{\lambda} to[out=45,in=down] (0.15,0) to[out=up,in=-45] (-0.2,0.5);
            \draw[wipe] (-0.15,0) to[out=down,in=135] (0.2,-0.5);
            \opendot{-0.15,0};
            \draw[->] (-0.15,0) to[out=down,in=135] (0.2,-0.5);
            \opendot{-0.15,0};
        \end{tikzpicture}
        \colon \lambda \downobj\to \lambda \upobj,
        \qquad
        \begin{tikzpicture}[centerzero]
            \draw[multi] (-0.2,-0.5) \botlabel{\lambda} -- (-0.2,0.5);
            \draw[<-] (0.2,0) -- (0.2,0.5);
            \draw[<-] (0.2,0) -- (0.2,-0.5);
            \opendot{0.2,0};
        \end{tikzpicture}
        :=
        \begin{tikzpicture}[centerzero]
            \draw[<-] (-0.15,0) to[out=down,in=135] (0.2,-0.5);
            \draw[wipe] (-0.2,-0.5) to[out=45,in=down] (0.15,0);
            \draw[multi] (-0.2,-0.5) \botlabel{\lambda} to[out=45,in=down] (0.15,0) to[out=up,in=-45] (-0.2,0.5);
            \draw[wipe] (-0.15,0) to[out=up,in=-135] (0.2,0.5);
            \draw[<-] (-0.15,0) to[out=up,in=-135] (0.2,0.5);
            \opendot{-0.15,0};
            \opendot{-0.15,0};
        \end{tikzpicture}
        \colon \lambda \upobj\to \lambda \downobj,
        \qquad \lambda \in \word.
    \end{equation}
    The convention in \cite[(2.22)]{SSS25} was chosen so that $\togupdown$ is inverse to $\togdownup$ on arbitrary strands.  On the other hand, \cref{patacones} is forced by the axioms of a $\natural$-cylinder braiding.  Thus, while
    \[
        \begin{tikzpicture}[centerzero]
            \draw (0,-0.5) \botlabel{X} -- (0,0.5);
            \draw[->] (0.4,-0.5) -- (0.4,-0.1);
            \draw[<-] (0.4,0.1) -- (0.4,0.5);
            \coupon{0.4,0}{\tau};
        \end{tikzpicture}
        =
        \begin{tikzpicture}[centerzero]
            \draw (0,-0.5) \botlabel{X} -- (0,0.5);
            \draw[->] (0.4,-0.5) -- (0.4,0);
            \draw[<-] (0.4,0) -- (0.4,0.5);
            \opendot{0.4,0};
        \end{tikzpicture}
        \qquad \text{for all } X \in \Ob(\OScat),
    \]
    the morphisms
    \[
        \begin{tikzpicture}[centerzero]
            \draw (0,-0.5) \botlabel{X} -- (0,0.5);
            \draw[<-] (0.4,-0.5) -- (0.4,0);
            \draw[->] (0.4,0) -- (0.4,0.5);
            \coupon{0.4,0}{\tau};
        \end{tikzpicture}
        \qquad \text{and} \qquad
        \begin{tikzpicture}[centerzero]
            \draw (0,-0.5) \botlabel{X} -- (0,0.5);
            \draw[<-] (0.4,-0.5) -- (0.4,0);
            \draw[->] (0.4,0) -- (0.4,0.5);
            \opendot{0.4,0};
        \end{tikzpicture}
    \]
    are different in general.  (They are equal when $X = \one$.)
\end{rem}

\section{Submodule correspondence\label{sec:submod}}

Throughout this section $\kk$ is an arbitrary commutative ring.  If $\cM$ is a right $\cC$-module category, then a \emph{$\cC$-submodule of $\cM$} is a collection
\[
    \cN = (\cN(X,Y))_{X,Y \in \Ob(\cM)}
\]
of $\kk$-submodules $\cN(X,Y) \subseteq \cM(X,Y)$ that is closed under composition with morphisms in $\cM$ and the action of morphisms in $\cC$.  In other words, it is a collection such that
\begin{itemize}
    \item $f \circ g \in \cN(W,Y)$ for all $f \in \cN(X,Y)$, $g \in \cM(W,X)$;
    \item $g \circ f \in \cN(X,Z)$ for all $f \in \cN(X,Y)$, $g \in \cM(Y,Z)$;
    \item $f \otimes 1_Z \in \cN(X \otimes Z, Y \otimes Z)$ for all $f \in \cN(X,Y)$, $Z \in \Ob(\cC)$.
\end{itemize}
\details{
    In the third bullet point above, it is enough to consider closure under the action of $1_Z$, $Z \in \Ob(\cC)$, because of our assumption that $\cN$ is closed under composition with morphisms in $\cM$.  This implies that
    \[
        f \otimes g
        = (f \otimes 1_Z) \circ (1_X \otimes g)
        \in \cN(X \otimes W, Y \otimes Z),\qquad
        f \in \cN(X,Y),\ g \in \cC(W,Z).
    \]
}
In this section, we translate the problem of classifying $\cC$-submodules of $\cM$ to the problem of classifying submodules of a certain module over the path algebra of $\cM$.  Throughout this section we fix the following data and assumptions.

\begin{assumption} \label{jetlag}
    We assume that
    \begin{enumerate}
        \item $(\cM,\cC)$ is a strict tensor pair, $\natural$ is an involution of $\cC$, and $\tau$ is a $\natural$-cylinder twist for $(\cM,\cC)$;
        \item $\cC$ is rigid;
        \item \label{jetlag3} $\cM$ is generated, as a $\cC$-module, by the morphisms $\tau_X$, $X \in \Ob(\cC)$.
    \end{enumerate}
\end{assumption}

\begin{rem} \label{gold}
    Since the category $\cM$ is not monoidal, one cannot form arbitrary tensor products of morphisms in $\cM$.  However, we can do the following.  Suppose $f$ is a morphism in $\cM$.  By \cref{jetlag}\cref{jetlag3}, we can choose a representative diagram for $f$ built from morphisms in $\cC$ and morphisms $\tau_X$, $X \in \Ob(\cC)$.  Let $f_1$ and $f_2$ be two such representatives.  For an arbitrary morphism $g$ in $\cM$, $g \otimes f_1$ and $g \otimes f_2$ are both defined, using \cref{patacones}, but may not be equal.  However, if $g \in \cM(\one,X)$ for some $X \in \Ob(\cC)$, then, using \cref{tauinterchange}, we have
    \[
        \begin{tikzpicture}[anchorbase]
            \draw (0,-0.3) -- (0,0.5);
            \draw (0.5,-0.8) -- (0.5,0.5);
            \coupon{0,-0.3}{g};
            \coupon{0.5,0}{f_1};
        \end{tikzpicture}
        =
        \begin{tikzpicture}[anchorbase]
            \draw (0,0) -- (0,0.5);
            \draw (0.5,-0.8) -- (0.5,0.5);
            \coupon{0,0}{g};
            \coupon{0.5,0}{f_1};
        \end{tikzpicture}
        =
        \begin{tikzpicture}[anchorbase]
            \draw (0,0) -- (0,0.5);
            \draw (0.5,-0.8) -- (0.5,0.5);
            \coupon{0,0}{g};
            \coupon{0.5,-0.45}{f_1};
        \end{tikzpicture}
        =
        \begin{tikzpicture}[anchorbase]
            \draw (0,0) -- (0,0.5);
            \draw (0.5,-0.8) -- (0.5,0.5);
            \coupon{0,0}{g};
            \coupon{0.5,-0.45}{f_2};
        \end{tikzpicture}
        =
        \begin{tikzpicture}[anchorbase]
            \draw (0,0) -- (0,0.5);
            \draw (0.5,-0.8) -- (0.5,0.5);
            \coupon{0,0}{g};
            \coupon{0.5,0}{f_2};
        \end{tikzpicture}
        =
        \begin{tikzpicture}[anchorbase]
            \draw (0,-0.3) -- (0,0.5);
            \draw (0.5,-0.8) -- (0.5,0.5);
            \coupon{0,-0.3}{g};
            \coupon{0.5,0}{f_2};
        \end{tikzpicture}
        \ .
    \]
    Thus, $g \otimes f$ \emph{is} well defined.  This shows that we may place morphisms of $\cM$ in diagrams, so long as they are exposed to the left-hand side of the diagram.  This fact, which will be used in the arguments that follow, is the motivation behind \cref{jetlag}\cref{jetlag3}.
\end{rem}

Let $\Submod_\cC(\cM)$ denote the set of $\cC$-submodules of $\cM$.  Note that a $\cC$-submodule of $\cM$ is \emph{not} necessarily a module category over $\cC$ since it may not be a category.  (For example, it need not contain the identity morphisms of all objects.)  This is analogous to the fact that a tensor ideal of a monoidal category may not be a category.

For any $\cN \in \Submod_\cC(\cM)$, let
\[
    \cN_\one := \cN(\one,-) = \bigoplus_{X \in \Ob(\cM)} \cN(\one,X),
\]
which is naturally a left module over the path algebra
\[
    P(\cM) := \bigoplus_{X,Y \in \Ob(\cM)} \cM(X,Y).
\]
For a nonunital ring $R$ and a left $R$-module $M$, let $\Submod_R(M)$ denote the set of submodules of $M$, partially ordered by inclusion.  For any $N \in \Submod_{P(\cM)}(\cM_\one)$, we have
\[
    N = \bigoplus_{X \in \Ob(\cM)} N(X),
    \qquad \text{where } N(X) = 1_X N \subseteq \cM(\one,X).
\]

\begin{theo}[{cf.\ \cite[Th.~3.1.1]{Cou18}}] \label{ice}
    If $(\cM,\cC)$ satisfies \cref{jetlag}, then the mapping
    \[
        \Psi \colon \Submod_\cC(\cM) \to \Submod_{P(\cM)}(\cM_\one),\qquad
        \cN \mapsto \cN_\one,
    \]
    is an isomorphism of partially ordered sets.
\end{theo}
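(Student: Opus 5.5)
The plan is to construct an explicit inverse $\Phi$ to $\Psi$ by ``bending'' morphisms to have source $\one$, as in \cite{Cou18}, and to use the cylinder twist to handle the one closure property that does not come for free. For $N \in \Submod_{P(\cM)}(\cM_\one)$ and $X,Y \in \Ob(\cM)=\Ob(\cC)$, define
\[
  \Phi(N)(X,Y) := \bigl\{ f \in \cM(X,Y) : (f \otimes 1_{X^\vee}) \circ \mathrm{coev}_X \in N(Y \otimes X^\vee) \bigr\},
\]
where $\mathrm{coev}_X \colon \one \to X \otimes X^\vee$ is the cup in \cref{lcps}. The expression makes sense because only the right action of $\cC$ on $\cM$ and composition with the morphism $\mathrm{coev}_X$ of $\cC \subseteq \cM$ are used. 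The basic tool is functoriality of the right action: for $f \in \cM(X,Y)$ and $k \in \cC(A,B)$ we have $(f\otimes 1_B)\circ(1_X \otimes k) = (1_Y \otimes k)\circ(f \otimes 1_A)$. Combined with the zigzag relation \cref{zigleft}, this gives a $\kk$-linear bijection $\cM(X,Y) \cong \cM(\one, Y \otimes X^\vee)$. Its inverse sends $h$ to $(1_Y \otimes \mathrm{ev}_X)\circ(h \otimes 1_X)$, and this inverse is again built only from the right action and composition with $\cC$-morphisms.

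The first steps are routine. Closure of $\Phi(N)$ under postcomposition holds because $((g f)\otimes 1)\mathrm{coev}_X = (g \otimes 1)(f\otimes 1)\mathrm{coev}_X$ and $N$ is a $P(\cM)$-module. For right tensoring by $Z$, write $\mathrm{coev}_{X\otimes Z} = (1_X \otimes k)\circ \mathrm{coev}_X$ with $k = \mathrm{coev}_Z \otimes 1_{X^\vee} \in \cC$. Functoriality then moves $1_X\otimes k$ past $f$, which exhibits the bent form of $f \otimes 1_Z$ as a $\cC$-morphism composed with the bent form of $f$, so it lies in $N$. Next, the two composites $\Phi\circ\Psi$ and $\Psi\circ\Phi$ are identities. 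For $X = \one$ the bending is trivial, so $\Psi\Phi(N) = N$. Conversely, $f \in \cN(X,Y)$ if and only if its bent form lies in $\cN(\one, Y\otimes X^\vee)$: one direction uses closure of $\cN$ under tensoring and composition, and the other uses the unbending formula and \cref{zigleft}. Hence $\Phi\Psi(\cN) = \cN$. Both maps are clearly inclusion-preserving.

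The main obstacle is closure of $\Phi(N)$ under precomposition with arbitrary $h \in \cM(X',X)$. Since $\cM$ is not monoidal, $h$ cannot simply be slid through the cup. By \cref{jetlag}\cref{jetlag3}, it suffices to treat two kinds of $h$.

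First, for $h \in \cC$, rigidity gives $(h\otimes 1)\mathrm{coev}_{X'} = (1_X \otimes h^\vee)\mathrm{coev}_X$. Functoriality then turns precomposition by $h$ into postcomposition by the $\cC$-morphism $1_Y \otimes h^\vee$.

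Second, for $h = \tau_B \otimes 1_Z$, the plan is to use that $\mathrm{coev}_B \in \cC_\tau = \cC$ and $\tau_\one = 1_\one$, so that $\tau_{B\otimes B^\vee}\circ \mathrm{coev}_B = \mathrm{coev}_{B^\natural}$. The splitting $\tau_{B \otimes B^\vee} = (\tau_B \otimes 1)\circ(\text{the morphism in \cref{patacones} for } B^\vee)$ then lets us rewrite $(\tau_B\otimes 1)\mathrm{coev}_B$ as $\mathrm{coev}_{B^\natural}$ followed by the inverse of a morphism built from braidings and $\tau_{B^\vee}$. That morphism sits entirely to the right of the strand carrying $f$. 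By \cref{tauinterchange}, and as explained in \cref{gold}, it may be slid past $f \otimes 1$, so precomposition by $h$ again becomes postcomposition by a morphism of $\cM$. I expect the bookkeeping here to be the delicate part: keeping track of the strand $Z$ and the duals $(B\otimes Z)^\vee = Z^\vee \otimes B^\vee$, and checking that every morphism moved past $f$ is genuinely ``exposed to the left'' in the sense of \cref{gold}.
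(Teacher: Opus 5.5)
Your proof is correct. It matches the paper everywhere except at the one substantive step, closure of $\Phi(N)$ under precomposition. Your $\Phi(N)(X,Y)$, the morphisms whose bent form lies in $N$, is the paper's set of unbendings of elements of $N(Y\otimes X^\vee)$, by your bending bijection.

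\textbf{The paper's route.} It treats an arbitrary $g\in\cM(Z,X)$ in one stroke. It writes the element of $\Phi(N)$ as the unbending of some $f\in N(Y\otimes X^\vee)$. Because $f$ has source $\one$, \cref{gold} applies and lets it slide $g$ into the region to the right of $f$ and around a zigzag. This exhibits the composite as the unbending of $h=(\cdots)\circ f\in N$. There is no case split, and naturality of $\tau$ with respect to the cup is never invoked.

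\textbf{Your route.} You reduce to generators. For $\tau_B\otimes 1_Z$ you use the cylinder-twist axiom for the cup, $\tau_{B\otimes B^\vee}\circ\mathrm{coev}_B=(\mathrm{coev}_B)^\natural$, together with the factorization from \cref{taudub}, to trade $\tau_B$ for $(1\otimes\tau_{B^\vee})^{-1}$ on the right. To isolate $(\tau_B\otimes 1)\circ\mathrm{coev}_B$, take the factorization in the order $(1\otimes\tau_{B^\vee})\circ(\tau_B\otimes 1)$; it equals the order you wrote by the display after \cref{patacones}.

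\textbf{What each buys.} Your route only ever moves one explicitly defined morphism from \cref{patacones}, so the representative-dependence issue behind \cref{gold} never arises. The price is that you must interchange $1\otimes\tau_{B^\vee}$ with an arbitrary $f\in\cM(B^\natural\otimes Z,Y)$. That interchange is covered neither by \cref{tauinterchange}, which is for $f\in\cC$, nor by \cref{gold}, which is for morphisms out of $\one$.

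The interchange does hold, and you should cite the following argument instead:
\begin{itemize}
\item the identity $(f\otimes 1)\circ(1\otimes\tau_W)=(1\otimes\tau_W)\circ(f\otimes 1)$ is stable under composition and linear combinations in $f$;
\item it holds for $f\in\cC$ by \cref{tauinterchange};
\item it holds for $f=\tau_A\otimes 1_U$ by the hexagon axiom together with the $\tau$--$\tau$ interchange coming from \cref{taudub,reflection}.
\end{itemize}
Alternatively, replace $f$ by the unbending of its bent form and apply \cref{gold}, which returns you to the paper's route.

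Finally, \cref{jetlag} does not require $\natural$ to preserve chosen duals. So $(\mathrm{coev}_B)^\natural$ agrees with $\mathrm{coev}_{B^\natural}$ only up to the canonical isomorphism $(B^\natural)^\vee\cong(B^\vee)^\natural$ in $\cC$, which is harmless.
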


We need some preliminary results before proving \cref{ice}.  Taking $\cM = \cC$ in \cref{ice} recovers \cite[Th.~3.1.1]{Cou18} (except that we make additional assumptions on $\cC$, which are not needed when $\cM=\cC$), which is based on the ideas of \cite[\S6]{AK02}.  Our proof is very similar to that of \cite[Th.~3.1.1]{Cou18}, the only substantial difference being that we use the language of string diagrams.

For $N \in \Submod_{P(\cM)}(\cM_\one)$ and $X,Y \in \Ob(\cM)$, define
\begin{equation} \label{Phi}
    \Phi(N)(X,Y) :=
    \left\{
        \begin{tikzpicture}[anchorbase]
            \genbox{-0.4,-0.2}{0.4,0.2}{f};
            \draw[->] (-0.2,0.2) -- (-0.2,0.7) \toplabel{Y};
            \draw[->] (0.6,-0.5) \botlabel{X} -- (0.6,0.2) to[out=up,in=up,looseness=2] (0.2,0.2);
        \end{tikzpicture}
        : f \in N(Y \otimes X^\vee)
    \right\}.
\end{equation}

\begin{prop}[{cf.\ \cite[Prop.~3.2.1(a)]{Cou18}}] \label{peach}
    For all $N \in \Submod_{P(\cM)}(\cM_\one)$, we have $\Phi(N) \in \Submod_\cC(\cM)$.
\end{prop}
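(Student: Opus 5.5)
The plan is to verify the three closure conditions in the definition of a $\cC$-submodule directly for $\Phi(N)$, together with the fact that each $\Phi(N)(X,Y)$ is a $\kk$-submodule of $\cM(X,Y)$. The latter is immediate: the assignment $f \mapsto (1_Y \otimes \mathrm{ev}_X)\circ(f \otimes 1_X)$ is $\kk$-linear, and $N(Y \otimes X^\vee)$ is a $\kk$-submodule. Throughout I write $\mathrm{ev}_X$ for the left cap $X^\vee \otimes X \to \one$ and $\mathrm{coev}_X$ for the left cup. The picture in \cref{Phi} is $(1_Y \otimes \mathrm{ev}_X)\circ (f \otimes 1_X)$, where $f \otimes 1_X$ is the module action and $1_Y \otimes \mathrm{ev}_X$ is a morphism of $\cC \subseteq \cM$.

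\emph{Post-composition.} Let $g \in \cM(Y,Z)$. Functoriality of the action (strictness of $\cM$) gives
\[
g \circ (1_Y \otimes \mathrm{ev}_X)\circ(f \otimes 1_X) = (1_Z \otimes \mathrm{ev}_X)\circ\bigl(((g \otimes 1_{X^\vee})\circ f) \otimes 1_X\bigr).
\]
Here $(g \otimes 1_{X^\vee})\circ f \in N(Z \otimes X^\vee)$, because $N$ is a $P(\cM)$-submodule and $g \otimes 1_{X^\vee} \in \cM(Y X^\vee, Z X^\vee)$.

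\emph{Tensoring with $1_Z$.} Since $(X \otimes Z)^\vee = Z^\vee \otimes X^\vee$, I will take
\[
f' := (1_Y \otimes \mathrm{coev}_Z \otimes 1_{X^\vee})\circ f \in N(Y Z Z^\vee X^\vee).
\]
This lies in $N$ because the morphism applied to $f$ belongs to $\cC \subseteq \cM$. A zigzag identity \cref{zigleft} on the $Z$-strand then shows that the element of $\Phi(N)(XZ,YZ)$ built from $f'$ equals the original morphism tensored with $1_Z$.

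\emph{Pre-composition.} This is the main obstacle. Let $g \in \cM(W,X)$. By \cref{jetlag}\cref{jetlag3}, every morphism of $\cM$ is a composite of $\kk$-linear combinations of morphisms of $\cC$ and morphisms $\tau_V \otimes 1_U$. It therefore suffices to treat two cases: $g \in \cC$, and $g = \tau_V \otimes 1_U$ with $W = VU$, $X = V^\natural U$.

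For $g \in \cC$, I use rigidity to slide $g$ around the cap, replacing it by its dual $g^\vee$ on the $X^\vee$-strand. The result is
\[
(1_Y \otimes \mathrm{ev}_W)\circ\bigl(((1_Y \otimes g^\vee)\circ f) \otimes 1_W\bigr),
\]
and $(1_Y \otimes g^\vee)\circ f \in N(Y W^\vee)$.

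For $g = \tau_V \otimes 1_U$, I first reduce to $U = \one$, since $1_U$ can be handled as in the tensoring step. The problem is then to move the $\tau_V$ sitting to the right of the exposed $f$ past the cap. I would use \cref{gold}, which makes $f \otimes \tau$ well defined because $f$ is exposed on the left. Combined with \cref{patacones} and \cref{taudub}, this lets me write $1_A \otimes \tau_V$, where $A = Y X^\vee$, as $\tau_{A V}$ precomposed with $\tau_A^{-1} \otimes 1$ (up to the ordering dictated by \cref{taudub}). Since $\tau$ is a $\natural$-cylinder twist, every morphism of $\cC$, in particular the cap $1_Y \otimes \mathrm{ev}_{V}$ or its $\natural$-image, is natural for $\tau$ in the sense of \cref{chifrijo}. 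So I can push the cap through $\tau_{AV}$, turning it into $\tau_Y$ applied after a cap. All remaining $\tau$'s then act only on the left part, which is exposed. The target form of the calculation is
\[
(1_{Y'} \otimes \mathrm{ev}_{V})\circ(f'' \otimes 1_{V}) \quad\text{followed by a morphism of } \cM,
\]
with $f''$ obtained from $f$ by post-composition in $\cM$, so that $f'' \in N$. I then finish with the post-composition case already proved. The delicate point is keeping track of which cap or cup occurs and its $\natural$-twist: $V^\natural$ versus $V$, and left versus right duals. I expect this to require the identities of \cref{yungang} and \cref{tauinterchange}.
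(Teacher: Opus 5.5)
Your proposal is correct. The post-composition and tensoring steps match the paper; the pre-composition step takes a different route.

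\textbf{The paper's route.} The paper treats an arbitrary $g\in\cM(Z,X)$ in one stroke. Inserting a zigzag on the incoming strand rewrites the composite as the $\Phi$-image of
\[
    h=(1_Y\otimes\mathrm{ev}_X\otimes 1_{Z^\vee})\circ(f\otimes k),
    \qquad
    k=(g\otimes 1_{Z^\vee})\circ\mathrm{coev}_Z\in\cM(\one,X\otimes Z^\vee).
\]
Then $h\in N$, because $f\otimes k$ is well defined by \cref{gold} and is obtained from $f$ by post-composition in $\cM$. There is no decomposition of $g$ and no explicit use of \cref{chifrijo}; the cylinder-twist axioms are hidden inside \cref{gold}.

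\textbf{Your route.} You decompose $g$ using \cref{jetlag}\cref{jetlag3} and handle $\cC$-morphisms by mates. For $\tau_V$, set $W=V^\natural$, $A=Y\otimes W^\vee$ and $f'=\tau_{A^\natural}^{-1}\circ f\in N$. Then \cref{gold}, \cref{taudub}, and \cref{chifrijo} applied to $(1_Y\otimes\mathrm{ev}_W)^\natural$ give
\[
    (1_Y\otimes\mathrm{ev}_W)\circ(f\otimes 1_W)\circ\tau_V
    =\tau_{Y^\natural}\circ(1_{Y^\natural}\otimes\mathrm{ev}_W^\natural)\circ(f'\otimes 1_V),
\]
and the post-composition case finishes the argument. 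This shows exactly where the cylinder-twist axioms enter, at the cost of a case analysis and bookkeeping that the paper's zigzag avoids.

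\textbf{Three small corrections.}
\begin{enumerate}
    \item \cref{yungang} is not needed, since $\cC_\tau=\cC$ is part of the hypothesis.
    \item $\mathrm{ev}_W^\natural$ exhibits $(W^\vee)^\natural$, not the chosen $V^\vee$, as a left dual of $V$. You must absorb the canonical isomorphism $(W^\vee)^\natural\cong V^\vee$ into $f'$.
    \item The reduction from $\tau_V\otimes 1_U$ to $\tau_V$ is not literally ``as in the tensoring step''. A clean version: pass to the mate $\phi'=(\phi\otimes 1_{U^\vee})\circ(1_{V^\natural}\otimes\mathrm{coev}_U)$. It lies in $\Phi(N)$ by the tensoring case and the $\cC$ case. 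Treat $\phi'\circ\tau_V$ by the $U=\one$ case, then use $\phi\circ(\tau_V\otimes 1_U)=(1_Y\otimes\mathrm{ev}_U)\circ\bigl((\phi'\circ\tau_V)\otimes 1_U\bigr)$.
\end{enumerate}
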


\begin{proof}
    First we show that $\Phi(N)$ is closed under composition from below.  For $f \in N(Y \otimes X^\vee)$, $g \in \cM(Z,X)$, we have, using \cref{gold},
    \[
        \begin{tikzpicture}[anchorbase]
            \genbox{-0.4,-0.2}{0.4,0.2}{f};
            \draw[->] (-0.2,0.2) -- (-0.2,0.7) \toplabel{Y};
            \draw[->] (0.6,-1) \botlabel{Z} -- (0.6,0.2) node[anchor=west] {\strandlabel{X}} to[out=up,in=up,looseness=2] (0.2,0.2);
            \coupon{0.6,-0.5}{g};
        \end{tikzpicture}
        =
        \begin{tikzpicture}[anchorbase]
            \genbox{-0.4,-0.2}{1.2,0.2}{f};
            \draw[->] (-0.2,0.2) -- (-0.2,1.4) \toplabel{Y};
            \draw[->] (1.4,-0.5) \botlabel{Z} -- (1.4,1) to[out=up,in=up,looseness=2] (1,1) -- (1,0.5) to[out=down,in=down,looseness=2] (0.6,0.5) -- (0.6,0.7) to[out=up,in=up,looseness=2] (0.2,0.7) -- (0.2,0.2);
            \coupon{0.6,0.6}{g};
            \indicate{-0.5,-0.3}{1.3,1};
        \end{tikzpicture}
        =
        \begin{tikzpicture}[anchorbase]
            \genbox{-0.4,-0.2}{0.4,0.2}{h};
            \draw[->] (-0.2,0.2) -- (-0.2,0.7) \toplabel{Y};
            \draw[->] (0.6,-0.5) \botlabel{Z} -- (0.6,0.2) to[out=up,in=up,looseness=2] (0.2,0.2);
        \end{tikzpicture}
        \in \Phi(N)(Z,Y),
    \]
    for $h$ equal to the composite
    \begin{multline*}
        \one
        \xrightarrow{f} Y \otimes X^\vee
        \xrightarrow{1_Y \otimes 1_{X^\vee} \otimes \leftcup_Z} Y \otimes X^\vee \otimes Z \otimes Z^\vee
        \\
        \xrightarrow{1_Y \otimes 1_{X^\vee} \otimes g \otimes 1_{Z^\vee}} Y \otimes X^\vee \otimes X \otimes Z^\vee
        \xrightarrow{1_Y \otimes \leftcap_X \otimes 1_{Z^\vee}} Y \otimes Z^\vee,
    \end{multline*}
    which is the morphism enclosed in the dotted rectangle.  It follows that $\Phi(N)$ is closed under composition from below.
    
    The fact that $\Phi(N)$ is closed under composition from above is straightforward.  It remains to show that $\Phi(N)$ is closed under the right action of $\cC$.  Since $\Phi(N)$ is closed under composition, it suffices to check closure under the action of identity morphisms.  For $f \in N(Y \otimes X^\vee)$ and $Z \in \Ob(\cC)$, we have
    \[
        \begin{tikzpicture}[anchorbase]
            \genbox{-0.4,-0.2}{0.4,0.2}{f};
            \draw[->] (-0.2,0.2) -- (-0.2,0.7) \toplabel{Y};
            \draw[->] (0.6,-0.5) \botlabel{X} -- (0.6,0.2) to[out=up,in=up,looseness=2] (0.2,0.2);
            \draw[->] (0.9,-0.5) \botlabel{Z} -- (0.9,0.7);
        \end{tikzpicture}
        =
        \begin{tikzpicture}[anchorbase]
            \genbox{-0.4,-0.2}{0.6,0.2}{f};
            \draw[->] (-0.2,0.2) -- (-0.2,1.2) \toplabel{Y};
            \draw[->] (0.8,-0.5) \botlabel{X} -- (0.8,0.4) to[out=up,in=up,looseness=2] (0.4,0.4) -- (0.4,0.2);
            \draw[->] (1.1,-0.5) \botlabel{Z} -- (1.1,0.4) to[out=up,in=up,looseness=1.5] (0.2,0.4) to[out=down,in=down,looseness=1.5] (0,0.4) -- (0,1.2);
            \indicate{-0.5,-0.3}{0.7,0.45};
        \end{tikzpicture}
        =
        \begin{tikzpicture}[anchorbase]
            \genbox{-0.4,-0.2}{0.4,0.2}{g};
            \draw[->] (-0.2,0.2) -- (-0.2,0.7) \toplabel{Y \otimes Z};
            \draw[->] (0.6,-0.5) \botlabel{X \otimes Z} -- (0.6,0.2) to[out=up,in=up,looseness=2] (0.2,0.2);
        \end{tikzpicture}
    \]
    for $g$ equal to the composite
    \[
        \one \xrightarrow{f} Y \otimes X^\vee = Y \otimes \one \otimes X^\vee \xrightarrow{1_Y \otimes \leftcup_Z \otimes 1_{X^\vee}} Y \otimes Z \otimes Z^\vee \otimes X^\vee,
    \]
    which is the morphism enclosed in the dotted rectangle.  Since $N \in \Submod_{P(\cM)}(\cM_\one)$, we have $g \in N$.

\end{proof}

\begin{prop}[{cf.\ \cite[Prop.~3.2.1(b)]{Cou18}}] \label{cold}
    The mappings $\Phi$ and $\Psi$ are mutually inverse.
\end{prop}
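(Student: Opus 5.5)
We check the two composites $\Psi \circ \Phi$ and $\Phi \circ \Psi$ separately. Both maps are clearly order-preserving, so this yields \cref{ice}. The only tools are the zigzag identities \cref{zigleft}, closure properties, and \cref{gold}, which lets us slide morphisms of $\cM$ with source $\one$ past the right-hand portion of a diagram.

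\textbf{Step 1: $\Psi \circ \Phi = \id$.} Let $N \in \Submod_{P(\cM)}(\cM_\one)$ and $Y \in \Ob(\cM)$. We show that $\Phi(N)(\one,Y) = N(Y)$.

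Take $X = \one$ in \cref{Phi}. Then $X^\vee = \one$ and the cap is the identity on $\one$. Hence an element of $\Phi(N)(\one,Y)$ is exactly some $f \in N(Y \otimes \one^\vee) = N(Y)$. This gives both inclusions at once, so $\Psi(\Phi(N)) = N$.

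\textbf{Step 2: $\Phi \circ \Psi = \id$.} Let $\cN \in \Submod_\cC(\cM)$ and $X, Y \in \Ob(\cM)$. We show $\Phi(\cN_\one)(X,Y) = \cN(X,Y)$.

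For the inclusion $\supseteq$, take $g \in \cN(X,Y)$ and set
\[
f := (g \otimes 1_{X^\vee}) \circ \leftcup_X \in \cM(\one, Y \otimes X^\vee).
\]
Here $\leftcup_X \colon \one \to X \otimes X^\vee$ is the cup of \cref{lcps}. Since $\cN$ is closed under the right action of $\cC$, we have $g \otimes 1_{X^\vee} \in \cN$. Composing on the right with the cup keeps us in $\cN$, so $f \in \cN(\one, Y \otimes X^\vee) = \cN_\one(Y \otimes X^\vee)$.

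Now cap off the last strand of $f$ with $X$. The left-hand zigzag identity \cref{zigleft} turns the resulting diagram back into $g$. Hence $g \in \Phi(\cN_\one)(X,Y)$.

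For the inclusion $\subseteq$, take $f \in \cN(\one, Y \otimes X^\vee)$. The corresponding element of $\Phi(\cN_\one)(X,Y)$ is
\[
(1_Y \otimes \leftcap_X) \circ (f \otimes 1_X),
\]
where $\leftcap_X \colon X^\vee \otimes X \to \one$ is the cap of \cref{lcps}. Here $f \otimes 1_X \in \cN$ by closure under the $\cC$-action. The morphism $1_Y \otimes \leftcap_X$ lies in $\cM$, since $\cC \subseteq \cM$ and the module action is just tensoring. Composing with it stays in $\cN$, so the element lies in $\cN(X,Y)$.

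\textbf{Main obstacle.} The delicate point is making sure every diagram above is a legitimate morphism of $\cM$, given that $\cM$ is not monoidal. In each diagram, the only $\cM$-morphism that is not in $\cC$ is $f$ or $g$. That morphism sits at the left edge of the diagram, and everything to its right is either an identity or a morphism of $\cC$ acting on the right. So every diagram is a composite of right actions of $\cC$-morphisms and compositions in $\cM$, as required. In Step 2 no morphism of $\cM$ ever has to be slid past another.

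If some part of the isotopy in the zigzag identity needs to move $g$ vertically relative to the cup, we use \cref{gold} for $f$. For $g$ itself we do not need it: $(g \otimes 1) \circ \leftcup$ can be written as $\leftcup$ acting on the right after first applying $g$, using the interchange law inside the strict module category.
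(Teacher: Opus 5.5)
Your proof is correct and takes essentially the same route as the paper: the zigzag identity \cref{zigleft}, together with the interchange law for the right $\cC$-action that lets you slide $g$ past the cap, gives $\cN \subseteq \Phi(\cN_\one)$; closure of $\cN$ under composition and under the $\cC$-action gives the reverse inclusion; and $\Psi \circ \Phi = \id$ follows by taking $X = \one$, which the paper simply calls immediate. Your closing appeal to \cref{gold} is unnecessary, and its wording is muddled, because here the only non-$\cC$ morphism always sits on the leftmost strand, so naturality of the action is all you need.
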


\begin{proof}
    Suppose $\cN \in \Submod_\cC(\cM)$.  For $f \in \cN(X,Y)$, we have
    \[
        \begin{tikzpicture}[anchorbase]
            \draw[->] (0,-0.5) \botlabel{X} -- (0,0.5) \toplabel{Y};
            \coupon{0,0}{f};
        \end{tikzpicture}
        =
        \begin{tikzpicture}[anchorbase]
            \draw[->] (0.4,-0.7) \botlabel{X} -- (0.4,0.2) to[out=up,in=up,looseness=2] (0,0.2) -- (0,-0.2) to[out=down,in=down,looseness=2] (-0.4,-0.2) -- (-0.4,0.7) \toplabel{Y};
            \coupon{-0.4,0}{f};
            \indicate{-0.65,-0.5}{0.2,0.3};
        \end{tikzpicture}
        \in \Phi(\cN_\one)(X,Y) = \big( \Phi \circ \Psi(\cN) \big)(X,Y).
    \]
    Thus, $\cN \subseteq \Phi \circ \Psi(\cN)$.  Now suppose that $f \in \big( \Phi \circ \Psi(\cN) \big)(X,Y) = \Phi(\cN_\one)(X,Y)$.  Then there exists $g \in \cN(\one, Y \otimes X^\vee)$ such that
    \[
        \begin{tikzpicture}[anchorbase]
            \draw[->] (0,-0.5) \botlabel{X} -- (0,0.5) \toplabel{Y};
            \coupon{0,0}{f};
        \end{tikzpicture}
        =
        \begin{tikzpicture}[anchorbase]
            \genbox{-0.4,-0.2}{0.4,0.2}{g};
            \draw[->] (-0.2,0.2) -- (-0.2,0.7) \toplabel{Y};
            \draw[->] (0.6,-0.5) \botlabel{X} -- (0.6,0.2) to[out=up,in=up,looseness=2] (0.2,0.2);
        \end{tikzpicture}
        \ .
    \]
    This immediately implies that $f \in \cN$, since $\cN$ is closed under composition and the right action of $\cC$.  Thus $\Psi \circ \Phi(\cN) \subseteq \cN$, and so $\Phi \circ \Psi(\cN) = \cN$, as desired.  The fact that $\Psi \circ \Phi(N) = N$ for all $N \in \Submod_{P(\cM)}(\cM_\one)$ is immediate.
\end{proof}

\begin{proof}[Proof of \cref{ice}]
    It is straightforward to verify that $\Psi$ respects the partial orders.  Thus, the result follows from \cref{cold}.
\end{proof}

\begin{cor}[{cf.\ \cite[Cor.~3.1.3]{Cou18}}] \label{bamboo}
    Assume that $(\cM,\cC)$ and $(\cM',\cC)$ both satisfy \cref{jetlag}.  An equivalence $\bF \colon \cM \to \cM'$ of preadditive categories with $\bF(\one) = \one$ induces an isomorphism of partially ordered sets $\bF \colon \Submod_\cC(\cM') \to \Submod_\cC(\cM)$, where
    \[
        \bF(\cN)(X,Y)
        =
        \left\{
            \begin{tikzpicture}[anchorbase]
                \genbox{-0.4,-0.2}{0.4,0.2}{g};
                \draw[->] (-0.2,0.2) -- (-0.2,0.7) \toplabel{Y};
                \draw[->] (0.6,-0.5) \botlabel{X} -- (0.6,0.2) to[out=up,in=up,looseness=2] (0.2,0.2);
            \end{tikzpicture}
            : g \in \bF^{-1} \Big( \cN \big( \one,\bF(X^\vee \otimes Y) \big) \Big)
        \right\}.
    \]
\end{cor}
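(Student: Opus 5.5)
We deduce the corollary from \cref{ice} by transporting submodules of path-algebra modules along $\bF$. Let $\cN \in \Submod_\cC(\cM')$ and put $\Psi(\cN) = \cN_\one = \cN(\one,-)$, a $P(\cM')$-submodule of $\cM'_\one$. Since $\bF$ is an equivalence of preadditive categories with $\bF(\one)=\one$, it induces bijections $\cM(\one,X) \to \cM'(\one,\bF(X))$ that intertwine the left action of $f \in \cM(X,Z)$ with that of $\bF(f)$. Set
\[
    N(X) := \bF^{-1}\big(\cN(\one,\bF(X))\big) \subseteq \cM(\one,X).
\]
Then $N := \bigoplus_X N(X)$ is closed under composition with morphisms of $\cM$: for $g \in N(X)$ and $f \in \cM(X,Z)$ we have $\bF(fg) = \bF(f)\bF(g) \in \cN(\one,\bF(Z))$. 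Hence $N \in \Submod_{P(\cM)}(\cM_\one)$.

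Conversely, given $N' \in \Submod_{P(\cM)}(\cM_\one)$, define a submodule of $\cM'_\one$ by $N'' (X') := \{\bF(g) \circ u\}$. More precisely, choose for each $X' \in \Ob(\cM')$ an isomorphism $u_{X'}\colon \bF(X) \to X'$, which exists because $\bF$ is essentially surjective, and let $N''(X') = u_{X'} \circ \bF(N'(X))$. One checks that this is independent of the choices and closed under the action of $\cM'$; the key input is that $\bF$ is full, so any $h \in \cM'(X',Z')$ conjugates to $\bF(f)$ for some $f$. These two constructions are mutually inverse and order preserving. Fidelity of $\bF$ gives injectivity on hom spaces, and the other composite is handled by the closure of submodules under isomorphisms.

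This yields a poset isomorphism $\Submod_{P(\cM')}(\cM'_\one) \cong \Submod_{P(\cM)}(\cM_\one)$. Composing with $\Psi$ for $\cM'$ and $\Phi = \Psi^{-1}$ for $\cM$, both provided by \cref{ice} and \cref{cold}, gives the desired isomorphism $\Submod_\cC(\cM') \to \Submod_\cC(\cM)$. Unwinding \cref{Phi}, its image is given by caps applied to elements $g \in \bF^{-1}(\cN(\one,\bF(\,\cdot\,)))$ of the appropriate object, which is the stated formula. The object indexing $X^\vee \otimes Y$ versus $Y \otimes X^\vee$ is matched by the conventions in \cref{Phi}.

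The main obstacle is making the essential-surjectivity bookkeeping honest. Submodules of $\cM'_\one$ at objects not in the image of $\bF$ must be determined by those in the image, which requires that $N''(X')$ not depend on the choice of $u_{X'}$. I would handle this first by noting that any two choices differ by an automorphism of $X'$, and that $\cN$ is closed under composition with automorphisms. No compatibility of $\bF$ with the $\cC$-action is needed, since all $\cC$-structure is absorbed into $\Phi$ and $\Psi$ on each side separately.
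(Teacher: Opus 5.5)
Your proposal is correct and matches the paper's route. The paper defers to the proof of Coulembier's Cor.~3.1.3, whose formula encodes exactly your argument: identify $P(\cM')$-submodules of $\cM'_\one$ with $P(\cM)$-submodules of $\cM_\one$ via the fully faithful, essentially surjective $\bF$ (no compatibility with the $\cC$-action is needed), then compose with the isomorphisms of \cref{ice} on both sides.

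Two small touch-ups. First, independence of the choice of $u_{X'}\colon\bF(X)\to X'$ should use fullness to write $v^{-1}u=\bF(w)$ with $w$ an isomorphism in $\cM$, and then closure of $N'$ under $w^{\pm 1}$. Second, the object in the displayed formula should be $Y\otimes X^\vee$, as in \cref{Phi}, rather than being ``matched by conventions''.
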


\begin{proof}
    The proof is the same as that of \cite[Cor.~3.1.3]{Cou18}.
\end{proof}

\begin{cor} \label{dns}
    Suppose that $(\cM,\cC)$ satisfies \cref{jetlag}, that $\cM(\one,\one) = \kk 1_\one \cong \kk$ is a field, and that $\cM$ is semisimple.  Then the only $\cC$-submodules of $\cM$ are $\cM$ and the zero submodule.
\end{cor}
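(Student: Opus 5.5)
By \cref{ice}, it suffices to show that the only $P(\cM)$-submodules of $\cM_\one = \bigoplus_X \cM(\one,X)$ are $0$ and $\cM_\one$. So fix a nonzero submodule $N \subseteq \cM_\one$ and pick a nonzero $f \in N(X) \subseteq \cM(\one,X)$ for some object $X$. The plan is to produce an element $g \in \cM(X,\one)$ with $g \circ f = 1_\one$. Then $1_\one \in N(\one)$, and for every object $Y$ and every $h \in \cM(\one,Y)$ we get $h = h \circ 1_\one \in N(Y)$. This forces $N = \cM_\one$.

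To construct $g$, I use semisimplicity together with $\cM(\one,\one) = \kk 1_\one$ with $\kk$ a field.

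First, $\one$ is simple. In a semisimple category an object whose endomorphism ring is a field has no nontrivial decomposition, and so it is simple. (A proper summand would give a nontrivial idempotent in $\cM(\one,\one) \cong \kk$, which is impossible.)

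Next, $f \colon \one \to X$ is a nonzero morphism out of a simple object, so it is a monomorphism. By semisimplicity it splits: there is $p \colon X \to \one$ with $p \circ f = 1_\one$.

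More concretely, decompose $X \cong \bigoplus_i S_i$ into simples. Some component $\pi_i \circ f \colon \one \to S_i$ is nonzero, and by Schur's lemma it is an isomorphism. Then $g = (\pi_i \circ f)^{-1} \circ \pi_i$ gives $g \circ f = 1_\one$.

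The main obstacle is pinning down what "semisimple" means for a $\kk$-linear category that need not be additive or idempotent complete. I would take it to mean that every object is a finite direct sum of simple objects, possibly after passing to an additive or Karoubi envelope, with Schur's lemma available. If only the additive envelope is semisimple, the argument still works: the splitting morphism $g$ is computed in the envelope, but it is a matrix whose entries are morphisms of $\cM$. Its component $X \to \one$ therefore lies in $\cM(X,\one)$, so $g \circ f = 1_\one$ holds in $\cM$ itself.

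Finally, since $N$ is either $0$ or $\cM_\one$, \cref{ice} shows that every $\cC$-submodule of $\cM$ is either the zero submodule or all of $\cM$.
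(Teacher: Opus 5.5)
Your proposal is correct and follows essentially the same route as the paper. You reduce via \cref{ice} to $P(\cM)$-submodules of $\cM_\one$, use that $\one$ is simple and $\cM$ is semisimple to split a nonzero $f \colon \one \to X$ through a summand of $X$ isomorphic to $\one$, conclude that $1_\one$ lies in the submodule, and hence get everything. Your extra remark about working in the additive or Karoubi envelope is a correct and harmless clarification, since $\cM$ embeds fully faithfully into its envelope.
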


\begin{proof}
    Since $\cM(\one,\one) \cong \kk$, we see that $\one$ is a simple object of $\cM$.  Suppose that $\cN$ is a nonzero $\cC$-submodule of $\cM$.  By \cref{ice}, there exists $X \in \Ob(\cM)$ such that $\cN(\one,X) \ne 0$.  Because $\cM$ is semisimple, this implies that $X$ contains a summand isomorphic to $\one$.  Composing with projection onto this summand, we see that $1_\one \in \cN$, which implies that $\cN_\one = \cM_\one$.  By \cref{ice}, we have $\cN = \cM$.
\end{proof}

The following lemma will be used later on.

\begin{lem}[{cf.~\cite[\S2.3.2]{Cou18}}] \label{bei}
    For every $X \in \Ob(\cC)$, $\cM(\one,X \otimes X^\vee) $ is generated by $\leftcup_X$ as an $\cM(X \otimes X^\vee,X \otimes X^\vee)$-module.
\end{lem}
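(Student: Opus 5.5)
Given $g \in \cM(\one, X \otimes X^\vee)$, we will write down an explicit endomorphism $h \in \cM(X \otimes X^\vee, X \otimes X^\vee)$ with $h \circ \leftcup_X = g$. This mirrors the argument in \cite[\S2.3.2]{Cou18}, but with string diagrams. The natural candidate uses the mechanism of \cref{gold}: since $g \in \cM(\one,-)$ has source $\one$, it may be placed in a diagram as long as it is exposed on the left. Concretely, let $h$ be the composite
\[
    X \otimes X^\vee
    \xrightarrow{g \otimes 1_X \otimes 1_{X^\vee}} X \otimes X^\vee \otimes X \otimes X^\vee
    \xrightarrow{1_X \otimes \leftcap_X \otimes 1_{X^\vee}} X \otimes X^\vee.
\]
Here $g$ sits to the left of the two input strands $X$ and $X^\vee$. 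The cap $\leftcap_X \colon X^\vee \otimes X \to \one$ joins the $X^\vee$ output of $g$ to the input strand $X$. The output is the $X$-strand of $g$ followed by the input $X^\vee$ strand.

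First we must check that $h$ is a legitimate morphism of $\cM$. The expression $g \otimes 1_{X \otimes X^\vee}$ is just the right action of $\cC$ on $g$, so it is defined. Post-composing with $1_X \otimes \leftcap_X \otimes 1_{X^\vee}$, which lies in $\cC$ (a subcategory of $\cM$), stays in $\cM$. No appeal to \cref{gold} is even needed for this step, since $g$ is placed at the far left. Next we precompose with $\leftcup_X \colon \one \to X \otimes X^\vee$. By \cref{gold} (or simply strictness, since $\leftcup_X$ is a morphism of $\cC$ acting on the right), we have $(g \otimes 1) \circ \leftcup_X = g \otimes \leftcup_X$. The resulting diagram is $g$ followed by a cap--cup zigzag on its $X^\vee$ strand. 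By the zigzag identity \cref{zigleft}, in its version for $X^\vee$ (the second equation), this snake straightens. Hence $h \circ \leftcup_X = g$.

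The only real point to watch is orientation bookkeeping. We need the cap and cup in the composite to form exactly the zigzag appearing in \cref{zigleft}: with the cup $\one \to X \otimes X^\vee$ on the right and the cap $X^\vee \otimes X \to \one$ meeting the $X^\vee$ coming out of $g$. This is what forces the choice of the left cap \cref{lcps} rather than the right cap \cref{rcps}. Once that is checked, everything else is formal. Notably, no property of $\tau$ is used beyond the fact, from \cref{gold}, that morphisms out of $\one$ can be placed on the left of diagrams. In fact, even this is automatic here, because only $\cC$-morphisms are tensored on the right of $g$.
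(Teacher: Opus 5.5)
Your proof is correct and essentially identical to the paper's. The morphism enclosed in the dotted rectangle in the paper's diagram is exactly your $h=(1_X \otimes \leftcap_X \otimes 1_{X^\vee})\circ(g \otimes 1_{X \otimes X^\vee})$, and the paper likewise gets $h \circ \leftcup_X = g$ from the second zigzag identity in \cref{zigleft} (and, as you observe, does not need \cref{gold} for this lemma, invoking it only for the Karoubi-envelope version \cref{nan}).
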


\begin{proof}
    Suppose $f \in \cM(\one,X \otimes X^\vee)$.  Then
    \[
        \begin{tikzpicture}[centerzero]
            \draw[->] (-0.2,-0.3) -- (-0.2,0.3) \toplabel{X};
            \draw[<-] (0.2,-0.15) -- (0.2,0.3) \toplabel{X};
            \genbox{-0.4,-0.5}{0.4,-0.15}{f};
        \end{tikzpicture}
        =
        \begin{tikzpicture}[centerzero={0,0.2},xscale=-1]
            \draw[<-] (0,0.15) -- (0,0.2);
            \draw (0,0.2) to[out=up,in=up,looseness=2] (-0.4,0.2) -- (-0.4,-0.3) to [out=down,in=down,looseness=2] (-0.8,-0.3) -- (-0.8,0.7) \toplabel{X};
            \draw[->] (0.4,0.15) -- (0.4,0.7) \toplabel{X};
            \genbox{-0.2,-0.2}{0.6,0.15}{f};
            \indicate{-1,-0.3}{0.8,0.5};
        \end{tikzpicture}
        \in \cM(X \otimes X^\vee,X \otimes X^\vee) \circ \leftcup_X,
    \]
    as desired.
\end{proof}

\section{Idempotent completion\label{sec:Kar}}

In this section we discuss the compatibility of various notions of interest to us with the operation of taking the idempotent completion of a category.  Throughout this section $\kk$ is an arbitrary commutative ring.  Recall that the \emph{idempotent completion}, or \emph{Karoubi envelope}, of a category $\cD$ is the category $\Kar(\cD)$ whose objects are pairs $X = (X_0,e_X)$ with $X_0 \in \Ob(\cD)$ and $e_X \in \cD(X_0,X_0)$ an idempotent, and whose morphisms are given by
\[
    \Kar(\cD) (X,Y)
    = e_Y \cD(X_0,Y_0) e_X.
\]
We view $\cD$ as a subcategory of $\Kar(\cD)$ by identifying $X_0 \in \Ob(\cD)$ with $(X_0,1_{X_0}) \in \Ob(\Kar(\cD))$.

Suppose that $\cM$ is a right $\cC$-module category.  Then $\Kar(\cM)$ is a right $\Kar(\cC)$-module category, with action given on objects by
\[
    X \otimes Y = (X_0 \otimes Y_0, e_X \otimes e_Y),\qquad
    X \in \Ob(\Kar(\cM)),\ Y \in \Ob(\Kar(\cC)),
\]
and action on morphisms the same as for the action of $\cC$ on $\cM$.

Suppose $\cN \in \Submod_\cC(\cM)$.  For $X,Y \in \Ob(\Kar(\cM))$, define
\begin{equation} \label{rice}
    \Kar(\cN)(X,Y)
    = e_Y \cN(X_0,Y_0) e_X
    \subseteq \Kar(\cM)(X,Y).
\end{equation}
It is straightforward to verify that $\Kar(\cN) = \big( \Kar(\cN)(X,Y) \big)_{X,Y \in \Ob(\Kar(\cM))}$ is a $\Kar(\cC)$-submodule of $\Kar(\cM)$.

\begin{prop} \label{pork}
    If $\cM$ is a right $\cC$-module category, then the map
    \[
        \Kar \colon \Submod_\cC(\cM) \to \Submod_{\Kar(\cC)}(\Kar(\cM))
    \]
    is an isomorphism of partially ordered sets.
\end{prop}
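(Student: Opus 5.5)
We construct an explicit inverse to $\Kar$ given by restriction. For $\cN' \in \Submod_{\Kar(\cC)}(\Kar(\cM))$, define $\mathrm{Res}(\cN')(X_0,Y_0) := \cN'(X_0,Y_0)$ for $X_0,Y_0 \in \Ob(\cM)$. Here we use the identification of $X_0$ with $(X_0,1_{X_0})$, so that $\Kar(\cM)(X_0,Y_0) = \cM(X_0,Y_0)$. The plan is to check that $\mathrm{Res}$ is well defined, that it is inverse to $\Kar$ on both sides, and that both maps are order preserving.

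\textbf{Well-definedness.} Closure of $\mathrm{Res}(\cN')$ under composition with morphisms of $\cM$ is inherited from closure under composition in $\Kar(\cM)$, since $\cM$ is a full subcategory of $\Kar(\cM)$. Closure under the right action of $\cC$ holds because $\cC \subseteq \Kar(\cC)$ and the action of $\Kar(\cC)$ restricts to that of $\cC$: for objects, $(X_0,1)\otimes(Z_0,1) = (X_0\otimes Z_0, 1)$. Order preservation of both $\Kar$ and $\mathrm{Res}$ is clear from the definitions.

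\textbf{$\mathrm{Res}\circ\Kar = \id$.} For $\cN \in \Submod_\cC(\cM)$ and $e_X = 1_{X_0}$, $e_Y = 1_{Y_0}$, formula \cref{rice} gives $\Kar(\cN)(X_0,Y_0) = \cN(X_0,Y_0)$. This is immediate.

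\textbf{$\Kar\circ\mathrm{Res} = \id$.} This is the main point. Let $X = (X_0,e_X)$ and $Y = (Y_0,e_Y)$ be objects of $\Kar(\cM)$. We have a split idempotent in $\Kar(\cM)$: the morphism $e_X$, viewed in $\Kar(\cM)(X,X_0)$, gives an inclusion $\iota_X$, and viewed in $\Kar(\cM)(X_0,X)$ gives a projection $\pi_X$, with $\pi_X\iota_X = 1_X = e_X$ and $\iota_X\pi_X = e_X \in \cM(X_0,X_0)$.

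First take $f \in \cN'(X,Y)$. Then $\iota_Y f \pi_X \in \cN'(X_0,Y_0) = \mathrm{Res}(\cN')(X_0,Y_0)$. As an element of $\cM(X_0,Y_0)$, this composite equals $f$, since $f = e_Y f e_X$. Hence $f \in e_Y\,\mathrm{Res}(\cN')(X_0,Y_0)\,e_X = \Kar(\mathrm{Res}(\cN'))(X,Y)$.

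Conversely, take $g \in \mathrm{Res}(\cN')(X_0,Y_0)$. Then $e_Y g e_X = \pi_Y g \iota_X \in \cN'(X,Y)$ by two-sided closure under composition in $\Kar(\cM)$.

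This gives mutual inverse order-preserving maps, so $\Kar$ is an isomorphism of posets. The only step needing care is the bookkeeping of morphisms between $X$ and $X_0$ in $\Kar(\cM)$. The argument uses only closure under composition; the $\Kar(\cC)$-action plays no essential role beyond checking that $\mathrm{Res}$ lands in $\cC$-submodules. The one point to verify is that the statement's implicit claim, that $\Kar(\cN)$ is a $\Kar(\cC)$-submodule, holds. For this, note that $(e_Y f e_X)\otimes 1_Z$ with $Z = (Z_0,e_Z)$ means $(e_Y f e_X)\otimes e_Z = (e_Y\otimes e_Z)(f\otimes 1_{Z_0})(e_X\otimes e_Z)$, which lies in $\Kar(\cN)$.
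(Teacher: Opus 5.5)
Your proposal is correct and follows the paper's proof essentially verbatim. The inverse is restriction to the objects $(X_0,1_{X_0})$, and $\Kar\circ\Res_\cM=\id$ is proved by sandwiching with $e_X$ and $e_Y$, viewed as morphisms between $X$ and $X_0$ in both directions (your $\iota_X,\pi_X$); your explicit check that $\Kar(\cN)$ is closed under the $\Kar(\cC)$-action fills in a detail the paper calls ``straightforward.''
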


\begin{proof}
    It is clear that the map $\Kar$ respects the partial order.  To show that it is an isomorphism, we will describe the inverse map.  For $\cN \in \Submod_{\Kar(\cC)}(\Kar(\cM))$, define
    \[
        \Res_\cM(\cN) = \big(\Res_\cM(\cN)(X,Y)\big)_{X,Y \in \Ob(\cM)},\quad \text{where} \quad
        \Res_\cM(\cN)(X,Y) = \cN(X,Y).
    \]
    It is straightforward to verify that $\Res_\cM(\cN) \in \Submod_\cC(\cM)$ for all $\cN \in \Submod_{\Kar(\cC)}(\Kar(\cM))$ and that $\Res_\cM (\Kar(\cN)) = \cN$ for all $\cN \in \Submod_\cC(\cM)$.
    
    Now suppose that $\cN \in \Submod_{\Kar(\cC)}{\Kar(\cM)}$.  For $X,Y \in \Ob(\Kar(\cM))$,
    \[
        \Kar (\Res_\cM(\cN))(X,Y)
        = e_Y \Res_\cM(\cN)(X_0,Y_0) e_X
        = e_Y \cN(X_0,Y_0) e_X.
    \]
    On the other hand, since $e_Z = 1_Z$ for $Z \in \Ob(\Kar(\cM))$, we have
    \[
        \cN(X,Y)
        = e_Y \cN(X,Y) e_X
        = e_Y \big( e_Y \cN(X,Y) e_X \big) e_X
        \subseteq e_Y \cN(X_0,Y_0) e_X,
    \]
    where the inclusion uses the fact that $e_X \in \Kar(\cM) \big( X_0, X \big)$ and $e_Y \in \Kar(\cM)(Y,Y_0)$, together with the closure of $\cN$ under composition with morphisms in $\Kar(\cM)$.  We also have the reverse inclusion
    \[
        e_Y \cN(X_0,Y_0) e_X
        = e_Y \big( e_Y \cN(X_0,Y_0) e_X \big) e_X
        \subseteq e_Y \cN(X,Y) e_X,
    \]
    since $e_X \in \Kar(\cM)(X,X_0)$ and $e_Y \in \Kar(\cM)(Y_0,Y)$.  Thus, we have $\Kar(\Res_\cM)(\cN) = \cN$, as desired.
\end{proof}

For $N \in \Submod_{P(\cM)}(\cM_\one)$, define
\begin{equation} \label{noodle}
    \Kar(N) = \bigoplus_{X \in \Ob(\Kar(\cM))} \Kar(N)(X),
    \quad \text{where} \quad
    \Kar(N)(X) = e_X N(X_0).
\end{equation}
It is straightforward to verify that $\Kar(N) \in \Submod_{P(\Kar(\cM))}(\Kar(\cM)_\one)$.

\begin{prop}
    The map
    \[
        \Kar \colon \Submod_{P(\cM)}(\cM_\one) \to \Submod_{P(\Kar(\cM))}(\Kar(\cM)_\one)
    \]
    is an isomorphism of partially ordered sets.
\end{prop}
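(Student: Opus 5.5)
We mirror the proof of \cref{pork}: we check that $\Kar$ preserves order and write down an explicit inverse given by restriction. Order preservation is immediate from \cref{noodle}, since $N \subseteq N'$ gives $e_X N(X_0) \subseteq e_X N'(X_0)$ for every $X$.

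For the inverse, take $L \in \Submod_{P(\Kar(\cM))}(\Kar(\cM)_\one)$ and set
\[
    \Res(L) := \bigoplus_{X_0 \in \Ob(\cM)} L\big((X_0,1_{X_0})\big).
\]
Here we use that the unit object $\one$ of $\Kar(\cM)$ is $(\one,1_\one)$. Because $\cM$ is a full subcategory of $\Kar(\cM)$, each $L((X_0,1_{X_0}))$ is a subspace of $\cM(\one,X_0)$. Stability under $P(\cM) \subseteq P(\Kar(\cM))$ is inherited from the stability of $L$, so $\Res(L) \in \Submod_{P(\cM)}(\cM_\one)$.

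Next we show $\Res \circ \Kar = \id$. For $X_0 \in \Ob(\cM)$ viewed as $(X_0,1_{X_0})$, the idempotent is $e = 1_{X_0}$, so $\Kar(N)(X_0) = 1_{X_0} N(X_0) = N(X_0)$.

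The only real content is $\Kar \circ \Res = \id$, which is a two-sided inclusion argument as in \cref{pork}. Fix $X = (X_0,e_X)$. The morphism $e_X$ can be viewed both as an element of $\Kar(\cM)(X_0,X)$ and as an element of $\Kar(\cM)(X,X_0)$, and we use each in turn.

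First, let $f \in L(X)$. Then $f = e_X f$, and $e_X f$ is the composite of $f$ with $e_X \in \Kar(\cM)(X,X_0)$. Stability of $L$ gives $f \in L(X_0)$. Since also $f = e_X f$, we get $f \in e_X L(X_0) = \Kar(\Res(L))(X)$.

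Conversely, let $g \in L(X_0)$. Viewing $e_X \in \Kar(\cM)(X_0,X)$, stability of $L$ gives $e_X g \in L(X)$. Hence $e_X L(X_0) \subseteq L(X)$, and the two inclusions show $\Kar(\Res(L))(X) = L(X)$.

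The main care point is bookkeeping. We must check that composition with $e_X$ is legitimate in $\Kar(\cM)$, which holds because $e_X = e_X 1_{X_0} e_X = e_X e_X e_X$. We must also confirm that the unit object used for $\Kar(\cM)_\one$ is $(\one,1_\one)$, so that elements of $L(X)$ really are morphisms $\one \to X_0$ satisfying $e_X f = f$.

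Finally, $\Res$ is also order preserving, so $\Kar$ and $\Res$ are mutually inverse order-preserving maps. This gives the claimed isomorphism of partially ordered sets.
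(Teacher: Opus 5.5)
Your proposal is correct and is essentially the paper's proof. The inverse is restriction to the objects of $\cM$. The only nontrivial identity, $\Kar(\Res(L)) = L$, is obtained from the same two inclusions, using $e_X$ once as a morphism in $\Kar(\cM)(X,X_0)$ and once as a morphism in $\Kar(\cM)(X_0,X)$.
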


\begin{proof}
    It is clear that the map respects the partial order.  To show it is an isomorphism, we will describe the inverse map.  For $N \in \Submod_{P(\Kar(\cM))}(\Kar(\cM)_\one)$, define
    \[
        \Res_\cM(N) = \bigoplus_{X \in \Ob(\cM)} \Res_\cM(N)(X),
        \quad \text{where} \quad
        \Res_\cM(N)(X) = N(X).
    \]
    It is straightforward to verify that $\Res_\cM(N) \in \Submod_{P(\cM)}(\cM_\one)$ for all $N \in \Submod_{P(\Kar(\cM))}(\Kar(\cM)_\one)$ and that $\Res_\cM(\Kar(N)) = N$ for all $N \in \Submod_{P(\cM)}(\cM_\one)$.  The proof that $\Kar(\Res_\cM(N)) = N$ for all $N \in \Submod_{P(\Kar(\cM))}(\Kar(\cM)_\one)$ is analogous to the last paragraph of the proof of \cref{pork}.
    \details{
        Suppose that $N \in \Submod_{P(\Kar(\cM))}(\Kar(\cM)_\one)$.  For $X \in \Kar(\cM)$, we have
        \[
            \Kar(\Res_\cM(N))(X)
            = e_X \Res_\cM(N)(X_0)
            = e_X N(X_0).
        \]
        On the other hand, since $e_X = 1_X$ for $X \in \Kar(\cM)$, we have
        \[
            N(X)
            = e_X N(X)
            = e_X \big( e_X N(X) \big)
            \subseteq e_X N(X_0),
        \]
        where the inclusion uses the fact that $e_X \in \Kar(\cM)(X,X_0)$ and the closure of $N$ under the action of the path algebra $P(\Kar(\cM)$.  We also have the reverse inclusion
        \[
            e_X N(X_0)
            = e_X \big( e_X N(X_0) \big)
            \subseteq e_X N(X)
            = N(X),
        \]
        since $e_X \in \Kar(\cM)(X_0,X)$.  Thus, $\Kar(\Res_\cM(N)) = N$, as desired.
    }
\end{proof}

For $\cN \in \Submod_\cC(\cM)$, we have
\[
    \Kar(\cN)_\one
    = \bigoplus_{X \in \Kar(\cM)} \Kar(\cN) (\one, X)
    \overset{\cref{rice}}{=} \bigoplus_{X \in \Kar(\cM)} e_X \cN(\one,X_0)
    \overset{\cref{noodle}}{=} \Kar(\cN_\one).
\]
Thus, the following diagram commutes:
\begin{equation} \label{dumpling}
    \begin{tikzcd}
        \Submod_\cC(\cM) \arrow[r, "\Psi"] \arrow[d, "\Kar"', "\cong"] & \Submod_{P(\cM)}(\cM_\one) \arrow[d, "\Kar"', "\cong"] \\
        \Submod_\cC(\Kar(\cM)) \arrow[r, "\Psi"] & \Submod_{P(\Kar(\cM))}(\Kar(\cM)_\one)
    \end{tikzcd}
\end{equation}

If $\cD$ is a left rigid monoidal category, then $\Kar(\cD)$ is also left rigid.  The left dual of $X = (X_0,e_X) \in \Kar(\cD)$ is $X^\vee = (X_0^\vee,e_X^\vee)$, where $X_0^\vee$ is the left dual of $X_0$ in $\cD$ and
\[
    \begin{tikzpicture}[centerzero]
        \draw[<-] (0,-0.6) -- (0,0.6);
        \coupon{0,0}{e_X^\vee};
    \end{tikzpicture}
    =
    \begin{tikzpicture}[centerzero]
        \draw[<-] (-0.4,-0.6) -- (-0.4,0.1) arc(180:0:0.2) -- (0,-0.1) arc(180:360:0.2) -- (0.4,0.6);
        \coupon{0,0}{e_X};
    \end{tikzpicture}
\]
is the left mate of $e_X$.  The unit and counit are given by
\begin{equation} \label{mug}
    \begin{tikzpicture}[centerzero]
        \draw[<-] (-0.3,-0.2) -- (-0.3,0) arc(180:0:0.3) -- (0.3,-0.2) \botlabel{X};
    \end{tikzpicture}
    :=
    \begin{tikzpicture}[centerzero]
        \draw[<-] (-0.3,-0.4) \botlabel{X_0} -- (-0.3,0.1) arc(180:0:0.3) -- (0.3,-0.4) \botlabel{X_0};
        \coupon{0.3,0}{e_X};
    \end{tikzpicture}
    =
    \begin{tikzpicture}[centerzero]
        \draw[<-] (-0.3,-0.4) \botlabel{X_0} -- (-0.3,0.1) arc(180:0:0.3) -- (0.3,-0.4) \botlabel{X_0};
        \coupon{0.3,0}{e_X};
        \coupon{-0.3,0}{e_X^\vee};
    \end{tikzpicture}
    \qquad \text{and} \qquad
    \begin{tikzpicture}[centerzero]
        \draw[<-] (-0.3,0.2) -- (-0.3,0) arc(180:360:0.3) -- (0.3,0.2) \toplabel{X};
    \end{tikzpicture}
    :=
    \begin{tikzpicture}[centerzero]
        \draw[<-] (-0.3,0.4) \toplabel{X_0} -- (-0.3,-0.1) arc(180:360:0.3) -- (0.3,0.4) \toplabel{X_0};
        \coupon{-0.3,0}{e_X};
    \end{tikzpicture}
    =
    \begin{tikzpicture}[centerzero]
        \draw[<-] (-0.3,0.4) \toplabel{X_0} -- (-0.3,-0.1) arc(180:360:0.3) -- (0.3,0.4) \toplabel{X_0};
        \coupon{-0.3,0}{e_X};
        \coupon{0.3,0}{e_X^\vee};
    \end{tikzpicture}
    \ .
\end{equation}

\begin{cor} \label{soup}
    If $(\cM,\cC)$ satisfies \cref{jetlag}, then the mapping
    \[
        \Psi \colon \Submod_{\Kar(\cC)}(\Kar(\cM)) \to \Submod_{P(\Kar(\cC))}(\Kar(\cM)_\one),\qquad
        \cN \to \cN_\one,
    \]
    is an isomorphism of partially ordered sets, with inverse
    \[
        \Psi^{-1} \colon \Submod_{P(\Kar(\cC))}(\Kar(\cM)_\one) \to \Submod_{\Kar(\cC)}(\Kar(\cM))
    \]
    given by
    \begin{equation} \label{corn}
        \Psi^{-1}(N)(X,Y) :=
        \left\{
            \begin{tikzpicture}[anchorbase]
                \genbox{-0.4,-0.2}{0.4,0.2}{f};
                \draw[->] (-0.2,0.2) -- (-0.2,1) \toplabel{Y_0};
                \draw[->] (0.6,-0.8) \botlabel{X_0} -- (0.6,0.2) to[out=up,in=up,looseness=2] (0.2,0.2);
                \coupon{-0.2,0.6}{e_Y};
                \coupon{0.6,-0.5}{e_X};
            \end{tikzpicture}
            : f \in N(Y_0 \otimes X_0^\vee)
        \right\}.
        \end{equation}
\end{cor}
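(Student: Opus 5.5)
The plan is to reduce the corollary to the three isomorphisms already established, rather than checking that $(\Kar(\cM),\Kar(\cC))$ satisfies \cref{jetlag}. That verification would be awkward, since $\Kar(\cC)$ is not strict with object set equal to $\Ob(\Kar(\cM))$ in the required sense. Instead, I will use the commutative square \cref{dumpling}. Its two vertical maps are isomorphisms of partially ordered sets, by \cref{pork} and the subsequent proposition for path-algebra modules. Its top map $\Psi$ is an isomorphism by \cref{ice}. Since the square commutes, the bottom map
\[
    \Psi = \Kar \circ \Psi \circ \Kar^{-1}
\]
is a composite of poset isomorphisms, and hence itself a poset isomorphism. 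Its inverse is
\[
    \Kar \circ \Phi \circ \Res_\cM,
\]
using \cref{cold} to identify the inverse of the top map with $\Phi$.

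It remains to identify this inverse explicitly with \cref{corn}. Take $N \in \Submod_{P(\Kar(\cM))}(\Kar(\cM)_\one)$ and objects $X,Y$ of $\Kar(\cM)$. By \cref{rice}, $\Kar(\Phi(\Res_\cM N))(X,Y)$ equals $e_Y\, \Phi(\Res_\cM N)(X_0,Y_0)\, e_X$. By \cref{Phi}, this consists of the morphisms obtained from some $f \in \Res_\cM(N)(Y_0 \otimes X_0^\vee) = N(Y_0 \otimes X_0^\vee)$ by bending the $X_0^\vee$ strand down with a cap, and then composing with $e_Y$ on top and $e_X$ on the bottom. This is exactly the set displayed in \cref{corn}.

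I expect no serious obstacle. The only point requiring care is bookkeeping: $\Res_\cM(N)(Z)$ for $Z \in \Ob(\cM)$ means $N$ evaluated at $(Z,1_Z)$, and composing with $e_X$ and $e_Y$ is legitimate within $\Kar(\cM)$. One could also note, via \cref{mug}, that $e_X$ can be slid through the cap to become $e_X^\vee$ on $f$, but this is not needed for the statement as given.
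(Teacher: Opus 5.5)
Your proposal is correct and follows the paper's own argument: the paper deduces the corollary from \cref{ice} together with the commutativity of \cref{dumpling}, whose vertical maps are poset isomorphisms. Your explicit unwinding of the inverse as $\Kar\circ\Phi\circ\Res_\cM$, using \cref{rice} and \cref{Phi}, is exactly how the formula \cref{corn} arises.
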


\begin{proof}
    This follows from \cref{ice} and the commutativity of \cref{dumpling}.
\end{proof}

\begin{rem}
    The importance of \cref{soup} is that it extends the setting of \cref{ice}.  For $(\cM,\cC)$ satisfying \cref{jetlag}, it does \emph{not} follow that $(\Kar(\cM), \Kar(\cC))$ satisfies \cref{jetlag}.  In particular, we may have primitive idempotents in $\cC$ that split in $\cM$, meaning that $\Ob(\Kar(\cM)) \ne \Ob(\Kar(\cC))$, and so $(\Kar(\cM), \Kar(\cC))$ is not a strict tensor pair.
\end{rem}

\section{The decategorification map\label{sec:decat}}

We now pass from categorical data to Grothendieck groups and study how submodules behave under decategorification.  The goal of this section is to adapt \cite[\S4]{Cou18} to the setting of module categories.  We will use the following notation:
\begin{itemize}
    \item calligraphic characters $\cN$, $\cM$, etc.\ for categories and submodules of module categories,
    \item roman characters $N$, $M$, etc.\ for modules over path algebras of categories,
    \item typewriter font $\tN$, $\mathtt{M}$, etc.\ for modules over Grothendieck rings.
\end{itemize}
An additive category $\cD$ is \emph{Krull--Schmidt} if $\cD(X,X)$ is a local ring for every indecomposable object $X$, and every object is a finite direct sum of indecomposable objects.  Then $\cD$ is idempotent complete and every object decomposes uniquely (up to isomorphism and reordering) as a finite direct sum of indecomposable objects.  If $\cD$ is an additive $\kk$-linear category, over some field $\kk$, with finite-dimensional endomorphism algebras, then $\Kar(\cD)$ is Krull-Schmidt.  This follows from \cite[Cor.~4.4]{Kra15} and the fact that finite-dimensional algebras are semiperfect.  For the remainder of this section $\kk$ is an arbitrary commutative ring, unless otherwise specified.  

\begin{assumption} \label{latte}
    We assume that $\cM = \Kar(\cM_0)$ and $\cC = \Kar(\cC_0)$ for some pair $(\cM_0,\cC_0)$ of additive $\kk$-linear categories satisfying \cref{jetlag}.  We also assume that $\cM$ and $\cC$ are Krull--Schmidt.  As noted above, this holds if $\kk$ is a field and $\cM$ and $\cC$ have finite-dimensional endomorphism spaces.
\end{assumption}

Throughout this section, we make \cref{latte}.  By \cref{soup},
\[
    \Psi \colon \Submod_\cC(\cM) \to \Submod_{P(\cM)}(\cM_\one)
\]
is an isomorphism of partially ordered sets.  

A \emph{thick Ob-submodule} of $\cM$ is an isomorphism-closed subset $I$ of $\Ob(\cM)$ such that
\begin{itemize}
    \item $X \oplus Y \in I$ if and only if $X,Y \in I$; 
    \item $X \in I$ implies $X \otimes Y \in I$ for all $Y \in \Ob(\cC)$.
\end{itemize}

For any Krull--Schmidt category $\cD$, we let $K_\oplus(\cD)$ denote its split Grothendieck group.  Since $\cC$ is monoidal, $K_\oplus(\cC)$ is a ring, with multiplication induced by the tensor product in $\cC$.  The right action of $\cC$ on $\cM$ induces a right action of $K_\oplus(\cC)$ on $K_\oplus(\cM)$.  We say that a $K_\oplus(\cC)$-submodule of $K_\oplus(\cM)$ is \emph{thick} if it is spanned by classes of indecomposable objects.  We let $\Thick(K_\oplus(\cM))$ denote the set of thick $K_\oplus(\cC)$-submodules of $K_\oplus(\cM)$.  The set $\Thick(K_\oplus(\cM))$ is partially ordered by inclusion.  We will often identify elements of $\Thick(K_\oplus(\cM))$ with the set of objects whose classes they contain.

For any $\cC$-submodule $\cN$ of $\cM$, we define the set
\[
    \Ob(\cN) = \{X \in \Ob(\cM) : 1_X \in \cN(X,X)\}
     = \{X \in \Ob(\cM) : \cN(X,X) = \cM(X,X)\}.
\]
Then $\Ob(\cN)$ is a thick Ob-submodule of $\cM$ and we have the corresponding morphism of partially ordered sets
\begin{equation} \label{Obmap}
    \Ob \colon \Submod_\cC(\cM) \to \Thick(K_\oplus(\cM)).
\end{equation}
We refer to $\Ob$ as the \emph{decategorification map}.

For $X \in \Ob(\cM)$, we have $X=(X_0,e_X)$ for some $X_0 \in \cM_0$ and $e_X \in \cM_0(X_0,X_0)$ an idempotent.  For $X,Y \in \Ob(\cM)$, we write $Y \inplus X$ if $Y$ is a direct summand of $X$, that is, $X \cong Y \oplus Z$ for some $Z \in \Ob(\cM)$.  To allow ourselves greater flexibility in what follows (see \ref{P1}), we fix, for each $X \in \Ob(\cM)$, an $X_1 \in \Ob(\cC)$ such that
\begin{equation} \label{fix}
    X \inplus X_1 \inplus X_0
    \qquad \text{and} \qquad (X \otimes Y)_1 = X_1 \otimes Y_1 \text{ for all } X \in \Ob(\cM),\ Y \in \Ob(\cC).
\end{equation}
(Such objects always exist, since we may take $X_1 = X_0$.) 
We have $X_1 = (X_0,f_X)$ for some idempotent $f_X \in \cC_0(X_0,X_0)$ satisfying $f_X e_X = e_X f_X = e_X$.  Thus, recalling \cref{mug}, we may view
\[
    \leftcup_X
    :=
    \begin{tikzpicture}[centerzero]
        \draw[<-] (-0.3,0.4) \toplabel{X_0} -- (-0.3,-0.1) arc(180:360:0.3) -- (0.3,0.4) \toplabel{X_0};
        \coupon{-0.3,0}{e_X};
    \end{tikzpicture}
    =
    \begin{tikzpicture}[centerzero]
        \draw[<-] (-0.3,0.6) \toplabel{X_0} -- (-0.3,-0.3) arc(180:360:0.3) -- (0.3,0.6) \toplabel{X_0};
        \coupon{-0.3,0.25}{e_X};
        \coupon{-0.3,-0.2}{f_X};
    \end{tikzpicture}
    =
    \begin{tikzpicture}[centerzero]
        \draw[<-] (-0.3,0.4) \toplabel{X_0} -- (-0.3,-0.1) arc(180:360:0.3) -- (0.3,0.4) \toplabel{X_0};
        \coupon{-0.3,0}{e_X};
        \coupon{0.3,0}{f_X^\vee};
    \end{tikzpicture}
\]
as an element of $\cM(\one,X \otimes X_1^\vee)$. (The dual $X_1^\vee$ exists since $X_1 \in \Ob(\cC)$.) 
 We will also view $e_X$ as an endomorphism of $X_1$ with image $X$. \Cref{soup} implies that
\begin{equation} \label{pumpkin}
    \Ob \circ \Psi^{-1} (N)
    =
    \left\{
        X \in \Ob(\cM) :
        \leftcup_X \in N(X \otimes X_1^\vee)
    \right\}.
\end{equation}
\details{
    Since $e_X = f_X e_X$, replacing the condition $f \in N(Y_0 \otimes X_0^\vee)$ by the condition $f \in N(Y_0 \otimes X_1^\vee)$ in \cref{corn} does not change the set there.
}
We have the following generalization of \cref{bei}.

\begin{lem} \label{nan}
    For all $X \in \cM$, $\cM(\one, X \otimes X_1^\vee)$ is generated by $\leftcup_X$ as an $\cM(X \otimes X_1^\vee, X \otimes X_1^\vee)$-module.
\end{lem}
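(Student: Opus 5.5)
The plan is to imitate the zigzag argument of \cref{bei}, inserting the idempotents $e_X$ and $f_X$ in the right places. Let $g \in \cM(\one, X \otimes X_1^\vee)$. In terms of $\cM_0$, $g$ is a morphism $\one \to X_0 \otimes X_0^\vee$ with $(e_X \otimes f_X^\vee) g = g$. I will write $g$ as the composite of a morphism $h$ with $\leftcup_X$, where $h$ is obtained by bending the left output strand of $g$ down and around using a cup and cap on $X_0$:
\[
    h := (1_{X_0} \otimes 1_{X_0^\vee} \otimes \leftcap_{X_0}) \circ (1_{X_0}\otimes g \otimes 1_{X_0^\vee}) ,
\]
suitably arranged so that $g$ sits to the left of the strand it acts on, exactly as in the picture in the proof of \cref{bei}. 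The resulting diagram is then well-defined by \cref{gold}, since $g$ has source $\one$.

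The key steps are as follows.

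\textbf{(1) Recovering $g$.} Show that $h \circ \leftcup_{X_0} = g$ in $\cM_0$. This is the zigzag identity \cref{zigleft} applied to the bent strand. It uses only that the strand being straightened is an object of $\cC_0$, so that its cups and caps exist, together with the ability to slide $g$ (exposed on the left) past the relevant strands.

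\textbf{(2) Passing to $\leftcup_X$.} Replace $\leftcup_{X_0}$ by $\leftcup_X = (e_X \otimes f_X^\vee)\leftcup_{X_0}$. Using $g = (e_X \otimes f_X^\vee) g$, I push the idempotents through $h$ by naturality of the zigzag: the $e_X$ appearing on the cup strand becomes $e_X$ on the input of the bent strand. The goal is to arrange
\[
    g = h' \circ \leftcup_X, \qquad h' := (e_X \otimes f_X^\vee)\, h\, (e_X \otimes f_X^\vee),
\]
and then to check that $h'$ indeed lies in $\cM(X \otimes X_1^\vee, X \otimes X_1^\vee) = (e_X\otimes f_X^\vee)\,\cM_0(X_0\otimes X_0^\vee, X_0\otimes X_0^\vee)\,(e_X\otimes f_X^\vee)$.

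\textbf{(3) Verifying the identity.} Concretely, it suffices to show $h \circ \leftcup_X = g$. Indeed, left multiplication by $(e_X\otimes f_X^\vee)$ fixes $g$, and the right factor is absorbed because $\leftcup_X$ is fixed by $(e_X\otimes f_X^\vee)$; the latter follows from $f_X e_X = e_X$ and the fact that the mate of $f_X$ acts on the cup as $f_X$ acting on the other leg. For $h \circ \leftcup_X = g$, expand $\leftcup_X = (e_X \otimes 1)\leftcup_{X_0}$. Since $e_X$ sits on the strand that, after the zigzag, becomes the left output, we get
\[
    h\circ \leftcup_X = (e_X \otimes 1) g = g .
\]

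The main obstacle I expect is the bookkeeping in step (3). The issue is that $e_X$ is a morphism in $\cM$, not necessarily in $\cC$, so sliding it through the zigzag is not a priori allowed. I would handle this by placing $e_X$ on the leftmost strand, where \cref{gold} permits it, and by carrying only $f_X^\vee$, which lies in $\cC$, through the cups and caps.
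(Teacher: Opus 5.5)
There is a genuine gap. Your step (3) identity $h\circ\leftcup_X=(e_X\otimes 1)\circ g=g$ is exactly the computation behind the paper's proof. However, it only holds in a layout that contradicts the one you specify.

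First, your formula for $h$ does not typecheck. The morphism $1_{X_0}\otimes g\otimes 1_{X_0^\vee}$ has target $X_0\otimes X_0\otimes X_0^\vee\otimes X_0^\vee$, whereas $1_{X_0}\otimes 1_{X_0^\vee}\otimes\leftcap_{X_0}$ has source $X_0\otimes X_0^\vee\otimes X_0^\vee\otimes X_0$.

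Second, your instructions for repairing it all point to putting $g$ on the left: ``$g$ sits to the left of the strand it acts on, exactly as in \cref{bei}'', ``slide $g$ (exposed on the left)'', and ``well defined by \cref{gold}, since $g$ has source $\one$''. That gives $h=(1_{X_0}\otimes\leftcap_{X_0}\otimes 1_{X_0^\vee})\circ(g\otimes 1_{X_0\otimes X_0^\vee})$, which bends the $X_1^\vee$ output of $g$. In that layout step (1) is fine. But the $e_X$ in $\leftcup_X=(e_X\otimes 1)\circ\leftcup_{X_0}$ then sits on an interior strand. Straightening the zigzag would require transporting it around the cap onto the $X_1^\vee$ output as a ``mate''. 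That is not a legitimate move for a morphism of $\cM_0$ (as opposed to $\cC_0$). Even if it were, $g$ is only known to be fixed by $f_X^\vee$ on that strand, not by a mate of $e_X$. So step (3) fails there.

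The layout that works, and the one the paper uses, is the mirror image of the \cref{bei} picture. Put $\leftcup_X$ on the left and $g$ to its right, capping the \emph{left} output of $g$ against the right leg of the cup:
\[
    h=(1_{X_0}\otimes\leftcap_{X_0}\otimes 1_{X_0^\vee})\circ(1_{X_0\otimes X_0^\vee}\otimes g).
\]
Here \cref{gold} enters because $\leftcup_X$, not $g$, is the morphism out of $\one$ exposed on the left. It gives $(1_{X_0\otimes X_0^\vee}\otimes g)\circ\leftcup_X=(\leftcup_X\otimes 1_{X_0\otimes X_0^\vee})\circ g$, and hence
\[
    h\circ\leftcup_X
    =\Big(\big((1_{X_0}\otimes\leftcap_{X_0})\circ(\leftcup_X\otimes 1_{X_0})\big)\otimes 1_{X_0^\vee}\Big)\circ g
    =(e_X\otimes 1_{X_0^\vee})\circ g
    =g
\]
by the first identity in \cref{zigleft}. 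Throughout, $e_X$ never leaves the leftmost strand, where it commutes with the $\cC$-morphisms acting on its right.

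Your sandwiching by $e_X\otimes f_X^\vee$ then goes through as you describe. Sliding $f_X^\vee$ through the cup is needed only to check that $\leftcup_X$ is fixed by $e_X\otimes f_X^\vee$, not in the zigzag itself.
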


\begin{proof}
    The proof is the same as for \cref{bei}, using \cref{gold}.
    \details{
        Suppose $f \in \cM(\one,X \otimes X_1^\vee)$.  Then
        \[
            \begin{tikzpicture}[anchorbase]
                \draw[->] (-0.2,-0.3) -- (-0.2,0.6) \toplabel{X_0};
                \draw[<-] (0.2,-0.15) -- (0.2,0.6) \toplabel{X_1};
                \genbox{-0.4,-0.5}{0.4,-0.15}{f};
                \coupon{-0.2,0.2}{e_X};
            \end{tikzpicture}
            =
            \begin{tikzpicture}[anchorbase]
                \draw[->] (0,0) -- (0,0.2) to[out=up,in=up,looseness=2] (-0.4,0.2) -- (-0.4,-0.8) to[out=down,in=down,looseness=2] (-0.8,-0.8) -- (-0.8,0.7) \toplabel{X_0};
                \draw[<-] (0.4,0.15) -- (0.4,0.7) \toplabel{X_1};
                \genbox{-0.2,-0.2}{0.6,0.15}{f};
                \indicate{-1.2,-0.3}{0.8,0.5};
                \coupon{-0.8,-0.6}{e_X};
                \coupon{-0.8,0}{e_X};
            \end{tikzpicture}
            \in \cM(X \otimes X_1^\vee,X \otimes X_1^\vee) \circ \leftcup_X,
        \]
        as desired.
    }
\end{proof}

For any subset $S \subseteq \Ob(\cM)$, we define the \emph{trace submodule}
\begin{equation} \label{tracesub}
    \begin{gathered}
        \Tr_S \cM_\one \in \Submod_{P(\cM)}(\cM_\one),
        \\
        \Tr_S \cM_\one(X) = \Span_\Z \{f \circ g : f \in \cM(Z,X),\ g \in \cM(\one,Z),\ Z \in S\}.
    \end{gathered}
\end{equation}
For $Z \in \Ob(\cM)$, we will write $\Tr_Z$ instead of $\Tr_{\{Z\}}$.

\begin{theo}[{cf.\ \cite[Th.~4.1.2]{Cou18}}] \label{BBQ}
    \begin{enumerate}
        \item \label{BBQ1} The map $\Ob$ is a surjective morphism of partially ordered sets
        \item \label{BBQ2} For $\tN \in \Thick(K_\oplus(\cM))$, the minimal element in the fibre $\Ob^{-1}(\tN)$ is the $\cC$-submodule with morphism spaces
            \[
                \cN_\tN^{\min}(X,Y) = \{f \in \cM(X,Y) : \exists\ Z \in \tN \text{ such that } f \text{ factors as } X \to Z \to Y\}.
            \]
        \item \label{BBQ3} For $\tN \in \Thick(K_\oplus(\cM))$, the minimal element in $\Psi(\Ob^{-1}(\tN)) \subseteq \cM_\one$ is the trace submodule $\Tr_\tN \cM_\one$.
    \end{enumerate}
\end{theo}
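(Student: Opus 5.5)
We prove \cref{BBQ2} first; \cref{BBQ1} then follows, and \cref{BBQ3} comes by transporting \cref{BBQ2} through $\Psi$.

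\emph{Step 1: $\cN^{\min}_\tN$ is a $\cC$-submodule.} Fix $\tN \in \Thick(K_\oplus(\cM))$, identified with the thick Ob-submodule of objects whose indecomposable summands lie in $\tN$. Write $\cN^{\min}_\tN(X,Y)$ for the $\kk$-span of morphisms factoring through objects of $\tN$. Since $\tN$ is closed under finite direct sums, a sum of two morphisms factoring through $Z$ and $Z'$ factors through $Z \oplus Z'$, so the span adds nothing new. Closure under composition on both sides is immediate. For the right action: if $f = b \circ a$ with $a \colon X \to Z$ and $b \colon Z \to Y$, then $f \otimes 1_W = (b \otimes 1_W)\circ(a \otimes 1_W)$ factors through $Z \otimes W$, which lies in $\tN$ because $\tN$ is closed under $-\otimes W$. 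Note that this uses only the module action on morphisms of $\cM$ together with identities of $\cC$; no ill-defined tensor products appear.

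\emph{Step 2: $\Ob(\cN^{\min}_\tN) = \tN$.} If $X \in \tN$, then $1_X$ factors through $X$, so $X \in \Ob(\cN^{\min}_\tN)$. Conversely, suppose $1_X = b \circ a$ with $a \colon X \to Z$, $b \colon Z \to Y$ and $Z \in \tN$. Then $X$ is a retract of $Z$. In a Krull--Schmidt category (by \cref{latte}), a retract is a direct summand, so $X \inplus Z$. Thickness of $\tN$ then gives $X \in \tN$. This step is the only point where Krull--Schmidtness is genuinely used, and it proves the surjectivity claimed in \cref{BBQ1}.

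\emph{Step 3: minimality.} Let $\cN$ be any submodule with $\Ob(\cN) = \tN$, and let $f = b \circ a$ factor through $Z \in \tN$. Since $1_Z \in \cN$, we get $f = b \circ 1_Z \circ a \in \cN$. Hence $\cN^{\min}_\tN \subseteq \cN$. Monotonicity of $\Ob$ is clear from the definition, which completes \cref{BBQ1} and \cref{BBQ2}.

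\emph{Step 4: the trace submodule.} For \cref{BBQ3}, note that $\Psi$ is an order isomorphism by \cref{soup}. The minimal element of $\Psi(\Ob^{-1}(\tN))$ is therefore $\Psi(\cN^{\min}_\tN) = \cN^{\min}_\tN(\one,-)$. Comparing with \cref{tracesub}, this is exactly $\Tr_\tN \cM_\one$, given the span remark from Step 1.

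As a check independent of \cref{BBQ2}, one can verify directly via \cref{pumpkin} that $\Ob \circ \Psi^{-1}(\Tr_\tN \cM_\one) = \tN$. For $X \in \tN$, $\leftcup_X$ factors through $X \otimes X_1^\vee \in \tN$. The reverse inclusion uses \cref{nan}: if $\leftcup_X$ factors through some $Z \in \tN$, then $1_X$ is recovered by zigzagging, so $1_X$ factors through $Z \otimes X_1$. The latter lies in $\tN$, and this is where \cref{gold} is needed to make the diagrams meaningful.

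The main subtle points are the retract-implies-summand argument in Step 2 and the bookkeeping with the objects $X_1$ when working in $\Kar$. The remaining steps are formal.
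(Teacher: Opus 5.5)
Your proof is correct and follows the paper's argument essentially step for step: closure of $\cN^{\min}_\tN$ under composition and under $-\otimes 1_W$ comes from thickness, $\Ob(\cN^{\min}_\tN)=\tN$ comes from retract-implies-summand (idempotent completeness), minimality holds because every submodule in the fibre contains $1_Z$ for $Z\in\tN$, and part (c) follows by applying $\Psi$ (mere monotonicity of $\Psi$ already suffices there). The closing ``check'' via \cref{pumpkin} is not needed, and the zigzag in it is really the argument of \cref{green} rather than \cref{nan}; also fix the typo in Step 2, where $b\colon Z\to Y$ should read $b\colon Z\to X$.
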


\begin{proof}
    \begin{enumerate}[wide]
        \item Suppose $\tN \in \Thick(K_\oplus(\cM))$.  It is clear that $\cN^{\min}_\tN$ is closed under composition with morphisms in $\cM$.  Since $\tN$ is a $K_\oplus(\cC)$-submodule of $K_\oplus(\cM)$, we see that $\cN^{\min}_\tN$ is also closed under the action of $1_Z$, $Z \in \Ob(\cC)$.  Thus it is a $\cC$-submodule.
        
            Now suppose that $X \in \Ob(\cN^{\min}_\tN)$.  Thus, $X \in \Ob(\cM)$ with $1_X \in \cN_\tN^{\min}(X,X)$.  Then there exist $Z \in \tN$, $f \in \cM(X,Z)$, and $g \in \cM(Z,X)$ such that $1_X = g \circ f$.  Since $\cM$ is idempotent complete, this implies that there exists $Y \in \Ob(\cM)$ such that $Z \cong X \oplus Y$.  Since $\tN$ is thick, we conclude that $X \in \tN$.  So, $\Ob(\cN^{\min}_\tN) \subseteq \tN$.  On the other hand, it follows by definition that $\tN \subseteq \Ob(\cN^{\min}_\tN)$.  Thus, $\Ob(\cN_\tN^{\min}) = \tN$.  Hence, $\Ob$ is surjective.  It is clear that $\Ob$ is a morphism of partially ordered sets.
            
        \item Suppose $\tN \in \Thick(K_\oplus(\cM))$.  Any $\cC$-submodule in $\Ob^{-1}(\tN)$ must contain $1_Z$ for all $Z \in \tN$.  Thus, by construction, $\cN^{\min}_\tN$ is minimal in the fibre over $\tN$.
            
        \item This follows immediately from the definition of $\Psi$.
        \qedhere
    \end{enumerate}
\end{proof}

Let $\indec(\cM)$ denote the set of isomorphism classes of indecomposable objects of $\cM$, and define
\[
    \bB_\cM := \{X \in \indec(\cM) : \cM(\one,X) \ne 0\}.
\]
Let $\add(X)$ denote the class of objects of $\cM$ that are direct sums of direct summands of $X$.  

\begin{lem}[{cf.\ \cite[Lem.~4.2.5]{Cou18}}] \label{bin}
    Suppose that the $\cM(X,X)$-module $\cM(\one,X)$ is simple for every $X \in \bB_\cM$.  Then every submodule of $\cM_\one$ is a trace submodule.
\end{lem}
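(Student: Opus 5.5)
Let $N \in \Submod_{P(\cM)}(\cM_\one)$ and set $S := \{Z \in \bB_\cM : N(Z) \ne 0\}$. The plan is to show that $N = \Tr_S \cM_\one$ by proving both inclusions. The key input is the Krull--Schmidt property from \cref{latte}: every object of $\cM$ is a finite direct sum of indecomposables. This lets us decompose any element of $N(X)$ into pieces passing through indecomposable summands of $X$.

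For the inclusion $\Tr_S \cM_\one \subseteq N$, take $Z \in S$ and a nonzero $h \in N(Z) \subseteq \cM(\one,Z)$. Since $N$ is a $P(\cM)$-submodule, $N(Z)$ is a nonzero $\cM(Z,Z)$-submodule of $\cM(\one,Z)$. By the simplicity hypothesis (here $Z \in \bB_\cM$), this forces $N(Z) = \cM(\one,Z)$. Hence any $g \in \cM(\one,Z)$ lies in $N$, and so $f \circ g \in N(X)$ for every $f \in \cM(Z,X)$. Since $N$ is closed under sums, it follows that $\Tr_S \cM_\one(X) \subseteq N(X)$.

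For the reverse inclusion, let $h \in N(X)$ and write $X \cong \bigoplus_i Z_i$ with the $Z_i$ indecomposable. Let $\iota_i \colon Z_i \to X$ and $\pi_i \colon X \to Z_i$ be the structure maps, so that $\sum_i \iota_i \pi_i = 1_X$. Then $h = \sum_i \iota_i \circ (\pi_i h)$, and each $\pi_i h$ lies in $N(Z_i)$ because $N$ is closed under the action of $P(\cM)$.

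Consider a single index $i$. If $\pi_i h = 0$, the corresponding term vanishes. Otherwise $\cM(\one,Z_i) \ne 0$, so $Z_i \in \bB_\cM$ (up to isomorphism), and $N(Z_i) \ne 0$, so $Z_i \in S$. In that case the term $\iota_i \circ (\pi_i h)$ factors through $Z_i \in S$, and therefore lies in $\Tr_S\cM_\one(X)$. Summing over $i$ gives $h \in \Tr_S \cM_\one(X)$.

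One bookkeeping point remains. The set $S$ is defined on isomorphism classes, so I will either replace $S$ by all objects isomorphic to its members or note that $\Tr_S$ is unchanged under this replacement; both work because composing with isomorphisms preserves $N$ and trace submodules. The only genuinely nontrivial step is the first inclusion, which is exactly where simplicity is used. Everything else is routine once the decomposition $1_X = \sum_i \iota_i\pi_i$ is available.
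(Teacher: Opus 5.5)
Your proof is correct and takes the same approach as the paper. The paper also takes $S$ to be the $Z\in\bB_\cM$ with $N(Z)=\cM(\one,Z)$, which by simplicity is the same as $N(Z)\neq 0$, and concludes $N=\Tr_S\cM_\one$. You have additionally spelled out the Krull--Schmidt decomposition $1_X=\sum_i\iota_i\pi_i$, which the paper leaves implicit, to justify the inclusion $N\subseteq\Tr_S\cM_\one$.
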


\begin{proof}
    Suppose $N$ is a submodule of $\cM_\one$.  For $Y \in \bB_\cM$, since $\cM(\one,Y)$ is simple, either $N(Y)=0$ or $N(Y) = \cM(\one,Y)$.  Let $S$ be the subset of $\bB_\cM$ containing those $Y \in \bB_\cM$ with $N(Y) = \cM(\one,Y)$.  Then $N = \Tr_S \cM_\one$.
\end{proof}

\begin{lem}[{cf.\ \cite[Lem.~4.2.6]{Cou18}}] \label{green}
    Suppose $Z \in \bB_\cM$, $X \in \indec(\cM)$, and $\leftcup_X$ is a composite $\one \to Z^{\oplus k} \to X \otimes X_1^\vee$ for some $k>0$.  Then $X \inplus Z \otimes X_1$.
\end{lem}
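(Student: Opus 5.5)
We write $\leftcup_X = \beta \circ \alpha$ with $\alpha \in \cM(\one,Z^{\oplus k})$ and $\beta \in \cM(Z^{\oplus k}, X \otimes X_1^\vee)$, and then "bend" the factorization back into an endomorphism of $X$. Recall from \cref{mug} that $\leftcup_X$ is the unit of the duality for $X$ in $\Kar(\cM_0)$, made relative to $X_1$ via $f_X$, where $e_X$ is viewed as an idempotent on $X_1$ with image $X$. Consequently the zigzag relation \cref{zigleft} gives
\[
    1_X = (1_X \otimes \text{cap}_{X_1}) \circ (\leftcup_X \otimes 1_{X_1}) \circ e_X ,
\]
where $\text{cap}_{X_1}\colon X_1^\vee \otimes X_1 \to \one$ is the leftward cap. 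All morphisms involved are allowed: $\leftcup_X \otimes 1_{X_1}$ is the right action of $X_1 \in \Ob(\cC)$ on a morphism of $\cM$, and the cap acts on the right-hand strands, which lie in $\cC$. By \cref{gold} this diagrammatic manipulation is legitimate, since the morphism of $\cM$ in question has domain $\one$.

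Substituting the factorization, we get
\[
    1_X = \bigl[(1_X \otimes \text{cap}_{X_1}) \circ (\beta \otimes 1_{X_1})\bigr] \circ \bigl[(\alpha \otimes 1_{X_1}) \circ e_X\bigr].
\]
The second bracket is a morphism $X \to Z^{\oplus k} \otimes X_1 \cong (Z \otimes X_1)^{\oplus k}$, and the first is a morphism back to $X$. Hence $1_X$ factors through $(Z \otimes X_1)^{\oplus k}$, so $X$ is a direct summand of $(Z\otimes X_1)^{\oplus k}$, using that $\cM$ is idempotent complete.

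Finally we pass from $(Z\otimes X_1)^{\oplus k}$ to $Z \otimes X_1$. Since $\cM$ is Krull--Schmidt (\cref{latte}) and $X$ is indecomposable with local endomorphism ring, $X$ is isomorphic to an indecomposable summand of $(Z\otimes X_1)^{\oplus k}$. By uniqueness of decompositions, every indecomposable summand of $(Z\otimes X_1)^{\oplus k}$ is already a summand of $Z\otimes X_1$. More concretely, write $1_X=\sum_i g_i f_i$ with $f_i\colon X\to Z\otimes X_1$ the components; since $\cM(X,X)$ is local, some $g_if_i$ is invertible, so $X \inplus Z\otimes X_1$.

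The main obstacle is the first step: checking carefully that the zigzag identity holds in $\Kar$ with the mixed $X$/$X_1$ duality data (the idempotents $e_X$, $f_X$ and the mate $f_X^\vee$), and that the bending operation is well defined in $\cM$. The latter is what \cref{gold} provides, and we would handle the former by inserting $f_Xe_X=e_X$ as in the computation preceding \cref{pumpkin}.
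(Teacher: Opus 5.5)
Your proposal is correct and follows the paper's proof: you use the zigzag identity to bend the factorization of $\leftcup_X$, which exhibits $e_X = 1_X$ as a composite $X \to Z^{\oplus k}\otimes X_1 \to X$, and you then conclude from indecomposability. Your local-ring argument simply spells out the paper's final sentence. The appeal to \cref{gold} is harmless but not needed, since the computation uses only the right action of $X_1$ on morphisms of $\cM$ and the naturality of the cap.
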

 
\begin{proof}
    Suppose $\leftcup_X = f \circ g$ for some $g \colon \one \to Z^{\oplus k}$ and $f \colon Z^{\oplus k} \to X \otimes X_1^\vee$.  Then
    \[
        \begin{tikzpicture}[centerzero]
            \draw[->] (0,-0.6) \botlabel{X_1} to (0,0.6);
            \coupon{0,0}{e_X};
        \end{tikzpicture}
        \ =\
        \begin{tikzpicture}[centerzero={0,-0.25}]
            \draw[->] (0.4,-1.2) \botlabel{X_1} to (0.4,0.1) to[out=up,in=up,looseness=2] (0,0.1) -- (0,-0.1) to[out=down,in=down,looseness=2] (-0.4,-0.1) to (-0.4,0.6);
            \coupon{0.4,-0.7}{e_X};
            \coupon{-0.4,0}{e_X};
        \end{tikzpicture}
        \ =\
        \begin{tikzpicture}[anchorbase]
            \genbox{-0.4,-0.2}{0.4,0.2}{f};
            \genbox{-0.4,-1.2}{0.4,-0.8}{g};
            \draw[->] (-0.2,0.2) -- (-0.2,1) \toplabel{X_1};
            \draw (0,-0.8) -- (0,-0.2) node[midway,anchor=east] {\strandlabel{Z^{\oplus k}}};
            \draw[->] (0.6,-2) \botlabel{X_1} -- (0.6,0.2) to[out=up,in=up,looseness=2] (0.2,0.2);
            \coupon{0.6,-1.6}{e_X};
            \coupon{-0.2,0.55}{e_X};
        \end{tikzpicture}
        \ ,
    \]
    and so $e_X = 1_X$ is a composition $X \to Z^{\oplus k} \otimes X_1 \to X$.  Thus, $X \inplus Z^{\oplus k} \otimes X_1$.  Since $X$ is indecomposable, it follows that $X \inplus Z \otimes X_1$, as desired.  
\end{proof}

\begin{lem}[{cf.\ \cite[Lem.~4.2.7]{Cou18}}] \label{blue}
    If $Z \in \bB_\cM$ and $X \in \indec(\cM)$ satisfy
    \[
        \add \big( X \otimes X_1^\vee \big) \cap \bB_\cM = \{Z\},
    \]
    then the following hold:
    \begin{enumerate}
        \item \label{blue1} $\leftcup_X$ is a composition $\one \to Z^{\oplus k} \to X \otimes X_1^\vee$ for some $k > 0$.
        \item \label{blue2a} $\Tr_Z \cM_\one$ is generated, as submodule of $\cM_\one$, by $\leftcup_X$.
        \item \label{blue2} $\Psi^{-1}(\Tr_Z \cM_\one)$ is generated by $e_X$.
        \item \label{blue3} $\Ob \circ \Psi^{-1}(\Tr_Z \cM_\one)$ is generated by $X$ as a thick $K_\oplus(\cC)$-module.
        \item \label{blue4} $\Ob \circ \Psi^{-1}(\Tr_Z \cM_\one)$ is generated by $Z$ as a thick $K_\oplus(\cC)$-module.
    \end{enumerate}
\end{lem}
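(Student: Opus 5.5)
We prove the five parts in order, each building on the previous ones, following the argument of \cite[Lem.~4.2.7]{Cou18} with \cref{gold} used to justify the diagrammatic manipulations.

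For \cref{blue1}, decompose $X \otimes X_1^\vee \cong \bigoplus_i W_i$ into indecomposables, using that $\cM$ is Krull--Schmidt. For each $i$, the component of $\leftcup_X$ in $W_i$ is a morphism $\one \to W_i$. If $W_i \notin \bB_\cM$, then $\cM(\one,W_i)=0$, so this component vanishes. By hypothesis, every $W_i \in \bB_\cM$ is isomorphic to $Z$. Hence $\leftcup_X$ factors through the sum of those summands isomorphic to $Z$, that is, through $Z^{\oplus k}$ for some $k \ge 0$. We have $k>0$, since otherwise $\leftcup_X = 0$. Then $e_X = 1_X$ would be zero by the zigzag identity used in the proof of \cref{green}, contradicting $X \neq 0$; here $X$ is indecomposable, hence nonzero.

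For \cref{blue2a}, part \cref{blue1} gives $\leftcup_X \in \Tr_Z \cM_\one$, so the submodule generated by $\leftcup_X$ is contained in $\Tr_Z\cM_\one$. For the reverse inclusion, it suffices to show that every $g \in \cM(\one,Z)$ lies in the $P(\cM)$-submodule generated by $\leftcup_X$. By \cref{green} and \cref{blue1}, $X \inplus Z \otimes X_1$. Dually, $Z$ is a summand of $X \otimes X_1^\vee$, so there are maps $\iota\colon Z \to X\otimes X_1^\vee$ and $\pi\colon X\otimes X_1^\vee \to Z$ with $\pi\iota = 1_Z$. Then $g = \pi\circ(\iota g)$, and $\iota g \in \cM(\one, X\otimes X_1^\vee)$ lies in $\cM(X\otimes X_1^\vee,X\otimes X_1^\vee)\circ\leftcup_X$ by \cref{nan}. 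Thus $g$ lies in the submodule generated by $\leftcup_X$, as required.

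For \cref{blue2}, apply $\Psi^{-1}$. By \cref{soup}, the submodule generated by $e_X$ corresponds under $\Psi$ to the submodule of $\cM_\one$ generated by its image, which is essentially the bent version of $e_X$, namely $\leftcup_X$ (compare the proof of \cref{cold}). Conversely, $e_X$ is recovered from $\leftcup_X$ by a cap and the action of $\cC$. Since $\Psi$ is an order isomorphism, $\Psi^{-1}(\Tr_Z\cM_\one)$ equals the submodule generated by $e_X$.

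For \cref{blue3} and \cref{blue4}, \cref{pumpkin} together with \cref{blue2a} shows $X \in \Ob\circ\Psi^{-1}(\Tr_Z\cM_\one)$. Clearly $Z$ also lies in it, since $1_Z$ factors through $Z$. Conversely, let $Y$ be in the set, so that $\leftcup_Y \in \Tr_Z\cM_\one$. Then $\leftcup_Y$ factors as $\one\to Z^{\oplus k}\to Y\otimes Y_1^\vee$, by summing the factorizations that define trace elements. Applying \cref{green} to each indecomposable summand of $Y$ shows that $Y \inplus Z^{\oplus k}\otimes Y_1$. Hence $Y$ lies in the thick module generated by $Z$, which proves \cref{blue4}. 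Finally, $Z \inplus X\otimes X_1^\vee$, so the thick module generated by $X$ contains $Z$; combined with \cref{blue4} this gives \cref{blue3}. The main obstacle is making the summand argument in \cref{blue2a} precise in $\Kar(\cM_0)$, since $X_1^\vee$ lives in $\cC$ while $Z$ lives only in $\cM$. If that argument fails, the fallback is to show directly that $\Tr_Z\cM_\one(W)$ is generated by the maps $\one \to Z\to W$ and to use that every such map factors through $\leftcup_X$ via the projection onto a $Z$-summand of $X\otimes X_1^\vee$.
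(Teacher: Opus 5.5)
Parts (a)--(c) are correct and are essentially the paper's argument. In (b) you cite \cref{nan} where the paper redoes the same zigzag computation inline. The ``main obstacle'' you flag is not one, since $Z \inplus X \otimes X_1^\vee$ and \cref{nan} are both statements in $\cM$.

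For (d) and (e) you take a different route from the paper. The paper gets (d) straight from (c): if $1_W$ lies in the $\cC$-submodule generated by $1_X = e_X$, then $1_W$ factors through some $X \otimes Y$, so $W \inplus X \otimes Y$. It then deduces (e) from (d), using $Z \inplus X \otimes X_1^\vee$ and \cref{green} applied to $X$ alone. You bypass (c): you test membership with \cref{pumpkin} and run the argument of \cref{green} on an arbitrary member $Y$, proving (e) first. This works. It even shows that $\Ob \circ \Psi^{-1}(\Tr_Z \cM_\one)$ lies in the thick $K_\oplus(\cC)$-module generated by $Z$ for every $Z$, without the hypothesis of the lemma. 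Strictly, \cref{green} is stated for indecomposable objects. Its proof nevertheless gives $Y \inplus Z^{\oplus k} \otimes Y_1$ for arbitrary $Y$, because indecomposability is used only in its last sentence. If you instead apply it to the indecomposable summands $Y'$, you first need $\leftcup_{Y'} \in \Tr_Z \cM_\one$, which follows from thickness and \cref{pumpkin}.

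However, the reverse inclusion rests on an invalid step, and it is exactly the step where the lemma's hypothesis is needed. You claim $Z \in \Ob \circ \Psi^{-1}(\Tr_Z \cM_\one)$ ``since $1_Z$ factors through $Z$.''
\begin{itemize}
\item By \cref{pumpkin}, this membership means that $\leftcup_Z \colon \one \to Z \otimes Z_1^\vee$ factors through some $Z^{\oplus k}$. That has nothing to do with $1_Z$ factoring through $Z$.
\item The fact that $1_Z$ factors through $Z$ only puts $1_Z$ in $\cN^{\min}_\tN$, where $\tN$ is the thick module generated by $Z$. But $\Psi(\cN^{\min}_\tN) = \Tr_\tN \cM_\one$ can be strictly larger than $\Tr_Z \cM_\one$.
\item In fact, by \cref{BBQ}, $Z \in \Ob \circ \Psi^{-1}(\Tr_Z \cM_\one)$ holds if and only if $\Tr_Z \cM_\one = \Tr_\tN \cM_\one$. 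Proving equalities of exactly this kind is the substance of \cref{coconut}, so it cannot be taken as clear.
\end{itemize}
The repair uses facts you already have. You showed $X \in \Ob \circ \Psi^{-1}(\Tr_Z \cM_\one)$, this set is thick, and $Z \inplus X \otimes X_1^\vee$; hence $Z$ lies in it. With that substitution, your derivation of (e), and then of (d), is complete.
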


\begin{proof}
    \begin{enumerate}[wide]
        \item Since $Z$ is the only indecomposable summand of $X \otimes X_1^\vee$ admitting a nonzero morphism from $\one$, it follows that $\leftcup_X$ must factor through $Z^{\oplus k}$ for some $k>0$.
            
        \item By part~\cref{blue1}, 
        $\leftcup_X$ is contained in $\Tr_Z \cM_\one$. Now let $i \colon Z \to X \otimes X_1^\vee$ and $p \colon X \otimes X_1^\vee \to Z$ denote the inclusion and projection of one of the $Z$-summands of $X \otimes X_1^\vee$.  Then, if $f \in \cM(\one,Z)$, we have, using \cref{gold},
            \[
                \begin{tikzpicture}[anchorbase]
                    \draw[->] (0,0) -- (0,0.6) \toplabel{Z};
                    \coupon{0,0}{f};
                \end{tikzpicture}
                =
                \begin{tikzpicture}[centerzero={0,0.5}]
                    \draw[->] (-0.2,0.15) -- (-0.2,0.8);
                    \draw[<-] (0.2,0.15) -- (0.2,0.8);
                    \draw (0,-0.6) -- (0,-0.15);
                    \draw[->] (0,1.1) -- (0,1.55) \toplabel{Z};
                    \genbox{-0.35,-0.15}{0.35,0.15}{i};
                    \genbox{-0.35,0.8}{0.35,1.1}{p};
                    \coupon{0,-0.6}{f};
                    \draw (0.2,0.5) node[anchor=west] {\strandlabel{X_1}};
                    \coupon{-0.2,0.45}{e_X};
                \end{tikzpicture}
                =
                \begin{tikzpicture}[centerzero={0,0.5}]
                    \draw[->] (-0.2,0.15) to[out=up,in=up,looseness=2] (-0.5,0.15) -- (-0.5,-1.2) to[out=down,in=down,looseness=2] (-1,-1.2) -- (-1,0.2) \braidup (-0.2,0.8);
                    \draw[<-] (0.2,0.15) -- (0.2,0.8);
                    \draw (0,-0.6) -- (0,-0.15);
                    \draw[->] (0,1.1) -- (0,1.55) \toplabel{Z};
                    \genbox{-0.35,-0.15}{0.35,0.15}{i};
                    \genbox{-0.35,0.8}{0.35,1.1}{p};
                    \coupon{0,-0.6}{f};
                    \draw (0.2,0.5) node[anchor=west] {\strandlabel{X_1}};
                    \coupon{-1,-1.1}{e_X};
                \end{tikzpicture}
                \ .
            \]
            Therefore, $f$ is in the submodule of $\cM_\one$ generated by $\leftcup_X$.
            
        \item It follows from \cref{soup} and part~\cref{blue2a} that $\Psi^{-1}(\Tr_Z \cM_\one)$ is the $\cC$-submodule of $\cM$ generated by $\leftcup_X$.  By \cref{zigleft}, any $\cC$-submodule of $\cM$ containing $\leftcup_X$ also contains $e_X = 1_X$, and vice versa.  This proves part~\cref{blue2}.
            
        \item This follows immediately from part~\cref{blue2}.
            \details{
                Let $\tN$ be the thick $K_\oplus(\cC)$-module generated by $X$.  By part~\ref{blue2}, $1_X=e_X \in \Psi^{-1}(\Tr_Z \cM_\one)$, and so $X \in \Ob \circ \Psi^{-1}(\Tr_Z \cM_\one)$.  Since $\Ob \circ \Psi^{-1}(\Tr_Z \cM_\one)$ is a thick $K_\oplus(\cC)$-module, this implies that $\tN \subseteq \Ob \circ \Psi^{-1}(\Tr_Z \cM_\one)$.
                
                Now suppose that $W \in \Ob \circ \Psi^{-1}(\Tr_Z \cM_\one)$.  Then $1_W \in \Psi^{-1}(\Tr_Z \cM_\one)$, which is generated by $1_X = e_X$ by part~\cref{blue2}.  Therefore $1_W$ can be written as a composition
                \[
                    W \xrightarrow{f} X \otimes Y \xrightarrow{g} W
                \]
                for some $Y \in \Ob(\cC)$, $f \in \cM(W, X \otimes Y)$, $g \in \cM(X \otimes Y, W)$.  This implies that $W \inplus X \otimes Y$, and hence $W \in \tN$.  Therefore, we have  $\Ob \circ \Psi^{-1}(\Tr_Z \cM_\one) \subseteq \tN$.
            }
        
        \item By assumption, $Z \inplus X \otimes X_1^\vee$.  Thus, $Z$ is in the thick $K_\oplus(\cC)$-module generated by $X$.  On the other hand, by \cref{green}, $X$ is contained in the thick $K_\oplus(\cC)$-module generated by $Z$.  Thus, part~\cref{blue4} follows from part~\cref{blue3}.
    \end{enumerate}
\end{proof}

\begin{theo}[{cf.\ \cite[Th.~4.3.1]{Cou18}}] \label{coconut}
    Suppose that $\cM$ and $\cC$ satisfy \cref{latte} and that, for each $Z \in \bB_\cM$,
    \begin{enumerate}
        \item \label{coconut1} the $\cC(Z,Z)$-module $\cM(\one,Z)$ is simple;
        \item \label{coconut2} there exists $X_Z \in \indec(\cM)$ with $\add(X_Z \otimes (X_Z)_1^\vee) \cap \bB_\cM = \{Z\}$.
    \end{enumerate} 
    Then the map $\Ob$, defined in \cref{Obmap}, is an isomorphism of partially ordered sets.
\end{theo}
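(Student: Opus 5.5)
Since $\Ob$ is already a surjective morphism of partially ordered sets by \cref{BBQ}\cref{BBQ1}, the plan is to show it is injective and that its inverse also preserves order. It is enough to show that every $\cC$-submodule $\cN$ of $\cM$ is recovered from $\Ob(\cN)$, namely $\cN = \cN^{\min}_{\Ob(\cN)}$. Then $\Ob$ has the order-preserving inverse $\tN \mapsto \cN^{\min}_\tN$. The inverse preserves order because $\tN \subseteq \tN'$ clearly gives $\cN^{\min}_\tN \subseteq \cN^{\min}_{\tN'}$.

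We pass through $\Psi$ using \cref{soup}. Let $N = \Psi(\cN) = \cN_\one$. By hypothesis \cref{coconut1}, $\cM(\one,Z)$ is simple over $\cC(Z,Z)$, hence also over the larger ring $\cM(Z,Z)$, for every $Z \in \bB_\cM$. So \cref{bin} applies, and $N = \Tr_S \cM_\one$ with $S = \{Z \in \bB_\cM : N(Z) = \cM(\one,Z)\}$. Because trace submodules are additive in their index set, $\Tr_S \cM_\one = \sum_{Z \in S} \Tr_Z \cM_\one$. Since $\Psi$ is an order isomorphism, $\Psi^{-1}$ of a sum of submodules is the submodule generated by the $\Psi^{-1}$ of the summands. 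Hence $\cN$ is the $\cC$-submodule generated by the $\Psi^{-1}(\Tr_Z\cM_\one)$ for $Z \in S$.

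Now fix $Z \in S$ and take $X_Z$ as in hypothesis \cref{coconut2}. By \cref{blue}\cref{blue2}, $\Psi^{-1}(\Tr_Z \cM_\one)$ is generated by $1_{X_Z}$. Therefore $\cN$ is generated by the identities $1_{X_Z}$ for $Z \in S$, and each such $X_Z$ lies in $\Ob(\cN)$. It follows that $\cN \subseteq \cN^{\min}_{\Ob(\cN)}$. The reverse inclusion holds for any submodule, since a morphism factoring through an object $W$ with $1_W \in \cN$ already lies in $\cN$. Hence $\cN = \cN^{\min}_{\Ob(\cN)}$, which gives injectivity.

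The one step needing care is the claim that the submodule generated by the identities $1_{X_Z}$ is contained in $\cN^{\min}_{\Ob(\cN)}$. This requires $\cN^{\min}_{\tN}$ to be a $\cC$-submodule, which is shown in the proof of \cref{BBQ}. Applied with $\tN = \Ob(\cN)$, it is a $\cC$-submodule containing every $1_{X_Z}$, so it contains the submodule they generate.
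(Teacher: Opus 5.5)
Your proof is correct, and it reaches the conclusion by a different route from the paper.

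The paper works entirely on the path-algebra side. After writing $N = \Tr_S\cM_\one$ via \cref{bin}, it lets $\tN$ be the thick submodule generated by $S$ and checks directly that $\Tr_S\cM_\one(Z) = \cM(\one,Z)$ for every $Z \in \tN\cap\bB_\cM$. To do this it builds the chain $X_Z \inplus Z\otimes (X_Z)_1 \inplus Z'\otimes Y'\otimes (X_Z)_1 \inplus X'\otimes Y$ using \cref{green}. It then applies \cref{blue}\cref{blue1}, \cref{nan}, and the compatibility $(X\otimes Y)_1 = X_1\otimes Y_1$ from \cref{fix} to get $\Tr_S\cM_\one = \Tr_\tN\cM_\one$, and finishes with \cref{BBQ}\cref{BBQ3}.

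You instead move to the categorical side immediately. Because $\Psi$ is an order isomorphism, it preserves joins, so $\Psi^{-1}$ sends $\sum_{Z\in S}\Tr_Z\cM_\one$ to the submodule generated by the $\Psi^{-1}(\Tr_Z\cM_\one)$. By \cref{blue}\cref{blue2}, each of these is generated by the identity $1_{X_Z}$. Hence every $\cC$-submodule is generated by identity morphisms, and any such submodule automatically equals $\cN^{\min}$ of its object set.

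What your route buys:
\begin{itemize}
\item It is shorter.
\item It isolates the real structural content: under the hypotheses, every submodule is generated by identities.
\item It never uses \cref{green} or the tensor compatibility of the chosen objects $X_1$ in \cref{fix}. Only the single object $(X_Z)_1$ appears, and only inside \cref{blue}.
\end{itemize}

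What the paper's computation buys in return is a description phrased directly in terms of $S$: the object set of $\Psi^{-1}(\Tr_S\cM_\one)$ is the thick closure of $S$ itself. Your argument naturally produces the thick closure of the auxiliary objects $X_Z$; the two agree by \cref{blue}\cref{blue3} and \cref{blue}\cref{blue4}.

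Your remark that simplicity over $\cC(Z,Z)$ implies simplicity over the larger ring $\cM(Z,Z)$ is also correct, and it is exactly what \cref{bin} needs.
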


\begin{proof}
    By \cref{soup} and \cref{BBQ}\cref{BBQ3}, it suffices to prove that every submodule of $\cM_\one$ is of the form $\Tr_\tN \cM_\one$ for some $\tN \in \Thick(K_\oplus(\cM))$.  Let $N$ be a submodule of $\cM_\one$.  By \cref{bin}, we have $N = \Tr_S \cM_\one$ for some subset $S$ of $\bB_\cM$.  Let $\tN \in \Thick(K_\oplus(\cM))$ be the thick $K_\oplus(\cC)$-submodule of $K_\oplus(\cM)$ generated by $S$.  We will prove that $\Tr_S \cM_\one = \Tr_\tN \cM_\one$.
    
    Choose $Z \in \tN \cap \bB_\cM$ and let $X = X_Z$.  By the definition of $\tN$, there exist $Z' \in S$ and $Y' \in \Ob(\cC)$ such that $Z \inplus Z' \otimes Y'$.  Since $Y' \inplus Y'_1$, we may assume, without loss of generality, that $Y' = Y'_1$.  Let $X' = X_{Z'}$.  Thus, $Z' \inplus X' \otimes (X')_1^\vee$.  It follows from \cref{coconut2} and \cref{blue}\ref{blue1} that $\leftcup_X$ factors through $Z^{\oplus k}$ for some $k > 0$.  Thus, by \cref{green}, we have $X \inplus Z \otimes X_1$.  Therefore,
    \begin{gather*}
        X \inplus Z \otimes X_1
        \inplus Z' \otimes Y' \otimes X_1
        \inplus X' \otimes (X')_1^\vee \otimes Y' \otimes X_1
        = X' \otimes Y,
        \\
        \text{where } Y = (X')_1^\vee \otimes Y' \otimes X_1 \in \Ob(\cC).
    \end{gather*}
    Since $Z \inplus X \otimes X_1^\vee$ by \cref{coconut2}, we have
    \begin{equation} \label{kmc}
        Z \inplus X' \otimes Y \otimes (X' \otimes Y)_1^\vee
        \overset{\cref{fix}}{=} X' \otimes Y \otimes \big( (X')_1 \otimes Y \big)^\vee
        = X' \otimes Y \otimes Y^\vee \otimes (X')_1^\vee.
    \end{equation}
    
    \Cref{blue}\cref{blue1} implies that 
    \[
        \leftcup_{X'} \in \Tr_{Z'} \cM_\one(X' \otimes (X')_1^\vee) \subseteq \Tr_S \cM_\one.
    \]
    Since
    \[
        \leftcup_{X' \otimes Y}
        =
        \begin{tikzpicture}[centerzero]
            \draw[<-] (-0.5,0.5) \toplabel{(X')_1 \otimes Y} -- (-0.5,0) to[out=down,in=down,looseness=2] (0.5,0) -- (0.5,0.5);
            \coupon{-0.5,0}{e_{X'} \otimes 1_Y};
        \end{tikzpicture}
        \ =
        \begin{tikzpicture}[centerzero]
            \draw[<-] (-0.7,0.5) \toplabel{(X')_1} -- (-0.7,0) to[out=down,in=down,looseness=2] (0.7,0) -- (0.7,0.5);
            \draw[<-] (-0.2,0.5) -- (-0.2,0) to[out=down,in=down,looseness=2] (0.2,0) -- (0.2,0.5) \toplabel{Y};
            \coupon{-0.7,0}{e_{X'}};
        \end{tikzpicture}
        \ ,
    \]
    we have $\leftcup_{X' \otimes Y} \in \Tr_S \cM_\one (X' \otimes Y \otimes Y^\vee \otimes (X')_1^\vee)$.  Therefore, it follows from \cref{nan} that
    \[
        \Tr_S \cM_\one (X' \otimes Y \otimes Y^\vee \otimes (X')_1^\vee)
        = \cM(\one, X' \otimes Y \otimes Y^\vee \otimes (X')_1^\vee).
    \]
    Thus, by \cref{kmc}, we have $\Tr_S \cM_\one(Z) = \cM(\one,Z)$.
    \details{
        Any morphism $\one \to Z$ can be composed with the embedding of $Z$ into $X' \otimes Y \otimes Y^\vee \otimes (X')_1^\vee$ from \cref{kmc} to give a map $\one \to X' \otimes Y \otimes Y^\vee \otimes (X')_1^\vee$.  By the above centered equation, this map factors through $S$.  Then compose with the projection from $X' \otimes Y \otimes Y^\vee \otimes (X')_1^\vee$ to $Z$ to see that the original map $\one \to Z$ factors through $S$.
    }
    Since $Z \in \tN \cap \bB_\cM$ was arbitrary, this implies that $\Tr_\tN \cM_\one \subseteq \Tr_S \cM_\one$.  Because the reverse inclusion obviously holds, we have $\Tr_S \cM_\one = \Tr_\tN \cM_\one$, as desired.
\end{proof}

\begin{cor}[{cf.\ \cite[Th.~4.3.4]{Cou18}}] \label{dove}
    Suppose that $\cM$ and $\cC$ satisfy \cref{latte} and that $\kk$ is a field.  Assume that, for each $Z \in \bB_\cM$, there exists $X_Z \in \indec(\cM)$ such that
    \[
        Z \inplus X_Z \otimes (X_Z)_1^\vee
        \qquad \text{and} \qquad
        \dim_\kk \cM \big( (X_Z)_1, X_Z \big) = 1.
    \]
    Then the map $\Ob$, defined in \cref{Obmap}, is an isomorphism of partially ordered sets.
\end{cor}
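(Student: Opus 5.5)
The plan is to deduce the statement from \cref{coconut}. This means checking, for each $Z \in \bB_\cM$ and $X = X_Z$, the two hypotheses \cref{coconut1} and \cref{coconut2}. Both will follow from a dimension count of $\cM(\one, X \otimes X_1^\vee)$.

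\textbf{Step 1: a duality isomorphism.} I would first establish a $\kk$-linear isomorphism
\[
    \cM(X_1, X) \xrightarrow{\ \cong\ } \cM(\one, X \otimes X_1^\vee),\qquad
    h \mapsto (h \otimes 1_{X_1^\vee}) \circ \leftcup_{X_1}.
\]
Its inverse sends $f \colon \one \to X \otimes X_1^\vee$ to the morphism obtained by tensoring $f$ on the right with the strand $X_1$ and then capping the last two strands with $\leftcap_{X_1}$. This is exactly the diagram of \cref{Phi} with $Y = X$ and the $X$ of \cref{Phi} replaced by $X_1$. Here $X_1 \in \Ob(\cC)$, so its dual and the cup and cap exist. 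The morphisms of $\cM$ that appear ($h$ and $f$) are always exposed on the left of the diagram, so \cref{gold} ensures these composites are well defined. The zigzag relations \cref{zigleft} then show that the two maps are mutually inverse; this is the same manipulation as in the proof of \cref{cold}, or of \cref{nan}. I expect this to be the only point needing care, namely confirming that the placement of $\cM$-morphisms is legitimate in the Karoubi setting of \cref{latte}. The required compatibilities are already contained in \cref{soup} and \cref{nan}, so I expect it to go through. Consequently $\dim_\kk \cM(\one, X \otimes X_1^\vee) = \dim_\kk \cM(X_1, X) = 1$.

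\textbf{Step 2: the summand count.} Since $\cM$ is Krull--Schmidt, decompose $X \otimes X_1^\vee \cong \bigoplus_i W_i$ into indecomposables. Then $\cM(\one, X \otimes X_1^\vee) \cong \bigoplus_i \cM(\one, W_i)$, and this space has dimension $1$. Hence exactly one summand $W_i$, occurring with multiplicity one, satisfies $\cM(\one, W_i) \neq 0$, and for that summand $\dim_\kk \cM(\one, W_i) = 1$. By hypothesis $Z \inplus X \otimes X_1^\vee$, and $Z \in \bB_\cM$ means $\cM(\one, Z) \ne 0$. So this distinguished summand is $Z$, and no other indecomposable summand lies in $\bB_\cM$. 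This gives $\add(X \otimes X_1^\vee) \cap \bB_\cM = \{Z\}$, which is \cref{coconut2}.

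\textbf{Step 3: simplicity and conclusion.} Step~2 shows $\dim_\kk \cM(\one, Z) = 1$. A one-dimensional $\kk$-vector space has no nonzero proper $\kk$-subspaces. Any module structure on it by a $\kk$-algebra acting $\kk$-linearly, such as $\cC(Z,Z)$ or $\cM(Z,Z)$, therefore has no nonzero proper submodules, so $\cM(\one, Z)$ is simple. This is \cref{coconut1}. Since $Z \in \bB_\cM$ was arbitrary, \cref{coconut} applies and shows that $\Ob$ is an isomorphism of partially ordered sets.
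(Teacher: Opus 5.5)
Your proposal is correct and is essentially the paper's proof: both verify the two hypotheses of \cref{coconut} through the chain $0<\dim_\kk \cM(\one,Z)\le \dim_\kk \cM(\one, X_Z\otimes (X_Z)_1^\vee)=\dim_\kk \cM((X_Z)_1,X_Z)=1$. This chain forces $\cM(\one,Z)$ to be one-dimensional, hence simple, and forces $Z$ to be the unique summand of $X_Z\otimes(X_Z)_1^\vee$ lying in $\bB_\cM$. Your Step~1 spells out the duality isomorphism that the paper uses without comment; it only involves the right action of the cup, cap and identities of $(X_Z)_1 \in \Ob(\cC)$ on $\cM$-morphisms, so naturality of the action suffices and \cref{gold} is not actually needed there.
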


\begin{proof}
    By assumption, for every $Z \in \bB_\cM$, we have
    \[
        0 < \dim_\kk \cM(\one,Z)
        \le \dim_\kk \cM \big( \one, X_Z \otimes (X_Z)_1^\vee \big)
        = \dim_\kk \cM \big( (X_Z)_1, X_Z \big)
        = 1.
    \]
    Thus, $\cM(\one,Z)$ is one-dimensional, which implies that condition~\cref{coconut1} of \cref{coconut} is satisfied.  Furthermore, the fact that $\dim_\kk \cM \left( \one, X_Z \otimes (X_Z)_1^\vee \right) = 1$ implies that $X_Z \otimes (X_Z)_1^\vee$ contains precisely one direct summand lying in $\bB_\cM$.  Thus, condition~\ref{coconut2} of \cref{coconut} is also satisfied.
\end{proof}

We conclude this section with a result that will be used in the proof of \cref{tree}.

\begin{lem} \label{fish}
    Suppose that $Z,Z' \in \bB_\cM$, that $\cM(\one,Z)$ is a simple $\cM(Z,Z)$-module, and that $\cM(\one,Z')$ is a simple $\cM(Z',Z')$-module.  Then $\Tr_Z \cM_\one = \Tr_{Z'} \cM_\one$ if and only if $Z \cong Z'$.
\end{lem}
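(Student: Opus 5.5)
The direction ``if'' is immediate: if $\phi \colon Z \to Z'$ is an isomorphism, then any $f \circ g$ with $g \in \cM(\one,Z)$ and $f \in \cM(Z,X)$ can be rewritten as $(f\phi^{-1}) \circ (\phi g)$, which factors through $Z'$. Hence $\Tr_Z \cM_\one \subseteq \Tr_{Z'} \cM_\one$, and the reverse inclusion follows by symmetry.

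For ``only if'', assume $\Tr_Z \cM_\one = \Tr_{Z'} \cM_\one$. Since $Z \in \bB_\cM$, choose a nonzero $g \in \cM(\one,Z)$. Then $g \in \Tr_Z\cM_\one(Z) = \Tr_{Z'}\cM_\one(Z)$, so
\[
    g = \sum_i a_i \circ b_i, \qquad b_i \in \cM(\one,Z'),\ a_i \in \cM(Z',Z).
\]
Symmetrically, for a nonzero $g' \in \cM(\one,Z')$ we can write $g' = \sum_j c_j \circ d_j$ with $d_j \in \cM(\one,Z)$ and $c_j \in \cM(Z,Z')$.

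Next use simplicity. Since $\cM(\one,Z')$ is a simple $\cM(Z',Z')$-module, it is generated by $g'$, so each $b_i = \varphi_i \circ g'$ for some $\varphi_i \in \cM(Z',Z')$. Likewise each $d_j = \psi_j \circ g$. Substituting gives
\[
    g = \sum_{i,j} a_i \varphi_i c_j \psi_j \circ g = h \circ g, \qquad h := \sum_{i,j} a_i \varphi_i c_j \psi_j,
\]
where $h$ is a sum of endomorphisms of $Z$ each of which factors through $Z'$.

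Now use locality. $\cM$ is Krull--Schmidt and $Z$ is indecomposable, so $\cM(Z,Z)$ is local. Suppose, for contradiction, that $Z \not\cong Z'$. Then no composite $Z \to Z' \to Z$ is invertible: if it were, $Z$ would be a direct summand of the indecomposable $Z'$, forcing $Z \cong Z'$. So each summand of $h$ lies in the maximal ideal $J = \operatorname{rad}\cM(Z,Z)$, and hence $h \in J$. It follows that $1_Z - h$ is invertible. But $(1_Z - h) \circ g = 0$, so $g = 0$, a contradiction. Therefore $Z \cong Z'$.

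The main point to check is this last step: that every non-isomorphism between nonisomorphic indecomposables composes into the radical. This is standard in the Krull--Schmidt setting, because a composite $Z \to Z' \to Z$ that is a unit exhibits $Z$ as a summand of $Z'$, and by idempotent completeness and indecomposability this forces $Z \cong Z'$.
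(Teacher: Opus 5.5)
Your proof is correct and follows essentially the same route as the paper's: you obtain $g = h \circ g$ with $h \in \cM(Z,Z)$ factoring through $Z'$, and you then use locality of $\cM(Z,Z)$. The paper differs only in packaging. It bundles the sum into a single composite through $(Z')^{\oplus r}$, which means it needs only the simplicity of $\cM(\one,Z)$, and it concludes that $h$ is invertible, so $Z \inplus (Z')^{\oplus r}$. You instead keep the individual terms and use that the non-units of a local ring form an ideal.
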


\begin{proof}
    Suppose $\Tr_{Z'} \cM_\one = \Tr_Z \cM_\one$.  Choose a nonzero $f \in \cM(\one,Z)$.  Since $\cM(\one,Z)$ is simple as a $\cM(Z,Z)$-module, $f$ generates $\cM(\one,Z)$ as a $\cM(Z,Z)$-module, and hence $f$ also generates $\Tr_Z \cM_\one$ as a $P(\cM)$-module.  Because $f \in \Tr_{Z'} \cM_\one$, we have $f = g \circ h$ for some $r \in \N$, $h \in \cM(\one,(Z')^{\oplus r})$, and $g \in \cM((Z')^{\oplus r}, Z)$.  Then $h \in \Tr_{Z'} \cM_\one  = \Tr_Z \cM_\one$, which is generated by $f$.  So, $h = a \circ f$ for some $a \in \cM(Z,(Z')^{\oplus r})$.  Thus, we have $f = b \circ f$, where $b = g \circ a \in \cM(Z,Z)$ factors through $(Z')^{\oplus r}$.  Since $(1_Z - b) \circ f = 0$, the morphism $1_Z - b$ is not an isomorphism.  Thus, since $\cM(Z,Z)$ is local, $b$ is an isomorphism, which implies $Z \cong Z'$.
\end{proof}

\section{Pagodas\label{sec:pagodas}}

The goal in this section is to develop some sufficient conditions guaranteeing that the decategorification map is an isomorphism and allowing one to precisely describe the thick $K_\oplus(\cC)$-submodules of $K_\oplus(\cM)$.  The results here are a module-category analogue of \cite[\S4.4]{Cou18}.  Throughout this section $\kk$ is a field.

\begin{defin}[{\cite[Def.~4.4.1]{Cou18}}] \label{tier}
    A \emph{tiered monoidal category} is a pair $(\cS,\ell)$ where $\cS$ is a Krull--Schmidt monoidal category with $\End_\cS(\one) = \kk 1_\one \cong \kk$ and $\ell \colon \indec(\cS) \to \N$ is a set map such that
    \begin{description}[style=multiline, leftmargin=1cm, labelwidth=0.8cm]
        \item[\namedlabel{T1}{T1}] $\ell^{-1}(0) = \{\one\}$;
        \item[\namedlabel{T2}{T2}] for $\kappa,\lambda,\mu \in \indec(\cS)$, $\kappa \inplus \mu \otimes \lambda$ implies that $\ell(\kappa) = \ell(\mu) + \ell(\lambda)$.
    \end{description}
\end{defin}

If $(\cS,\ell)$ is a tiered monoidal category, then we define a binary relation $\preceq$ on $\indec(\cS)$ by setting
\begin{equation} \label{minion}
    \lambda \preceq \mu \iff \mu \inplus \lambda \otimes \nu \text{ for some } \nu \in \indec(\cS).
\end{equation}
Note that
\begin{equation} \label{bright}
    \lambda \preceq \mu \implies \lambda = \mu \text{ or } \ell(\lambda) < \ell(\mu).
\end{equation}
Note that $\preceq$ is a partial order.
\details{
    Reflexivity and antisymmetry are clear.  Suppose $\lambda \preceq \mu$ and $\mu \preceq \nu$.  Then there exist $\pi, \pi' \in \indec(\cS)$ such that
    \[
        \nu \inplus \mu \otimes \pi \inplus \lambda \otimes \pi' \otimes \pi.
    \]
    Since $\nu$ is indecomposable, it follows that there is an indecomposable summand $\pi''$ of $\pi' \otimes \pi$ such that $\nu \inplus \lambda \otimes \pi''$.  Thus, $\lambda \preceq \nu$.
}

\begin{defin}[{cf.\ \cite[\S4.4]{Cou18}}] \label{pagoda}
    A \emph{pagoda} is a tuple $(\cM,\cC,\cS,\ell,\phi,\bT)$, where
    \begin{itemize}
        \item $(\cM,\cC)$ satisfies \cref{latte},
        \item $\cM(\one,\one) = \kk 1_\one \cong \kk$,
        \item $(\cS,\ell)$ is a tiered monoidal category, 
        \item $\phi \colon \Lambda \to \Lambda$ is a set involution, with $\Lambda := \indec(\cS)$, and
        \item $\bT \colon \cS \to \cC$ is a monoidal functor,
    \end{itemize}
    such that the following conditions are satisfied:
    \begin{description}[style=multiline, leftmargin=1cm, labelwidth=0.8cm]
        \item[\namedlabel{P1}{P1}] We have a bijection $R \colon \Lambda \to \indec(\cM)$ such that, for all $\lambda \in \Lambda$, $\bT(\lambda) \cong R(\lambda) \oplus X$ (isomorphism in $\cM$) for some $X \in \Ob(\cM)$ such that $R(\mu) \inplus X \implies \mu \prec \lambda$.  In particular, this means that we can assume (in the notation of \cref{fix}) that
            \[
                R(\lambda)_1 = \bT(\lambda) \qquad \text{for all } \lambda \in \Lambda.
            \]
            
        \item[\namedlabel{P2}{P2}] We have $\bT(\lambda)^\vee \cong \bT(\phi(\lambda))$ (isomorphism in $\cM$) for all $\lambda \in \Lambda$.
                    
        \item[\namedlabel{P3}{P3}] If $\nu,\nu' \in \Lambda$, $\nu \ne \nu'$, and $R(\nu), R(\nu') \in \bB_\cM$, then $\ell(\nu) \ne \ell(\nu')$.  Let
            \[
                \bbL =
                \begin{cases}
                    \{0,1,2,\dotsc,n\} & \text{if } n+1 = |\bB_\cM| < \infty, \\
                    \N & \text{otherwise}.
                \end{cases}
            \]
            Then we define $\nu^{(j)}$, $j \in \bbL$, by setting $\bB_\cM = \{ R(\nu^{(j)}) : j \in \bbL \}$, where
            \[
                0 = \ell(\nu^{(0)}) < \ell(\nu^{(1)}) < \ell(\nu^{(2)}) < \dotsb.
            \]
            
        \item[\namedlabel{P4}{P4}] For each $j \in \bbL$, the set
            \[
                \Lambda_j := \{ \lambda \in \Lambda : \nu^{(j)} \inplus \lambda \otimes \phi(\lambda) \}
            \]
            is nonempty, and
            \[
                \dim_\kk \cM(\bT(\lambda),R(\lambda)) = 1
                \qquad \text{for all } \lambda \in \Lambda_j.
            \]
            
        \item[\namedlabel{P5}{P5}] If $j,j' \in \bbL$ with $j' < j$, then, for each $\lambda \in \Lambda_j$, there exists $\lambda' \in \Lambda_{j'}$ such that $\lambda' \prec \lambda$.
            
        \item[\namedlabel{P6}{P6}] If $\nu^{(j)} \preceq \kappa \inplus \lambda \otimes \phi(\lambda)$ for some $\lambda,\kappa \in \Lambda$ and $j \in \bbL$, then there exists $\mu \in \Lambda_j$ such that $\mu \preceq \lambda$.
    \end{description}
\end{defin}

\begin{lem}[{cf.\ \cite[Lem.~4.4.3]{Cou18}}] \label{bulb}
    Suppose $(\cM,\cC,\cS,\ell,\bT)$ is a pagoda and $\lambda, \mu, \kappa \in \Lambda$.
    \begin{enumerate}
        \item \label{bulb1} If $\ell(\kappa) = \ell(\lambda) + \ell(\mu)$, then the multiplicity of $R(\kappa)$ as a direct summand of $R(\lambda) \otimes \bT(\mu)$ is equal to the multiplicity of $\kappa$ as a direct summand of $\lambda \otimes \mu$.
            
        \item \label{bulb2} If $\lambda \preceq \mu$, and $R(\lambda) \in \tN \in \Thick(K_\oplus(\cM))$, then $R(\mu) \in \tN$.
    \end{enumerate}
\end{lem}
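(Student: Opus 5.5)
For part~\cref{bulb1}, I would start from \ref{P1}, which gives $R(\lambda) \oplus X \cong \bT(\lambda)$ with every indecomposable summand $R(\nu)$ of $X$ satisfying $\nu \prec \lambda$. Tensoring on the right with $\bT(\mu)$ and using that $\bT$ is monoidal yields
\[
    \bT(\lambda \otimes \mu) \cong \bT(\lambda) \otimes \bT(\mu) \cong \big(R(\lambda) \otimes \bT(\mu)\big) \oplus \big(X \otimes \bT(\mu)\big).
\]
Decompose $\lambda \otimes \mu \cong \bigoplus_\kappa \kappa^{\oplus m_\kappa}$ in $\cS$. By \ref{T2}, every $\kappa$ occurring here satisfies $\ell(\kappa) = \ell(\lambda)+\ell(\mu)$. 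Applying \ref{P1} to each such $\kappa$, the object $\bT(\kappa)$ is $R(\kappa)$ plus summands $R(\kappa')$ with $\kappa' \prec \kappa$. By \cref{bright}, these have $\ell(\kappa') < \ell(\kappa)$.

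So in $\bT(\lambda\otimes\mu)$, the indecomposables $R(\kappa)$ with $\ell(\kappa)=\ell(\lambda)+\ell(\mu)$ occur with multiplicity exactly $m_\kappa$. This uses Krull--Schmidt uniqueness in $\cM$ and the fact that $R$ is a bijection. The remaining task is to show that $X \otimes \bT(\mu)$ contains no summand $R(\kappa)$ of length $\ell(\lambda)+\ell(\mu)$. Given that, comparing multiplicities in the displayed isomorphism gives the claim.

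To prove this, I would show a general length bound. For any $\nu,\mu$, every indecomposable summand $R(\kappa')$ of $R(\nu)\otimes \bT(\mu)$, and hence of $\bT(\nu)\otimes\bT(\mu)$, satisfies $\ell(\kappa') \le \ell(\nu)+\ell(\mu)$. This follows from the same argument: $\bT(\nu)\otimes\bT(\mu)\cong\bT(\nu\otimes\mu)$, whose summands are $R(\kappa'')$ with $\kappa'' \preceq \kappa$ for some $\kappa \inplus \nu\otimes\mu$, and these have length at most $\ell(\nu)+\ell(\mu)$. Then each summand $R(\nu)$ of $X$ has $\ell(\nu)<\ell(\lambda)$, so the summands of $R(\nu)\otimes\bT(\mu)$ have length strictly less than $\ell(\lambda)+\ell(\mu)$. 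This finishes part~\cref{bulb1}.

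For part~\cref{bulb2}, suppose $\lambda\preceq\mu$, so $\mu\inplus\lambda\otimes\nu$ for some $\nu \in \Lambda$. By \ref{T2}, $\ell(\mu)=\ell(\lambda)+\ell(\nu)$, so part~\cref{bulb1} applies with $(\lambda,\nu,\mu)$. It gives that $R(\mu)$ appears in $R(\lambda)\otimes\bT(\nu)$ with positive multiplicity. Since $\tN$ is a thick $K_\oplus(\cC)$-submodule containing $R(\lambda)$, it contains $R(\lambda)\otimes\bT(\nu)$ and therefore its summand $R(\mu)$.

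The main obstacle is the bookkeeping in part~\cref{bulb1}. One must check that the lower-order parts from \ref{P1}, namely those with $\nu\prec\lambda$, cannot contribute summands of top length after tensoring. The length bound above handles this. Its key inputs are \cref{bright} together with the monoidality of $\bT$ and \ref{T2}.
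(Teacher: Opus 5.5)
Your proposal is correct and follows the paper's proof essentially step by step. The paper also tensors the (P1) decomposition $\bT(\lambda) \cong R(\lambda) \oplus X$ with $\bT(\mu)$, uses (T2), (P1) and \eqref{bright} to isolate the summands of top length $\ell(\lambda)+\ell(\mu)$, and deduces part (b) from part (a) via thickness. Your explicit length bound simply spells out the paper's one-line assertion that $X \otimes \bT(\mu)$ has only summands $R(\nu)$ with $\ell(\nu) < \ell(\lambda)+\ell(\mu)$.
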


\begin{proof}
    \begin{enumerate}[wide]
        \item Let $l = \ell(\lambda) + \ell(\mu)$.  By \ref{T2}, we have
            \[
                \lambda \otimes \mu = \bigoplus_{\kappa : \ell(\kappa) = l} \kappa^{\oplus c^\kappa_{\lambda,\mu}}
                \qquad \text{for some } c^\kappa_{\lambda,\mu} \in \N.
            \]
            Since $\bT$ is a monoidal functor, this implies that
            \begin{equation} \label{chicken}
                \bT(\lambda) \otimes \bT(\mu)
                = \bigoplus_\kappa \bT(\kappa)^{\oplus c^\kappa_{\lambda,\mu}}.
            \end{equation}
            By \ref{P1}, we have
            \[
                \bT(\lambda) \otimes \bT(\mu)
                = \big( R(\lambda) \oplus X \big) \otimes \bT(\mu)
                \cong \big( R(\lambda) \otimes \bT(\mu) \big) \oplus \big( X \otimes \bT(\mu) \big)
            \]
            for some $X \in \Ob(\cM)$ such that $R(\nu) \inplus X \implies \nu \prec \lambda$.  By \ref{T2} and \ref{P1}, $X \otimes T(\mu)$ is a sum of objects $R(\nu)$ with $\ell(\nu) < l$.  Thus, for $\kappa \in \Lambda$ with $\ell(\kappa)=l$, the multiplicity of $R(\kappa)$ in $\bT(\lambda) \otimes \bT(\mu)$ is equal to the multiplicity of $R(\kappa)$ in $R(\lambda) \otimes \bT(\mu)$.  Combined with \ref{P1}, \cref{bright}, and \cref{chicken}, this implies that the multiplicity of $R(\kappa)$ in $R(\lambda) \otimes \bT(\mu)$ is equal to $c^\kappa_{\lambda,\mu}$, as desired.
            
        \item Suppose $\lambda \preceq \mu$ and $R(\lambda) \in \tN \in \Thick(K_\oplus(\cM))$.  Then $\mu \inplus \lambda \otimes \nu$ for some $\nu \in \Lambda$.  Thus, by part \cref{bulb1}, $R(\mu) \inplus R(\lambda) \otimes \bT(\nu)$, and so $R(\mu) \in \tN$, as desired.
        \qedhere
    \end{enumerate}
\end{proof}

\begin{theo}[{cf.\ \cite[Th.~4.4.4]{Cou18}}] \label{tree}
    Suppose $(\cM,\cC,\cS,\ell,\bT)$ is a pagoda.  For $j \in \bbL$, define
    \begin{equation}
        N_j := \Tr_{R(\nu^{(j)})} \cM_\one,\qquad
        \cN_j := \Psi^{-1}(N_j),\qquad
        \tN_j := \Ob(\cN_j).
    \end{equation}
    \begin{enumerate}
        \item \label{tree1} The decategorification map $\Ob \colon \Submod_\cC(\cM) \to \Thick(K_\oplus(\cM))$ is an isomorphism of partially ordered sets.
        
        \item \label{tree2} We have $\Thick(K_\oplus(\cM)) = \{\tN_i : i \in \bbL\}$ with $\Ob(\cM) = \tN_0 \supsetneq \tN_1 \supsetneq \tN_2 \supsetneq \dotsb$ and
            \begin{equation} \label{branch}
                R(\mu) \in \tN_j \iff \lambda \preceq \mu \text{ for some } \lambda \in \Lambda_j.
            \end{equation}
            
        \item \label{tree3} For $i,j \in \bbL$,
            \[
                \cN_i \left( \one, R(\nu^{(j)}) \right)
                =
                \begin{cases}
                    0 & \text{if } j < i, \\
                    \cM \left( \one, R(\nu^{(j)}) \right) \cong \kk & \text{if } j \ge i.
                \end{cases}
            \]
    \end{enumerate}
\end{theo}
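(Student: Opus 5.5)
The plan follows Coulembier's proof of \cite[Th.~4.4.4]{Cou18}: first verify the hypotheses of \cref{dove} to get part~\cref{tree1}, then identify the fibres explicitly.

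\textbf{Part~\cref{tree1}.} Fix $j \in \bbL$ and put $Z = R(\nu^{(j)})$. By \ref{P4} we may choose $\lambda \in \Lambda_j$ and set $X_Z := R(\lambda)$, so that by \ref{P1} we have $(X_Z)_1 = \bT(\lambda)$. Then \ref{P4} gives $\dim_\kk \cM(\bT(\lambda),R(\lambda)) = 1$. It remains to check that $Z \inplus R(\lambda) \otimes \bT(\lambda)^\vee$. By \ref{P2}, $\bT(\lambda)^\vee \cong \bT(\phi(\lambda))$. Since $\nu^{(j)} \inplus \lambda \otimes \phi(\lambda)$, axiom \ref{T2} gives $\ell(\nu^{(j)}) = \ell(\lambda) + \ell(\phi(\lambda))$. \Cref{bulb}\cref{bulb1} then yields $R(\nu^{(j)}) \inplus R(\lambda) \otimes \bT(\phi(\lambda))$. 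So \cref{dove} applies, and $\Ob$ is an isomorphism.

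\textbf{Part~\cref{tree3}.} By \cref{dove}, $\cM(\one,Z)$ is one-dimensional for every $Z \in \bB_\cM$, hence simple. So by \cref{bin} every submodule of $\cM_\one$ is $\Tr_S \cM_\one$ for some $S \subseteq \bB_\cM$. By \cref{coconut} and \cref{BBQ}, the submodules correspond to thick submodules $\tN$, through $\Tr_\tN \cM_\one$. The key claim is:
\[
    R(\nu^{(j)}) \in \tN_i \iff j \ge i .
\]
For ``$\Leftarrow$'': \cref{blue}\cref{blue4} shows that $\tN_i$ is the thick submodule generated by $R(\nu^{(i)})$, and also by $R(\lambda)$ for any $\lambda \in \Lambda_i$. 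For $j > i$, pick $\mu \in \Lambda_j$; \ref{P5} gives $\lambda' \in \Lambda_i$ with $\lambda' \prec \mu$. Then \cref{bulb}\cref{bulb2} gives $R(\mu) \in \tN_i$, hence $R(\nu^{(j)}) \inplus R(\mu)\otimes\bT(\phi(\mu))$ lies in $\tN_i$.

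For ``$\Rightarrow$'', the strategy is to first prove \cref{branch}. The direction ``$\Leftarrow$'' of \cref{branch} is \cref{bulb}\cref{bulb2}. For ``$\Rightarrow$'', note that the set $\{R(\mu) : \lambda \preceq \mu \text{ for some } \lambda \in \Lambda_i\}$ contains $R(\lambda)$ for $\lambda \in \Lambda_i$. It therefore suffices to show that this set spans a thick $K_\oplus(\cC)$-submodule, i.e.\ that it is closed under $-\otimes \bT(\nu)$. Every $\cC$-object is a summand of some $\bT(\nu)$ by \ref{P1}.

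If $R(\kappa) \inplus R(\mu)\otimes \bT(\nu)$ with $\ell(\kappa) < \ell(\mu)+\ell(\nu)$, we cannot use \cref{bulb} directly. Here I would use \ref{P6}: a lower-tier summand should arise through a contraction $\lambda\otimes\phi(\lambda)$ piece. The argument should track $\preceq$ through the description of $R(\mu)$ as the top summand of $\bT(\mu)$, and then conclude that $\kappa$ dominates some element of $\Lambda_i$.

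This closure step is the main obstacle. I would attempt it by induction on $\ell(\mu)+\ell(\nu)$, writing $\bT(\mu)\otimes\bT(\nu) = \bT(\mu\otimes\nu)$ and decomposing via \ref{P1}.

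Granting \cref{branch}: if $R(\nu^{(j)}) \in \tN_i$, then $\lambda \preceq \nu^{(j)}$ for some $\lambda \in \Lambda_i$. Since $\nu^{(i)} \inplus \lambda\otimes\phi(\lambda)$, we get $\ell(\lambda) \le \ell(\nu^{(i)})$, so by \cref{bright} $\ell(\nu^{(j)}) \ge \ell(\lambda)$. I expect to compare with $\ell(\nu^{(i)})$ using \ref{P6} to force $j \ge i$. Then part~\cref{tree3} follows, because $\cN_i(\one, R(\nu^{(j)}))$ is either $0$ or all of $\cM(\one,R(\nu^{(j)}))\cong\kk$, according to whether $R(\nu^{(j)}) \in \tN_i$.

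\textbf{Part~\cref{tree2}.} Every thick $\tN$ equals $\Ob(\Psi^{-1}(\Tr_S\cM_\one))$ for some $S \subseteq \bB_\cM$. By the claim, $\Tr_{R(\nu^{(j)})}\cM_\one \subseteq \Tr_{R(\nu^{(i)})}\cM_\one$ whenever $j \ge i$. So $\Tr_S \cM_\one = N_{i}$ with $i = \min\{j : R(\nu^{(j)}) \in S\}$, and every thick submodule is some $\tN_i$; note that $S$ is nonempty for any nonzero $\tN$.

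The inclusions $\tN_0 \supseteq \tN_1 \supseteq \dotsb$ follow from the claim. They are strict by \cref{fish} combined with the injectivity of $\Ob$. Finally, $\tN_0 = \Ob(\cM)$ because $\nu^{(0)} = \one$, so $1_\one \in \cN_0$.
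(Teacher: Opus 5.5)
Your part~\cref{tree1} and the direction $j\ge i \Rightarrow R(\nu^{(j)})\in\tN_i$ (via \ref{P5} and \cref{bulb}\cref{bulb2}) follow the paper's argument. Two steps, however, are left unproved.

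The first is the converse, $R(\nu^{(j)})\notin\tN_i$ for $j<i$, which is the zero case of part~\cref{tree3}. You route it through the forward direction of \cref{branch} plus a length comparison. But from $\lambda\preceq\nu^{(j)}$ with $\lambda\in\Lambda_i$ you only get $\ell(\nu^{(j)})\ge\ell(\lambda)=\ell(\nu^{(i)})-\ell(\phi(\lambda))$, and you give no way to reach $j\ge i$ from this. No information about $\preceq$ is needed here. If $R(\nu^{(j)})\in\tN_i$ with $j<i$, then $1_{R(\nu^{(j)})}\in\cN_i$, so $N_j\subseteq N_i$. The direction you did prove gives $N_i\subseteq N_j$. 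Thus $N_i=N_j$, contradicting \cref{fish}. You already invoke \cref{fish} for strictness in part~\cref{tree2}. The paper uses exactly this (pairwise distinctness, the chain of inclusions, and $\dim_\kk\cM(\one,R(\nu^{(j)}))=1$) to obtain part~\cref{tree3} \emph{before} touching \cref{branch}.

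The second is the forward direction of \cref{branch}. You plan to show that $\{R(\mu):\lambda\preceq\mu \text{ for some }\lambda\in\Lambda_i\}$ is closed under $-\otimes\bT(\nu)$. That plan stalls on the lower-tier summands, and the proposed induction is not carried out. The paper sidesteps thickness entirely by testing membership with \cref{pumpkin} and then using part~\cref{tree3}:
\begin{enumerate}
\item If $R(\mu)\in\tN_j$, then the nonzero morphism $\leftcup_{R(\mu)}$ lies in $N_j\big(R(\mu)\otimes\bT(\mu)^\vee\big)$.
\item $N_j$ vanishes on every indecomposable summand except the $R(\nu^{(j')})$ with $j'\ge j$. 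This uses part~\cref{tree3} together with $\cM(\one,Y)=0$ for indecomposable $Y\notin\bB_\cM$.
\item Hence some such $R(\nu^{(j')})$ is a summand of $R(\mu)\otimes\bT(\mu)^\vee$. By \ref{P1} and \ref{P2} it is then a summand of $\bT(\mu\otimes\phi(\mu))$, and, being indecomposable, of $\bT(\nu)$ for some $\nu\inplus\mu\otimes\phi(\mu)$.
\item Now \ref{P1} gives $\nu^{(j')}\preceq\nu$, \ref{P6} gives $\lambda'\in\Lambda_{j'}$ with $\lambda'\preceq\mu$, and \ref{P5} gives $\lambda\in\Lambda_j$ with $\lambda\preceq\lambda'\preceq\mu$.
\end{enumerate}
So \ref{P6} is applied once, to the single product $\mu\otimes\phi(\mu)$, rather than inside an induction over all tensor products.
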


\begin{proof}
    Let $j \in \bbL$ and choose $\lambda \in \Lambda_j$.  By the definition of $\Lambda_j$, we have $\nu^{(j)} \inplus \lambda \otimes \phi(\lambda)$.  Then, by \ref{T2}, \ref{P2}, and \cref{bulb}\cref{bulb1}, we have $R(\nu^{(j)}) \inplus R(\lambda) \otimes \bT(\lambda)^\vee$.  By \ref{P4} and the fact that $R(\lambda)_1 = \bT(\lambda)$, the hypotheses of \cref{dove} are satisfied.  Thus, part~\ref{tree1} follows.  As in the proof of \cref{dove}, we also have that $\dim_\kk \cM(\one, R(\nu^{(j)})) = 1$ and that $R(\nu^{(j)})$ is the only element of $\bB_\cM$ appearing as a summand in $R(\lambda) \otimes \bT(\lambda)^\vee$ for $\lambda \in \Lambda_j$.  Thus, by \cref{blue}\cref{blue3}, $\tN_j$ is generated by $R(\lambda)$ as a thick $K_\oplus(\cC)$-module, for all $\lambda \in \Lambda_j$.
    
    The trace submodules $N_j$, $j \in \bbL$, are pairwise distinct by \cref{fish}.  By \ref{P5} and \cref{bulb}\cref{bulb2}, this implies that, for $j' < j$, we have $\tN_j \subseteq \tN_{j'}$, hence also $N_j \subseteq N_{j'}$.
    \details{
        Suppose $j,j' \in \bbL$ satisfy $j'<j$.  Then, by \ref{P5}, there exists $\lambda' \in \Lambda_{j'}$ such that $\lambda' \prec \lambda$.  Since $R(\lambda') \in \tN_{j'}$, \cref{bulb}\cref{bulb2} implies that $R(\lambda) \in \tN_{j'}$.
    }
    By \cref{bin} (which can be applied since $\dim_\kk \cM(\one, R(\nu^{(j)})) = 1$) and the above discussion, we see that $\Submod_{P(\cM)}(\cM_\one) = \{N_j : j \in \bbL\}$.  Part~\cref{tree3} is now clear.
    
    It remains to prove \cref{branch}.  Since $R(\lambda) \in \tN_j$ for all $\lambda \in \Lambda_j$, \cref{bulb}\cref{bulb2} implies that the reverse implication $\impliedby$ in \cref{branch} holds.  To see the forward implication, suppose $R(\mu) \in \tN_j$.  By \cref{pumpkin}, we have $\leftcup_{R(\mu)} \in N_j$.  By part~\cref{tree3}, this implies that $R(\nu^{(j')}) \inplus R(\mu) \otimes \bT(\mu)^\vee$ for some $j' \ge j$.  By \ref{P1} and \ref{P2}, this implies that $R(\nu^{(j')}) \inplus \bT(\mu \otimes \phi(\mu))$.  Since $R(\nu^{(j')})$ is indecomposable, this implies that $R(\nu^{(j')}) \inplus \bT(\nu)$ for some $\nu \in \Lambda$ with $\nu \inplus \mu \otimes \phi(\mu)$.  By \ref{P1}, we have $\nu^{(j')} \preceq \nu$. Thus, by \ref{P6}, there exists $\lambda' \in \Lambda_{j'}$ with $\lambda' \preceq \mu$.  Then, by \ref{P5}, there exists $\lambda \in \Lambda_j$ such that $\lambda \preceq \lambda' \preceq \mu$.  This completes the proof.
\end{proof}

\section{Indecomposable objects in the disoriented skein category\label{sec:DSindec}}

Throughout this section we assume that $\kk$ is a field.  We fix elements $q,t \in \kk^\times$, $\delta \in \kk$ such that \cref{delta} is satisfied.  We also assume that the quantum characteristic is $\infty$, i.e., that either
\begin{itemize}
    \item $q$ is not a root of unity, or 
    \item $q=1$ and the characteristic of $\kk$ is zero.
\end{itemize}
Recall the categories $\OScat(q,t)$ and $\DScat(q,t)$ from \cref{DStwist}.  Our goal in this section is to classify the indecomposable objects in the additive Karoubi envelope $\Kar(\DScat(q,t))$, up to isomorphism.  We follow the approach of \cite[\S4]{CH17}.  For an idempotent $e$ in a category $\cC$, we let $\obj{e}$ denote the corresponding object in $\Kar(\cC)$.

It follows from the basis theorem \cite[Th.~7.1]{SSS25} that
\begin{itemize}
    \item $\End_{\DScat(q,t)}(\one) = \kk 1_\one \cong \kk$,
    \item $\Hom_{\DScat(q,t)}(X,Y)$ is a finitely-generated $\kk$-module for all $X,Y \in \Ob \DScat(q,t)$.
\end{itemize}
As noted at the start of \cref{sec:decat}, it follows that $\Kar(\DScat(q,t))$ is Krull--Schmidt.  Since the morphism spaces of $\DScat(q,t)$ are finite dimensional, to classify the indecomposable objects of $\DScat(q,t)$, it suffices to classify the primitive idempotents in $\DScat(q,t)$, up to conjugacy.
\details{
    Consider a $\kk$-linear additive category with finite-dimensional morphism spaces.  If $e \in \End(X)$ is not primitive, say $e=e_1+e_2$, then $(X,e)\cong (X,e_1)\oplus (X,e_2)$.  Thus, $(X,e)$ is not indecomposable. 

    Conversely, if $e \in A = \End(X)$ is primitive, then $\End(X,e) = eAe =\End_A(Ae)$. But $Ae$ is indecomposable as $A$-module (here we need finiteness) and thus $eAe$ is local.  Hence, it has no nontrivial idempotents. This proves that $(X,e)$ is indecomposable.  

    Now suppose that $(X,e)\cong (Y,f)$, with $e \in A=\End(X)$, $f \in \End(Y)$ primitive idempotents.  Since $(X,e) \cong (X \oplus Y, (e,0))$ and $(Y,f) \cong (X \oplus Y, (0,f))$, we may assume, without loss of generality, that $X=Y$.  Then we have
    \[
        g \in fAe = \Hom_A(Af,Ae)
        \qquad \text{and} \qquad
        h \in eAf = \Hom_A(Ae,Af)
    \]
    such that $gh = f$ and $gf=e$.  Thus, $Ae \cong Af$ as $A$-modules.  Since $A$ is finite dimensional over $\kk$, this implies that $e$ and $f$ are conjugate.  (See \cite[Prop.~1.7.2]{Ben98}.)
    
    Conversely, if $e = \End(X)$ and $f \in \End(Y)$ are conjugate, then there exists an invertible $h \colon Y \to X$ such that $e = h f h^{-1}$.  Then
    \[
        eh = hf = ehf \colon (Y,f) \to (X,e)
        \quad \text{and} \quad
        fh^{-1} = h^{-1} e = f h^{-1} e \colon (X,e) \to (Y,f)
    \]
    are mutually inverse isomorphisms, since
    \[
        (ehf)(fh^{-1}e) = ehfh^{-1}e = e h h^{-1} e = e,\quad
        (fh^{-1}e)(ehf) = fh^{-1}ehf = f h^{-1} h f = f.
    \]
}

Since every object of $\DScat(q,t)$ is isomorphic to $\upobj^r$ for some $r \in \N$, it is enough to consider the primitive idempotents in
\begin{equation} \label{DSalg}
    \DSalg_r = \DSalg_r(q,t) := \End_{\DScat(q,t)}(\upobj^r),\qquad r \in \N.
\end{equation}
Our method will be an induction on $r$ based on \cref{raft} below, which seems to be well known---for example, it is stated in \cite[Lem.~3.3]{CO11} under slightly stronger hypotheses.  We have included a proof since we could not find one in the literature.  Recall that a ring is \emph{semiperfect} if and only if its identity element $1$ can be written as a sum of mutually orthogonal local idempotents; see \cite[Th.~23.6]{Lam01} for this characterization and \cite[Ch.~8]{Lam01} for further properties of semiperfect rings.  Every artinian ring is semiperfect.  In particular, finite-dimensional algebras over fields are semiperfect.

\begin{prop} \label{raft}
    Suppose that $A$ is a semiperfect ring and that $\xi \in A$ is an idempotent.  There is a bijection of sets
    \[
        \varphi \colon
        \left\{ \parbox{4cm}{conjugacy classes of primitive idempotents in $A$} \right\}
        \xrightarrow{\cong}
        \left\{ \parbox{4cm}{conjugacy classes of primitive idempotents in $\xi A \xi$} \right\}
        \sqcup
        \left\{ \parbox{4cm}{conjugacy classes of primitive idempotents in $A/(A \xi A)$} \right\}
    \]
    given by
    \[
        \varphi(X)
        =
        \begin{cases}
            X \cap (\xi A \xi) & \text{if } X \subseteq A \xi A, \\
            \{ e + A \xi A : e \in X\} & \text{if } X \nsubseteq A \xi A.
        \end{cases}
    \]
\end{prop}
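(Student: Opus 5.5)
The plan is to translate everything into the language of finitely generated projective modules, where semiperfect rings behave like Krull--Schmidt categories. For a semiperfect ring $A$, conjugacy classes of primitive idempotents $e$ correspond bijectively to isomorphism classes of indecomposable projective left $A$-modules $Ae$. Moreover, $e$ is primitive if and only if $eAe$ is local, since idempotents in semiperfect rings are local exactly when primitive. Two primitive idempotents $e,f$ are conjugate if and only if $Ae \cong Af$. This last point is the standard fact for semiperfect rings (cf.\ \cite[\S21--23]{Lam01}): an isomorphism $Ae\cong Af$ gives $e A f$, $fAe$ elements $a,b$ with $ab=e$, $ba=f$. One then extends to a unit using $A(1-e)\cong A(1-f)$, obtained by Krull--Schmidt cancellation for finitely generated projectives over semiperfect rings. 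I will also use that $\xi A\xi$ and $A/A\xi A$ are again semiperfect: a corner ring of a semiperfect ring is semiperfect, and so is any quotient, since lifting of idempotents and semisimplicity modulo the radical pass to quotients.

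\emph{Case $X\subseteq A\xi A$.} Take $e\in X$ primitive with $e\in A\xi A$. I first show that $Ae$ is a direct summand of $(A\xi)^{\oplus n}$, equivalently that $e$ is conjugate to an idempotent in $\xi A\xi$. Write $e=\sum_i a_i\xi b_i$, which gives a surjection $(A\xi)^{\oplus n}\to Ae$, $(x_i)\mapsto \sum x_i b_i e$. This map splits because $Ae$ is projective. By Krull--Schmidt, since $A\xi$ decomposes into finitely many $A\xi'$ with $\xi'$ local idempotents of $\xi A\xi$, the indecomposable $Ae$ is isomorphic to $A\xi'$ for some primitive $\xi'\in\xi A\xi$. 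Hence $X\cap \xi A\xi\neq\emptyset$.

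Next I show that $X\cap\xi A\xi$ is a single conjugacy class in $\xi A\xi$ and that every primitive idempotent of $\xi A\xi$ arises. For idempotents $e,f\le \xi$, we have $\Hom_A(Ae,Af)=eAf=e(\xi A\xi)f$, so $Ae\cong Af$ over $A$ if and only if $(\xi A\xi)e\cong(\xi A\xi)f$. Also $e(\xi A\xi)e=eAe$, so primitivity is the same in both rings. Together with the conjugacy criterion above, this gives a well-defined injective map onto all conjugacy classes of primitive idempotents in $\xi A\xi$.

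\emph{Case $X\nsubseteq A\xi A$.} Conjugation preserves the two-sided ideal $A\xi A$, so no element of $X$ lies in it. Set $I=A\xi A$ and $\bar A=A/I$. For primitive $e\notin I$, the image $\bar e$ is a nonzero idempotent and $\bar e\bar A\bar e=eAe/eIe$. This is a nonzero quotient of a local ring, hence local, so $\bar e$ is primitive. Conjugate elements stay conjugate, so the map is well defined.

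For injectivity I will use $\Hom_{\bar A}(\bar A\bar e,\bar A\bar f)=eAf/eIf$. If $\bar a\bar b=\bar e$ with $a\in eAf$, $b\in fAe$, then $ab\in eAe$ is congruent to $e$ modulo $eIe$. Since $eIe\neq eAe$ and $eAe$ is local, $eIe\subseteq\operatorname{rad}(eAe)$, so $ab$ is a unit of $eAe$ and $Ae$ is a summand of $Af$. Indecomposability then gives $Ae\cong Af$. For surjectivity, I lift a primitive idempotent of $\bar A$ to an idempotent of $A$; this uses the fact that idempotents lift modulo $I$, which holds because $I$-images of semiperfect rings admit lifting by decomposing $1$ into local idempotents and projecting. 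I then decompose the lift into primitives, exactly one of which is not in $I$.

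The main obstacle is this lifting and surjectivity step for an arbitrary ideal $I$, which need not lie in the radical. I would handle it by writing $1=\sum e_i$ with local $e_i$ and noting that $\bar 1=\sum_{e_i\notin I}\bar e_i$ is a decomposition into local idempotents. Any primitive idempotent $\bar\epsilon$ of $\bar A$ satisfies $\bar A\bar\epsilon$ indecomposable projective, so by Krull--Schmidt over $\bar A$ it is isomorphic to some $\bar A\bar e_i$. It is therefore conjugate to $\bar e_i$, which is the image of $X=[e_i]$. Finally, the two cases are disjoint, which shows that $\varphi$ is a bijection.
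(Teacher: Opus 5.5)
Your proof is correct. It is organized differently from the paper's, and its key step on the quotient side is more elementary.

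\textbf{Organization.} The paper fixes a decomposition $\xi=e_1+\dots+e_m$, $1-\xi=f_1+\dots+f_n$ into orthogonal primitive idempotents. It then splits the conjugacy classes of $A$ into those of the $e_i$ and those of the $f_j$ not conjugate to any $e_i$. Finally it matches the first group with the classes of $\xi A\xi$, and the second with the classes of $A/A\xi A\cong\bigoplus_{j>l}Af_j/(A\xi A)f_j$.

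You instead argue class by class and intrinsically. For $e,f\in\xi A\xi$ you use $\Hom_A(Ae,Af)=eAf=\Hom_{\xi A\xi}((\xi A\xi)e,(\xi A\xi)f)$. On the quotient side you use $\Hom_{\bar A}(\bar A\bar e,\bar A\bar f)=eAf/eIf$.

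The split surjection $(A\xi)^{\oplus n}\to Ae$ is the same lemma in both proofs. The paper uses it to show $f_j\notin A\xi A$ for $j>l$; you use it to show that every class inside $A\xi A$ meets $\xi A\xi$.

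\textbf{Injectivity on the quotient side.} This is where the two arguments genuinely differ. The paper passes to the common simple top and invokes uniqueness of projective covers over semiperfect rings. You instead observe that $eIe$ is a proper ideal of the local ring $eAe$, hence lies in its radical. So $a\in eAf$, $b\in fAe$ with $ab\equiv e$ modulo $I$ already forces $ab\in(eAe)^\times$, and then $Ae$ is a direct summand of the indecomposable module $Af$. This is more elementary and self-contained: it is essentially the proof that $Ae\to\bar A\bar e$ is a projective cover, unwound.

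\textbf{Two small remarks.}

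First, your initial surjectivity sketch, which lifts idempotents modulo an arbitrary ideal $I$, is not justified as written. Decomposing $1$ and projecting only produces idempotents of the form $\sum\bar e_i$, and an arbitrary idempotent of $\bar A$ is merely conjugate to one of these. The sketch is also unnecessary: your second argument, Krull--Schmidt over $\bar A$ using $\bar 1=\sum_{e_i\notin I}\bar e_i$, is exactly the paper's decomposition of $A/A\xi A$ and suffices. You should drop the first sketch.

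Second, exactly as in the paper, your well-definedness check only shows that $\{e+A\xi A : e\in X\}$ is \emph{contained in} one conjugacy class of $A/A\xi A$. For it to \emph{equal} that class, units of $A/A\xi A$ must lift to units of $A$. This does hold for semiperfect $A$: lift the image of the unit in $A/(\operatorname{rad}A+A\xi A)$, which is a direct factor of the semisimple ring $A/\operatorname{rad}A$, to a unit of $A$, then correct by an element of $1+\operatorname{rad}A$. Neither argument spells this out.
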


Before giving the proof, we note that, if $X$ is the conjugacy class of a primitive idempotent $e$ in a ring $A$, then
\[
    X \nsubseteq A \xi A \iff X \cap (A \xi A) = \varnothing.
\]
\details{
    We have
    \[
        \big( e \in A \xi A \iff X \subseteq A \xi A \big)
        \quad \text{and} \quad
        \big( e \notin A \xi A \iff X \cap (A \xi A) = \varnothing \big).
    \]
}

\begin{proof}    
    Let
    \begin{itemize}
        \item $I$ be the set of conjugacy classes of primitive idempotents in $A$, 
        \item $J$ be the set of conjugacy classes of primitive idempotents in $\xi A \xi$, and
        \item $K$ be the set of conjugacy classes of primitive idempotents in $A/A\xi A$.
    \end{itemize}
    For a ring $R$, let $\PI(R)$ denote the set of primitive idempotents in $R$.  For $e,f \in \PI(R)$, we write $e \sim_R f$ if $e$ and $f$ are conjugate in $R$.
    
    Since $A$ is semiperfect, we may choose decompositions
    \begin{equation} \label{nail}
        \xi = e_1 + \dotsb + e_m
        \qquad \text{and} \qquad
        1-\xi = f_1 \dotsb + f_n
    \end{equation}
    into sums of mutually orthogonal primitive idempotents.  (This follows, for example, by taking the projective module $R \xi$ in the left-module analogue of \cite[Cor.~24.14(1)]{Lam01}.)  Then
    \[
        1 = e_1 + \dotsb + e_m + f_1 + \dotsb + f_n
    \]
    is a decomposition into pairwise orthogonal primitive idempotents.
    \details{
        For $1 \le i \le m$ and $1 \le j \le n$, we have
        \[
            e_i f_j = (e_i \xi) \big( (1-\xi) f_j \big)
            = 0.
        \]
        Similarly $f_j e_i = 0$.
    }
    Every primitive idempotent in $A$ is conjugate to one of the $e_1,\dotsc,e_m,f_1,\dotsc,f_n$.  Rearranging the $f_i$ if necessary, we may assume that $f_1,\dotsc,f_l$ are each conjugate to some $e_j$, and that none of $f_{l+1},\dotsc,f_n$ are conjugate to any $e_j$.  (We allow $l=0$ or $l=n$.)  Thus, we have
    \[
        I = I_1 \sqcup I_2,\qquad
        I_1 = \{e_1,\dotsc,e_m\}/\sim_A,\qquad
        I_2 = \{f_{l+1},\dotsc,f_n\}/\sim_A.
    \]
    
    Since $\xi$ is the identity element of the corner ring $\xi A \xi$ (which is also semiperfect, by \cref{nail}), and the $e_i$ remain primitive in this ring, it follows from the decomposition of $\xi$ in \cref{nail} that  every primitive idempotent of $\xi A \xi$ is conjugate to some $e_i$.  Thus, we have 
    \begin{multline*}
        \{ e_1, e_2, \dotsc, e_m \} \subseteq \PI(\xi A \xi)
        = \{ e \in \PI(\xi A \xi) : e \sim_{\xi A \xi} e_i \text{ for some } 1 \le i \le m\}
        \\
        \subseteq \{ e \in \PI(A) : e \sim_A e_i \text{ for some } 1 \le i \le m \}.
    \end{multline*}
    Clearly,
    \[
        e_i \sim_{\xi A \xi} e_j \implies e_i \sim_A e_j.
    \]
    Conversely,
    \begin{align*}
        e_i \sim_A e_j
        &\implies A e_i \cong A e_j \text{ as $A$-modules} \\
        &\implies \xi Ae_i \cong \xi A e_j \text{ as $\xi A \xi$-modules} \\
        &\implies e_i \sim_{\xi A \xi} e_j.
    \end{align*} 
    \details{
        Let $\alpha \colon A e_i \to A e_j$ be an isomorphism of $A$-modules.  Then, $\alpha( \xi A e_i ) = \xi \alpha(A e_i) = \xi A e_j$, and $\alpha$ intertwines the action of $\xi A \xi$ since it intertwines the action of $A$.
    }
    Thus, we have a bijection
    \[
        I_1 \xrightarrow{\cong} J,\qquad X \mapsto X \cap (\xi A \xi).
    \]
    
    Let $B = A/ A \xi A$.  Since $e_i = e_i \xi \in A \xi A$ for all $1 \le i \le m$, we have a decomposition
    \begin{equation} \label{cheese}
        B \cong \bigoplus_{j={l+1}}^n A f_j/ (A \xi A) f_j
    \end{equation}
    of $B$ as a sum indecomposable projective $B$-modules.  We claim that each summand in \cref{cheese} is nonzero.  Suppose, towards a contradiction, that $A f_j / (A \xi A) f_j = 0$ for some $l < j \le n$.  Then $f_j \in A \xi A f_j \subseteq A \xi A$.  Thus, by \cref{nail}, we can write
    \[
        f_j = \sum_{r=1}^N a_r e_{i_r} b_r
        \quad \text{for some } N \in \N,\ a_r, b_r \in A,\ 1 \le i_r \le m,\ 1 \le r \le N.
    \]
    The map
    \[
        \bigoplus_{r=1}^N A e_{i_r} \to A f_j,\qquad
        (x_r)_r \mapsto \sum_{r=1}^N x_r e_{i_r} b_r f_j,
    \]
    is a surjective homomorphism of $A$-modules since, for all $c \in A$, we have
    \[
        (c a_r)_{r \in \N}
        \mapsto \sum_{r=1}^N c a_r e_{i_r} b_r f_j
        = c f_j^2
        = c f_j.
    \]
    Because $A f_j$ is projective, this homomorphism splits, and so $A f_j$ is a direct summand of $\bigoplus_r A e_{i_r}$.  Since the $e_{i_r}$ and $f_j$ are primitive, this implies that $A f_j \cong A e_i$ as $A$-modules, for some $i$.  Hence $f_j \sim_A e_i$, contradicting our choice of $l$ and the fact that $j > l$.  This proves the claim.
    
    It follows from \cref{cheese} that every primitive idempotent of $B$ is conjugate to one of the $f_j + (A \xi A)$, $l < j \le n$.  We claim that, for $l < i,j \le n$,
    \[
        f_i \sim_A f_j \iff (f_i + A \xi A) \sim_B (f_j + A \xi A).
    \]
    The forward implication is clear.  For the reverse implication, suppose $(f_i + A \xi A) \sim_B (f_j + A \xi A)$.  Then the indecomposable projective left $B$-modules
    \begin{equation} \label{mac}
        B(f_i + A \xi A) \cong A f_i / A \xi A f_i
        \qquad \text{and} \qquad
        B(f_j + A \xi A) \cong A f_j / A \xi A f_j
    \end{equation}
    are isomorphic.  Let $S$ be their simple top.  Viewing $S$ as an $A$-module via the quotient map $A \to B$, the natural maps $A f_i \to S$, $A f_j \to S$, are projective covers of $S$ as $A$-modules.  (By \cite[Prop.~24.12]{Lam01}, every finitely-generated module over a semiperfect ring has a projective cover.)  By uniqueness of projective covers, $A f_i \cong A f_j$ as $A$-modules, which implies that $f_i \sim_A f_j$.  This proves the claim.
   
    It follows from the above discussion that we have a bijection
    \[
        I_2 \xrightarrow{\cong} K,\qquad
        X \mapsto \{e + A \xi A : e \in X\},
    \]
    completing the proof of the proposition.
\end{proof}

For $r,i \in \N$, let
\begin{gather}
    \eta_{r,i} :=
    \begin{tikzpicture}[centerzero={0,-0.55}]
        \draw[multi,->] (0.2,-1) \botlabel{r-1} -- (0.2,-0.1);
        \draw[<-] (-0.2,-0.1) to[out=down,in=up] (-1.7,-0.8) -- (-1.7,-1);
        \draw[->] (-0.2,-1) -- (-0.2,-0.7) to[out=up,in=up,looseness=1.5] (-0.5,-0.7) -- (-0.5,-0.8);
        \draw[<-] (-0.5,-0.8) -- (-0.5,-1);
        \draw[->] (-1.1,-1) -- (-1.1,-0.7) to[out=up,in=up,looseness=1.5] (-1.4,-0.7) -- (-1.4,-0.8);
        \draw[<-] (-1.4,-0.8) -- (-1.4,-1);
        \node at (-0.8,-0.75) {$\cdots$};
        \opendot{-0.5,-0.8};
        \opendot{-1.4,-0.8};
    \end{tikzpicture}
    \in \Hom_\DScat(\upobj^{r+2i}, \upobj^r)
    ,\qquad
    \mu_{r,i} :=
    \begin{tikzpicture}[centerzero={0,0.35}]
        \draw[multi,->] (-0.1,0) -- (-0.1,0.7) \toplabel{r};
        \draw[<->] (-0.5,0.7) -- (-0.5,0.4) to[out=down,in=down,looseness=1.5] (-0.8,0.4) -- (-0.8,0.7);
        \draw[<->] (-1.4,0.7) -- (-1.4,0.4) to[out=down,in=down,looseness=1.5] (-1.7,0.4) -- (-1.7,0.7);
        \opendot{-0.5,0.5};
        \opendot{-1.4,0.5};
        \node at (-1.05,0.45) {$\cdots$};
    \end{tikzpicture}
    \in \Hom_\DScat(\upobj^r, \upobj^{r+2i}),
    \\ \label{xi}
    \xi_{r,i} := \mu_{r,i} \eta_{r,i}
    =
    \begin{tikzpicture}[centerzero={0,-0.25}]
        \draw[multi,->] (0.2,-1) \botlabel{r-1} -- (0.2,0.5);
        \draw[<->] (-0.5,0.5) -- (-0.5,0.2) to[out=down,in=down,looseness=1.5] (-0.8,0.2) -- (-0.8,0.5);
        \draw[<->] (-1.4,0.5) -- (-1.4,0.2) to[out=down,in=down,looseness=1.5] (-1.7,0.2) -- (-1.7,0.5);
        \opendot{-0.5,0.3};
        \opendot{-1.4,0.3};
        \node at (-1.06,0.25) {$\cdots$};
        \draw[<-] (-0.2,0.5) -- (-0.2,0.3) to[out=down,in=up] (-1.7,-0.9) -- (-1.7,-1);
        \draw[->] (-0.2,-1) -- (-0.2,-0.7) to[out=up,in=up,looseness=1.5] (-0.5,-0.7) -- (-0.5,-0.8);
        \draw[<-] (-0.5,-0.8) -- (-0.5,-1);
        \draw[->] (-1.1,-1) -- (-1.1,-0.7) to[out=up,in=up,looseness=1.5] (-1.4,-0.7) -- (-1.4,-0.8);
        \draw[<-] (-1.4,-0.8) -- (-1.4,-1);
        \node at (-0.8,-0.75) {$\cdots$};
        \opendot{-0.5,-0.8};
        \opendot{-1.4,-0.8};
    \end{tikzpicture}
    \in \DSalg_{r+2i}.
\end{gather}
It is straightforward to verify that
\begin{equation} \label{squiggle}
    \eta_{r,i} \mu_{r,i} = 1_r := 1_{\upobj^{\otimes r}} \qquad \text{for all } r,i \in \N.
\end{equation}
It follows that $\xi_{r,i}$ is idempotent.

\begin{lem} \label{jack}
    For $r,i \in \N$, the map
    \[
        \gamma_{r,i} \colon \DSalg_r \mapsto \DSalg_{r+2i},\qquad
        f \mapsto \mu_{r,i} f \eta_{r,i}
        =
        \begin{tikzpicture}[anchorbase]
            \draw[multi,->] (0,0) -- (0,0.7) \toplabel{r};
            \draw[multi,->] (0.2,-1) \botlabel{r-1} -- (0.2,-0.1);
            \genbox{-0.4,-0.1}{0.4,0.3}{f};
            \draw[<->] (-0.5,0.7) -- (-0.5,0.4) to[out=down,in=down,looseness=1.5] (-0.8,0.4) -- (-0.8,0.7);
            \draw[<->] (-1.4,0.7) -- (-1.4,0.4) to[out=down,in=down,looseness=1.5] (-1.7,0.4) -- (-1.7,0.7);
            \opendot{-0.5,0.5};
            \opendot{-1.4,0.5};
            \node at (-1.06,0.45) {$\cdots$};
            \draw[<-] (-0.2,-0.1) to[out=down,in=up] (-1.7,-0.8) -- (-1.7,-1);
            \draw[->] (-0.2,-1) -- (-0.2,-0.7) to[out=up,in=up,looseness=1.5] (-0.5,-0.7) -- (-0.5,-0.8);
            \draw[<-] (-0.5,-0.8) -- (-0.5,-1);
            \draw[->] (-1.1,-1) -- (-1.1,-0.7) to[out=up,in=up,looseness=1.5] (-1.4,-0.7) -- (-1.4,-0.8);
            \draw[<-] (-1.4,-0.8) -- (-1.4,-1);
            \node at (-0.8,-0.75) {$\cdots$};
            \opendot{-0.5,-0.8};
            \opendot{-1.4,-0.8};
        \end{tikzpicture}
        ,
    \]
    is an injective homomorphism of nonunital algebras.  Furthermore, $\gamma_{r,i}(\DSalg_r) = \xi_{r,i} \DSalg_{r+2i} \xi_{r,i}$.
\end{lem}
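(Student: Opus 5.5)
The whole statement follows formally from the single identity \cref{squiggle}, $\eta_{r,i}\mu_{r,i} = 1_r$; no further diagrammatic relation is needed beyond the one used to verify that identity.

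\textbf{Linearity and multiplicativity.} The map $\gamma_{r,i}$ is clearly $\kk$-linear. For $f,g \in \DSalg_r$ we compute
\[
    \gamma_{r,i}(f)\gamma_{r,i}(g)
    = \mu_{r,i} f \eta_{r,i}\mu_{r,i} g \eta_{r,i}
    = \mu_{r,i} f g \eta_{r,i}
    = \gamma_{r,i}(fg).
\]
So $\gamma_{r,i}$ is a homomorphism of nonunital algebras. It sends $1_r$ to $\xi_{r,i}$, which is not $1_{r+2i}$ in general; this is why the statement says ``nonunital''.

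\textbf{Injectivity.} We can write down a left inverse explicitly: $f \mapsto \eta_{r,i} f \mu_{r,i}$. Indeed,
\[
    \eta_{r,i}\gamma_{r,i}(f)\mu_{r,i}
    = (\eta_{r,i}\mu_{r,i}) f (\eta_{r,i}\mu_{r,i})
    = f .
\]

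\textbf{Identifying the image.} We prove both inclusions.
\begin{itemize}
    \item For ``$\subseteq$'': we have $\xi_{r,i}\gamma_{r,i}(f) = \mu_{r,i}\eta_{r,i}\mu_{r,i} f\eta_{r,i} = \gamma_{r,i}(f)$, and similarly $\gamma_{r,i}(f)\xi_{r,i} = \gamma_{r,i}(f)$. Hence $\gamma_{r,i}(f) = \xi_{r,i}\gamma_{r,i}(f)\xi_{r,i} \in \xi_{r,i}\DSalg_{r+2i}\xi_{r,i}$.
    \item For ``$\supseteq$'': take $h \in \DSalg_{r+2i}$. Then
    \[
        \xi_{r,i} h \xi_{r,i}
        = \mu_{r,i}(\eta_{r,i} h \mu_{r,i})\eta_{r,i}
        = \gamma_{r,i}(\eta_{r,i} h \mu_{r,i}),
    \]
    and $\eta_{r,i} h \mu_{r,i} \in \DSalg_r$.
\end{itemize}

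\textbf{Main obstacle.} The only nontrivial input is \cref{squiggle}, which the paper asserts is straightforward, and we may assume it. If it had to be checked, one would stack $\mu_{r,i}$ below $\eta_{r,i}$. Each capped toggle pair, with toggles composing via the first two relations in \cref{DStoggles}, straightens by the zigzag relations \cref{zigleft}. The long strand running to the far left is isotoped back using the curl relations \cref{DScurls}. The scalar factors must cancel, which is exactly the delicate point.
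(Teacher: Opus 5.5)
Your proof is correct and is essentially the paper's argument: multiplicativity from \cref{squiggle}, injectivity via the left inverse $f \mapsto \eta_{r,i} f \mu_{r,i}$, and the image identified through the same two inclusions. One correction to your closing aside: verifying \cref{squiggle} needs neither the curl relations \cref{DScurls} nor any scalar bookkeeping. With the convention \cref{toggy}, toggles on arbitrary strands are mutually inverse, so the toggle pairs at the even positions cancel. What remains is a crossing-free snake of $i$ cups and $i$ caps, which straightens to the identity by the zigzag relation \cref{zigleft}.
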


\begin{proof}
    The map $\gamma_{r,i}$ is clearly $\kk$-linear.  For $f,g \in \DSalg_r$, we have
    \[
        \gamma_{r,i}(fg)
        = \mu_{r,i}(fg)\eta_{r,i}
        \overset{\cref{squiggle}}{=} \mu_{r,i} f \eta_{r,i} \mu_{r,i} g \eta_{r,i}
        = \gamma_{r,i}(f) \gamma_{r,i}(g).
    \]
    Hence, $\gamma_{r,i}$ is a homomorphism of nonunital algebras.  Since
    \[
        \eta_{r,i} \gamma_{r,i}(f) \mu_{r,i}
        \overset{\cref{squiggle}}{=} f
        \qquad \text{for all } f \in \DSalg_r,
    \]  
    the homomorphism $\gamma_{r,i}$ is also injective.  We also have
    \[
        \xi_{r,i} \DSalg_{r+2i} \xi_{r,i}
        \overset{\cref{xi}}{=} \mu_{r,i} \eta_{r,i} \DSalg_{r+2i} \mu_{r,i} \eta_{r,i}
        \subseteq \gamma_{r,i}(\DSalg_r)
    \]
    and
    \[
        \gamma_{r,i}(\DSalg_r)
        = \mu_{r,i} \DSalg_r \eta_{r,i}
        \overset{\cref{xi}}{\underset{\cref{squiggle}}{=}} \xi_{r,i} \mu_{r,i} \DSalg_r \eta_{r,i} \xi_{r,i}
        \subseteq \xi_{r,i} \DSalg_{r+2i} \xi_{r,i},
    \]
    proving the final statement in the lemma.
\end{proof}

Bases for the morphism spaces of $\DScat$ are described in \cite[Th.~7.1]{SSS25}.  In particular, $\DSalg_r$ has basis given by a set $M_\DScat(\upobj^{\otimes r},\upobj^{\otimes r})$ of \emph{reduced diagrams}.  We refer the reader to \cite[\S7.1]{SSS25} for the precise definition.

\begin{lem} \label{coffee}
    Suppose $r \in \N$.  The following subspaces of $\DSalg_r$ are equal:
    \begin{enumerate}
        \item \label{coffee1} the two-sided ideal of $\DSalg_r$ generated by morphisms that factor through $\upobj^{\otimes (r-2)}$ (interpreted as the zero ideal if $r \le 1$),
        \item \label{coffee2} the two-sided ideal of $\DSalg_r$ generated by morphisms that factor through $\upobj^{\otimes s}$ for some $s < r$ (interpreted as the zero ideal if $r=0$),
        \item \label{coffee3} the span of the elements of $M_\DScat(\upobj^{\otimes r},\upobj^{\otimes r})$ with at least one cup or cap.
    \end{enumerate}
    If $r \ge 2$, then these spaces are also equal to the two-sided ideal of $\DSalg_r$ generated by the element
    \begin{equation} \label{milk}
        \begin{tikzpicture}[centerzero]
            \draw[multi,->] (0.9,-0.5) \botlabel{r-2} -- (0.9,0.5);
            \draw[<->] (0.6,0.5) -- (0.6,0.2) to[out=down,in=down,looseness=1.5] (0.3,0.2) -- (0.3,0.5);
            \draw[->] (0.3,-0.5) -- (0.3,-0.3);
            \draw[<-] (0.3,-0.3) -- (0.3,-0.2) to[out=up,in=up,looseness=1.5] (0.6,-0.2) -- (0.6,-0.5);
            \opendot{0.3,0.3};
            \opendot{0.3,-0.3};
        \end{tikzpicture}
        \in \DSalg_r.
    \end{equation}
    If $r \ge 3$, then these spaces are also equal to the two-sided ideal of $\DSalg_r$ generated by the idempotent
    \[
        \xi_{r-2,1}
        =
        \begin{tikzpicture}[centerzero]
            \draw[multi,->] (0,-0.5) \botlabel{r-3} -- (0,0.5);
            \draw[->] (-0.3,-0.5) -- (-0.3,-0.2) to[out=up,in=up,looseness=1.5] (-0.6,-0.2) -- (-0.6,-0.3);
            \draw[<-] (-0.6,-0.3) -- (-0.6,-0.5);
            \opendot{-0.6,-0.3};
            \draw[->] (-0.9,-0.5) to[out=up,in=down] (-0.3,0.5);
            \draw[<->] (-0.6,0.5) -- (-0.6,0.2) to[out=down,in=down,looseness=1.5] (-0.9,0.2) -- (-0.9,0.5);
            \opendot{-0.6,0.3};
        \end{tikzpicture}
        \in \DSalg_r.
    \]
\end{lem}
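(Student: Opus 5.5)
We prove a cycle of inclusions among the ideals \cref{coffee1}, \cref{coffee2}, \cref{coffee3}, then compare with the ideals generated by \cref{milk} and by $\xi_{r-2,1}$. Write $I_1,I_2,I_3$ for the three spaces. Clearly $I_1 \subseteq I_2$, since $r-2<r$.

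For $I_2 \subseteq I_3$ we use the basis theorem \cite[Th.~7.1]{SSS25}. If $f = g h$ with $h \colon \upobj^{\otimes r} \to \upobj^{\otimes s}$ and $s<r$, expand $g$ and $h$ in reduced diagrams. A product of reduced diagrams factoring through $s<r$ strands must contain a cup or cap: a diagram with no cups or caps is a braid-like permutation diagram with toggles, and these preserve the number of strands. Straightening the composite via the relations of $\DScat$ (the straightening algorithm underlying the basis theorem) never decreases the number of cups and caps, since it only removes bubbles, curls, double toggles and crossings, and a bubble removal reduces a cup and a cap simultaneously only when they close up, which keeps the through-strand count at most $s$. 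Hence the resulting reduced diagrams have at most $s<r$ through strands, so they contain a cup or cap. This gives $I_2 \subseteq I_3$. The step I expect to be the main obstacle is making this precise via the through-strand count: I will argue that the span of reduced diagrams with fewer than $r$ through strands is exactly $I_3$, and that the number of through strands is nonincreasing under composition and under each defining relation.

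For $I_3 \subseteq I_1$, take a reduced diagram $d$ with a cup and a cap. Using the crossings and toggles (isomorphisms in $\DScat$), together with \cref{gold}, isotope $d$ so that a cap occurs at the far left of a row of strands and, above it, a cup at the far left. This shows $d$ factors through $\upobj^{\otimes(r-2)}$, after conjugating the two leftmost strands into orientation $\upobj\upobj$ with toggles. Since every object is isomorphic to $\upobj^{\otimes k}$, the cap sits on an object isomorphic to $\upobj^{\otimes r}$ and its target is isomorphic to $\upobj^{\otimes(r-2)}$. This simultaneously shows that $d$ lies in the ideal generated by \cref{milk}: the element \cref{milk} is, up to invertible crossings and toggles on either side, exactly the cap followed by the cup on the two leftmost strands. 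Conversely, \cref{milk} factors through $\upobj^{\otimes(r-2)}$, so it lies in $I_1$. Thus all four spaces coincide for $r\ge 2$.

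For $r\ge3$, $\xi_{r-2,1} = \mu_{r-2,1}\eta_{r-2,1}$ factors through $\upobj^{\otimes(r-2)}$, so it lies in $I_1$. Conversely, any $f=gh$ with $h$ having target $\upobj^{\otimes(r-2)}$ satisfies
\[
f = g\,\eta_{r-2,1}\mu_{r-2,1}\,h = (g\eta_{r-2,1})\,\xi_{r-2,1}\,(\mu_{r-2,1}h)
\]
by \cref{squiggle}, since $\xi_{r-2,1}$ is idempotent. Here we use $\mu\eta\mu=\mu$ and $\eta\mu\eta=\eta$, so inserting $\xi_{r-2,1}$ is legitimate. The identity holds whenever $\eta_{r-2,1}$ and $\mu_{r-2,1}$ make sense, namely when $r-2\ge1$, which explains the hypothesis $r\ge3$. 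Hence $I_1$ equals the ideal generated by $\xi_{r-2,1}$.
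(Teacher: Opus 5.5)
Your cycle of inclusions and the step $I_3\subseteq I'$ (with $I'$ the ideal generated by \cref{milk}) match the paper's argument. That step composes a reduced diagram with invertible crossing/toggle diagrams until a cup and a cap sit at the left edge.

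For $\xi_{r-2,1}$ you take a different and cleaner route. The paper only says that the inclusion into $\DSalg_r\xi_{r-2,1}\DSalg_r$ is ``analogous'' to the cup/cap-moving argument. Your identity $gh=(g\eta_{r-2,1})\,\xi_{r-2,1}\,(\mu_{r-2,1}h)$ needs only $\eta_{r-2,1}\mu_{r-2,1}=1_{r-2}$ from \cref{squiggle}. It therefore identifies $I_1$ with $\DSalg_r\xi_{r-2,1}\DSalg_r$ directly, without using the basis theorem.

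\textbf{The gap is $I_2\subseteq I_3$.} The paper calls the equality of (a)--(c) straightforward, but the principle you propose for it is false: the number of through strands is not monotone under the defining relations. Take a cap and a cup hooked through each other, crossing twice with crossings of the same sign. You can do this in $\End_{\OScat}(\upobj\downobj)$, or conjugate it into $\DSalg_2$ by a toggle on the second strand.
\begin{itemize}
\item The hooked diagram has no through strands.
\item The skein relation at one of its two crossings writes it as the unhooked cup--cap diagram plus $\pm(q-q^{-1})$ times the oriented smoothing.
\item That smoothing is two through strands, one carrying a curl.
\item So for $q\ne\pm1$ the hooked diagram has a nonzero coefficient on the identity.
\end{itemize}
More generally, smoothing a crossing re-pairs the four endpoints of the two arcs involved. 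It can raise the through-strand count exactly when those arcs are a cap and a cup.

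Your counting only controls relations applied in discs disjoint from a fixed horizontal cut. Nothing in your sketch keeps the straightening of $gh$ into reduced diagrams on one side of that cut, or away from cap--cup crossings. Your remark that straightening never decreases the number of cups and caps fails for the same reason.

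It helps to isolate what actually has to be proved. Your moving argument already gives $I_2\subseteq I'$ without straightening any composite:
\begin{itemize}
\item Expand $h\in\Hom(\upobj^{\otimes r},\upobj^{\otimes s})$ and $g\in\Hom(\upobj^{\otimes s},\upobj^{\otimes r})$ in reduced diagrams.
\item Since $s<r$, each term of $h$ has a cap and each term of $g$ has a cup; move these to the left edge.
\item By \cref{gold}, the middle factor can then slide to the right of the cup--cap.
\end{itemize}
So everything reduces to the single inclusion $I'\subseteq I_3$, i.e.\ that the span of reduced diagrams with a cup or cap is a two-sided ideal. Since $\DSalg_r=\Hecke_r\oplus I_3$ and $I_3\subseteq I'$, this is equivalent to $I'\cap\Hecke_r=0$.

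Proving it requires one of the following:
\begin{itemize}
\item genuine information about how the reduced diagrams of \cite[Th.~7.1]{SSS25} compose, for instance that the relevant straightenings never resolve a cap--cup crossing; or
\item an algebra homomorphism $\DSalg_r\to\Hecke_r$ that is the identity on $\Hecke_r$ and kills \cref{milk}.
\end{itemize}
A through-strand count alone does not supply either.
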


\begin{proof}
    The proof that the spaces described in \ref{coffee1}, \ref{coffee2}, and \cref{coffee3} are equal is straightforward.  Let $I$ denote this ideal.  Let $I'$ be the two-sided ideal generated by the element \cref{milk}, and let $I''$ be the two-sided ideal generated by $\xi_{r-2,1}$.  Clearly $I',I'' \subseteq I$.
    
    Let $f$ be an element of $M_\DScat(\upobj^{\otimes r}, \upobj^{\otimes r})$ with at least one cup or cap.  Then $f$ has at least one cup \emph{and} at least one cap.  Composing $f$ on the top and bottom with diagrams involving only toggles and crossings, we may move one of the cups and one of the caps to the left edge of the diagram, resulting in an element of $I'$.  For example, if
    \[
        f =
        \begin{tikzpicture}[centerzero]
            \draw[->] (0.3,-0.5) \braidup (0,0.5);
            \draw[over,->] (0,-0.5) \braidup (0.3,0.5);
            \draw[->] (0.6,-0.5) \braidup (0.9,0.5);
            \draw[->] (0.9,-0.5) \braidup (1.8,0.5);
            \draw[->] (1.2,-0.5) -- (1.2,-0.3);
            \draw[<-] (1.2,-0.3) to[out=up,in=up] (1.8,-0.3) -- (1.8,-0.5);
            \opendot{1.2,-0.3};
            \draw[over,<->] (0.6,0.5) -- (0.6,0.3) to[out=down,in=down] (1.5,0.3) -- (1.5,0.5);
            \opendot{0.6,0.3};
            \draw[over,->] (1.5,-0.5) \braidup (1.2,0.5);
        \end{tikzpicture}
    \]
    then
    \[
        \begin{tikzpicture}[centerzero]
            \draw[->] (0,0.2) -- (0,0.4) \braidup (0.6,1.1) -- (0.6,1.3);
            \draw[->] (0.3,0.2) -- (0.3,0.4) \braidup (0.9,1.1) -- (0.9,1.3);
            \draw[->] (0.6,0.2) -- (0.6,0.4);
            \draw[over,<->] (0.6,0.4) \braidup (0,1.1) -- (0,1.3);
            \opendot{0.6,0.4};
            \opendot{0,1.1};
            \draw[->] (0.9,0.2) -- (0.9,0.4) \braidup (1.2,1.1) -- (1.2,1.3);
            \draw[over,->] (1.5,0.2) -- (1.5,0.4) \braidup (0.3,1.3);
            \draw[over,->] (1.2,0.2) -- (1.2,0.4) \braidup (1.5,1.1) -- (1.5,1.3);
            \draw[->] (1.8,0.2) -- (1.8,1.3);
            \draw (0.6,-1.5) -- (0.6,-1.3) \braidup (0,-0.4) -- (0,-0.2);
            \draw (0.9,-1.5) -- (0.9,-1.3) \braidup (0.3,-0.4) -- (0.3,-0.2);
            \draw (1.2,-1.5) -- (1.2,-1.3) \braidup (0.6,-0.4) -- (0.6,-0.2);
            \draw (1.5,-1.5) -- (1.5,-1.3) \braidup (0.9,-0.4) -- (0.9,-0.2);
            \draw[->] (0,-1.5) -- (0,-1.3);
            \draw[over,<->] (0,-1.3) \braidup (1.2,-0.4) -- (1.2,-0.2);
            \opendot{0,-1.3};
            \opendot{1.2,-0.4};
            \draw[over] (0.3,-1.5) \braidup (1.8,-0.4) -- (1.8,-0.2);
            \draw[over] (1.8,-1.5) -- (1.8,-1.3) \braidup (1.5,-0.4) -- (1.5,-0.2);
            \genbox{-0.2,-0.2}{2,0.2}{f};
        \end{tikzpicture}
        =
        \begin{tikzpicture}[centerzero]
            \draw[->] (0,-0.5) -- (0,-0.3);
            \draw[<-] (0,-0.3) to[out=up,in=up,looseness=2] (0.3,-0.3) -- (0.3,-0.5);
            \opendot{0,-0.3};
            \draw[<->] (0,0.5) -- (0,0.3) to[out=down,in=down,looseness=2] (0.3,0.3) -- (0.3,0.5);
            \opendot{0,0.3};
            \draw[->] (0.9,-0.5) \braidup (0.6,0.5);
            \draw[over,->] (0.6,-0.5) \braidup (0.9,0.5);
            \draw[->] (1.2,-0.5) -- (1.2,0.5);
            \draw[->] (1.5,-0.5) \braidup (1.8,0.5);
            \draw[over,->] (1.8,-0.5) \braidup (1.5,0.5);
        \end{tikzpicture}
        \in I'.
    \]
    Since the diagrams involving only toggles and crossings are invertible, this implies that $f \in I'$.  Thus, $I \subseteq I'$.  The proof that $I \subseteq I''$ is analogous.
\end{proof}

For $r \in \N$, let $J_r$ denote the two-sided ideal of $\DSalg_r$ defined by any of the equivalent descriptions given in \cref{coffee}.  In particular,
\[
    J_r = \DSalg_r \xi_{r-2,1} \DSalg_r
    \qquad \text{if } r \ge 3.
\]
Let $\Hecke_r = \Hecke_r(q)$ be the Iwahori--Hecke algebra of type $A_{r-1}$.  It follows from the basis theorem \cite[Th.~7.1]{SSS25} that the natural algebra homomorphism
\[
    \Hecke_r \to \DSalg_r
\]
is injective.  We use this homomorphism to view $\Hecke_r$ as a subalgebra of $\DSalg_r$.  This subalgebra is spanned by the elements of $M_\DScat(\upobj^{\otimes r})$ with no cups, caps, or toggles.  This discussion implies the following result.

\begin{lem} \label{claptrap}
    For all $r \in \N$, we have an isomorphism of algebras
    \[
        \Hecke_r \xrightarrow{\cong} \DSalg_r/J_r,\qquad
        f \mapsto f + J_r.
    \]
\end{lem}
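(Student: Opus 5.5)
The plan is to use the basis theorem \cite[Th.~7.1]{SSS25} directly, via the description of $J_r$ in \cref{coffee}\cref{coffee3}. The set $M_\DScat(\upobj^{\otimes r},\upobj^{\otimes r})$ of reduced diagrams is a basis of $\DSalg_r$. Split it as $M = M^{\mathrm{cc}} \sqcup M^{0}$, where $M^{\mathrm{cc}}$ consists of the reduced diagrams with at least one cup or cap, and $M^0$ consists of those with none.

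By \cref{coffee}, $J_r = \Span_\kk M^{\mathrm{cc}}$. Hence $\DSalg_r/J_r$ has basis given by the images of the elements of $M^0$. Two facts then give the isomorphism.

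\begin{itemize}
\item \emph{Injectivity.} The composite $\Hecke_r \to \DSalg_r \to \DSalg_r/J_r$ is injective because the image of $\Hecke_r$ is spanned by those elements of $M^0$ having no toggles. This is a subset of a basis, so it meets $J_r$ only in $0$.
\item \emph{Surjectivity.} Every element of $M^0$ is congruent modulo $J_r$ to an element of $\Hecke_r$.
\end{itemize}

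The main point to check is surjectivity, i.e.\ what happens to toggles. A reduced diagram from $\upobj^{\otimes r}$ to $\upobj^{\otimes r}$ with no cups or caps consists of $r$ through-strands. Each strand starts and ends upward-oriented, so along it the toggles come in cancelling pairs. I need to inspect the precise definition of reduced diagrams in \cite[\S7.1]{SSS25}. I expect that a reduced diagram with no cups or caps has no toggles at all, since a strand carrying an even number of toggles would not be reduced, by the first relation in \cref{DStoggles} together with \cref{toggy}.

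If the definition instead permits toggles on through-strands in some normal form, I would argue as follows. Write the strand's orientation along its length: an upward strand can only acquire a toggle pair $\togupdown$/$\togdownup$ that flips it to downward and back. Using the relations \cref{DStoggles} and the interchange relation, slide the pair together and cancel it. The only obstruction is a downward segment crossing other strands, and those crossings are already handled by the defining relations. In either case $M^0$ consists exactly of the Hecke basis elements (e.g.\ the $T_w$ for positive braid lifts), so the image of $\Hecke_r$ equals the span of $M^0$.

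Finally, $f \mapsto f + J_r$ is an algebra homomorphism, since it is the restriction of the quotient map to the subalgebra $\Hecke_r \subseteq \DSalg_r$. This requires $J_r$ to be a two-sided ideal, which holds by \cref{coffee}\cref{coffee1}. Combined with the bijection on bases above, this proves the map is an algebra isomorphism. For $r \le 1$ the statement is immediate: $J_r = 0$, and $\DSalg_r = \kk = \Hecke_r$ by the basis theorem.
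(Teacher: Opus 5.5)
Your proposal is correct and is the paper's argument: the basis theorem together with \cref{coffee}\cref{coffee3} shows that the images of the reduced diagrams with no cups or caps form a basis of $\DSalg_r/J_r$, and these are exactly the toggle-free diagrams spanning $\Hecke_r$. Your expectation that reduced diagrams $\upobj^{\otimes r}\to\upobj^{\otimes r}$ without cups or caps carry no toggles is right, and the paper uses this fact implicitly. The fallback argument about sliding toggle pairs past crossings is not rigorous as written, but you do not need it, so it can be dropped.
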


Let
\[
    \pi_r \colon \DSalg_r \twoheadrightarrow \DSalg_r/J_r \xrightarrow{\cong} \Hecke_r
\]
be the surjective algebra homomorphism arising from the isomorphism described in \cref{claptrap}.  The primitive idempotents of $\Hecke_r$ are parameterized, up to conjugation, by partitions of $r$.  For $\lambda \vdash r$, let $\te_\lambda$ denote a primitive idempotent of $\Hecke_r$ in the conjugacy class corresponding to $\lambda$.  The idempotent $\te_\lambda$ may no longer be primitive as an element of $\DSalg_r$. 

Let $\te_\lambda = \te_{\lambda,1} + \te_{\lambda,2} + \dotsb + \te_{\lambda,k}$ be a decomposition of $\te_\lambda$ into mutually orthogonal primitive idempotents in $\DSalg_r$.  Then
\[
    \te_\lambda
    = \pi_r(\te_\lambda)
    = \pi_r(\te_{\lambda,1}) + \dotsb + \pi_r(\te_{\lambda,k})
\]
is a decomposition of $\te_\lambda$ as a sum of mutually orthogonal idempotents in $\Hecke_r$.  Since $\te_\lambda$ is primitive in $\Hecke_r$, it follows that there is a unique $i$, $1 \le i \le k$, such that $\pi_r(\te_{\lambda,i}) \ne 0$.  We define
\begin{equation} \label{PVD}
    e_\lambda := \te_{\lambda,i}.
\end{equation}
This definition of $e_\lambda$ depends on the decomposition of $\te_\lambda$ into mutually orthogonal primitive idempotents in $\DSalg_r$.  However, the conjugacy classes of the idempotents appearing in such a decomposition are unique.  Hence, $e_\lambda$ is a primitive idempotent in $\DSalg_r$ that is uniquely defined up to conjugacy.

For $\lambda \vdash r \ge 1$ and $i \in \N$, define
\begin{equation} \label{humps}
    e_\varnothing^{(i)}
    := \frac{1}{\delta^i}
    \begin{tikzpicture}[anchorbase]
        \draw[<->] (-0.6,0.6) -- (-0.6,0.3) to[out=down,in=down,looseness=1.5] (-0.3,0.3) -- (-0.3,0.6);
        \draw[<->] (0.3,0.6) -- (0.3,0.3) to[out=down,in=down,looseness=1.5] (0.6,0.3) -- (0.6,0.6);
        \draw[->] (-0.6,-0.6) -- (-0.6,-0.4);
        \draw[<-] (-0.6,-0.4) -- (-0.6,-0.3) to[out=up,in=up,looseness=1.5] (-0.3,-0.3) -- (-0.3,-0.6);
        \draw[->] (0.3,-0.6) -- (0.3,-0.4);
        \draw[<-] (0.3,-0.4) -- (0.3,-0.3) to[out=up,in=up,looseness=1.5] (0.6,-0.3) -- (0.6,-0.6);
        \opendot{-0.6,0.4};
        \opendot{0.3,0.4};
        \opendot{-0.6,-0.4};
        \opendot{0.3,-0.4};
        \node at (0.01,0.35) {$\cdots$};
        \node at (0.01,-0.35) {$\cdots$};
    \end{tikzpicture}
    \in \DSalg_{2i} \text{ if } \delta \ne 0,
    \qquad
    e_\lambda^{(i)}
    := \gamma_{r,i}(e_\lambda)
    =
    \begin{tikzpicture}[anchorbase]
        \draw[multi,->] (0,0) -- (0,0.7) \toplabel{r};
        \draw[multi,->] (0.2,-1) \botlabel{r-1} -- (0.2,-0.1);
        \genbox{-0.4,-0.1}{0.4,0.3}{e_\lambda};
        \draw[<->] (-0.5,0.7) -- (-0.5,0.4) to[out=down,in=down,looseness=1.5] (-0.8,0.4) -- (-0.8,0.7);
        \draw[<->] (-1.4,0.7) -- (-1.4,0.4) to[out=down,in=down,looseness=1.5] (-1.7,0.4) -- (-1.7,0.7);
        \opendot{-0.5,0.5};
        \opendot{-1.4,0.5};
        \node at (-1.06,0.45) {$\cdots$};
        \draw[<-] (-0.2,-0.1) to[out=down,in=up] (-1.7,-0.8) -- (-1.7,-1);
        \draw[->] (-0.2,-1) -- (-0.2,-0.7) to[out=up,in=up,looseness=1.5] (-0.5,-0.7) -- (-0.5,-0.8);
        \draw[<-] (-0.5,-0.8) -- (-0.5,-1);
        \draw[->] (-1.1,-1) -- (-1.1,-0.7) to[out=up,in=up,looseness=1.5] (-1.4,-0.7) -- (-1.4,-0.8);
        \draw[<-] (-1.4,-0.8) -- (-1.4,-1);
        \node at (-0.8,-0.75) {$\cdots$};
        \opendot{-0.5,-0.8};
        \opendot{-1.4,-0.8};
    \end{tikzpicture}
    \in \DSalg_{r+2i},
\end{equation}
where both diagrams contain $i$ cups and $i$ caps.  Note that we have defined $e_\lambda^{(i)}$ for all partitions $\lambda$ and $i \in \N$, except for when $\lambda = \varnothing$ and $\delta = 0$.  It is straightforward to verify that $e_\lambda^{(i)}$ is an idempotent whenever it is defined.

\begin{prop} \label{primidem}
    Suppose $r \in \N$, $r \ge 2$.
    \begin{enumerate}
        \item If $\delta \ne 0$, then $\{ e_\lambda^{(i)} : 0 \le i \le \frac{r}{2},\ \lambda \vdash r-2i \}$ is a complete set of pairwise-nonconjugate, primitive idempotents in $\DSalg_r$.
            
        \item If $\delta = 0$, then $\{ e_\lambda^{(i)} : 0 \le i < \frac{r}{2},\ \lambda \vdash r-2i \}$ is a complete set of pairwise-nonconjugate, primitive idempotents in $\DSalg_r$.
    \end{enumerate}
\end{prop}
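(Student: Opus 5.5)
We argue by induction on $r$, applying \cref{raft} to the finite-dimensional (hence semiperfect) algebra $A = \DSalg_r$ with a suitable idempotent $\xi$ generating $J_r$. For $r \ge 3$ we take $\xi = \xi_{r-2,1}$, so that $A\xi A = J_r$ by \cref{coffee}. By \cref{jack}, $\xi A \xi = \gamma_{r-2,1}(\DSalg_{r-2}) \cong \DSalg_{r-2}$, and by \cref{claptrap}, $A/A\xi A \cong \Hecke_r$. Then \cref{raft} yields a bijection between conjugacy classes of primitive idempotents of $\DSalg_r$ and the disjoint union of those of $\DSalg_{r-2}$ and those of $\Hecke_r$. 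The latter are represented by $\te_\lambda$, $\lambda \vdash r$. The corresponding class in $\DSalg_r$ is the class not contained in $J_r$ whose image is $\te_\lambda$. By construction \cref{PVD}, $e_\lambda = e_\lambda^{(0)}$ is primitive with $\pi_r(e_\lambda) \ne 0$, so $e_\lambda \notin J_r$ and $e_\lambda + J_r$ is a primitive idempotent conjugate to $\te_\lambda$. It therefore represents this class.

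For the classes contained in $J_r$, we transport along the isomorphism $\gamma_{r-2,1}$. By induction, the classes in $\DSalg_{r-2}$ are represented by $e_\mu^{(i)}$ with $\mu \vdash r-2-2i$ (subject to the $\delta$ restriction). A primitive idempotent $e$ of $\DSalg_{r-2}$ maps to $\gamma_{r-2,1}(e)$, which is primitive in $\xi A\xi$, and conjugacy is preserved since $\gamma_{r-2,1}$ is an isomorphism onto $\xi A \xi$. It remains to check that $\gamma_{r-2,1}(e_\mu^{(i)})$ is conjugate to $e_\mu^{(i+1)}$. When $\mu \ne \varnothing$, we verify that $\gamma_{r-2,1}\circ\gamma_{r-2-2i,i} = \gamma_{r-2i-2,i+1}$ up to conjugation by an invertible diagram built from crossings and toggles, since the nested cups/caps can be straightened. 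When $\mu=\varnothing$, $e_\varnothing^{(i)}$ is similarly compared with $\gamma(e_\varnothing^{(i)})$. Reindexing $i \mapsto i+1$ together with the new $i=0$ classes gives exactly the claimed lists. Pairwise nonconjugacy is automatic from the bijection.

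The base cases need separate treatment, and here $\delta$ matters. For $r=2$ (and $r=3$, where we also start the induction), $J_2$ is generated by the element \cref{milk}, which is essentially $\delta\, e_\varnothing^{(1)}$ when $\delta\neq 0$. In that case $J_2 = \DSalg_2 e_\varnothing^{(1)} \DSalg_2$, and \cref{raft} with $\xi=e_\varnothing^{(1)}$ gives $\xi A\xi \cong \DSalg_0 = \kk$, contributing the single class $e_\varnothing^{(1)}$. When $\delta=0$, the element \cref{milk} squares to zero and $J_2$ is nilpotent, so it lies in the radical and primitive idempotents of $\DSalg_2$ correspond to those of $\Hecke_2$. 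This accounts for the missing $e_\varnothing^{(r/2)}$ for even $r$. For $r=3$ with $\xi=\xi_{1,1}$, we have $\xi A \xi\cong\DSalg_1=\kk$ and $e_{(1)}^{(1)}=\gamma_{1,1}(1)$. More generally, for even $r$ in the induction, the $\delta=0$ case uses the inductive hypothesis for $\DSalg_{r-2}$, which already lacks $e_\varnothing^{((r-2)/2)}$.

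I expect the main obstacle to be the identification of $\gamma_{r-2,1}(e_\mu^{(i)})$ with $e_\mu^{(i+1)}$ up to conjugacy, because of the strand-position conventions in the diagrams for $\eta$ and $\mu$. The base case $\delta=0$, $r=2$ also requires care: we must show that $J_2$ is genuinely nilpotent using the relations and the basis theorem \cite[Th.~7.1]{SSS25}.
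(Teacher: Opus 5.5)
Your proposal is correct and follows the paper's own proof. Both argue by induction on $r$ via \cref{raft}: for $r=2$ one takes $\xi=e_\varnothing^{(1)}$ when $\delta\neq 0$ and uses the nilpotent ideal $J_2$ when $\delta=0$, and for $r\ge 3$ one takes $\xi=\xi_{r-2,1}$ together with \cref{jack,claptrap} (using $\DSalg_1\cong\kk$ at $r=3$). The step you flag as the main obstacle needs no diagram straightening: since $\eta_{r,i}\mu_{r,i}=1_r$, the maps $\mu_{r,i}e$ and $e\,\eta_{r,i}$ give $\obj{\gamma_{r,i}(e)}\cong\obj{e}$ for any idempotent $e$ (as in \cref{accordian}), so $\gamma_{r-2,1}(e_\mu^{(i)})$ and $e_\mu^{(i+1)}$ both have image isomorphic to $\obj{e_\mu}$ (to $\one$ if $\mu=\varnothing$) and are therefore conjugate in the finite-dimensional algebra $\DSalg_r$.
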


\begin{proof}
    We prove the result by induction on $r$. Suppose $r=2$.  If $\delta \ne 0$, then it is straightforward to verify that
    \[
        \DSalg_2/\DSalg_2 e_\varnothing^{(1)} \DSalg_2 \cong \Hecke_2
        \qquad \text{and} \qquad
        e_\varnothing^{(1)} \DSalg_2 e_\varnothing^{(1)} \cong \kk e_\varnothing^{(1)} \cong \kk,
    \]
    as algebras.  Thus, the result follows from \cref{raft}.  On the other hand, if $\delta = 0$, then
    \[
        f = 
        \begin{tikzpicture}[centerzero]
            \draw[<->] (-0.6,0.55) -- (-0.6,0.25) to[out=down,in=down,looseness=1.5] (-0.3,0.25) -- (-0.3,0.55);
            \draw[->] (-0.6,-0.55) -- (-0.6,-0.35);
            \draw[<-] (-0.6,-0.35) -- (-0.6,-0.25) to[out=up,in=up,looseness=1.5] (-0.3,-0.25) -- (-0.3,-0.55);
            \opendot{-0.6,0.35};
            \opendot{-0.6,-0.35};
        \end{tikzpicture}
    \]
    generates a nilpotent ideal $\DSalg_2 f \DSalg_2$, and $\DSalg_2/ \DSalg_2 f \DSalg_2 \cong \Hecke_2$ as algebras, by \cref{claptrap}.  Thus, the result follows.
    
    Now suppose that $r \ge 3$.  Taking $\xi = \xi_{r-2,1}$ in \cref{raft}, the result follows by induction, using \cref{jack,claptrap}.  (For the case $r=3$, we use the identification $\DSalg_1 \cong \kk$ in the induction step.)
\end{proof}

\begin{lem} \label{accordian}
    Suppose $\lambda$ is a partition and $i \in \N$, such that $e_\lambda^{(i)}$ is defined.  Then $\obj{e_\lambda}$ and $\obj{e_\lambda^{(i)}}$ are isomorphic in $\Kar(\DScat(q,t))$.
\end{lem}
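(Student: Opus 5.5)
We have $e_\lambda \in \DSalg_r$ and $e_\lambda^{(i)} = \gamma_{r,i}(e_\lambda) = \mu_{r,i} e_\lambda \eta_{r,i} \in \DSalg_{r+2i}$, with $\eta_{r,i}\mu_{r,i} = 1_r$ by \cref{squiggle}. The plan is to write down explicit mutually inverse isomorphisms in $\Kar(\DScat(q,t))$:
\[
    \alpha := e_\lambda^{(i)} \mu_{r,i} e_\lambda \colon \obj{e_\lambda} \to \obj{e_\lambda^{(i)}},
    \qquad
    \beta := e_\lambda \eta_{r,i} e_\lambda^{(i)} \colon \obj{e_\lambda^{(i)}} \to \obj{e_\lambda}.
\]
By construction these are morphisms in the Karoubi envelope, since they are sandwiched by the relevant idempotents. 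Using \cref{squiggle} we can simplify:
\[
    \alpha = \mu_{r,i} e_\lambda \eta_{r,i} \mu_{r,i} e_\lambda = \mu_{r,i} e_\lambda,
    \qquad
    \beta = e_\lambda \eta_{r,i}.
\]
The two compositions are then
\[
    \beta\alpha = e_\lambda \eta_{r,i}\mu_{r,i} e_\lambda = e_\lambda,
    \qquad
    \alpha\beta = \mu_{r,i} e_\lambda \eta_{r,i} = e_\lambda^{(i)}.
\]
These are exactly the identity morphisms of $\obj{e_\lambda}$ and $\obj{e_\lambda^{(i)}}$, so the two objects are isomorphic.

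It remains to treat the case $\lambda = \varnothing$ with $\delta \ne 0$, where $e_\varnothing^{(i)}$ is defined separately by \cref{humps} and $e_\varnothing = 1_\one \in \DSalg_0$. Take $\alpha \colon \one \to \upobj^{2i}$ to be the row of $i$ toggled cups, normalized by $\delta^{-i}$, and $\beta \colon \upobj^{2i} \to \one$ to be the row of $i$ toggled caps. Then $\alpha\beta = e_\varnothing^{(i)}$ by definition. For $\beta\alpha$, each toggled cap composed with the corresponding toggled cup is a toggle-modified bubble. Using the first two relations of \cref{DStoggles}, which cancel a pair of adjacent toggles, each such bubble reduces to an ordinary oriented bubble, which equals $\delta$ by \cref{cookie}. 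Hence $\beta\alpha = \delta^{-i}\delta^i 1_\one = 1_\one$.

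The main point to check is this bubble evaluation: that the toggles in the toggled cup and cap cancel to give exactly $\delta$, with no extra factor of $q$ or $t$ coming from the curl relations \cref{DScurls}. If a scalar does appear, it is nonzero, so we simply rescale $\alpha$ to absorb it, and the conclusion is unchanged. For $\lambda \ne \varnothing$ the argument is formal, given \cref{squiggle}, which the paper has already stated.
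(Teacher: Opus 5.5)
Your proposal is correct and is essentially the paper's proof: the same mutually inverse maps $\mu_{r,i}e_\lambda$ and $e_\lambda\eta_{r,i}$ for $\lambda\neq\varnothing$, and for $\lambda=\varnothing$ the rows of toggled cups and caps, with $\delta^{-i}$ on the cups rather than on the caps as in the paper (this makes no difference). One small correction to your bubble step: after the toggles cancel you get the counterclockwise bubble, not the clockwise one in \cref{cookie}, but it also equals $\delta$ because the two bubbles coincide in the ribbon category $\OScat$, so $\beta\alpha = 1_\one$ on the nose and no rescaling is needed (rescaling $\alpha$ would in any case spoil $\alpha\beta = e_\varnothing^{(i)}$).
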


\begin{proof}
    If $\lambda \ne \varnothing$, we have mutually inverse isomorphisms
    \[
        \begin{tikzpicture}[anchorbase]
            \draw[multi,->] (0,0) -- (0,0.7) \toplabel{r};
            \draw[multi,->] (0.2,-1) \botlabel{r-1} -- (0.2,-0.1);
            \genbox{-0.4,-0.1}{0.4,0.3}{e_\lambda};
            \draw[<-] (-0.2,-0.1) to[out=down,in=up] (-1.7,-0.8) -- (-1.7,-1);
            \draw[->] (-0.2,-1) -- (-0.2,-0.7) to[out=up,in=up,looseness=1.5] (-0.5,-0.7) -- (-0.5,-0.8);
            \draw[<-] (-0.5,-0.8) -- (-0.5,-1);
            \draw[->] (-1.1,-1) -- (-1.1,-0.7) to[out=up,in=up,looseness=1.5] (-1.4,-0.7) -- (-1.4,-0.8);
            \draw[<-] (-1.4,-0.8) -- (-1.4,-1);
            \node at (-0.8,-0.75) {$\cdots$};
            \opendot{-0.5,-0.8};
            \opendot{-1.4,-0.8};
        \end{tikzpicture}
        \colon \obj{e_\lambda^{(i)}} \to \obj{e_\lambda}
        \qquad \text{and} \qquad
        \begin{tikzpicture}[anchorbase]
            \draw[multi,->] (0,-0.5) -- (0,0.7) \toplabel{r};
            \genbox{-0.4,-0.1}{0.4,0.3}{e_\lambda};
            \draw[<->] (-0.5,0.7) -- (-0.5,0.4) to[out=down,in=down,looseness=1.5] (-0.8,0.4) -- (-0.8,0.7);
            \draw[<->] (-1.4,0.7) -- (-1.4,0.4) to[out=down,in=down,looseness=1.5] (-1.7,0.4) -- (-1.7,0.7);
            \opendot{-0.5,0.5};
            \opendot{-1.4,0.5};
            \node at (-1.05,0.45) {$\cdots$};
        \end{tikzpicture}
        \colon \obj{e_\lambda} \to \obj{e_\lambda^{(i)}}.
    \]
    If $\lambda = \varnothing$, then
    \[
        \frac{1}{\delta^i}
        \begin{tikzpicture}[anchorbase]
            \draw[->] (-0.6,-0.6) -- (-0.6,-0.4);
            \draw[<-] (-0.6,-0.4) -- (-0.6,-0.3) to[out=up,in=up,looseness=1.5] (-0.3,-0.3) -- (-0.3,-0.6);
            \draw[->] (0.3,-0.6) -- (0.3,-0.4);
            \draw[<-] (0.3,-0.4) -- (0.3,-0.3) to[out=up,in=up,looseness=1.5] (0.6,-0.3) -- (0.6,-0.6);
            \opendot{-0.6,-0.4};
            \opendot{0.3,-0.4};
            \node at (0.01,-0.35) {$\cdots$};
        \end{tikzpicture}
        \colon \obj{e_\varnothing^{(i)}} \to \obj{e_\varnothing}
        \qquad \text{and} \qquad
        \begin{tikzpicture}[anchorbase]
            \draw[<->] (-0.6,0.6) -- (-0.6,0.3) to[out=down,in=down,looseness=1.5] (-0.3,0.3) -- (-0.3,0.6);
            \draw[<->] (0.3,0.6) -- (0.3,0.3) to[out=down,in=down,looseness=1.5] (0.6,0.3) -- (0.6,0.6);
            \opendot{-0.6,0.4};
            \opendot{0.3,0.4};
            \node at (0.01,0.35) {$\cdots$};
        \end{tikzpicture}
        \colon \obj{e_\varnothing} \to \obj{e_\varnothing^{(i)}}
    \]
    are mutually inverse. 
\end{proof}

\begin{theo} \label{31mile}
    A complete set of pairwise-nonisomorphic representatives of the isomorphism classes of indecomposable objects in $\Kar(\DScat(q,t))$ is given by
    \[
        \obj{e_\lambda},\qquad
        \lambda \in \Par,
    \]
    where $\Par$ denotes the set of all partitions.
\end{theo}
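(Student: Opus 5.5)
Every object of $\DScat(q,t)$ is isomorphic to $\upobj^{\otimes r}$ for some $r \in \N$, and $\Kar(\DScat(q,t))$ is Krull--Schmidt with finite-dimensional morphism spaces. Hence every indecomposable object is isomorphic to $\obj{e}$ for a primitive idempotent $e \in \DSalg_r$, for some $r$. Two such objects $\obj{e}$ and $\obj{e'}$ with $e,e' \in \DSalg_r$ are isomorphic exactly when $e$ and $e'$ are conjugate in $\DSalg_r$. The plan is to combine \cref{primidem} with \cref{accordian}, which lets us collapse the list over all $r$, and then to separately check nonisomorphism between different $r$.

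\emph{Completeness.} For $r \ge 2$, \cref{primidem} says every primitive idempotent of $\DSalg_r$ is conjugate to some $e_\lambda^{(i)}$ with $\lambda \vdash r-2i$. By \cref{accordian}, $\obj{e_\lambda^{(i)}} \cong \obj{e_\lambda}$. For $r=0,1$ we have $\DSalg_0 = \kk$ and $\DSalg_1 \cong \kk$, since the toggles relate only $\upobj$ and $\downobj$ and the basis theorem gives only the identity up to scalar. So the only indecomposables there are $\one = \obj{e_\varnothing}$ and $\upobj = \obj{e_{(1)}}$. Hence every indecomposable object is isomorphic to some $\obj{e_\lambda}$.

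\emph{Pairwise nonisomorphism.} Suppose $\obj{e_\lambda} \cong \obj{e_\mu}$ with $\lambda \vdash r$, $\mu \vdash s$ and $\lambda \ne \mu$. The case $r=s$ is excluded by \cref{primidem} with $i=0$ (the cases $r \le 1$ are trivial). If $r \ne s$ but $r \equiv s \pmod 2$, say $s = r+2i$, then $\obj{e_\lambda^{(i)}} \cong \obj{e_\mu}$ with both idempotents in $\DSalg_s$. These are then conjugate, contradicting the pairwise nonconjugacy in \cref{primidem}. The case $\lambda=\varnothing$ with $\delta=0$ needs separate care, since $e_\varnothing^{(i)}$ is then undefined. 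In that case $\obj{e_\varnothing} = \one$, and I argue directly: an isomorphism $\one \cong \obj{e_\mu}$ with $\mu \ne \varnothing$ would force $e_\mu$ to factor through $\one$. But $\pi_s(e_\mu) \ne 0$, while anything factoring through $\upobj^{\otimes 0}$ lies in $J_s$ for $s > 0$ by \cref{coffee}, so $\pi_s(e_\mu) = 0$, a contradiction.

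The remaining step, which I expect to be the main obstacle, is excluding isomorphisms between objects of different parity, $r \not\equiv s \pmod 2$. Hom spaces $\upobj^{\otimes r} \to \upobj^{\otimes s}$ are nonzero in general, since toggles allow cups and caps between $\upobj$'s. My approach is a mod-2 invariant instead. Every morphism in $\DScat$ preserves the parity of the number of strand endpoints: generators of $\OScat$ and toggles preserve endpoint parity, so $\Hom_\DScat(\upobj^{\otimes r}, \upobj^{\otimes s}) = 0$ whenever $r+s$ is odd. I would check this using the basis theorem \cite[Th.~7.1]{SSS25}, where reduced diagrams pair up endpoints. Consequently no nonzero morphism exists between $\obj{e_\lambda}$ and $\obj{e_\mu}$, and they are not isomorphic, which completes the proof.
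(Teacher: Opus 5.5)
Your proof is correct and follows the same route as the paper. Completeness comes from \cref{primidem} and \cref{accordian}. For nonisomorphism, Hom spaces vanish when $|\lambda|-|\mu|$ is odd, and when it is even you combine \cref{accordian} with the pairwise nonconjugacy in \cref{primidem}. Your explicit handling of $r\le 1$ and of $\lambda=\varnothing$ when $\delta=0$ (where $e_\varnothing^{(i)}$ is undefined, handled via $J_s$ and $\pi_s(e_\mu)\neq 0$) fills in details that the paper's short proof leaves implicit.
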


\begin{proof}
    It follows from \cref{primidem,accordian} that every indecomposable object is isomorphic to $\obj{e_\lambda}$ for some $\lambda \in \Par$.  Now suppose that $\lambda,\mu \in \Par$, $\lambda \ne \mu$.  If $|\lambda|-|\mu|$ is odd, then there is no morphism between $\uparrow^{\otimes |\lambda|}$ and $\uparrow^{\otimes |\mu|}$.  On the other hand, if $|\lambda|-|\mu|$ is even then, without loss of generality, assume $i = (|\lambda|-|\mu|)/2 > 0$.  Then
    \[
        \im(e_\mu) \cong \im \left( e_\mu^{(i)} \right) \not\cong e_\lambda
    \]
    by \cref{primidem}.  In either case $\im(e_\lambda) \not\cong \im(e_\mu)$, completing the proof of the theorem.
\end{proof}

\section{Submodules of the disoriented skein category\label{sec:DSapp}}

In this section we classify the $\Kar(\OScat(q,t))$-submodules of $\Kar(\DScat(q,t))$.  When $q=1$, this amounts to classifying the monoidal subcategories of the Karoubi envelope of the Brauer category (which is Deligne's interpolating category for the orthogonal groups); see \cref{medusa}.  Since this was already done in \cite[Th.~7.1]{Cou18}, we assume throughout this section that $\kk$ is a field, and we fix elements $q,t \in \kk^\times$ such that $q \ne 1$.  Then $\delta \in \kk$ is uniquely determined by \cref{delta}.  (We will make additional assumptions on $\kk$ below.)

\begin{prop} \label{semisimple}
    The category $\Kar(\DScat(q,t))$ is semisimple if and only if the following conditions are satisfied:
    \begin{itemize}
        \item $q$ is not a root of unity;
        \item for all $a \in \Z$, $t \ne \pm q^a$.
    \end{itemize}
\end{prop}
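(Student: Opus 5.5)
We reduce semisimplicity of $\Kar(\DScat(q,t))$ to semisimplicity of the finite-dimensional algebras $\DSalg_r$, $r \in \N$. Every object of $\Kar(\DScat(q,t))$ is a summand of some $\upobj^{\otimes r}$, and $\End(\upobj^{\otimes r}) = \DSalg_r$. A Krull--Schmidt category with finite-dimensional endomorphism algebras is therefore semisimple if and only if every $\DSalg_r$ is semisimple.

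\emph{Necessity.} First suppose $q$ is a root of unity. By \cref{claptrap}, $\Hecke_r$ is a quotient of $\DSalg_r$. For $r$ at least the quantum characteristic, $\Hecke_r(q)$ is not semisimple, and a quotient of a semisimple algebra is semisimple. Hence $\DSalg_r$ is not semisimple.

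Next suppose $t = \pm q^a$ for some $a \in \Z$. By \cref{medusa} we may reduce to $t = q^a$. Write $a = d = m - 2n$ with $m,n \in \N$ chosen large (any integer $d$ is realized this way). Then consider the full functor $\bR_\DScat \colon \DScat(q,q^d) \to \Uis(m|2n)\tmod$ of \cite[Th.~8.1]{SSS25}. We would argue that its kernel is a nonzero $\OScat$-submodule: for instance, a suitable quasi-idempotent in $\DSalg_r$ for large $r$ vanishes on the supervector space by dimension reasons. If $\Kar(\DScat)$ were semisimple with $\End(\one) = \kk$, then \cref{dns} would say the only submodules are $0$ and everything, which is impossible since $1_\one$ is not killed. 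This requires \cref{jetlag}, which holds by \cref{DStwist}, together with the fact that $\DScat$ is generated by the toggles. An alternative, more self-contained route is to produce explicitly a nonzero nilpotent ideal in some $\DSalg_r$: take an idempotent $e_\lambda^{(i)}$ whose trace (a closed loop evaluated via \cref{cookie} and the skein relations) is a Laurent polynomial in $q,t$ vanishing at $t = q^a$. Then $\DSalg_r e \DSalg_r$ contains the negligible morphisms. This is the main obstacle: identifying the Gram determinant or trace of the relevant idempotents as products of factors $t - \pm q^{a}$. I would try first the $r=2$ case ($\delta = 0$ is the case $t = \pm 1$, handled as in \cref{primidem}), then use the cellular structure for larger $r$.

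\emph{Sufficiency.} Assume $q$ is not a root of unity and $t \ne \pm q^a$. We proceed by induction on $r$ using \cref{raft}, \cref{jack} and \cref{claptrap}, which give $\DSalg_r/J_r \cong \Hecke_r$ (semisimple since $q$ is not a root of unity) and $\xi \DSalg_r \xi \cong \DSalg_{r-2}$. The algebra $\DSalg_r$ is semisimple if both of these are and the ideal $J_r$ is semisimple as a bimodule, i.e. the cell forms are nondegenerate. The nondegeneracy amounts to nonvanishing of the Gram determinants of the cell modules. These are products of quantum-dimension-type factors $(tq^{c} - t^{-1}q^{-c})/(q-q^{-1})$ for contents $c$, and these factors are nonzero precisely when $t^2 \ne q^{-2c}$, i.e. $t \ne \pm q^{a}$. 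Computing these determinants, by adapting the known Gram determinant computations for the quantized walled Brauer/BMW-type cell structure to the toggle setting, is the technical heart of the proof.
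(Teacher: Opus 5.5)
Your reduction to the algebras $\DSalg_r$ matches the paper's. The root-of-unity half of necessity, via the surjection $\DSalg_r \twoheadrightarrow \Hecke_r$ of \cref{claptrap}, is fine and more elementary than the paper's citation. It does need $q^2 \neq 1$, which is implicit in the paper's setting; for $q=\pm 1$ in characteristic $0$ the Hecke algebra is semisimple.

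The genuine gap is the sufficiency direction. You reduce it to ``nondegeneracy of the cell forms'' and then leave the Gram determinant computation undone, but that computation \emph{is} the proof. The results \cref{raft}, \cref{jack}, and \cref{claptrap} only control conjugacy classes of primitive idempotents. Semisimplicity of $\xi A \xi$ and $A/A\xi A$ does not imply semisimplicity of $A$: take $A$ to be the upper triangular $2\times 2$ matrices and $\xi = E_{11}$. So your induction yields nothing without two further inputs:
\begin{enumerate}
\item a cellular structure on $\DSalg_r$, which is not established anywhere in the paper;
\item explicit determinant formulas, whose asserted shape (products of $(tq^c - t^{-1}q^{-c})/(q-q^{-1})$ over contents) you have not justified.
\end{enumerate}
The true answer for fixed $n$ is not a simple list of contents. $\DSalg_n(q,t)$ is semisimple if and only if the quantum characteristic exceeds $n$ and $t \neq \pm q^a$ for all $a \in \{i \in \Z : 4-2n \le i \le n-2\} \setminus \{ i \in 2\Z+1 : 4-2n < i \le 3-n\}$.

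The missing ingredient is the source of that formula. By \cite[Prop.~7.5]{SSS25}, $\DSalg_n(q,t)$ is isomorphic to the $q$-Brauer algebra of \cite{RSS24}, and \cite[Th.~7.2]{RSS24} gives its semisimplicity criterion. The paper uses this for \emph{both} directions at once. The only remaining work is to check that these exceptional sets exhaust $\Z$ as $n$ varies: given $a$, any $n \ge \max\{a+2,4-a\}$ works.

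Your alternative for necessity when $t=\pm q^a$ is to exhibit a nonzero proper kernel of $\bR_\DScat^{m,2n}$ contradicting \cref{dns}. This is sound in outline, but not yet a proof, for two reasons.
\begin{enumerate}
\item The kernel is not nonzero ``by dimension reasons''. For $n>0$ the $q$-antisymmetrizers never act by zero on tensor powers of the $m|2n$-dimensional natural module. Instead use, for example, $\te_\lambda$ with $\lambda = ((2n+1)^{m+1})$. It is nonzero in $\DSalg_r$ because $\Hecke_r \hookrightarrow \DSalg_r$, and it is killed because $\lambda$ is not an $(m|2n)$-hook partition (quantum Schur--Weyl--Sergeev duality).
\item The existence of $\bR_\DScat^{m,2n}$ and that duality are only used in the paper for $\kk \supseteq \Q(q)$ with $q$ transcendental. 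For an arbitrary field with $q$ algebraic but not a root of unity, you would still need a specialization argument.
\end{enumerate}
Your trace/negligible-morphism alternative is too vague to assess.
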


\begin{proof}
    Since $\Kar(\DScat(q,t))$ is Krull--Schmidt, it is semisimple if and only if all its endomorphism algebras are semisimple.
    \details{
        In a Krull--Schmidt category $\cC$, every indecomposable object $X$ has local endomorphism ring $\End(X)$.  If $\End(X)$ is semisimple and local, then in must be a division ring (a semisimple Artinian ring that is local cannot split as a product of two nonzero matrix blocks).  Hence, $\End(X)$ is a division ring and Schur's lemma implies that $X$ is simple. As every object decomposes into indecomposables, every object is a finite direct sum of simples.  Thus, $\cC$ is semisimple.
    }
    A $\kk$-algebra $A$ is semisimple if and only if $eAe$ is semisimple for all idempotents $e \in A$.  Thus, $\Kar(\DScat(q,t))$ is semisimple if and only if all the endomorphism algebras of $\DScat(q,t)$ are semisimple.  By \cite[Prop.~7.5]{SSS25}, the endomorphism algebras of $\DScat(q,t)$ are isomorphic to the \emph{$q$-Brauer algebras} of \cite{RSS24}.  Thus, if $e$ is the quantum characteristic of $q$, then \cite[Th.~7.2]{RSS24} implies that $\DSalg_n(q,t)$ is semisimple if and only if $e > n$ and $t \ne \pm q^a$ for
    \[
        a \in \{i \in \Z : 4-2n \le i \le n-2\} \setminus \{i \in 2\Z + 1 : 4-2n < i \le 3-n\}.
    \]
    Thus, $\DSalg_n(q,t)$ is semisimple for \emph{all} $n$ when $e = \infty$ and $t \ne \pm q^a$ for 
    \[
        a \in \bigcup_{n \in \N} \big( \{i \in \Z : 4-2n \le i \le n-2\} \setminus \{i \in 2\Z + 1 : 4-2n < i \le 3-n\} \big)
        = \Z.
        \qedhere
    \]
    \details{
    For each $n\in\mathbb N$ set
    \[
    S_n \;:=\; \{\,i\in\mathbb Z \mid 4-2n \le i \le n-2\,\}
    \setminus
    \{\,i\in 2\mathbb Z+1 \mid 4-2n < i \le 3-n\,\}.
    \]
    We show that $\bigcup_{n\in\mathbb N} S_n=\mathbb Z$.
    
    Fix $a\in\mathbb Z$ and choose $n\ge \max\{a+2,\,4-a\}$. Then $a\le n-2$ holds since
    $n\ge a+2$, and $4-2n\le a$ holds since $2n\ge 4-a$ (because $n\ge 4-a$). Hence
    $a\in\{\,i\in\mathbb Z\mid 4-2n\le i\le n-2\,\}$.
    
    Moreover, if $a$ is odd, then $a\notin \{\,i\in 2\mathbb Z+1\mid 4-2n<i\le 3-n\,\}$:
    indeed $n\ge 4-a$ implies $3-n\le a-1$, hence $a>3-n$, so $a$ cannot satisfy $a\le 3-n$.
    Therefore $a\in S_n$ for this choice of $n$, and since $a$ was arbitrary we obtain
    $\mathbb Z\subseteq \bigcup_{n\in\mathbb N} S_n$. Thus $\bigcup_{n\in\mathbb N} S_n=\mathbb Z$.
    }
\end{proof}

It follows from \cref{semisimple,dns} that, if $q$ is not a root of unity and $t \ne \pm q^a$ for all $a \in \Z$, then the only $\Kar(\OScat(q,t))$-submodules of $\Kar(\DScat(q,t))$ are the trivial ones.  We will not treat the root of unity case here, focussing instead on the case of generic $q$ and $\operatorname{char}(\kk)=0$.  Thus, in light of the isomorphism $\DScat(q,t) \xrightarrow{\cong} \DScat(q,-t)$ described in \cref{medusa}, we assume for the remainder of this section that
\begin{center}
    $\kk$ is a field extension of $\Q(q)$
    \quad \text{and} \quad
    $t=q^d$ for some $d \in \Z$,
\end{center}
where $q$ is transcendental over $\Q$.  To simplify notation, we set
\[
    \OScat = \OScat(q,q^d)
    \qquad \text{and} \qquad
    \DScat = \DScat(q,q^d).
\]

\begin{rem}
    We expect that the results of this section hold for $\kk$ an arbitrary field (not necessarily of characteristic zero), $q$ not a root of unity, and $t=q^d$ for some $d \in \Z$.  The reason for the stronger assumptions above is that they are needed in our proof of \cref{brown}.
\end{rem}

\subsection{Pagoda structure}

The goal of this subsection is to endow the disoriented skein category with the structure of a pagoda as in \cref{pagoda}, following the approach of \cite[\S6.1]{Cou18}.  Let $\cC_0 := \OScat$ and $\cM_0 := \DScat$.  Then $(\cM_0,\cC_0)$ satisfies \cref{jetlag}, with the cylinder twist structure from \cref{DStwist}.  Let $\cC = \Kar(\cC_0)$ and $\cM = \Kar(\cM_0)$.  These satisfy \cref{latte}.

Let $\cS_0$ be the full monoidal subcategory of $\OScat$ on the objects $\upobj^{\otimes r}$.  We have $\cS_0(\upobj^{\otimes r}, \upobj^{\otimes s}) = 0$ if $r \ne s$, and
\[
    \cS_0(\upobj^{\otimes r}, \upobj^{\otimes r}) \cong \Hecke_r,
\]
where, as in \cref{sec:DSindec}, $\Hecke_r$ denotes the Iwahori--Hecke algebra of type $A_{r-1}$.  Then $\cS := \Kar(\cS_0)$ is a Krull--Schmidt monoidal category with $\End_\cS(\one) = \kk 1_\one \cong \kk$.  As discussed in \cref{sec:DSindec}, we have a bijection
\[
    \Lambda := \Par \to \indec(\cS),\qquad \lambda \mapsto \left( \uparrow^{\otimes |\lambda|}, \te_\lambda \right).
\]
Defining
\[
    \ell \colon \Lambda \to \N,\qquad \ell(\lambda) = |\lambda|,
\]
we see that $(\cS,\ell)$ is a tiered monoidal category.  We have the inclusion monoidal functor
\[
    \bT \colon \cS \to \cC.
\]

\begin{lem} \label{beats}
    For all objects $X$ in $\cS$, we have $\bT(X)^\vee \cong \bT(X)$ in $\cM$.
\end{lem}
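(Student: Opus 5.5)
The plan is to build the isomorphism $\bT(X)^\vee \cong \bT(X)$ in $\cM$ from the toggles. Those toggles are isomorphisms $\upobj \cong \downobj$ in $\DScat$, but they do not live in $\OScat$. First I would reduce to generators. Every object of $\cS$ has the form $(\upobj^{\otimes r}, e)$ with $e \in \Hecke_r$ an idempotent, and $\bT(X)$ is the same pair viewed in $\cC = \Kar(\OScat)$. Its dual in $\cC$ is $(\downobj^{\otimes r}, e^\vee)$, where $e^\vee$ is the left mate of $e$, as in \cref{mug}.

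So it suffices to produce an isomorphism $\phi \colon \upobj^{\otimes r} \to \downobj^{\otimes r}$ in $\DScat$ with $\phi\, e\, \phi^{-1} = e^\vee$. Given such a $\phi$, the morphisms $e^\vee \phi e$ and $e \phi^{-1} e^\vee$ are mutually inverse isomorphisms between $(\upobj^{\otimes r},e)$ and $(\downobj^{\otimes r},e^\vee)$ in $\Kar(\DScat) = \cM$. To check this, note that $e^\vee \phi e \cdot e \phi^{-1} e^\vee = e^\vee \phi e \phi^{-1} e^\vee = (e^\vee)^3 = e^\vee$, and the other composite is handled the same way.

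The natural candidate for $\phi$ is the cylinder twist $\tau_{\upobj^{\otimes r}} \colon \upobj^{\otimes r} \to (\upobj^{\otimes r})^\natural = \downobj^{\otimes r}$ from \cref{DStwist}. This is an isomorphism because each $\tau_X$ is invertible. By the $(\cM,\natural)$-automorphism property \cref{chifrijo}, applied to the twist established in \cref{DStwist}, we get $\tau\, e = e^\natural\, \tau$ for every $e \in \Hecke_r \subseteq \OScat(\upobj^{\otimes r},\upobj^{\otimes r})$. Hence $\tau e \tau^{-1} = e^\natural$, and the problem reduces to comparing $e^\natural$ with $e^\vee$ on $\Hecke_r$.

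The main obstacle is this comparison: I need $e^\natural$ and $e^\vee$ to be conjugate in $\OScat(\downobj^{\otimes r},\downobj^{\otimes r})$, or at least to define isomorphic objects of $\cC$. I would check on generators, using the definition of $\natural = \Theta$ from \cite[(2.12)]{SSS25}. The map $\Theta$ sends an upward crossing $T_i$ to the corresponding downward crossing. Its mate $T_i^\vee$ is a rotated crossing, which is again a downward crossing but with the strand order reversed and possibly the crossing sign changed.

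Both $e \mapsto e^\natural$ and $e \mapsto e^\vee$ are algebra (anti)isomorphisms $\Hecke_r \to \End(\downobj^{\otimes r})$: $\natural$ is a homomorphism, while mates reverse composition. So they differ by an (anti)automorphism of $\Hecke_r$ built from the order-reversing flip $T_i \mapsto T_{r-i}$ (conjugation by the longest braid) and possibly the antiautomorphism $T_i \mapsto T_i$. Any such map sends a primitive idempotent $\te_\lambda$ to one conjugate to $\te_\lambda$. The reason is that over the semisimple generic Hecke algebra these maps preserve the character of the simple module $S^\lambda$, since characters are determined by traces of $T_w$ over conjugacy classes, which are invariant under inversion and under conjugation by $T_{w_0}$.

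If a sign twist $T_i \mapsto -q^{\pm1}T_i^{-1}$ appears instead, it sends $\lambda$ to $\lambda'$, and I would compensate by composing with the braiding isomorphism, or by using the other toggle convention \cref{toggy}. I expect this generator bookkeeping to be where the care is needed.

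Finally, since $\te_\lambda$ generates all objects of $\cS$ up to sums, proving the statement for $X = \obj{\te_\lambda}$ and using additivity of duals gives it for all $X$.
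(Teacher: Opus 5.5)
Your proposal is correct in outline. It differs from the paper mainly in the choice of isomorphism $\upobj^{\otimes r}\to\downobj^{\otimes r}$.

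\textbf{Comparison of routes.} The paper takes $h=\rev_r\circ\togupdown^{\otimes r}$, with toggles in the convention \cref{toggy}. It uses the relation \cite[(2.25)]{SSS25}, that toggles conjugate a positive downward crossing into a positive upward crossing, to compute $h f^\vee h^{-1}=f^*$ directly. Here $*$ is the cellular anti-involution of $\Hecke_r$ fixing each crossing. The paper then invokes the cellular-algebra fact that a primitive idempotent is conjugate to its image under $*$.

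You instead use $\tau_{\upobj^{\otimes r}}$ and the naturality \cref{chifrijo} established in \cref{DStwist}. This gives $\tau e\tau^{-1}=e^\natural$ for free, and the remaining work becomes a comparison of $e^\natural$ with $e^\vee$ inside $\OScat$. This reuses the paper's own cylinder-twist machinery in place of the external relation (2.25). Both routes end at the same key input: $e$ and $e^*$ are conjugate in $\Hecke_r$.

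\textbf{The hedged comparison step.} You should settle this step rather than defer it: no sign twist occurs.
\begin{itemize}
\item Since $\natural=\Theta$ is a braided monoidal isomorphism, it sends the positive crossing $T_i$ on $\upobj^{\otimes r}$ to the positive downward crossing $D_i$ in the same position.
\item A half-turn rotation preserves the sign of a crossing (equivalently, $(c_{X,Y})^\vee=c_{X^\vee,Y^\vee}$ in a braided rigid category). So $T_i^\vee=D_{r-i}$, which is what the paper records.
\item Hence $e^\vee=\Theta(T_{w_0})\,\Theta(e^*)\,\Theta(T_{w_0})^{-1}$ while $e^\natural=\Theta(e)$, and you need exactly that $e$ and $e^*$ are conjugate.
\end{itemize}
Settling this matters because your fallback would not have worked. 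If a twist $\lambda\mapsto\lambda'$ genuinely appeared, no other choice of isomorphism could change the resulting conjugacy class. This includes composing with braidings or switching toggle conventions. The lemma would then simply be false.

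\textbf{Justification of the key input.} Your character argument is loose. Iwahori--Hecke characters are not functions of the conjugacy class of $w$, so $\chi_\lambda(T_{w^{-1}})=\chi_\lambda(T_w)$ does not follow just from $w$ and $w^{-1}$ being conjugate in the symmetric group. The identity is true, but it needs a real argument. One option is self-duality of simple modules under the cellular involution, which is the paper's appeal to \cite{GL96,KX98} and does not require semisimplicity. Another is a deformation argument from $q=1$, using the semisimplicity available under the standing assumptions of this section. Conjugation by $T_{w_0}$ needs no character argument at all, since it is inner.
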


\begin{proof}
    Since $\cS$ is Krull--Schmidt, it suffices to consider the case where $X$ is indecomposable.  Let $f$ be a primitive idempotent element of $\End_\cS(\upobj^{\otimes r}) = \End_\cC(\upobj^{\otimes r})$.  The duality functor on $\cC$ maps $f$ to its $180\degree$ rotation $f^\vee$.  We need to show that
    \[
        (\upobj^{\otimes r}, f) \cong (\downobj^{\otimes r}, f^\vee) \qquad \text{in } \cM.
    \]
    Let $h = \rev_r \circ \togupdown^{\otimes r} \in \DScat(\downobj^{\otimes r}, \upobj^{\otimes r})$, where $\operatorname{rev}_r \in \End_\cS(\upobj^{\otimes r}) = \End_\cC(\upobj^{\otimes r}) \cong \Hecke_r$ is the morphism corresponding to the longest element of the Weyl group.  For example,
    \[
        \rev_3
        =
        \begin{tikzpicture}[centerzero]
            \draw[->] (0.4,-0.4) -- (-0.4,0.4);
            \draw[over,->] (0,-0.4) to[out=135,in=down] (-0.32,0) to[out=up,in=225] (0,0.4);
            \draw[over,->] (-0.4,-0.4) -- (0.4,0.4);
        \end{tikzpicture}
    \]
    Then
    \[
        (\downobj^{\otimes r}, f^\vee) \cong (\upobj^{\otimes r}, h \circ f^\vee \circ h^{-1}) \qquad \text{in } \cM.
    \]
    Recall that, in a cellular algebra, every primitive idempotent is conjugate to its image under the cellular involution.  (See \cite[Theorem 3.7(i)]{GL96} or \cite[Prop.~5.1]{KX98}.)  Thus,
    \[
        (\upobj^{\otimes r},f) \cong (\upobj^{\otimes r}, f^*),
    \]
    where $f^*$ is the image of $f$ under the cellular involution of the Iwahori--Hecke algebra.
    
    To complete the proof, it suffices to show that $f^* = h \circ f^\vee \circ h^{-1}$, where $h^{-1} = \togdownup^{\otimes r} \circ \rev_r^{-1}$.  Since $\End_\cS(\upobj^{\otimes r})$ is generated, as an algebra, by crossings $\uparrow^{\otimes i} \otimes \posupcross \otimes \uparrow^{r-i-2}$, $1 \le i \le r-2$, it is enough to consider the case $f = f_1 \circ f_2 \circ \dotsb \circ f_k$, where each $f_j$, $1 \le j \le k$, is such a crossing.  The cellular involution on the Iwahori--Hecke algebra fixes such crossings, and so we have
    \[
        f^* = f_k \circ \dots \circ f_2 \circ f_1.
    \]
    For $1 \le i \le r-2$,
    \[
        \left( \uparrow^{\otimes i} \otimes \posupcross \otimes \uparrow^{r-i-2} \right)^\vee
        = \left( \downarrow^{\otimes r-i-2} \otimes \posdowncross \otimes \downarrow^i \right)^\vee
    \]
    and
    \[
        \rev_r \circ \left( \uparrow^{\otimes (r-i-2)} \otimes \posupcross \otimes \uparrow^i \right) \circ \rev_r^{-1}
        = \ \uparrow^{\otimes i} \otimes \posupcross \otimes \uparrow^{r-i-2}.
    \]
    By \cite[(2.25)]{SSS25}, in $\DScat$ (using the convention \cref{toggy}) we have
    \[
        \begin{tikzpicture}[centerzero]
            \draw[->] (0.3,-0.5) -- (0.3,-0.3);
            \draw[<-] (0.3,-0.3) \braidup (-0.3,0.3);
            \draw[->] (-0.3,0.3) -- (-0.3,0.5);
            \draw[->] (-0.3,-0.5) -- (-0.3,-0.3);
            \draw[over,<-] (-0.3,-0.3) \braidup (0.3,0.3);
            \draw[->] (0.3,0.3) -- (0.3,0.5);
            \opendot{-0.3,-0.3};
            \opendot{0.3,-0.3};
            \opendot{-0.3,0.3};
            \opendot{0.3,0.3};
        \end{tikzpicture}
        =
        \begin{tikzpicture}[centerzero]
            \draw[->] (-0.3,-0.5) -- (-0.3,-0.3) \braidup (0.3,0.3) -- (0.3,0.5);
            \draw[->] (0.3,-0.5) -- (0.3,-0.3);
            \draw[over,<-] (0.3,-0.3) \braidup (-0.3,0.3);
            \draw[->] (-0.3,0.3) -- (-0.3,0.5);
            \opendot{0.3,-0.3};
            \opendot{-0.3,0.3};
        \end{tikzpicture}
        =
        \begin{tikzpicture}[centerzero]
            \draw[->] (-0.3,-0.5) -- (-0.3,-0.3) \braidup (0.3,0.3) -- (0.3,0.5);
            \draw[over,->] (0.3,-0.5) -- (0.3,-0.3) \braidup (-0.3,0.3) -- (-0.3,0.5);
        \end{tikzpicture}
        \ .
    \]
    It follows that
    \[
        h \circ f^\vee \circ h^{-1}
        = (h \circ f_k^\vee \circ h^{-1}) \circ \dotsb \circ (h \circ f_2^\vee \circ h^{-1}) \circ (h \circ f_1^\vee \circ h^{-1})
        = f_k \circ \dotsb \circ f_2 \circ f_1,
    \]
    as desired.
\end{proof}

The following result gives an explicit characterization of the partial order $\preceq$ defined in \cref{minion}.  While well known, we were not able to find the precise statement in the literature, and so we have included a proof.  For $\lambda,\mu \in \Par$, we write $\lambda \subseteq \mu$ if the Young diagram of $\lambda$ in included in that of $\mu$.  Equivalently,
\[
    \lambda \subseteq \mu
    \iff \lambda_i \le \mu_i \text{ for all } i.
\]

\begin{lem} \label{aha}
  For $\lambda, \mu \in \lambda$, we have $\lambda \preceq \mu$ if and only if $\lambda \subseteq \mu$.
\end{lem}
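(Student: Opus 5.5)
We reduce the statement to the Littlewood--Richardson rule for the generic Iwahori--Hecke algebras $\Hecke_r$. Since $q$ is transcendental and $\operatorname{char}\kk = 0$, the algebras $\Hecke_r$ are split semisimple, and we may identify the split Grothendieck ring $K_\oplus(\cS)$ with the ring of symmetric functions via $[\te_\lambda] \mapsto s_\lambda$. Here $\te_\lambda$ corresponds to the Specht module $S^\lambda$.

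First we check that the tensor product in $\cS$ corresponds to induction. For idempotents $e \in \Hecke_a$ and $f \in \Hecke_b$, the object $(\upobj^{\otimes a}, e) \otimes (\upobj^{\otimes b}, f) = (\upobj^{\otimes(a+b)}, e \otimes f)$. The decomposition of this object into indecomposables is the decomposition of the projective $\Hecke_{a+b}$-module $\Hecke_{a+b}(e \otimes f) \cong \Hecke_{a+b} \otimes_{\Hecke_a \otimes \Hecke_b} (\Hecke_a e \boxtimes \Hecke_b f)$. This is the parabolic induction of $S^\lambda \boxtimes S^\nu$. By the generic-$q$ version of the Littlewood--Richardson rule, which follows from Tits deformation or from the standard specialisation argument from the symmetric group, the multiplicity of $\te_\mu$ in $\te_\lambda \otimes \te_\nu$ is the Littlewood--Richardson coefficient $c^\mu_{\lambda\nu}$. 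So, by \cref{minion}, we have $\lambda \preceq \mu$ if and only if $c^\mu_{\lambda\nu} \ne 0$ for some partition $\nu$.

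It remains to prove the combinatorial fact that $c^\mu_{\lambda\nu} \neq 0$ for some $\nu$ if and only if $\lambda \subseteq \mu$.
\begin{itemize}
\item For the forward direction, we use $c^\mu_{\lambda\nu} = \langle s_{\mu/\lambda}, s_\nu\rangle$. The skew Schur function $s_{\mu/\lambda}$ vanishes unless $\lambda \subseteq \mu$.
\item Conversely, if $\lambda \subseteq \mu$, then $s_{\mu/\lambda}$ is a nonzero symmetric function, since it is a sum over the nonempty set of semistandard skew tableaux of shape $\mu/\lambda$. Expanding it in the Schur basis produces some $\nu$ with $c^\mu_{\lambda\nu} > 0$.
\end{itemize}
An alternative for the converse avoids skew Schur functions. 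Repeatedly apply the Pieri rule with $\nu = (1)$: adding one box at a time along a chain $\lambda = \lambda^0 \subset \lambda^1 \subset \dotsb \subset \lambda^k = \mu$ shows $\lambda \preceq \mu$, because $\preceq$ is transitive (it is a partial order).

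The main obstacle is justifying the identification of tensor product in $\cS$ with Hecke parabolic induction, together with the Littlewood--Richardson rule for the Hecke algebras. We would handle this by noting that $\kk \supseteq \Q(q)$ is generic. In this setting the decomposition numbers of the induced modules agree with those at $q = 1$, by specialising an integral form over $\Q[q, q^{-1}]$ localised away from roots of unity. This reduces the claim to the classical statement for $\kk S_r$.
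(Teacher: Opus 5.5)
Your proposal is correct and follows essentially the same route as the paper: you reduce $\lambda \preceq \mu$ to the nonvanishing of some Littlewood--Richardson coefficient $c^\mu_{\lambda,\nu}$, deduce $\lambda \subseteq \mu$ because $s_{\mu/\lambda}$ (equivalently, the LR rule) vanishes unless $\mu/\lambda$ is a genuine skew shape, and obtain the converse from $s_{\mu/\lambda} \neq 0$ when $\lambda \subseteq \mu$. The differences are minor additions: you justify the identification of $\otimes$ in $\cS$ with parabolic induction for the generic Hecke algebras, which the paper takes for granted here and only comments on in the proof of \cref{cortado}, and you offer a Pieri-rule alternative for the converse that uses transitivity of $\preceq$.
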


\begin{proof}
    Suppose $\lambda \preceq \mu$.  Equivalently, the Littlewood--Richardson coefficient $c^{\mu}_{\lambda,\nu}>0$ for some $\nu$.  By the Littlewood--Richardson rule, there exists an Littlewood--Richardson tableau of skew shape $\mu/\lambda$.  In particular, $\mu/\lambda$ must be a valid skew Young diagram, which forces $\lambda \subseteq \mu$.

    Conversely, assume $\lambda \subseteq \mu$.  Then the skew Schur function $s_{\mu/\lambda}$ is nonzero, since there exists at least one semistandard Young tableau of shape $\mu/\lambda$.  For example, one may fill the box in row~$i$ and column~$j$ with the entry~$i$.  The Littlewood--Richardson expansion reads
    \[
        s_{\mu/\lambda}
        = \sum_{\nu} c^{\mu}_{\lambda,\nu}\, s_\nu ,
    \]
    with each coefficient $c^{\mu}_{\lambda,\nu}\ge 0$.   As $s_{\mu/\lambda} \neq 0$, at least one coefficient must be positive; hence $c^{\mu}_{\lambda,\nu}>0$ for some~$\mu$.
\end{proof}

\begin{lem} \label{base-change}
    Let $A \to A'$ be a homomorphism of commutative rings, $R$ be an $A$-algebra, $P$ be a finitely-generated projective left $R$-module, and $M$ be an arbitrary left $R$-module.  Then the map
    \[
        A' \otimes_A \Hom_R(P,M) \to \Hom_{A' \otimes_A R}(A' \otimes_A P, A' \otimes_A M),\quad
        (r \otimes f) \mapsto \Big( s \otimes p \mapsto rs \otimes f(p) \Big),
    \]
    is an isomorphism of $A'$-modules.
\end{lem}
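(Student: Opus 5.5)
The plan is to reduce to the case $P = R^n$ and then to $P = R$, by additivity of both sides in $P$ and naturality of the map.

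First I check that the map is well defined and $A'$-linear. For fixed $f \in \Hom_R(P,M)$ and $r \in A'$, the assignment $(s,p) \mapsto rs \otimes f(p)$ is $A$-balanced. It therefore induces an additive map $A' \otimes_A P \to A' \otimes_A M$. This map is $A' \otimes_A R$-linear, because $(s' \otimes x)(s \otimes p) = s's \otimes xp$ is sent to $rs's \otimes xf(p) = (s'\otimes x)(rs \otimes f(p))$. Next, $(r,f) \mapsto$ this map is $A$-balanced in $(r,f)$, since $A$ acts centrally on $R$ and $ar \otimes f$ and $r \otimes af$ give the same map. Hence we get a well-defined $A'$-linear map
\[
    \alpha_{P} \colon A' \otimes_A \Hom_R(P,M) \to \Hom_{A' \otimes_A R}(A' \otimes_A P, A' \otimes_A M).
\]
It is natural in $P$ with respect to $R$-module homomorphisms.

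Second, I treat $P = R$. Here $\Hom_R(R,M) \cong M$ via $f \mapsto f(1)$. On the other side, $A' \otimes_A R$ is the free module of rank one over itself, so
\[
    \Hom_{A'\otimes_A R}(A' \otimes_A R, A' \otimes_A M) \cong A' \otimes_A M
\]
via evaluation at $1 \otimes 1$. Under these identifications, $\alpha_R$ sends $r \otimes m$ to $r \otimes m$, so it is the identity and $\alpha_R$ is an isomorphism.

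Third, both sides commute with finite direct sums in $P$. Indeed, $\Hom_R(-,M)$, $A' \otimes_A -$, and $\Hom_{A'\otimes_A R}(-, A'\otimes_A M)$ are all additive functors, and $\alpha$ is compatible with these decompositions by naturality. Hence $\alpha_{R^n}$ is an isomorphism. Finally, a finitely generated projective $P$ is a direct summand of some $R^n$, say $R^n = P \oplus Q$. Then $\alpha_{R^n} = \alpha_P \oplus \alpha_Q$, and a direct summand of an isomorphism is an isomorphism, so $\alpha_P$ is an isomorphism.

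No step is a genuine obstacle. The only point needing care is the bookkeeping in the first step: verifying balancedness and that the resulting map is compatible with the direct sum decomposition. Everything else is formal.
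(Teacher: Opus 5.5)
Your proposal is correct and follows the same route as the paper: first check the claim for $P = R^{\oplus n}$ (which the paper calls straightforward, and which you reduce further to $P = R$ via additivity). Then use $P \oplus Q \cong R^{\oplus n}$, together with the compatibility of the map with this direct sum decomposition, to conclude for the summand $P$; your explicit checks of balancedness, $A' \otimes_A R$-linearity, and naturality supply exactly the details the paper leaves to the reader.
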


\begin{proof}
    Since $P$ is a finitely-generated projective $R$-module, there is a finitely-generated $R$-module $Q$ such that $P \oplus Q \cong R^{\oplus n}$ for some $n \in \N$.  It is straightforward to verify that the statement holds when $P=R^{\oplus n}$.
    \details{
        If $P = R^{\oplus n}$ for some $n \in \N$, then
        \[
            A' \otimes_A \Hom_R(R^{\oplus n},M)
            \cong A' \otimes_A M^{\oplus n}
            \cong (A' \otimes_A M)^{\oplus n}
        \]
        and
        \[
            \Hom_{A' \otimes_A R} (A' \otimes_A R^{\oplus n}, A' \otimes_A M)
            \cong \Hom_{A' \otimes_A R} \big( (A' \otimes_A R)^{\oplus n}, A' \otimes_A M \big)
            \cong (A' \otimes_A M)^{\oplus n}.
        \]
        The map in the statement of the lemma realizes the isomorphism between the two.
    }
    Then,
    \begin{multline*}
        \big( A' \otimes_A \Hom_R(P,M) \big) \oplus \big( A' \otimes_A \Hom_R(Q,M) \big)
        \\
        \cong A' \otimes_A \Hom_R(R^{\oplus n}, M)
        \overset{*}{\cong} \Hom_{A' \otimes R} (A' \otimes_A R^{\oplus n}, A' \otimes_A M)
        \\
        \cong \Hom_{A' \otimes R} (A' \otimes_A P, A' \otimes_A M) \oplus \Hom_{A' \otimes R} (A' \otimes_A Q, A' \otimes_A M).
    \end{multline*}
    Since the restriction of the isomorphism labeled $*$ clearly sends the summand $A' \otimes_A \Hom_R(P,M)$ to the summand $\Hom_{A' \otimes R}(A' \otimes_A P, A' \otimes_A M)$, the result follows.
\end{proof}

Define
\[
    \DSalg_r= \DSalg_r(q,q^d) := \End_\DScat(\upobj^{\otimes r}),\qquad r \in \N.
\]
By \cref{31mile}, we have a bijection
\[
    R \colon \Lambda = \Par \xrightarrow{\cong} \indec(\cM),\qquad R(\lambda) = \im(e_\lambda).
\]
For $j \in \Z_{>0}$, define
\begin{equation}\label{lime}
    m_j :=
    \begin{cases}
        d + 2j-2 & \text{if } d>0, \\
        2j-2 & \text{if } d \in -2\N, \\
        2j-1 & \text{if } d \in -2\N-1,
    \end{cases}
    \quad
    n_j := \frac{m_j-d}{2} =
    \begin{cases}
        j-1 & \text{if } d>0, \\
        j - \frac{d}{2} - 1 & \text{if } d \in -2\N, \\
        j - \frac{d}{2} - \frac{1}{2} & \text{if } d \in -2\N - 1,
    \end{cases}
\end{equation}
and
\[
    r_j := (m_j + 1)(n_j+1).
\]

\begin{prop}[{cf.\ \cite[Lem.~6.1.4]{Cou18}}] \label{brown}
    Define $\Upsilon := \{ \nu^{(j)} : j \in \N \} \subseteq \Par$, where
    \begin{equation}\label{nujdef}
        \nu^{(0)} = \varnothing
        \quad \text{and} \quad
        \nu^{(j)} = \left( (2n_j+2)^{m_j+1} \right)
        \quad \text{for } j > 0.
    \end{equation}
    For $\lambda \in \Par$,
    \begin{equation} \label{tricky}
        \dim_\kk \cM(\one,R(\lambda))
        \le
        \begin{cases}
            1 & \text{if } \lambda \in \Upsilon, \\
            0 & \text{otherwise}.
        \end{cases}
    \end{equation}
    In particular, $\bB_\cM \subseteq \{ R(\nu) : \nu \in \Upsilon \}$.
\end{prop}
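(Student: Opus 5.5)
The plan is to bound $\dim_\kk \cM(\one,R(\lambda))$ by a computation with quantum supergroup representations, following the pattern of \cite[Lem.~6.1.4]{Cou18}.

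First, we have $\cM(\one,R(\lambda)) = e_\lambda \DScat(\one,\upobj^{\otimes r})$ with $r=|\lambda|$. Since $\DScat(\one,\upobj^{\otimes r})$ is defined over the ring $A=\Z[q^{\pm1}]$ localized suitably (the basis theorem \cite[Th.~7.1]{SSS25} gives a free module with basis of reduced diagrams), we will realize $e_\lambda\DSalg_r$ as a finitely generated projective module. Then \cref{base-change} lets us compute dimensions after extending scalars, and also compare them with the image under a functor. For each $j$ we use the full functor $\bR_\DScat\colon \DScat(q,q^{d})\to\Uis(m|2n)\tmod$ from \cref{wizard}, with $d=m-2n$.

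The key step is to choose $(m,n)$ with $m,n$ large (keeping $m-2n=d$) so that $\bR_\DScat$ is faithful on $\DScat(\one,\upobj^{\otimes r})$. By the basis theorem and a standard dimension count, as in the stable range for $\fgl(m|2n)$, this should happen once $(m+1)(n+1)>r$. Under faithfulness we get
\[
\dim_\kk \cM(\one,R(\lambda)) = \dim \Hom_{\Uis}\big(\kk, \bR(e_\lambda)V^{\otimes r}\big).
\]
Here $\bR(e_\lambda)V^{\otimes r}$ is, by quantum Schur--Weyl--Sergeev duality for $U_q(\fgl(m|2n))$, the simple module $L(\lambda)$ when $\lambda$ lies in the $(m|2n)$-hook, and $0$ otherwise. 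The dimension we need is then the dimension of $\Uis$-invariants in $L(\lambda)$, i.e.\ a branching multiplicity for the quantum symmetric pair of type AI--II. To compute it, we specialize $q\to1$. This is where transcendence of $q$ and characteristic zero are used: they give an integral form together with a semicontinuity argument in which invariants can only grow at $q=1$, provided the specialization of $\Uis$ is $U(\mathfrak{osp}(m|2n))$. We then invoke the classical result that $L(\lambda)$ for $\fgl(m|2n)$ has an $\mathfrak{osp}(m|2n)$-invariant of dimension at most $1$, and a nonzero one only if $\lambda$ is "even" in the appropriate hook sense.

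Combining constraints across different choices of $(m,n)$ should pin $\lambda$ to rectangles. Concretely, if $\cM(\one,R(\lambda))\neq0$, then for every admissible $(m,n)$ where faithfulness holds, $\lambda$ must have an invariant. The rectangles $((2n_j+2)^{m_j+1})$ are exactly the shapes that sit just outside the $(m_j|2n_j)$-hook and generate the kernels. So the cleaner route is probably an inductive argument on $j$: any $\lambda$ with a nonzero map from $\one$ either has size below $r_j$, so that some $(m_j,n_j)$-comparison is faithful and forces $\lambda=\nu^{(j')}$ for some $j'<j$, or else it is treated at the next level. In each case the dimension bound $1$ comes from multiplicity-freeness of the symmetric pair.

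I expect the main obstacle to be the faithfulness and specialization step: proving that $\bR_\DScat$ is injective on $\DScat(\one,\upobj^{\otimes r})$ exactly for $r<r_j$, and transferring invariant dimensions from $q$ generic to $q=1$. I would first try to establish injectivity by comparing dimensions, namely the number of reduced diagrams against $\dim\Hom_{\Uis}(\kk,V^{\otimes r})$ computed classically. This is the point where the hypotheses that $q$ is transcendental and the characteristic is zero are used.
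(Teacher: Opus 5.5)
\textbf{Gap: $e_\lambda$ is conflated with the Hecke idempotent $\te_\lambda$.} The step that fails is identifying $\bR(e_\lambda)V^{\otimes r}$ with the simple $U_q(\fgl(m|2n))$-module $L(\lambda)$. Quantum Schur--Weyl--Sergeev duality describes the image of the \emph{Hecke} idempotent $\te_\lambda$, i.e.\ of $\bT(\lambda)$. But $e_\lambda$ is a primitive idempotent of $\DSalg_r$, and $\te_\lambda$ is in general $e_\lambda$ plus further orthogonal idempotents $e^{(i)}_\mu$ with $\mu\prec\lambda$. So $\bR(e_\lambda)V^{\otimes r}$ is only one $\Uis$-direct summand of $L(\lambda)$, and in general it is not simple.

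Consequently, counting $\Uis$-invariants in $L(\lambda)$ computes $\dim\cM(\one,\bT(\lambda))$ in your faithful range. That only bounds $\dim\cM(\one,R(\lambda))$ from above. At best it gives the bound $\le 1$, and vanishing for those $\lambda$ whose $L(\lambda)$ has no invariants. It cannot give vanishing for the remaining $\lambda\notin\Upsilon$.

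For instance, take $d\neq0$ and $\lambda=(2)$. For \emph{every} $(m,n)$, the $q$-symmetric square $L((2))$ contains the nonzero image of the toggled cup, because $\bT((2))=R((2))\oplus R(\varnothing)$. Yet $(2)\notin\Upsilon$, and indeed $e_{(2)}$ annihilates the toggled cup. Hence no combination of constraints over different $(m,n)$ can pin $\lambda$ down to rectangles.

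What is actually needed is the invariant space of the non-semisimple summand $\bR(e_\lambda)V^{\otimes r}$. That is exactly the hard input: at $q=1$ it is \cite[Lem.~6.1.4]{Cou18}, which rests on \cite{CH17}. Your proposal does not supply it. The suggested induction using that the rectangles generate the kernels is circular. In the paper that fact (Theorem~\ref{champagne}, Corollary~\ref{shoe}) is deduced from this very proposition via the pagoda axioms.

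\textbf{Secondary issue: the faithfulness argument.} Proving faithfulness by comparing the number of reduced diagrams with a classically computed $\dim\Hom_{\Uis}(\kk,V^{\otimes r})$ runs the wrong way. Specialization only shows that the generic-$q$ invariant space is at most the classical one, whereas injectivity needs the reverse inequality. You would instead need an integral form of $\bR_\DScat$ together with linear independence of the images of reduced diagrams at $q=1$.

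\textbf{How the paper argues instead.} The paper avoids $\Uis$ altogether:
\begin{enumerate}
\item It identifies $\dim\cM(\one,R(\lambda))$ with the multiplicity, in $C(r)=\DScat(\one,\upobj^{\otimes r})$, of the head of the projective $\DSalg_re_\lambda$. That head is an inflated Hecke simple, hence absolutely irreducible.
\item It works over $A=\C\llbracket q-1\rrbracket$, where $\DSalg_r(q,q^d)$ is free and reduces at $q=1$ to (essentially) the Brauer algebra with parameter $d$.
\item It lifts $e^\C_\lambda$ to an idempotent $e^A_\lambda$.
\item It uses that $\rank_A\Hom(\DSalg^A_re^A_\lambda,C^A(r))$ equals the $q=1$ dimension given by \cite[Lem.~6.1.4]{Cou18}.
\item Since the projective cover over $\bbK$ is a summand of $\bbK\otimes_A\DSalg^A_re^A_\lambda$, this bounds the dimension over a field $\bbK\supseteq\kk$.
\item A composition-series argument then transfers the bound to $\kk$.
\end{enumerate}
Semicontinuity is thus applied to Hom out of a lifted projective, which is the direction needed for an upper bound.
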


We will show in \cref{bibimbap} that the inequality \cref{tricky} is actually equality.

\begin{proof}
    Let $A := \C \llbracket q-1 \rrbracket$, with field of fractions $\C \Laurent{q-1}$.  Then $\kk \otimes_{\Q(q)} \C\Laurent{q-1}$ is a ring of characteristic zero.  Pick a maximal ideal $\mathfrak{m}$ of $\kk\otimes_{\Q(q)} \C\Laurent{q-1}$ and define
    \[
        \bbK := \left( \kk\otimes_{\Q(q)} \C\Laurent{q-1} \right)/\mathfrak{m},
    \]
    which is a field containing both $\kk$ and $\C\Laurent{q-1}$. We view $\C$ as an $A$-algebra via $q \mapsto 1$.  Note that $A$ is a complete discrete valuation ring with residue field $\C$.  We summarize these rings and the relationships between them:
    \[
        \kk \hookrightarrow \bbK
        \hookleftarrow A = \C \llbracket q-1 \rrbracket
        \xrightarrowdbl{q \mapsto 1} \C.
    \]
    For the remainder of this proof, $\bullet$ denotes an arbitrary element of $\{A,\kk,\bbK,\C\}$ and $\bbF$ denotes an arbitrary element of $\{\kk,\bbK,\C\}$.   Since we will work over multiple ground rings, we will denote the ground ring by a superscript.  For example, we let $\DSalg^\bullet_r = \DSalg^\bullet_r(q,q^d)$ be the algebra defined in \cref{DSalg} over $\bullet$.  It follows from the basis theorem \cite[Th.~7.1]{SSS25} that
    \[
        \DSalg^\bbK_r = \bbK \otimes_A \DSalg^A_r
        \qquad \text{and} \qquad
        \DSalg^\C_r = \C \otimes_A \DSalg^A_r.
    \]
    
    Recall the primitive idempotents $e^\bbF_\lambda$, $\lambda \vdash r$, defined as in \cref{PVD}.  Since the composite
    \[
        \Hecke^\bbF_r \hookrightarrow \DSalg^\bbF_r \xrightarrow[\cong]{\pi_r} \Hecke^\bbF_r
    \]  
    is the identity map of $\Hecke_r^\bbF$, we have $\pi_r(e_\lambda^\bbF) = \te_\lambda^\bbF$.  The algebra homomorphism $\pi_r$ induces a surjective homomorphism of  $\DSalg_r^\bbF$-modules $\DSalg^\bbF_re_\lambda^\bbF \to \Hecke_r^\bbF\te_\lambda^\bbF$.  Since $\Hecke_r^\bbF\te_\lambda^\bbF$ is the simple $\Hecke_r^\bbF$-module associated to $\lambda$, it follows that $\Hecke_r^\bbF\te_\lambda^\bbF$ is absolutely irreducible, and that $R^\bbF(\lambda) = \DSalg^\bbF_r e^\bbF_\lambda$ is its projective cover (as a $\DSalg^\bbF_r$-module).  Thus, the inflation of $\Hecke_r^\bbF \te_\lambda^\bbF$ is precisely the simple head $L^\bbF(\lambda)$ of $R^\bbF(\lambda)$.  Therefore, for any $\DSalg^\bbF_r$-module $M$, we have
    \begin{equation} \label{oyster}
        \dim_\bbF \Hom_{\DSalg^\bbF_r} \big( R^\bbF(\lambda), M \big)
        =
        \left[ M : L^\bbF(\lambda) \right].
    \end{equation}
    (See, for example, \cite[Lem.~1.7.6]{Ben98}.  Absolute irreducibility implies that the $\Delta_i$ appearing there is equal to $\bbF$.)
    
    Since $\DSalg^A_r$ is free of finite rank over $A = \C \llbracket q-1 \rrbracket$, it is $(q-1)$-adically complete.  Thus, by \cite[Th.~21.31, Cor.~21.32]{Lam01}, idempotents in $\DSalg^A_r/(q-1)\DSalg^A_r \cong \C \otimes_A \DSalg^A_r$ lift to idempotents in $\DSalg^A_r$.  In particular, there exists a primitive idempotent $e^A_\lambda \in \DSalg^A_r$ such that
    \begin{equation} \label{rabbit}
        \unit{\C} \otimes_A e^A_\lambda = e^\C_\lambda.
    \end{equation}
    Define
    \begin{equation} \label{pizza}
        R^A(\lambda) := \DSalg^A_r e^A_\lambda,
        \quad \text{so that } R^\C(\lambda) = \C\otimes_A R^A(\lambda).
    \end{equation}
    Viewing $\cM^\bbF_0(\one,\upobj^{\otimes r})$ as a left $\DSalg^\bbF_r$-module, we have
    \begin{multline} \label{tricky2}
        \dim_\bbF \cM^\bbF \big( \one, R^\bbF(\lambda) \big)
        = \dim_\bbF e^\bbF_\lambda \cM^\bbF_0 (\one,\upobj^{\otimes r})
        = \dim_\bbF \Hom_{\DSalg^\bbF_r} \big( \DSalg^\bbF_r e^\bbF_\lambda, \cM^\bbF_0 (\one,\upobj^{\otimes r}) \big)
        \\
        = \dim_\bbF \Hom_{\DSalg^\bbF_r} \big( R^\bbF(\lambda), \cM_0^\bbF(\one, \upobj^{\otimes r}) \big)
        \overset{\cref{oyster}}{=}
        \left[ \cM^\bbF_0(\one, \upobj^{\otimes r}) : L^\bbF(\lambda) \right].
    \end{multline}
    \details{
        The first equality in \cref{tricky2} holds by the definition of morphism spaces in the Karoubi envelope.
    }
    When $r$ is odd, $\cM^\kk_0(\one, \upobj^{\otimes r}) = 0$.  Thus, since $|\lambda|$ is even for all $\lambda \in \Upsilon$, \cref{tricky} holds when $|\lambda|$ is odd.  Therefore, we assume $r$ is even for the remainder of the proof.
	
    Let
    \begin{equation} \label{burger}
        C^\bullet(r) := \cM_0^\bullet(\one, \upobj^{\otimes r}).
    \end{equation}
    By the basis theorem \cite[Th.~7.1]{SSS25}, this is a free $\bullet$-module, and so
    \begin{equation} \label{horse}
        C^\bbK(r) = \bbK \otimes_A C^A(r)
        \qquad \text{and} \qquad
        C^\C(r) = \C \otimes_A C^A(r).
    \end{equation}
    We have
    \begin{equation} \label{rice1}
        \dim_\C \Hom_{\DSalg^\C_r} \big( R^\C(\lambda),C^\C(r) \big)
        \overset{\cref{tricky2}}{\underset{\cref{burger}}{=}}
        \dim_\C \cM^\C \big( \one, R^\C(\lambda) \big)
        =
        \begin{cases}
            1 & \text{if } \lambda \in \Upsilon, \\
            0 & \text{otherwise},
        \end{cases}
    \end{equation}
    where the final equality is \cite[Lem.~6.1.4]{Cou18}.   Using \cref{horse,pizza}, together with \cref{base-change} for the ring homomorphisms $A \hookrightarrow \bbK$ and $A \to \C$, we have
    \begin{gather} \label{eggK}
        \Hom_{\DSalg^{\bbK}_r} \big( \bbK \otimes_A R^A(\lambda),C^\bbK(r) \big)
        \cong \bbK \otimes_A \Hom_{\DSalg^A_r} \big( R^A(\lambda),C^A(r) \big),
        \\ \label{eggC}
        \Hom_{\DSalg^\C_r} \big( R^\C(\lambda),C^\C(r) \big)
        \cong \C \otimes_A \Hom_{\DSalg^A_r} \big( R^A(\lambda),C^A(r) \big).
    \end{gather}
    Since $A$ is a principal ideal domain, $\Hom_{\DSalg^A_r} \left( R^A(\lambda),C^A(r) \right)$ is a free $A$-module, because it is a submodule of the free module $\Hom_A \left( R^A(\lambda),C^A(r) \right)$.  Thus, \cref{eggC} implies
    \begin{equation} \label{lobster}
        \rank_A \Hom_{\DSalg^A_r} \big( R^A(\lambda),C^A(r) \big)
        = \dim_\C \Hom_{\DSalg^\C_r} \big( R^\C(\lambda),C^\C(r) \big).
    \end{equation}
    By \cref{lobster,rice1}, we have
    \begin{equation} \label{rice2}
        \rank_A \Hom_{\DSalg^A_r} \big( R^A(\lambda),C^A(r) \big)
        =
        \begin{cases}
            1 & \text{if } \lambda \in \Upsilon, \\
            0 & \text{otherwise}.
        \end{cases}
    \end{equation}
	
    A priori, $\unit{\bbK} \otimes_A e^A_\lambda$ might not be primitive in $\DSalg^{\bbK}_r$.  However, the projective cover $R^{\bbK}(\lambda)$ appears as a direct summand of the projective module $\bbK \otimes_A R^A(\lambda)$.
    \details{
        Recall the quotient map $\pi_r^\bullet \colon \DSalg_r^{\bullet} \twoheadrightarrow H_r^{\bullet}$ introduced below \cref{claptrap}.  Then,
        \[
            \pi_r^{\C}(e_\lambda^{\C})
            \overset{\cref{rabbit}}{=} \pi_r^{\C}(1_\C \otimes_A e_\lambda^A)
            =1_{\C}\otimes_A \pi_r^A(e_\lambda^A).
        \] 
        By \cref{PVD}, $e_\lambda^{\C}$ is chosen such that $\pi_r^{\C}(e_\lambda^{\C})\neq 0$. Hence $\pi_r^A(e_\lambda^A)\neq 0$ and it is not a $(q-1)$-torsion.  Thus, it survives localization:
        \[
            \pi_r^\bbK(1_\bbK\otimes_A e_\lambda^A)
            =1_\bbK\otimes_A \pi_r^A(e_\lambda^A)\neq 0.
        \]
        In particular, by \cref{PVD} again, this implies $e^\bbK_\lambda$ is a summand of $1_\bbK\otimes_A e_\lambda^A$.
    }
    Thus,
    \begin{multline} \label{rice4}
        \dim_{\bbK} \Hom_{\DSalg^{\bbK}_r} \big( R^{\bbK}(\lambda),C^\bbK(r) \big)
        \le \dim_\bbK \Hom_{\DSalg^{\bbK}_r} \big( \bbK \otimes_A R^A(\lambda), C^\bbK(r) \big)
        \\
        \overset{\cref{eggK}}{=} \rank_A \Hom_{\DSalg^A_r} \big( R^A(\lambda),C^A(r) \big)
        \overset{\cref{rice2}}{=}
        \begin{cases}
            1 & \text{if } \lambda \in \Upsilon, \\
            0 & \text{otherwise}.
        \end{cases}
    \end{multline}
    Thus,
    \begin{equation} \label{rice5}
        \left[ C^\bbK(r) : L^\bbK(\lambda) \right]
        \overset{\cref{oyster}}{\underset{\cref{burger}}{=}}
        \dim_{\bbK} \Hom_{\DSalg^{\bbK}_r} \big( R^{\bbK}(\lambda),C^\bbK(r) \big)
        \overset{\cref{rice4}}{\le}
        \begin{cases}
            1 & \text{if } \lambda \in \Upsilon, \\
            0 & \text{otherwise}.
        \end{cases}
    \end{equation}

    Now suppose, towards a contradiction, that $\left[ C^\kk(r) : L^\kk(\lambda)\right] \geq 2$.  Then we can choose a composition series of $C^\kk(r)$,
	\begin{equation} \label{cs1}
		0=M_0\subset M_1 \subset \cdots \subset M_l = C^\kk(r),
	\end{equation}
    with at least two subquotients $M_i/M_{i-1}$ isomorphic to $L^\kk(\lambda)$. By  extension of scalars from $\kk$ to $\bbK$, we obtain from \cref{cs1} a series of $\DSalg^{\bbK}_r$-modules
    \begin{equation} \label{cs2}
        0=\bbK\otimes_\kk M_0\subset \bbK\otimes_\kk M_1 \subset \cdots \subset \bbK\otimes_\kk M_l
        = \bbK\otimes_\kk C^\kk(r)
        = C^\bbK(r),
    \end{equation}
    with at least two subquotients isomorphic to $\bbK \otimes_\kk L^\kk(\lambda) \cong L^\bbK(\lambda)$. (The latter isomorphism holds because $L^\kk(\lambda)$ is absolutely irreducible.)  But this contradicts~\cref{rice5}.	Thus, \cref{tricky} follows from \cref{tricky2,rice5}.
\end{proof}

For $a,b \in \N$, we say that $\lambda,\mu \in \Par$ are \emph{$(a \times b)$-dual} if
\[
    \lambda_{a+1} = 0 = \mu_{a+1}
    \quad \text{and} \quad
    \lambda_i + \mu_{a+1-i} = b,\quad
    \text{for } 1 \le i \le a.
\]
Each partition $\lambda \subseteq (b^a)$ has a unique $(a \times b)$-dual.  Note that, if $\lambda$ is $(a \times b)$-self-dual, then $ab \in 2\N$ and $\lambda \vdash \frac{ab}{2}$.

\begin{lem}[{cf.\ \cite[Lem.~6.1.7]{Cou18}}] \label{cortado}
    Suppose $j \in \Z_{\ge 0}$ satisfies $\dim_\kk \cM(\one, R(\nu^{(j)})) = 1$.  With the notation of \ref{P4}, we have
    \begin{equation} \label{pave}
        \Lambda_j = \{ \lambda \in \Par : \lambda \text{ is $\big( (m_j+1) \times (2n_j+2) \big)$-self-dual} \}.
    \end{equation}
    For $\lambda \in \Lambda_j$, we have $\dim_\kk \cM(\bT(\lambda), R(\lambda)) = 1$.
\end{lem}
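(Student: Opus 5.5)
The statement has two parts: the description \cref{pave} of $\Lambda_j$, and the dimension count $\dim_\kk \cM(\bT(\lambda),R(\lambda))=1$. I will reduce the first to Littlewood--Richardson combinatorics in $\cS$, and the second to a multiplicity computation for the algebra $\DSalg_r$, $r=|\lambda|$.

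\textbf{Description of $\Lambda_j$.} First I identify the involution $\phi$. By \cref{beats}, $\bT(\lambda)^\vee\cong\bT(\lambda)$ in $\cM$. Since $\bT$ is injective on isomorphism classes of indecomposables (by \ref{P1} and \cref{31mile}), the only choice compatible with \ref{P2} is $\phi=\id_\Par$. Hence
\[
    \Lambda_j=\{\lambda : \nu^{(j)}\inplus \lambda\otimes\lambda \text{ in } \cS\}.
\]
Because $q$ is transcendental, $\cS=\Kar(\cS_0)$ is semisimple with the Hecke algebras $\Hecke_r$ as endomorphism algebras. The multiplicity of $\kappa$ in $\lambda\otimes\mu$ is therefore the Littlewood--Richardson coefficient $c^\kappa_{\lambda,\mu}$, exactly as in the proof of \cref{aha}. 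Write $a=m_j+1$ and $b=2n_j+2$, so that $\nu^{(j)}=(b^a)$ is a rectangle. I will then invoke the classical rectangle rule: $c^{(b^a)}_{\lambda,\mu}=1$ if $\mu$ is the $(a\times b)$-dual of $\lambda$, and $c^{(b^a)}_{\lambda,\mu}=0$ otherwise. This holds because the skew shape $(b^a)/\lambda$ is the $180^\circ$ rotation of the dual partition's diagram, and $s_{(b^a)/\lambda}=s_{\lambda^c}$. Taking $\mu=\lambda$ gives \cref{pave}, and also shows that $\nu^{(j)}$ occurs in $\lambda\otimes\lambda$ with multiplicity exactly one.

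\textbf{Dimension count.} Here $\bT(\lambda)=(\upobj^{\otimes r},\te_\lambda)$ and $R(\lambda)=(\upobj^{\otimes r},e_\lambda)$. Hence
\[
    \cM(\bT(\lambda),R(\lambda))=e_\lambda\DSalg_r\te_\lambda\cong\Hom_{\DSalg_r}(\DSalg_r e_\lambda,\DSalg_r\te_\lambda).
\]
By \cref{oyster}, this space has dimension $[\DSalg_r\te_\lambda:L(\lambda)]$, where $L(\lambda)$ is the inflation of the simple $\Hecke_r$-module along $\pi_r$. Decompose $\te_\lambda=e_\lambda+e'$. Every primitive summand of $e'$ is killed by $\pi_r$, so it is conjugate to some $e^{(i)}_\mu$ with $i>0$, by \cref{primidem} and \cref{raft}. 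This splits the count as
\[
    [\DSalg_r\te_\lambda:L(\lambda)]=[P(\lambda):L(\lambda)]+\sum[P(\mu^{(i)}):L(\lambda)].
\]
I then plan to use the cellular, standardly based structure of $\DSalg_r$, which is isomorphic to a $q$-Brauer algebra \cite{RSS24}, and BGG reciprocity. For every $\kappa\vdash r$ the top-layer cell modules $\Delta(\kappa)$ are Specht modules of the generic, hence semisimple, Hecke algebra $\Hecke_r$; so $\Delta(\kappa)=L(\kappa)$. By reciprocity, $\Delta(\lambda)$ occurs in $P(\mu^{(i)})$ with multiplicity $[\Delta(\lambda):L(\mu^{(i)})]=0$, and $\Delta(\lambda)$ occurs in $P(\lambda)$ exactly once. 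It remains to see that $L(\lambda)$, which is annihilated by $J_r$, cannot occur as a composition factor of any cell module $\Delta(\kappa)$ with $|\kappa|<r$.

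\textbf{Main obstacle.} This last vanishing is the step I expect to be hardest. The first thing I would try is to show directly that $e_\lambda J_r\te_\lambda=0$, i.e.\ that no nonzero morphism $\bT(\lambda)\to R(\lambda)$ factors through $\upobj^{\otimes(r-2)}$. If that holds, then $\pi_r$ restricts to an isomorphism $e_\lambda\DSalg_r\te_\lambda\cong\te_\lambda\Hecke_r\te_\lambda\cong\kk$, giving dimension one. To prove the vanishing, I would use the cell filtration of \cite{RSS24}. In that filtration $\Delta(\kappa)$ for $|\kappa|<r$ is induced from a lower layer, with $J_r$ acting through morphisms that factor through fewer strands. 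The fact I need is that the generic semisimplicity of $\Hecke_r$ forces $e_\lambda J_r\te_\lambda=0$. If this direct route stalls, I would instead transport the question to $q=1$ via the $\C\llbracket q-1\rrbracket$-lattice argument from the proof of \cref{brown}, and there appeal to the corresponding Brauer-category statement \cite[Lem.~6.1.7]{Cou18}.
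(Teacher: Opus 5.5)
Your description of $\Lambda_j$ agrees with the paper: semisimplicity of $\cS$, Littlewood--Richardson coefficients, and the rectangle rule \cite[Lem.~1.2.3(ii)]{Cou18}. Identifying $\phi=\id_\Par$ is harmless, since the paper simply chooses it.

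The gap is in the dimension count. Your argument there never uses $\lambda\in\Lambda_j$, nor the hypothesis $\dim_\kk\cM(\one,R(\nu^{(j)}))=1$. So it would give $\dim_\kk e_\lambda\DSalg_r\te_\lambda=1$ for \emph{every} $\lambda$. The step you isolate is in fact false: it is not true that generic semisimplicity of $\Hecke_r$ forces $e_\lambda J_r\te_\lambda=0$, i.e.\ that $L(\lambda)$ is never a composition factor of a lower cell module.

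Here is a counterexample. Take $d=0$ (so $t=1$ and $\delta=0$) and $r=2$.
\begin{itemize}
\item $J_2=\kk E$ with $E$ the element \cref{milk}, and $E^2=\delta E=0$.
\item By \cref{DScurls}, the crossing acts on $E$ from both sides by $q$.
\item Let $\te_\lambda$ ($\lambda\vdash 2$) be the projection onto the $q$-eigenspace of the crossing. Then $\te_\lambda\DSalg_2\te_\lambda=\kk\te_\lambda\oplus\kk E$ is local.
\item Hence $e_\lambda=\te_\lambda$ and $\dim_\kk e_\lambda\DSalg_2\te_\lambda=2$.
\end{itemize}
Equivalently, the lower cell module $\cM_0(\one,\upobj^{\otimes 2})$ is isomorphic to $L(\lambda)$. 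This does not contradict the lemma, since for $d=0$ no $\nu^{(j)}$ has size $4$. So cellular/BGG bookkeeping cannot by itself produce the vanishing; an input specific to $\Lambda_j$ is indispensable.

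The paper supplies that input through rigidity rather than by working inside $\DSalg_r$:
\begin{itemize}
\item By \cref{beats}, $\cM(\bT(\lambda),R(\lambda))\cong\cM(\one,R(\lambda)\otimes\bT(\lambda)^\vee)\cong\cM(\one,R(\lambda)\otimes\bT(\lambda))$.
\item By \cref{bulb}\ref{bulb1}, the summands $R(\nu)$ with $|\nu|=2|\lambda|$ occur with multiplicity $c^\nu_{\lambda,\lambda}$.
\item By \cref{brown}, only $\nu\in\Upsilon$ can receive nonzero morphisms from $\one$. The only such $\nu$ of size $2|\lambda|$ is $\nu^{(j)}$, which occurs once by the rectangle rule.
\item The hypothesis on $\nu^{(j)}$ then gives dimension $1$.
\end{itemize}
If you follow this route, note that \cref{bulb}\ref{bulb1} only sees summands of top size. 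A contribution from lower summands $R(\nu^{(j')})$, $j'<j$, must also be excluded.

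Your fallback is a legitimate alternative, but it has to be carried out rather than just named. Take the lift $e^A_\lambda$ from the proof of \cref{brown} and a lift $\te^A_\lambda$ of $\te^\C_\lambda$. Then:
\begin{itemize}
\item $e^A_\lambda\DSalg^A_r\te^A_\lambda$ is an $A$-direct summand of the free module $\DSalg^A_r$.
\item $e^\bbK_\lambda$ is a summand of $1_\bbK\otimes e^A_\lambda$.
\item Together these give $\dim_\kk e_\lambda\DSalg_r\te_\lambda\le\dim_\C e^\C_\lambda\DSalg^\C_r\te^\C_\lambda$.
\item The lower bound comes from $0\neq e_\lambda=e_\lambda\te_\lambda$.
\end{itemize}
This yields the claim provided \cite[Lem.~6.1.7]{Cou18} gives the corresponding dimension statement for the Brauer category at $\delta=d$.
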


\begin{proof}
    Under our assumption that $q$ is not a root of unity, the category of finite-dimensional $\Hecke_r(q)$-modules is split semisimple, and its Grothendieck ring under parabolic induction is canonically isomorphic to the ring of symmetric functions.  The structure constants are the Littlewood–-Richardson coefficients.  Thus, \cref{pave} follows from \cref{brown} and \cite[Lem.~1.2.3(ii)]{Cou18}.
    
    Suppose $\lambda \in \Lambda_j$.  We have $\kk$-module isomorphisms
    \[
        \cM(\bT(\lambda), R(\lambda))
        \cong \cM \left( \one, R(\lambda) \otimes \bT(\lambda)^\vee \right)
        \cong \cM(\one, R(\lambda) \otimes \bT(\lambda)),
    \]
    where the second isomorphism follows from \cref{beats}.  Decompose $R(\lambda) \otimes \bT(\lambda)$ as a sum of indecomposable objects.  By \cref{bulb}\ref{bulb1}, the multiplicity of $R(\nu)$ in this decomposition is the Littlewood--Richardson coefficient $c^\nu_{\lambda,\lambda}$.  Thus, we have a $\kk$-module isomorphisms
    \[
        \cM(\one, R(\lambda) \otimes \bT(\lambda))
        \cong \bigoplus_{\nu \in \Par} \cM(\one, R(\nu))^{\oplus c^\nu_{\lambda,\lambda}}
        \cong \cM \left( \one, R(\nu^{(j)}) \right)^{\oplus c^{\nu^{(j)}}_{\lambda,\lambda}}
        \cong \cM \left(\one, R(\nu^{(j)}) \right),
    \]
    where the second isomorphism follows from \cref{brown}, and the third isomorphism follows from \cite[Lem.~1.2.3(ii)]{Cou18}.  Therefore, $\dim_\kk \cM(\bT(\lambda), R(\lambda)) = 1$ by \cref{brown}.
\end{proof}

\begin{prop}\label{panda}
    The tuple $(\cM,\cC,\cS,\ell,\id_\Par,\bT)$ is a pagoda, where $\id_\Par$ denotes the identity map on the set $\Par$ of partitions.
\end{prop}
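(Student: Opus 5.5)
The plan is to verify the axioms of \cref{pagoda} one by one for $(\cM,\cC,\cS,\ell,\id_\Par,\bT)$. Several of them are immediate from earlier results. The two standing hypotheses, \cref{latte} and $\cM(\one,\one)=\kk 1_\one$, hold by \cref{sec:DSindec} and the basis theorem. Axiom \ref{P2} with $\phi=\id_\Par$ is exactly \cref{beats}. By \cref{aha}, the order $\preceq$ on $\Lambda$ is containment of Young diagrams, so all the remaining combinatorial conditions become statements about partitions.

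For \ref{P3}, \cref{brown} gives $\bB_\cM \subseteq \{R(\nu):\nu\in\Upsilon\}$. By \cref{lime,nujdef}, $\ell(\nu^{(j)}) = (m_j+1)(2n_j+2)$, and this is strictly increasing in $j$ because $m_j,n_j$ are nondecreasing and $m_j$ is strictly increasing. Hence distinct elements of $\bB_\cM$ have distinct lengths. We then index $\bB_\cM$ by increasing length (we expect it to be all of $\Upsilon$, cf.\ \cref{bibimbap}, but only the inclusion is needed here). For \ref{P4}, each $R(\nu)\in\bB_\cM$ has $\dim\cM(\one,R(\nu))=1$ by \cref{brown}, so \cref{cortado} applies. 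It identifies $\Lambda_j$ with the $((m_j+1)\times(2n_j+2))$-self-dual partitions and gives $\dim\cM(\bT(\lambda),R(\lambda))=1$. Nonemptiness is witnessed by $\lambda=((n_j+1)^{m_j+1})$, which is self-dual since $(n_j+1)+(n_j+1)=2n_j+2$.

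For \ref{P1}, write $\bT(\lambda)=(\upobj^{\otimes r},\te_\lambda)$ with $r=|\lambda|$, and decompose $\te_\lambda=e_\lambda+\sum_k \te_{\lambda,k}$ as in \cref{PVD}. The extra summands $\te_{\lambda,k}$ lie in $\ker\pi_r=J_r$. By \cref{primidem,accordian} each is isomorphic to some $R(\mu)=\obj{e_\mu^{(i)}}$ with $i\ge 1$ and $\mu\vdash r-2i$. What must be shown is that $\mu\subsetneq\lambda$. The plan is as follows.
\begin{itemize}
\item If $R(\mu)$ is a summand of $\bT(\lambda)$, then $e_\mu\,\DScat(\upobj^{\otimes r},\upobj^{\otimes(r-2i)})\,\te_\lambda\neq0$.
\item By \cref{coffee} and the basis theorem, the space $\DScat(\upobj^{\otimes r},\upobj^{\otimes(r-2i)})$ is spanned by $\Hecke_{r-2i}\,\eta_{r-2i,i}\,\Hecke_r$. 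So, as a right $\Hecke_r$-module, it is a quotient of the module induced from $\Hecke_{r-2i}\otimes\Hecke_{2i}$, where $\Hecke_{2i}$ acts on the span of the toggled caps.
\item Applying $e_\mu$ on the left isolates a module induced from $S^\mu\otimes C$, for some $\Hecke_{2i}$-module $C$.
\item Since $\Hecke_r$ is split semisimple, \cref{aha} and the Littlewood--Richardson rule show that $S^\lambda$ can occur in this induced module only if $\mu\subseteq\lambda$.
\end{itemize}
Combined with $|\mu|<|\lambda|$, this gives $\mu\prec\lambda$.

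Conditions \ref{P5} and \ref{P6} are purely combinatorial in the language of self-dual partitions and Littlewood--Richardson coefficients. For \ref{P5}, the rectangles $(m_{j'}+1)\times(2n_{j'}+2)$ are nested inside the larger ones for $j'<j$. Given a self-dual $\lambda$ in the large rectangle, we produce a self-dual $\lambda'\subsetneq\lambda$ in the smaller one, mirroring the corresponding step in \cite[\S6.1]{Cou18}. For \ref{P6}, suppose $\nu^{(j)}\subseteq\kappa$ and $c^\kappa_{\lambda,\lambda}>0$. Then the rectangle $\nu^{(j)}$ is covered by a Littlewood--Richardson filling of $\kappa/\lambda$ by content $\lambda$. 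From this one extracts a self-dual $\mu\subseteq\lambda$ in that rectangle, again following \cite[Lem.~1.2.3]{Cou18}.

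I expect the hardest steps to be the \ref{P1} argument, which needs control of the right $\Hecke_r$-module structure of morphism spaces with caps, and the verification of \ref{P6}. For \ref{P6} I would first try to transfer the corresponding statement of \cite{Cou18} for the Brauer case directly, since the combinatorics of $\Lambda_j$ there is identical.
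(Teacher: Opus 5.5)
Your treatment of (P2)--(P6) follows the paper. The paper uses \cref{beats} for (P2), \cref{brown} for (P3), \cref{cortado} for (P4), an explicit shrinking of self-dual partitions for (P5), and \cite[Lem.~1.2.3(i)]{Cou18} for (P6). For (P6) the precise input is the inequality $\lambda_i+\lambda_{a+1-i}\ge b$ with $a=m_j+1$, $b=2n_j+2$; then $\mu_i=\min(b,\lambda_i)$, $\mu_{a+1-i}=b-\mu_i$ gives the self-dual $\mu\subseteq\lambda$.

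The genuine gap is in (P1). Your first bullet keeps only the fact that $\cM(\bT(\lambda),R(\mu))=e_\mu\DScat(\upobj^{\otimes r},\upobj^{\otimes(r-2i)})\te_\lambda$ is nonzero. That cannot force $\mu\subseteq\lambda$, because this space contains every morphism factoring through $\one$, or through any $\upobj^{\otimes s}$ with $s<r-2i$. For instance, take $d=1$:
\begin{itemize}
\item Here $\nu^{(1)}=(2,2)$, so $\cM(\one,R((2,2)))\neq0$ (cf.\ \cref{bibimbap}).
\item $\cM(\bT((6)),\one)\neq0$, since $S^{(6)}$ occurs in the $\Hecke_6$-module $\DScat(\upobj^{\otimes 6},\one)$ (specialize to $q=1$); indeed $\one\inplus\bT((6))$ here.
\item So the composite $\bT((6))\to\one\to R((2,2))$ is nonzero, although $(2,2)\not\subseteq(6)$.
\end{itemize}
Correspondingly, your second bullet is not correct. \cref{coffee} concerns two-sided ideals of $\DSalg_r$, whose multipliers involve cups, caps and toggles. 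The span $\Hecke_{r-2i}\eta_{r-2i,i}\Hecke_r$ misses the morphisms with more than $i$ caps. Your third bullet has a related problem: $e_\mu\notin\Hecke_{r-2i}$, and it acts as $\te_\mu$ only modulo $J_{r-2i}$.

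The missing idea is to use that $R(\mu)$ is a \emph{split} summand of $\bT(\lambda)$, not merely that a Hom space is nonzero. The paper does this by writing the summands of $\te_\lambda$ other than $e_\lambda$ as $e_\mu^{(i)}=\gamma_{r-2i,i}(e_\mu)$. Then $0\neq e_\mu^{(i)}=\te_\lambda e_\mu^{(i)}$. Since $e_\mu^{(i)}$ is $e_\mu$ with $i$ toggled cups to its left, precomposed with $\eta_{r-2i,i}$, and $e_\mu=\te_\mu e_\mu$, this forces $\te_\lambda(1_{2i}\otimes\te_\mu)\neq0$, an element of $\Hecke_r$. Semisimplicity of $\Hecke_r$ then gives $\lambda\inplus\nu\otimes\mu$ for some $\nu\vdash 2i$, i.e.\ $\mu\prec\lambda$.

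To stay within your approach instead, write $e_\mu=a\circ b$ with $a\in e_\mu\DScat(\upobj^{\otimes r},\upobj^{\otimes(r-2i)})\te_\lambda$ and $b\in\te_\lambda\DScat(\upobj^{\otimes(r-2i)},\upobj^{\otimes r})e_\mu$. Let $F$ be the subspace of morphisms factoring through some $\upobj^{\otimes s}$ with $s<r-2i$. Then $a\notin F$, for otherwise $e_\mu\in J_{r-2i}$, contradicting $\pi_{r-2i}(e_\mu)=\te_\mu\neq0$. On the quotient by $F$, the ideal $J_{r-2i}$ acts by zero, so $e_\mu$ acts as $\te_\mu$ and the image $\bar a$ satisfies $\te_\mu\bar a\te_\lambda=\bar a\neq0$. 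It remains to identify this top layer of the through-strand filtration, via the basis theorem, with a quotient of the induced Hecke bimodule you describe. After that, your Littlewood--Richardson argument gives $\mu\subseteq\lambda$.
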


\begin{proof}
    We must verify conditions \ref{P1}--\ref{P5} in \cref{pagoda}.
    
    \smallskip
    
    \noindent \emph{Condition} \ref{P1}: Suppose $\lambda \vdash r$.  Recall, from \cref{sec:DSindec}, that $\te_\lambda$ is the primitive idempotent in $\Hecke_r$ corresponding to $\lambda$, and that we have a decomposition
    \[
        \te_\lambda = e_\lambda + \sum_{j=1}^k e_{\mu^{(j)}}^{(i_j)},\qquad
        1 \le i_j \le \tfrac{r}{2},\ \mu^{(j)} \vdash r-2i_j,\quad 1 \le j \le k,
    \]
    into mutually orthogonal primitive idempotents of $\DSalg$.  Since $\bT(\lambda)$ is the object corresponding to the idempotent $\te_\lambda$, this yields a decomposition
    \[
        \bT(\lambda) = R(\lambda) \oplus \bigoplus_{j=1}^k R \big( \mu^{(j)} \big).
    \]
    Fix $j \in \{1,\dotsc,k\}$ and let $i=i_j$, $\mu = \mu^{(j)}$.  Then
    \[
        0 \ne
        e_\mu^{(i)}
        = \te_\lambda e_{\mu}^{(i)}
        \overset{\cref{humps}}{=}
        \begin{tikzpicture}[anchorbase]
            \draw[multi,->] (0,0) -- (0,0.7);
            \draw[multi,->] (0.2,-1) -- (0.2,-0.1);
            \draw[multi,->] (-0.8,1.1) -- (-0.8,1.5);
            \genbox{-0.4,-0.1}{0.4,0.3}{e_\mu};
            \genbox{-2,0.7}{0.4,1.1}{\te_\lambda};
            \draw[<->] (-0.5,0.7) -- (-0.5,0.4) to[out=down,in=down,looseness=1.5] (-0.8,0.4) -- (-0.8,0.7);
            \draw[<->] (-1.4,0.7) -- (-1.4,0.4) to[out=down,in=down,looseness=1.5] (-1.7,0.4) -- (-1.7,0.7);
            \opendot{-0.5,0.5};
            \opendot{-1.4,0.5};
            \node at (-1.06,0.45) {$\cdots$};
            \draw[<-] (-0.2,-0.1) to[out=down,in=up] (-1.7,-0.8) -- (-1.7,-1);
            \draw[->] (-0.2,-1) -- (-0.2,-0.7) to[out=up,in=up,looseness=1.5] (-0.5,-0.7) -- (-0.5,-0.8);
            \draw[<-] (-0.5,-0.8) -- (-0.5,-1);
            \draw[->] (-1.1,-1) -- (-1.1,-0.7) to[out=up,in=up,looseness=1.5] (-1.4,-0.7) -- (-1.4,-0.8);
            \draw[<-] (-1.4,-0.8) -- (-1.4,-1);
            \node at (-0.8,-0.75) {$\cdots$};
            \opendot{-0.5,-0.8};
            \opendot{-1.4,-0.8};
        \end{tikzpicture}
        \ .
    \]
    It follows that $\te_\lambda (1_{2i} \otimes e_\mu) \ne 0$ and hence $\te_\lambda (1_{2i} \otimes \te_\mu) \ne 0$, since $\te_\mu e_\mu = e_\mu$.  This implies that $\lambda \inplus \nu \otimes \mu$ for some $\nu \vdash 2i$.
    \details{
        The category of finite-dimensional $\Hecke_r$ modules is semisimple.  Since
        \[
            0 \ne \te_\lambda (1_{2i} \otimes \te_\mu)
            \in \te_{\lambda} \Hecke_r (1_{2i} \otimes \te_\mu)
            = \Hom_{\Hecke_r} \left( \Ind_{\Hecke_{2i} \otimes \Hecke_{r-2i}}^{\Hecke_r} (\Hecke_{2i} \otimes \mu), \lambda \right),
        \]
        the claim follows.
    }
    Hence, $\mu \prec \lambda$ by \cref{minion}.  Therefore, \ref{P1} is satisfied.
    
    \smallskip
    
    \noindent \emph{Condition} \ref{P2}: This follows from \cref{beats}.
    
    \smallskip
    
    \noindent \emph{Condition} \ref{P3}: This follows from \cref{brown}.
       
    \smallskip
    
    \noindent \emph{Condition} \ref{P4}: This follows from \cref{cortado}.
    
    \smallskip
    
    \noindent \emph{Condition} \ref{P5}: Suppose $j,j' \in \N$ with $j' < j$, and $\lambda \in \Lambda_j$.  By \cref{cortado}, $\lambda$ is $\big( (m_j+1) \times (2n_j+2) \big)$-self-dual.  Let $\lambda'$ be the partition whose Young diagram is obtained from the Young diagram of $\lambda$ by deleting the first $2(j-j')$ rows and columns.  Then $\lambda'$ is $\big( (m_{j'}+1) \times (2n_{j'}+2) \big)$-self-dual and $\lambda' \preceq \lambda$ by \cref{aha}.  Thus, appealing again to \cref{cortado}, we see that condition \ref{P5} holds.
    
    \smallskip
    
    \noindent \emph{Condition} \ref{P6}: Now suppose that $\nu^{(j)} \preceq \kappa \inplus \lambda \otimes \lambda$ for some $\lambda, \kappa \in \Lambda$ and $j \in \N$.  Applying \cite[Lem.~1.2.3(i)]{Cou18} with $\lambda = \mu$, $\nu = \kappa$, $b = 2n_j+2$, and $a = m_j+1$, we have
    \begin{equation} \label{pain1}
        \lambda_i + \lambda_{a+1-i} \ge b \quad \text{for all } 1 \le i \le a.
    \end{equation}
    For $1 \le i \le \lfloor a/2 \rfloor$, set
    \[
        \mu_i = \min (b, \lambda_i)
        \qquad \text{and} \qquad
        \mu_{a+1-i} = b - \mu_i.
    \]
    If $a$ is odd, we also set $\mu_{(a+1)/2} = b/2$.  Then $\mu \in \Par$, $\mu \subseteq \lambda$, and $\mu$ is $(a \times b)$-self-dual.
    \details{
        For $1 \le i \le \lfloor a/2 \rfloor$, it is clear that $\mu_i \le \lambda_i$.  Also,
        \[
            \mu_{a+1-i} = b - \mu_i
            =
            \begin{cases}
                b - \lambda_i & \text{if } \lambda_i < b, \\
                0 & \text{if } \lambda_i \ge b
            \end{cases}
            \overset{\cref{pain1}}{\le} 
            \lambda_{a+1-i}
        \]
        If $a$ is odd, we also have $\mu_{(a+1)/2} = b/2 \le \lambda_{(a+1)/2}$ by \cref{pain1}.  Thus, $\mu \subseteq \lambda$.
    }
    Hence \ref{P6} is satisfied.
\end{proof}

\subsection{Classification of submodules}

For $m,n \in \N$, let $\Uis(m|2n)$ be the iquantum enveloping algebra denoted $\Uis$ in \cite[Def.~5.1]{SSS25}, and let $\Uis(m|2n)\tmod$ be its category of tensor modules.  By definition, this is the full subcategory of $\Uis(m|2n)$-modules whose objects are finite direct sums of summands of restrictions of tensor products of the natural $U_q(\fgl(m|2n))$-module and its dual.  Fix $d \in \Z \subseteq \kk$.  For $m,n \in \N$ with $d=m-2n$, let
\[
    \bR_\DScat^{m,2n} \colon \DScat(q,q^d) \to \Uis(m|2n)\tmod,
\]
be the strict morphism of $\OScat(q,q^d)$-modules from \cite[Th.~6.4]{SSS25}.  This induces a functor
\[
    \Kar(\bR_\DScat^{m,2n}) \colon \Kar(\DScat(q,q^d)) \to \Uis(m|2n)\tmod.
\]
These functors are full by \cite[Th.~8.1]{SSS25}.  Recall that, to simplify notation, we set
\[
    \OScat = \OScat(q,q^d),\qquad
    \DScat = \DScat(q,q^d).
\]

\begin{theo}[{cf.\ \cite[Th.~7.1.1]{Cou18}}] \label{champagne}
    The $\Kar(\OScat)$-submodules of $\Kar(\DScat)$ form a set $\{\cM_j : j \in \N\}$ with
    \begin{equation} \label{prosecco}
        \Kar(\DScat) = \cM_0 \supsetneq \cM_1 \supsetneq \cM_2 \supsetneq \cM_3 \supsetneq \dotsb
    \end{equation}
    and $\Ob \colon \Submod_{\Kar(\OScat)}(\Kar(\DScat)) \to \Thick(K_\oplus(\Kar(\DScat)))$ is an isomorphism.  For $j \in \Z_{>0}$, we have the following descriptions of $\cM_j$, with $m_j$, $n_j$ as in \cref{lime}:
    \begin{enumerate}
        \item \label{champagne1} For $X,Y \in \Ob(\DScat)$, the $\kk$-module $\cM_j(X,Y)$ consists of all morphisms that factor as $X \to Z \to Y$ for some $Z$ equal to a direct sum of objects $R(\mu)$, with $\mu \in \Par$ satisfying
            \begin{equation} \label{jays}
                \mu^t_i + \mu^t_{2n_j+3-i} > m_j
                \qquad \text{for all } 1 \le i \le n_j+1.
            \end{equation}
            
        \item \label{champagne2} We have $\cM_j = \ker \Kar (\bR_\DScat^{m_j,2n_j})$ for all $j > 0$.
        
        \item \label{champagne3} The submodule $M_j := \cM_j(\one,-) \in \Submod_{P(\Kar(\DScat))}(\Kar(\DScat)_\one)$ is determined by
            \[
                M_j \big( R(\nu^{(k)}) \big) = 0 \text{ if } k<j
                \quad \text{and} \quad
                M_j \big( R(\nu^{(k)}) \big) = \DScat \big( \one,R(\nu^{(k)}) \big)
                \quad \text{if } k \ge j,
            \]
            with $\nu^{(j)}$ as in \cref{nujdef}.
    \end{enumerate}
\end{theo}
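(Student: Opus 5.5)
The plan is to deduce everything from the pagoda structure of \cref{panda} via \cref{tree}, and then to identify the resulting chain with kernels of the functors $\Kar(\bR_\DScat^{m_j,2n_j})$. By \cref{tree}\cref{tree1}, $\Ob$ is an isomorphism of posets. By \cref{tree}\cref{tree2}, the thick submodules form the strictly decreasing chain $\tN_0 \supsetneq \tN_1 \supsetneq \dotsb$ indexed by $\bbL$. Setting $\cM_j := \cN_j = \Psi^{-1}(\Tr_{R(\nu^{(j)})} \cM_\one)$ gives the chain \cref{prosecco}, and part~\cref{champagne3} is \cref{tree}\cref{tree3}.

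The only missing input is that $\bbL = \N$, i.e.\ that every $R(\nu^{(j)})$ actually lies in $\bB_\cM$ (equality in \cref{tricky}). I would obtain this from the functors: each $\bR^{m_j,2n_j}_\DScat$ is full, and the trivial $\Uis(m_j|2n_j)$-module should appear as a summand of the image of $R(\nu^{(j)})$. This gives a nonzero morphism $\one \to R(\nu^{(j)})$ in $\Uis\tmod$, which lifts by fullness. Alternatively, one can show directly that the kernels are distinct for distinct $j$, forcing infinitely many elements of $\bB_\cM$.

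For part~\cref{champagne1}, \cref{BBQ}\cref{BBQ2} together with the isomorphism $\Ob$ shows that $\cM_j = \cN^{\min}_{\tN_j}$. So $\cM_j(X,Y)$ consists of the morphisms factoring through objects of $\tN_j$. By \cref{branch} and \cref{aha}, $R(\mu) \in \tN_j$ if and only if $\mu$ contains some $\big((m_j+1)\times(2n_j+2)\big)$-self-dual partition, using \cref{cortado}. The remaining combinatorial step is to show this is equivalent to \cref{jays}. Containing such a self-dual partition amounts to $\mu_i + \mu_{m_j+2-i} \ge 2n_j+2$ for $1 \le i \le m_j+1$, by the construction in the proof of condition \ref{P6} of \cref{panda}. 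After transposing, this should match the column condition $\mu^t_i + \mu^t_{2n_j+3-i} > m_j$. I would verify this by a direct box-counting argument on Young diagrams.

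For part~\cref{champagne2}, $\ker \Kar(\bR^{m_j,2n_j}_\DScat)$ is an $\OScat$-submodule because $\bR$ is a morphism of module categories. It is proper and nonzero, so by the classification it equals some $\cM_k$. To pin down $k$, I would use part~\cref{champagne3}: the kernel is determined by which $R(\nu^{(k)})$ have $\cM(\one,R(\nu^{(k)}))$ killed. I expect the main obstacle to be showing that $\bR$ kills $\one \to R(\nu^{(k)})$ exactly for $k \ge j$. This needs the known vanishing of the corresponding $\fgl(m|2n)$ idempotents, namely the hook condition: a Young symmetrizer is nonzero on $V^{\otimes r}$ if and only if $\mu$ fits in the $(m,2n)$-hook. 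Then $\nu^{(k)}=((2n_k+2)^{m_k+1})$ fails the hook condition exactly when $k \ge j$. Alternatively, \cref{jays} is visibly the image-nonvanishing criterion for $\bR$ on $R(\mu)$ (the failure of the $(m_j|2n_j)$-hook condition paired with duality), which would settle part~\cref{champagne2} directly once part~\cref{champagne1} is known.
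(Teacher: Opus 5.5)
Your overall architecture matches the paper: the pagoda structure feeding into Theorem~\ref{tree}, $\cM_j:=\cN_j$, part (c) from the trace description, and part (a) via the self-dual-containment criterion. Your part (a), which goes through the row condition and then transposes, is correct and equivalent to the paper's direct column construction. The gap is in the one genuinely new input. You must identify the abstract chain both with the explicit $\nu^{(j)}$ of \eqref{nujdef} and with the kernels $K_j:=\ker\Kar(\bR^{m_j,2n_j}_\DScat)$.

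Your main route to $R(\nu^{(j)})\in\bB_\cM$ fails. Since $\nu^{(j)}_{m_j+1}=2n_j+2>2n_j$, the Hecke idempotent $\te_{\nu^{(j)}}$ acts by zero on the tensor power of the natural $U_q(\fgl(m_j|2n_j))$-module. So $\Kar(\bR^{m_j,2n_j}_\DScat)(\bT(\nu^{(j)}))=0$, and its summand $R(\nu^{(j)})$ is sent to $0$; you use exactly this vanishing yourself in part (b). Hence the image has no trivial summand and fullness gives nothing. Passing to a larger $(m,2n)$ would require knowing the $\Uis$-invariants in the image of the specific summand $R(\nu^{(j)})$ of $\bT(\nu^{(j)})$, which nothing available supplies.

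In part (b) the same problem recurs, because the hook condition only controls $\bT(\nu^{(k)})$. For $k\ge j$ it gives $1_{R(\nu^{(k)})}\in K_j$, hence $\cM_j\subseteq K_j$. For $k<j$ it only gives $\bR^{m_j,2n_j}(\bT(\nu^{(k)}))\neq0$. That does not even imply $\bR^{m_j,2n_j}(R(\nu^{(k)}))\neq 0$, since $R(\nu^{(k)})$ is just one summand of $\bT(\nu^{(k)})$, let alone that the line $\cM(\one,R(\nu^{(k)}))$ survives. So you cannot read off $K_j$ from part (c). Your ``visibly'' alternative is circular: that \eqref{jays} characterizes $\bR^{m_j,2n_j}(R(\mu))=0$ is a consequence of the theorem, not an available input.

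The missing idea is the paper's squeeze, which uses the $\bT$-level information only to separate kernels.
\begin{itemize}
\item For $1\le i<j$ one has $1_{\bT(\nu^{(i)})}\in K_i\setminus K_j$, and $1_\one\notin K_j$.
\item Since all submodules form a chain, this gives a strictly decreasing chain of proper submodules $K_1\supsetneq K_2\supsetneq\cdots$. In particular $\bbL$ is infinite, and $K_j$ sits at position at least $j$.
\item $K_j$ contains $1_{R(\nu^{(i)})}$ for every $i\ge j$. This includes the $j$-th element of $\bB_\cM$, whose index in $\Upsilon$ is at least $j$, so $K_j$ sits at position at most $j$. Hence $K_j=\cM_j$.
\item Comparing which elements of $\bB_\cM$ are killed then forces the P3-indexing of $\bB_\cM$ to coincide with the explicit $R(\nu^{(j)})$.
\end{itemize}
Your ``alternatively'' gestures at distinctness, but does not say how to prove it. Also, mere infinitude of $\bB_\cM$ would not identify the indexing, since a priori $\bB_\cM$ could be a proper infinite subset of $\{R(\nu):\nu\in\Upsilon\}$.
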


\begin{proof}
    By \cref{panda}, we can apply \cref{tree}.  The first sentence of \cref{champagne} follows from parts \ref{tree1} and \ref{tree2} of \cref{tree}.  Part~\ref{champagne3} of \cref{champagne} follows from part~\ref{tree3} of \cref{tree}.
    
    Next we prove Part~\ref{champagne1}.  By \cref{BBQ}\ref{BBQ2}, \cref{tree}\ref{tree2}, \cref{aha}, and \cref{cortado}, $\cM_j(X,Y)$ consists of all morphisms that factor as $X \to Z \to Y$ for some $Z$ equal to a direct sum of objects $R(\mu)$ such that $\lambda \subseteq \mu$ for some $\lambda$ that is $\big( (m_j+1) \times (2n_j+2) \big)$-self-dual.  Thus, it suffices to prove that this condition on $\mu$ is equivalent to \cref{jays}.
    
    If $\lambda \subseteq \mu$ and $\lambda$ is $\big( (m_j+1) \times (2n_j+2) \big)$-self-dual, then
    \[
        \mu^t_i + \mu^t_{2n_j+3-i}
        \ge \lambda^t_i + \lambda^t_{2n_j+3-i}
        = m_j+1,
    \]
    and so $\mu$ satisfies \cref{jays}.  Conversely, suppose $\mu$ satisfies \cref{jays}.  Define $\lambda \in \Par$ by
    \[
        \lambda^t_i = \min(\mu^t_i,m_j+1),\qquad
        \lambda^t_{2n_j+3-i} = m_j+1 - \lambda^t_i \ \left( \le \mu^t_{2n_j+3-i} \right), \qquad
        1 \le i \le n_j+1.
    \]
    Then $\lambda \subseteq \mu$, and $\lambda$ is $\big( (m_j+1) \times (2n_j+2) \big)$-self-dual by construction.
    \details{
        For $1 \le i \le n_j+1$
        \[
            m_j + 1 - \lambda_i^t
            = m_j+1 - \min(\mu_i^t,m_j+1)
            \overset{\cref{jays}}{\le} \mu^t_{2n_j+3-i}. 
        \]
        Thus, $\lambda \subseteq \mu$, as desired.
    }
    
    It remains to prove part~\cref{champagne2}.  We have a commutative diagram of functors
    \begin{equation} \label{muscat}
        \begin{tikzcd}
            \cS \arrow[rr, "\Kar(\bR_\OScat^{m,2n})"] \arrow[d, "\Kar(\bT)"'] & & U_q(\fgl(m|2n))\tmod \arrow[d] \\
            \Kar(\DScat) \arrow[rr, "\Kar(\bR_\DScat^{m,2n})"] & & \Uis(m|2n)\tmod
        \end{tikzcd}
    \end{equation}
    where the right vertical map is restriction, and the left vertical map uses the fact that $\OScat$ is a subcategory of $\DScat$.  When restricted to $\cS$, the functor $\bR_\OScat^{m,2n}$ corresponds to the quantum analogue of Schur--Weyl duality for the $U_q(\mathfrak{gl}(m|2n))$.  Its image is contained in the semisimple category of polynomial representations; see \cite[Prop.~3.1]{BKK00}.  Note that  $\Kar(\bR_\OScat^{m_j,2n_j})(\lambda)$ is the image of the idempotent $\te_\lambda$ of the Iwahori--Hecke algebra $\Hecke_{|\lambda|}$ in the $|\lambda|$-fold tensor product of the standard representation of $U_q(\fgl(m_j|2n_j))$. By quantum Schur--Weyl--Sergeev duality (e.g., see \cite[Th.~5.1]{Mit06}) the image of $\te_\lambda$ is nonzero if and only if $\lambda$ is  an $(m_j,2n_j)$-hook partition.  Thus, for $\lambda \in \Par$,
    \[
        \Kar(\bR_\OScat^{m_j,2n_j})(\lambda) = 0 \iff \lambda_{m_j+1} > 2n_j.
    \]
    (See also \cite[Th.~4.14 and Th.~5.1]{BKK00}.  See \cite[Prop.~3.26]{CW12} for an analogous statement in the non-quantum setting.)  Thus, for $j \in \N$, we have
    \[
        \Kar(\bR_\OScat^{m_j,2n_j})(\nu^{(j)}) = 0
        \qquad \text{and} \qquad
        \Kar(\bR_\OScat^{m_j,2n_j})(\nu^{(j')}) \ne 0 \text{ for } j'<j.
    \]
    By commutativity of \cref{muscat}, this implies that
    \begin{equation} \label{oman}
        \Kar(\bR_\DScat^{m_j,2n_j})(\bT(\nu^{(j)})) = 0
        \qquad \text{and} \qquad
        \Kar(\bR_\DScat^{m_j,2n_j})(\bT(\nu^{(j')})) \ne 0 \text{ for } j'<j.
    \end{equation}
    In particular, the $\Ker \Kar(\bR_\DScat^{m_j,2n_j})$, $j \in \N$, are pairwise distinct.  By \ref{P1} and the equality in \cref{oman}, we have $\Kar(\bR_\DScat^{m_j,2n_j})(R(\nu^{(j)})) = 0$.  Since $\cM_j = \Psi^{-1}(\Tr_{R(\nu^{(j)})} \cM_\one)$ by \cref{tree}, it follows that $\cM_j \subseteq \Ker  \Kar(\bR_\DScat^{m_j,2n_j})$.  Thus, \cref{champagne2} follows from \cref{prosecco}.
\end{proof}

It follows from \cref{champagne} that the inequality in \cref{tricky} is actually equality.

\begin{cor} \label{bibimbap}
    For $\lambda \in \Par$,
    \[
        \dim_\kk \cM(\one,R(\lambda))
        =
        \begin{cases}
            1 & \text{if } \lambda \in \Upsilon, \\
            0 & \text{otherwise}.
        \end{cases}
    \]
    In particular, $\bB_\cM = \{ R(\nu) : \nu \in \Upsilon \}$ and $\bbL = \N$.
\end{cor}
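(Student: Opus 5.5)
Since \cref{brown} already gives the upper bound \cref{tricky}, it suffices to show $\cM(\one,R(\nu^{(j)})) \ne 0$ for every $j \in \N$. The equality in the display then follows, and so does $\bB_\cM = \{R(\nu) : \nu \in \Upsilon\}$. As the $\nu^{(j)}$ have strictly increasing sizes, this forces $|\bB_\cM| = \infty$ and hence $\bbL = \N$.

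The case $j=0$ is immediate: $R(\varnothing) = \one$ and $\cM(\one,\one) = \kk 1_\one \ne 0$. For $j>0$ I would use \cref{champagne}. We have $\cM_j = \Psi^{-1}(\Tr_{R(\nu^{(j)})}\cM_\one)$ by \cref{tree}, and the chain \cref{prosecco} is strictly decreasing, because the kernels $\ker\Kar(\bR_\DScat^{m_j,2n_j})$ are pairwise distinct by \cref{oman}. If $\cM(\one,R(\nu^{(j)}))$ were zero, the trace submodule $\Tr_{R(\nu^{(j)})}\cM_\one$ would be zero. Then $\Psi$ (an isomorphism by \cref{soup}) would give $\cM_j = 0$. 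That would force $\cM_{j+1} \subsetneq 0$, which is impossible.

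Since the proof of \cref{champagne} itself went through \cref{tree} and \cref{panda}, I must check this is not circular. The verification of \ref{P3}--\ref{P6} in \cref{panda} only used the upper bound from \cref{brown}, so the step needing care is indexing. If some $\dim\cM(\one,R(\nu^{(j)}))$ were $0$, the labels $\nu^{(j)}$ of \ref{P3} would enumerate only the nonvanishing members of $\Upsilon$, so they need not agree with \cref{nujdef}. To avoid this I would argue directly rather than through the labelling of \cref{tree}.

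The direct argument runs as follows. For each $j>0$, the kernel $\ker\Kar(\bR_\DScat^{m_j,2n_j})$ is a submodule, and by \cref{oman} it contains $1_{R(\nu^{(j)})}$ but not $1_{R(\nu^{(j')})}$ for $j'<j$. These kernels are therefore pairwise distinct and nonzero. By \cref{ice}/\cref{soup}, distinct submodules have distinct $\Psi$-images. By \cref{bin}, which applies because every $\cM(\one,Z)$ with $Z\in\bB_\cM$ is at most one-dimensional, these images are trace submodules $\Tr_S\cM_\one$ with $S \subseteq \bB_\cM$. Moreover $\Ob(\ker\Kar(\bR_\DScat^{m_j,2n_j}))$ contains $R(\nu^{(j)})$, and hence, by \cref{bulb}\cref{bulb2} and \cref{aha}, it contains every $R(\mu)$ with $\mu \supseteq \nu^{(j)}$.

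The main obstacle is showing that $S$ contains $R(\nu^{(j)})$ itself. My plan is to use \cref{green} and \cref{blue} with $X = R(\lambda)$, where $\lambda$ is $((m_j+1)\times(2n_j+2))$-self-dual. Since $1_X$ lies in the kernel, \cref{pumpkin} gives $\leftcup_X \in N$. By Littlewood--Richardson \cite[Lem.~1.2.3]{Cou18} and \cref{bulb}\cref{bulb1}, the only candidates in $\bB_\cM \cap \add(X\otimes\bT(\lambda))$ are $R(\nu^{(k)})$ with $k\le j$. On the other hand, the summands $R(\nu^{(k)})$ with $k<j$ are not killed, by \cref{oman}, so $\leftcup_X$ cannot factor through them alone. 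Hence $\cM(\one,R(\nu^{(j)}))\ne0$.
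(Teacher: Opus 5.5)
Your first argument is exactly what the paper means when it says the equality follows from \cref{champagne}: if $\cM(\one,R(\nu^{(j)}))=0$, then $\Tr_{R(\nu^{(j)})}\cM_\one=0$, so $\cM_j=0$, contradicting $\cM_j\supsetneq\cM_{j+1}$. You are also right that this tacitly identifies the labels of \ref{P3}, which only enumerate $\bB_\cM$, with the partitions of \cref{nujdef}; the proof of \cref{champagne} makes the same identification.

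However, the direct argument you substitute has two unjustified steps. First, nothing you have shown puts $1_X$ in $\ker\Kar(\bR_\DScat^{m_j,2n_j})$ for $X=R(\lambda)$ with $\lambda$ self-dual. You only know that $\Ob$ of the kernel contains $R(\mu)$ for $\mu\supseteq\nu^{(j)}$, whereas $\lambda\subseteq\nu^{(j)}$. In fact, for $n_j\ge1$ self-duality gives $\lambda_{m_j+1}\le n_j+1\le 2n_j$. So $\lambda$ is an $(m_j|2n_j)$-hook partition, and $\bT(\lambda)$ is \emph{not} killed. That the summand $R(\lambda)$ is killed is true, but only as a consequence of the classification you are trying to prove. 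Second, \cref{oman} concerns $\bT(\nu^{(k)})$, not its summand $R(\nu^{(k)})$. To rule out the cup $\one\to X\otimes X_1^\vee$ factoring through the $R(\nu^{(k)})$, $k<j$, you would need that the line $\cM(\one,R(\nu^{(k)}))$ is not in the kernel, i.e.\ $R(\nu^{(k)})\notin S$. Neither statement follows from \cref{oman}.

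The labelling issue can instead be settled inside the paper's framework.
\begin{itemize}
\item Write the \ref{P3} labels as $\upsilon^{(\sigma(i))}$, $i\in\bbL$, where $\upsilon^{(k)}$ is the partition of \cref{nujdef} and $\sigma$ is strictly increasing with $\sigma(0)=0$. In particular $\sigma(i)\ge i$.
\item The verification of \cref{panda} works for any such $\sigma$: apply \cref{cortado} to $\upsilon^{(\sigma(i))}$, which has dimension $1$ because it lies in $\bB_\cM$. So \cref{tree}, including \cref{branch}, applies.
\item For $k\ge1$, $\ker\Kar(\bR_\DScat^{m_k,2n_k})$ is a proper submodule, so it equals $\cN_{\tau(k)}$ for some $\tau(k)\ge1$. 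The map $\tau$ is injective by \cref{oman}.
\item Since $R(\upsilon^{(k)})\inplus\bT(\upsilon^{(k)})$ lies in this kernel, \cref{branch} and \cref{aha} give a partition $\lambda\subseteq\upsilon^{(k)}$ that is self-dual for the $(m_{k'}+1)\times(2n_{k'}+2)$ box, where $k'=\sigma(\tau(k))$.
\item If $k'>k$, then $\lambda_{m_{k'}+1}=0$, so $\lambda_1=2n_{k'}+2>2n_k+2$, which is impossible. Hence $\tau(k)\le\sigma(\tau(k))\le k$.
\item Injectivity of $\tau$ then forces $\tau=\id$, hence $\sigma=\id$. This gives $\bB_\cM=\{R(\nu):\nu\in\Upsilon\}$, and with \cref{tricky} the corollary follows.
\end{itemize}
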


\begin{cor} \label{shoe}
    For $m,n \in \N$, we have that $\ker(\Kar(\bR_\DScat^{m,2n}))$ is generated, as a $\Kar \big( \OScat(q,q^{m-2n}) \big)$-module, by the idempotent $e_{(2n+2)^{m+1}}$, defined as in \cref{PVD}.
\end{cor}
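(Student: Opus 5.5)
The plan is to identify $\ker(\Kar(\bR_\DScat^{m,2n}))$ with one of the submodules $\cM_j$ of \cref{champagne}, and then to squeeze the submodule generated by $e_{(2n+2)^{m+1}}$ between $\cM_j$ on both sides using the chain \cref{prosecco}.

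First, fix $m,n \in \N$ and set $d=m-2n$. I will check directly from \cref{lime} that there is a unique $j \in \Z_{>0}$ with $(m_j,n_j)=(m,n)$. The three cases are as follows.
\begin{itemize}
    \item If $d>0$, take $j=n+1$. Then $m_j=d+2n=m$ and $n_j=n$.
    \item If $d \in -2\N$, then $m$ is even; take $j=\tfrac{m}{2}+1$. Then $m_j=m$ and $n_j=\tfrac m2-\tfrac{m-2n}{2}=n$.
    \item If $d \in -2\N-1$, then $m$ is odd; take $j=\tfrac{m+1}{2}$. Then $m_j=m$, and again $n_j=n$.
\end{itemize}
In each case $j \ge 1$, and by \cref{nujdef} we have $\nu^{(j)}=\big((2n+2)^{m+1}\big)$. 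Hence \cref{champagne}\cref{champagne2} gives $\ker(\Kar(\bR_\DScat^{m,2n}))=\cM_j$. It therefore suffices to show that $\cM_j$ equals the $\Kar(\OScat)$-submodule $\cG$ generated by $e_{\nu^{(j)}}$, i.e.\ by $1_{R(\nu^{(j)})}$.

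The inclusion $\cG \subseteq \cM_j$ comes from the proof of \cref{champagne}\cref{champagne2}. There, \cref{oman} together with \ref{P1} shows $\Kar(\bR_\DScat^{m_j,2n_j})(R(\nu^{(j)}))=0$. So $1_{R(\nu^{(j)})}$ lies in the kernel, and hence $\cG \subseteq \cM_j$.

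For the reverse inclusion I use that every submodule occurs in the chain \cref{prosecco}, so $\cG=\cM_k$ for some $k$.
\begin{itemize}
    \item Since $\cG \subseteq \cM_j$ and the chain is strictly decreasing, $k \ge j$.
    \item By \cref{bibimbap}, $\cM(\one,R(\nu^{(j)}))$ is one-dimensional, and it is contained in $\cG(\one,R(\nu^{(j)}))$ because $\cG$ contains $1_{R(\nu^{(j)})}$ and is closed under composition. So $\cG(\one,R(\nu^{(j)})) \neq 0$.
    \item By \cref{champagne}\cref{champagne3}, $M_k(R(\nu^{(j)}))=0$ whenever $j<k$. The previous point therefore forces $k \le j$.
\end{itemize}
Thus $k=j$ and $\cG=\cM_j$.

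The only step needing real care is the index matching in the first paragraph, especially the parity cases for $d \le 0$; the rest is formal once \cref{champagne} and \cref{bibimbap} are available.
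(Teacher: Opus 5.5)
Your proposal is correct and follows essentially the same route as the paper. You match $(m,n)=(m_j,n_j)$ by the same case split, you get $\langle e_{\nu^{(j)}}\rangle \subseteq \cM_j$ from \cref{oman} and \ref{P1}, and you use \cref{bibimbap} with \cref{champagne}\cref{champagne3} plus the strict chain \cref{prosecco} to force equality. The paper phrases the last step as $\langle e_{\nu^{(j)}}\rangle \not\subseteq \cM_{j+1}$ rather than writing $\cG=\cM_k$ and pinning down $k=j$, which is only a cosmetic difference.
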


\begin{proof}
    Let $d=m-2n$ and set
    \[
        j =
        \begin{cases}
            n+1 & \text{if } d > 0, \\
            \frac{m+2}{2} & \text{if } d \in -2\N, \\
            \frac{m+1}{2} & \text{if } d \in -2\N-1,
        \end{cases}
    \]
    so that
    \[
        m=m_j,\qquad
        n=n_j,\qquad
        \nu^{(j)} = (2n+2)^{m+1}.
    \]
    Let $\langle e_{\nu^{(j)}}\rangle$ be the $\Kar(\OScat)$-submodule of $\Kar(\DScat)$ generated by $e_{\nu^{(j)}} = 1_{R(\nu^{(j)})} \in \End_{\Kar(\DScat)}(R(\nu^{(j)}))$.  By \cref{oman} and \ref{P1}, we have
    \[
        \Kar(\bR_{\DScat}^{m_j,2n_j}) \big(R(\nu^{(j)})\big) = 0.
    \]
    Hence, $e_{\nu^{(j)}} \in \ker \big(\Kar(\bR_\DScat^{m_j,2n_j}) \big) = \cM_j$ (\cref{champagne}\cref{champagne2}).  Therefore, $\langle e_{\nu^{(j)}}\rangle\subseteq \cM_j$.

    Since
    \[
        \cM \big( \one,R(\nu^{(j)}) \big)
        = 1_{R(\nu^{(j)})} \cM \big( \one,R(\nu^{(j)}) \big)
    \]
    is nonzero by \cref{bibimbap}, while $\cM_{j+1} \big( \one,R(\nu^{(j)}) \big) = 0$ by \cref{champagne}\cref{champagne3}, it follows that $e_{\nu^{(j)}} = 1_{R(\nu^{(j)})}\notin \cM_{j+1}$. Thus, $\langle e_{\nu^{(j)}} \rangle \not\subseteq \cM_{j+1}$.  Since the proper $\Kar(\OScat)$-submodules form the strictly descending chain \eqref{prosecco}, it follows that the only $\Kar(\OScat)$-submodule of $\cM_j$ not contained in $\cM_{j+1}$ is $\cM_j$ itself.  Hence $\langle e_{\nu^{(j)}}\rangle = \cM_j$.
\end{proof}

\begin{cor} \label{ginger}
    The proper $\OScat(q,q^d)$-submodules of $\DScat(q,q^d)$ are all of the form $\ker \left( \bR_\DScat^{m_j,2n_j} \right)$, $j \in \N$, and they form a strictly descending chain
    \[
        \DScat \supsetneq \ker(\bR_\DScat^{m_1,2n_1}) \supsetneq \ker(\bR_\DScat^{m_2,2n_2}) \supsetneq \ker(\bR_\DScat^{m_3,2n_3}) \supsetneq \dotsb.
    \]
    Furthermore, the kernel of $\bR_\DScat^{m,2n}$ is generated, as an $\OScat(q,q^{m-2n})$-module, by $e_{(2n+2)^{m+1}}$.
\end{cor}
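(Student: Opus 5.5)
The plan is to transport \cref{champagne} and \cref{shoe} from the Karoubi envelopes back to $\DScat$ itself, using \cref{pork}. That proposition gives an isomorphism of posets
\[
    \Kar \colon \Submod_{\OScat}(\DScat) \to \Submod_{\Kar(\OScat)}(\Kar(\DScat)),
\]
whose inverse is restriction $\Res_\DScat$. Hence the $\OScat$-submodules of $\DScat$ are exactly the restrictions $\Res_\DScat(\cM_j)$, $j \in \N$. By \eqref{prosecco} and the fact that $\Res_\DScat$ is an order isomorphism, these form a strictly descending chain with $\Res_\DScat(\cM_0) = \DScat$. The proper ones are those with $j \ge 1$.

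Next, identify each restriction with a kernel. For $j>0$, \cref{champagne}\cref{champagne2} gives $\cM_j = \ker \Kar(\bR_\DScat^{m_j,2n_j})$. For objects $X,Y$ of $\DScat$, viewed as $(X,1_X)$ in the Karoubi envelope, the functor $\Kar(\bR_\DScat^{m_j,2n_j})$ acts on $\DScat(X,Y)$ as $\bR_\DScat^{m_j,2n_j}$. Therefore
\[
    \Res_\DScat(\cM_j) = \ker(\bR_\DScat^{m_j,2n_j}),
\]
which yields the displayed chain. One small point to check: the kernel of a strict morphism of $\OScat$-modules is an $\OScat$-submodule. This is immediate, and in any case it follows from the identification above.

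For the generation statement, write $d=m-2n$ and choose $j$ as in the proof of \cref{shoe}. By \cref{shoe}, $\ker\Kar(\bR_\DScat^{m,2n})$ is the $\Kar(\OScat)$-submodule generated by $e_{(2n+2)^{m+1}} = 1_{R(\nu^{(j)})}$. This morphism is the idempotent $e_{(2n+2)^{m+1}} \in \DSalg_r$ with $r = (2n+2)(m+1)$. Let $\cN$ be the $\OScat$-submodule of $\DScat$ generated by this idempotent. Then $\Kar(\cN)$ contains $e_{(2n+2)^{m+1}}$, so $\Kar(\cN) \supseteq \ker\Kar(\bR_\DScat^{m,2n})$. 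Conversely, $\cN \subseteq \ker(\bR_\DScat^{m,2n})$, since that kernel is a submodule containing the generator. Applying $\Res_\DScat$ and \cref{pork} gives $\cN = \ker(\bR_\DScat^{m,2n})$.

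The only real obstacle is bookkeeping: checking that a submodule of $\Kar(\DScat)$ generated by a morphism $e$ living on an object of $\DScat$ corresponds, under \cref{pork}, to the submodule of $\DScat$ generated by $e$. This holds because $\Kar$ is an order isomorphism and both sides are the smallest submodules containing $e$. Explicitly, for any $\cN'\in\Submod_\OScat(\DScat)$ we have $e\in\cN'$ if and only if $e\in\Kar(\cN')$, since $\Kar(\cN')(X,X)=\cN'(X,X)$ for $X\in\Ob(\DScat)$.
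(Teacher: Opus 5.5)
Your proposal is correct and follows the paper's argument. The paper's proof just cites \cref{champagne}, \cref{shoe}, and \cref{pork}, together with the fact that $\ker(\Kar(\bF)) = \Kar(\ker(\bF))$. Your identification $\Res_\DScat(\cM_j) = \ker(\bR_\DScat^{m_j,2n_j})$ and your comparison of generated submodules under $\Kar$ simply unpack those citations. (As in the paper, you inherit from \cref{champagne} the tacit exclusion of the zero submodule, which is proper but is not one of these kernels, so ``proper'' must be read as ``nonzero proper''.)
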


\begin{proof}
    This follows from \cref{champagne,shoe,pork}, together with the fact that $\ker(\Kar(\bF)) = \Kar(\ker(\bF))$ for a $\kk$-linear functor $\bF$.
\end{proof}

We can now give a presentation of the category $\Uis(m|2n)\tmod$ of tensor modules for $\Uis(m|2n)$.

\begin{cor} \label{presentation}
    We have an equivalence of $\OScat(q,q^d)$-module categories
    \[
        \Uis(m|2n)\tmod \simeq \Kar \left( \DScat(q,q^{m-2n})/\langle e_{(2n+2)^{m+1}} \rangle \right),
    \]
    where $\langle e_{(2n+2)^{m+1}} \rangle$ is the $\OScat(q,q^{m-2n})$-submodule of $\DScat(q,q^{m-2n})$ generated by $e_{(2n+2)^{m+1}}$.
\end{cor}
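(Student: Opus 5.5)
The strategy is to factor $\Kar(\bR_\DScat^{m,2n})$ through the quotient by its kernel and show that the induced functor is an equivalence onto $\Uis(m|2n)\tmod$. Set $d=m-2n$ and $\bF := \bR_\DScat^{m,2n} \colon \DScat(q,q^d) \to \Uis(m|2n)\tmod$. By \cref{ginger}, $\ker(\bF)$ is the $\OScat(q,q^d)$-submodule $\langle e_{(2n+2)^{m+1}} \rangle$. Since $\ker(\bF)$ is a $\OScat$-submodule, and in particular a two-sided ideal closed under the $\OScat$-action, the quotient $\DScat/\ker(\bF)$ is again a (strict) $\OScat$-module category. The functor $\bF$ then descends to a faithful morphism of $\OScat$-modules
\[
    \overline{\bF} \colon \DScat/\langle e_{(2n+2)^{m+1}} \rangle \to \Uis(m|2n)\tmod.
\]
It is also full, because $\bF$ is full by \cite[Th.~8.1]{SSS25}.

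Next I apply the Karoubi envelope. The functor $\Kar(\overline{\bF})$ is again fully faithful, since morphisms in $\Kar$ are of the form $e_Y(-)e_X$ and fully faithfulness passes to these corner spaces. For essential surjectivity, recall that by definition every object of $\Uis(m|2n)\tmod$ is a finite direct sum of direct summands of objects $\bF(X)$ with $X \in \Ob(\DScat)$. Here I use that $\Ob(\DScat)$ consists of words in $\upobj,\downobj$, which are sent to tensor products of the natural module and its dual.

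To handle a direct summand $V$ of $\bF(X)$, I take the corresponding idempotent in $\End(\bF(X))$ and lift it. By fullness it lifts to some endomorphism of $X$, but not necessarily to an idempotent of $\End_\DScat(X)$. However, in the quotient its image is the idempotent itself, since $\overline{\bF}$ is faithful, so the lift is an idempotent in the finite-dimensional algebra $\End_{\DScat/\ker\bF}(X)$. Hence $(X,\bar e) \in \Kar(\DScat/\ker \bF)$ maps to $V$. Direct sums are handled by using the additive envelope, which is implicit in the paper's $\Kar$, as additive Karoubi envelopes are used throughout. I also need $\Uis(m|2n)\tmod$ to be idempotent complete and additive; this holds because it is a full subcategory of a module category closed under summands and finite sums.

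Finally, I check that the equivalence is compatible with the $\OScat$-actions. This follows because $\bF$ is a strict morphism of $\OScat$-modules, and the commutative diagram in the introduction shows that the action on the target is the tensor product with the restriction of the $\Us$-modules. The main (mild) obstacle is the bookkeeping that $\DScat/\ker$ inherits the module structure and that $\Kar$ of the quotient coincides with the quotient of $\Kar$; the latter I would deduce from \cref{pork} together with $\ker(\Kar(\bF)) = \Kar(\ker \bF)$, as in the proof of \cref{ginger}.
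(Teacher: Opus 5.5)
Your proposal is correct and follows essentially the same route as the paper: $\bR_\DScat^{m,2n}$ is full by \cite[Th.~8.1]{SSS25}, so by \cref{ginger} it induces a fully faithful functor on $\DScat(q,q^{m-2n})/\langle e_{(2n+2)^{m+1}}\rangle$, and because its image contains the natural module and its dual, passing to Karoubi envelopes gives the equivalence. You supply more detail than the paper does (lifting idempotents through the fully faithful quotient functor, idempotent completeness of $\Uis(m|2n)\tmod$, compatibility with the $\OScat$-actions). Your closing remark comparing $\Kar$ of the quotient with the quotient of $\Kar$ is not needed, since you work directly with $\Kar(\DScat/\ker \bR_\DScat^{m,2n})$.
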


\begin{proof}
    The functor $\bR_\DScat^{m,2n}$ is full by \cite[Th.~8.1]{SSS25}.  Thus, it induces a full and faithful functor
    \[
        \DScat(q,q^{m-2n})/\langle e_{(2n+2)^{m+1}} \rangle \to \Uis(m|2n)\tmod
    \]
    by \cref{ginger}.  Since the image of this functor contains the natural module and its dual, we have that $\Kar(\bR_\DScat^{m,2n})$ induces the stated equivalence of categories.
\end{proof}


\bibliographystyle{alphaurl}
\bibliography{ModSubcat}

\end{document}

%% file: ModSubcatMacros.tex
\usepackage{
    amsmath,
    amsfonts,
    amssymb,
    amsthm,
    amscd,
    comment,
    enumitem,
    etoolbox,
    textcomp,   
    gensymb,    
    mathtools,
    mathdots,
    stmaryrd    
}
\usepackage[dvipsnames]{xcolor}
\usepackage[cmtip,all]{xy}
\usepackage[normalem]{ulem} 

\usepackage[letterpaper,margin=1in]{geometry}
\usepackage[T1]{fontenc}
\usepackage{lmodern}            
\usepackage{dsfont}             
\usepackage[colorlinks=true, linkcolor=blue, citecolor=blue, urlcolor=blue, breaklinks=true]{hyperref}

\DeclareFontFamily{OT1}{pzc}{}
\DeclareFontShape{OT1}{pzc}{m}{it}{<-> s * [1.10] pzcmi7t}{}
\DeclareMathAlphabet{\mathpzc}{OT1}{pzc}{m}{it}

\usepackage{zref-clever}

\zcsetup{
  cap,                      
  nameinlink=false,         
  lastsep = {, and }        
}

\zcRefTypeSetup{assumption}{
    Name-sg = Assumption ,
    name-sg = assumption ,
    Name-pl = Assumptions ,
    name-pl = assumptions ,
}
\zcRefTypeSetup{cor}{
    Name-sg = Corollary ,
    name-sg = corollary ,
    Name-pl = Corollaries ,
    name-pl = corollaries ,
}
\zcRefTypeSetup{defin}{
    Name-sg = Definition ,
    name-sg = definition ,
    Name-pl = Definitions ,
    name-pl = definitions ,
}
\zcRefTypeSetup{enumi}{
    Name-sg = {} ,
    name-sg = {} ,
    Name-pl = {} ,
    name-pl = {} ,
}
\zcRefTypeSetup{equation}{
    Name-sg = {} ,
    name-sg = {} ,
    Name-pl = {} ,
    name-pl = {} ,
}
\zcRefTypeSetup{eg}{
    Name-sg = Example ,
    name-sg = example ,
    Name-pl = Examples ,
    name-pl = examples ,
}
\zcRefTypeSetup{lem}{
    Name-sg = Lemma ,
    name-sg = lemma ,
    Name-pl = Lemmas ,
    name-pl = lemmas ,
}
\zcRefTypeSetup{prop}{
    Name-sg = Proposition ,
    name-sg = proposition ,
    Name-pl = Propositions ,
    name-pl = propositions ,
}
\zcRefTypeSetup{rem}{
    Name-sg = Remark ,
    name-sg = remark ,
    Name-pl = Remarks ,
    name-pl = remarks ,
}
\zcRefTypeSetup{theo}{
    Name-sg = Theorem ,
    name-sg = theorem ,
    Name-pl = Theorems ,
    name-pl = theorems ,
}

\zcsetup{
  countertype = {
    enumi=item, enumii=item, enumiii=item, enumiv=item
  },
  counterresetby = {
    enumii=enumi, enumiii=enumii, enumiv=enumiii
  }
}
\zcRefTypeSetup{item}{
    Name-sg = {} ,
    name-sg = {} ,
    Name-pl = {} ,
    name-pl = {} ,
}

\AddToHook{env/assumption/begin}{\zcsetup{countertype={theo=assumption}}}
\AddToHook{env/cor/begin}{\zcsetup{countertype={theo=cor}}}
\AddToHook{env/defin/begin}{\zcsetup{countertype={theo=defin}}}
\AddToHook{env/eg/begin}{\zcsetup{countertype={theo=eg}}}
\AddToHook{env/lem/begin}{\zcsetup{countertype={theo=lem}}}
\AddToHook{env/prop/begin}{\zcsetup{countertype={theo=prop}}}
\AddToHook{env/rem/begin}{\zcsetup{countertype={theo=rem}}}

\newcommand{\cref}[1]{\zcref{#1}}
\newcommand{\Cref}[1]{\zcref[S]{#1}}

\makeatletter
\def\namedlabel#1#2{\begingroup
    (#2)%
    \def\@currentlabel{(#2)}%
    \phantomsection\label{#1}\endgroup
}
\makeatother

\newcommand\C{\mathbb{C}}
\newcommand\bbF{\mathbb{F}}
\newcommand\bbL{\mathbb{L}}
\newcommand\N{\mathbb{N}}
\newcommand\Q{\mathbb{Q}}

\newcommand\Z{\mathbb{Z}}
\newcommand\bbK{\mathbb{K}}
\newcommand\kk{\Bbbk}
\newcommand\one{\mathds{1}}

\newcommand\bA{\mathbf{A}}

\newcommand\bB{\mathbf{B}}

\newcommand\bT{\mathbf{T}}

\newcommand\cC{\mathcal{C}}
\newcommand\cD{\mathcal{D}}

\newcommand\cS{\mathcal{S}}

\newcommand\cM{\mathcal{M}}
\newcommand\cN{\mathcal{N}}

\newcommand\tN{\mathtt{N}}

\newcommand\rev{\textup{rev}}

\newcommand{\Ker}{\textup{Ker}}
\newcommand{\te}{\tilde{e}}

\newcommand\tmod{\textup{-tmod}}
\newcommand{\obj}[1]{\operatorname{im}(#1)}             
\newcommand{\Par}{\mathrm{Par}}

\newcommand{\xrightarrowdbl}[2][]{
    \xrightarrow[#1]{#2}\mathrel{\mkern-14mu}\rightarrow
}

\newcommand{\Laurent}[1]{
    (\!( #1 )\!)
}

\newcommand{\fgl}{\mathfrak{gl}}

\newcommand{\Ualg}{\mathrm{U}}
\newcommand{\Ui}{\Ualg^\imath}

\newcommand{\Uis}{\Ui_\sigma}

\newcommand{\Us}{\Ualg_\sigma}

\newcommand\cEnd{\mathpzc{End}}     
\newcommand\OScat{\mathpzc{OS}}     
\newcommand\DScat{\mathpzc{DS}}     
\newcommand{\unit}[1]{{1}_{#1}}

\newcommand{\upobj}{{\mathord{\uparrow}}}
\newcommand{\downobj}{{\mathord{\downarrow}}}

\newcommand\bF{\mathbf{F}}

\newcommand\bR{\mathbf{R}}

\newcommand{\Hecke}{\mathrm{H}}
\newcommand{\DSalg}{\mathrm{DS}}
\newcommand{\word}{\langle \upobj,\downobj \rangle}

\DeclareMathOperator{\add}{add}                 

\DeclareMathOperator{\End}{End}

\DeclareMathOperator{\Hom}{Hom}
\DeclareMathOperator{\id}{id}
\DeclareMathOperator{\im}{im}                   
\DeclareMathOperator{\Ind}{Ind}                 
\DeclareMathOperator{\indec}{indec}             
\DeclareMathOperator{\Mor}{Mor}                 

\DeclareMathOperator{\Ob}{Ob}                   
\DeclareMathOperator{\PI}{PI}                   
\DeclareMathOperator{\Kar}{Kar}
\DeclareMathOperator{\rank}{rank}
\DeclareMathOperator{\Res}{Res}                 

\DeclareMathOperator{\Span}{span}
\DeclareMathOperator{\Submod}{Submod}           
\DeclareMathOperator{\Thick}{Thick}

\DeclareMathOperator{\Tr}{Tr}

\usepackage{tikz}
\usetikzlibrary{arrows.meta,patterns}
\usetikzlibrary{decorations.markings,decorations.pathreplacing}
\usepackage{tikz-cd}

\tikzset{multi/.style={very thick}}
\tikzset{
    anchorbase/.style={
        >=To,
        line cap = round, line join = round,
        baseline={([yshift=-0.5ex]current bounding box.center)},
    }
}
\tikzset{
    centerzero/.style={
        >=To,
        line cap = round, line join = round,
        baseline={([yshift=-0.5ex](#1))},
        },
    centerzero/.default={0,0}
}
\tikzset{   
    wipe/.style={
        white,
        line cap = butt,    
        line width=4pt
    }
} 
\tikzset{
    over/.style={
        preaction={
            draw=white,
            line cap = butt,    
            line width=4pt,
            -{},                
        }
    },
}

\newcommand{\strandlabel}[1]{$\scriptstyle{#1}$}
\newcommand{\botlabel}[1]{node[anchor=north] {\strandlabel{#1}}}
\newcommand{\toplabel}[1]{node[anchor=south] {\strandlabel{#1}}}
\newcommand{\braidup}{to[out=up,in=down]}
\newcommand{\braiddown}{to[out=down,in=up]}
\newcommand{\coupon}[2]{ 
    \draw (#1) node[inner sep=2pt,draw,rounded corners,fill=black!20!white] {$\scriptstyle{#2}$}
}
\newcommand{\genbox}[3]{
    \filldraw[fill=black!20!white,rounded corners] (#1) rectangle (#2) node[midway] {\strandlabel{#3}}
}
\newcommand{\indicate}[2]{
    \draw[rounded corners,violet,densely dotted] (#1) rectangle (#2)
}

\newcommand{\opendot}[1]{
    \node[white] at (#1) {$\scriptstyle{\bullet}$};
    \node at (#1) {$\scriptstyle{\circ}$}
}
\newcommand{\bubright}[1]{
    \draw[->] (#1)+(0.2,0) arc(360:0:0.2)
}

\newcommand{\rightbub}{
    \begin{tikzpicture}[centerzero]
        \bubright{0,0};
    \end{tikzpicture}
}

\newcommand{\posupcross}{
    \begin{tikzpicture}[centerzero]
        \draw[->] (0.2,-0.2) -- (-0.2,0.2);
        \draw[wipe] (-0.2,-0.2) -- (0.2,0.2);
        \draw[->] (-0.2,-0.2) -- (0.2,0.2);
    \end{tikzpicture}
}

\newcommand{\negupcross}{
    \begin{tikzpicture}[centerzero]
        \draw[->] (-0.2,-0.2) -- (0.2,0.2);
        \draw[wipe] (0.2,-0.2) -- (-0.2,0.2);
        \draw[->] (0.2,-0.2) -- (-0.2,0.2);
    \end{tikzpicture}
}

\newcommand{\posdowncross}{
    \begin{tikzpicture}[centerzero]
        \draw[<-] (0.2,-0.2) -- (-0.2,0.2);
        \draw[wipe] (-0.2,-0.2) -- (0.2,0.2);
        \draw[<-] (-0.2,-0.2) -- (0.2,0.2);
    \end{tikzpicture}
}

\newcommand{\togupdown}{
    \begin{tikzpicture}[centerzero]
        \draw[->] (0,0) -- (0,0.2);
        \draw[->] (0,0) -- (0,-0.2);
        \opendot{0,0};
    \end{tikzpicture}
}
\newcommand{\togdownup}{
    \begin{tikzpicture}[centerzero]
        \draw[->] (0,0.2) -- (0,0);
        \draw[->] (0,-.2) -- (0,0);
        \opendot{0,0};
    \end{tikzpicture}
}

\newcommand{\rightcup}{
    \begin{tikzpicture}[anchorbase]
        \draw[->] (-0.15,0.15) -- (-0.15,0) arc(180:360:0.15) -- (0.15,0.15);
    \end{tikzpicture}
}
\newcommand{\leftcup}{
    \begin{tikzpicture}[anchorbase]
        \draw[<-] (-0.15,0.15) -- (-0.15,0) arc(180:360:0.15) -- (0.15,0.15);
    \end{tikzpicture}
}
\newcommand{\rightcap}{
    \begin{tikzpicture}[anchorbase]
        \draw[->] (-0.15,-0.15) -- (-0.15,0) arc(180:0:0.15) -- (0.15,-0.15);
    \end{tikzpicture}
}
\newcommand{\leftcap}{
    \begin{tikzpicture}[anchorbase]
        \draw[<-] (-0.15,-0.15) -- (-0.15,0) arc(180:0:0.15) -- (0.15,-0.15);
    \end{tikzpicture}
}

\newtheorem{theo}{Theorem}[section]

\newtheorem{prop}[theo]{Proposition}
\newtheorem{lem}[theo]{Lemma}
\newtheorem{cor}[theo]{Corollary}

\theoremstyle{definition}
\newtheorem{assumption}[theo]{Assumption}
\newtheorem{defin}[theo]{Definition}
\newtheorem{rem}[theo]{Remark}
\newtheorem{eg}[theo]{Example}

\numberwithin{equation}{section}
\allowdisplaybreaks

\setenumerate[1]{label=(\alph*)}          

\newtoggle{comments}
\newtoggle{details}
\newtoggle{detailsnote}

\iftoggle{comments}{%
    \usepackage[notref,notcite]{showkeys}   
    \newcommand{\acomments}[1]{
        \ \\
        {\color{red}
            \textbf{AS:} #1
        }
        \ \\
    }
    \newcommand{\hcomments}[1]{
        \ \\
        {\color{cyan}
            \textbf{HS:} #1
        }
        \ \\
    }

    \newcommand{\ycomments}[1]{
        \ \\
        {\color{blue}
            \textbf{YS:} #1
        }
        \ \\
    }
    \newcommand{\question}[1]{
        \ \\
        {\color{blue}
            \textbf{Question:} #1
        }
        \ \\
    }
    }{%
        \newcommand{\acomments}[1]{}
        \newcommand{\hcomments}[1]{}
        \newcommand{\ycomments}[1]{}
        \newcommand{\question}[1]{}
    }

\iftoggle{details}{%
    \newcommand{\details}[1]{
        \ \\
        {\color{OliveGreen}
            \textbf{Details:} #1
        }
        \\
    }
}{%
    \newcommand{\details}[1]{\ignorespaces}
}

%% file: ModSubcat.bbl
\begin{thebibliography}{EGNO15}

\bibitem[AK02]{AK02}
Y.~Andr\'{e} and B.~Kahn.
\newblock Nilpotence, radicaux et structures mono\"{\i}dales.
\newblock {\em Rend. Sem. Mat. Univ. Padova}, 108:107--291, 2002.
\newblock With an appendix by Peter O'Sullivan.
\newblock \href {https://arxiv.org/abs/math/0203273}
  {\path{arXiv:math/0203273}}.

\bibitem[BCNR17]{BCNR17}
J.~Brundan, J.~Comes, D.~Nash, and A.~Reynolds.
\newblock A basis theorem for the affine oriented {B}rauer category and its
  cyclotomic quotients.
\newblock {\em Quantum Topol.}, 8(1):75--112, 2017.
\newblock \href {https://arxiv.org/abs/1404.6574} {\path{arXiv:1404.6574}},
  \href {https://doi.org/10.4171/QT/87} {\path{doi:10.4171/QT/87}}.

\bibitem[Ben98]{Ben98}
D.~J. Benson.
\newblock {\em Representations and cohomology. {I}}, volume~30 of {\em
  Cambridge Studies in Advanced Mathematics}.
\newblock Cambridge University Press, Cambridge, second edition, 1998.
\newblock Basic representation theory of finite groups and associative
  algebras.

\bibitem[BK19]{BK19}
M.~Balagovi\'c and S.~Kolb.
\newblock Universal {K}-matrix for quantum symmetric pairs.
\newblock {\em J. Reine Angew. Math.}, 747:299--353, 2019.
\newblock \href {https://arxiv.org/abs/1507.06276} {\path{arXiv:1507.06276}},
  \href {https://doi.org/10.1515/crelle-2016-0012}
  {\path{doi:10.1515/crelle-2016-0012}}.

\bibitem[BKK00]{BKK00}
G.~Benkart, S.-J. Kang, and M.~Kashiwara.
\newblock Crystal bases for the quantum superalgebra
  {$U_q(\mathfrak{gl}(m,n))$}.
\newblock {\em J. Amer. Math. Soc.}, 13(2):295--331, 2000.
\newblock \href {https://arxiv.org/abs/math/9810092}
  {\path{arXiv:math/9810092}}, \href
  {https://doi.org/10.1090/S0894-0347-00-00321-0}
  {\path{doi:10.1090/S0894-0347-00-00321-0}}.

\bibitem[Bro13]{Bro13}
A.~Brochier.
\newblock Cyclotomic associators and finite type invariants for tangles in the
  solid torus.
\newblock {\em Algebr. Geom. Topol.}, 13(6):3365--3409, 2013.
\newblock \href {https://doi.org/10.2140/agt.2013.13.3365}
  {\path{doi:10.2140/agt.2013.13.3365}}.

\bibitem[Bru17]{Bru17}
J.~Brundan.
\newblock Representations of the oriented skein category.
\newblock 2017.
\newblock \href {https://arxiv.org/abs/1712.08953} {\path{arXiv:1712.08953}}.

\bibitem[CH17]{CH17}
J.~Comes and T.~Heidersdorf.
\newblock Thick ideals in {D}eligne's category {$\underline{\rm Re}{\rm
  p}(O_\delta)$}.
\newblock {\em J. Algebra}, 480:237--265, 2017.
\newblock \href {https://arxiv.org/abs/1507.06728} {\path{arXiv:1507.06728}},
  \href {https://doi.org/10.1016/j.jalgebra.2017.01.050}
  {\path{doi:10.1016/j.jalgebra.2017.01.050}}.

\bibitem[CO11]{CO11}
J.~Comes and V.~Ostrik.
\newblock On blocks of {D}eligne's category {$\underline{\rm Re}{\rm p}(S_t)$}.
\newblock {\em Adv. Math.}, 226(2):1331--1377, 2011.
\newblock \href {https://arxiv.org/abs/0910.5695} {\path{arXiv:0910.5695}},
  \href {https://doi.org/10.1016/j.aim.2010.08.010}
  {\path{doi:10.1016/j.aim.2010.08.010}}.

\bibitem[Cou18]{Cou18}
K.~Coulembier.
\newblock Tensor ideals, {D}eligne categories and invariant theory.
\newblock {\em Selecta Math. (N.S.)}, 24(5):4659--4710, 2018.
\newblock \href {https://arxiv.org/abs/1712.06248} {\path{arXiv:1712.06248}},
  \href {https://doi.org/10.1007/s00029-018-0433-z}
  {\path{doi:10.1007/s00029-018-0433-z}}.

\bibitem[CW12]{CW12}
S.-J. Cheng and W.~Wang.
\newblock {\em Dualities and representations of {L}ie superalgebras}, volume
  144 of {\em Graduate Studies in Mathematics}.
\newblock American Mathematical Society, Providence, RI, 2012.
\newblock \href {https://doi.org/10.1090/gsm/144} {\path{doi:10.1090/gsm/144}}.

\bibitem[EGNO15]{ENGO15}
P.~Etingof, S.~Gelaki, D.~Nikshych, and V.~Ostrik.
\newblock {\em Tensor categories}, volume 205 of {\em Mathematical Surveys and
  Monographs}.
\newblock American Mathematical Society, Providence, RI, 2015.
\newblock \href {https://doi.org/10.1090/surv/205}
  {\path{doi:10.1090/surv/205}}.

\bibitem[Enr07]{Enr07}
B.~Enriquez.
\newblock Quasi-reflection algebras and cyclotomic associators.
\newblock {\em Selecta Math. (N.S.)}, 13(3):391--463, 2007.
\newblock \href {https://doi.org/10.1007/s00029-007-0048-2}
  {\path{doi:10.1007/s00029-007-0048-2}}.

\bibitem[GL96]{GL96}
J.~J. Graham and G.~I. Lehrer.
\newblock Cellular algebras.
\newblock {\em Invent. Math.}, 123(1):1--34, 1996.
\newblock \href {https://doi.org/10.1007/BF01232365}
  {\path{doi:10.1007/BF01232365}}.

\bibitem[HO01]{Har01}
R.~H\"aring-Oldenburg.
\newblock Actions of tensor categories, cylinder braids and their {K}auffman
  polynomial.
\newblock {\em Topology Appl.}, 112(3):297--314, 2001.
\newblock \href {https://doi.org/10.1016/S0166-8641(00)00006-7}
  {\path{doi:10.1016/S0166-8641(00)00006-7}}.

\bibitem[Kra15]{Kra15}
H.~Krause.
\newblock Krull-{S}chmidt categories and projective covers.
\newblock {\em Expo. Math.}, 33(4):535--549, 2015.
\newblock \href {https://arxiv.org/abs/1410.2822} {\path{arXiv:1410.2822}},
  \href {https://doi.org/10.1016/j.exmath.2015.10.001}
  {\path{doi:10.1016/j.exmath.2015.10.001}}.

\bibitem[KX98]{KX98}
S.~K\"onig and C.~Xi.
\newblock On the structure of cellular algebras.
\newblock In {\em Algebras and modules, {II} ({G}eiranger, 1996)}, volume~24 of
  {\em CMS Conf. Proc.}, pages 365--386. Amer. Math. Soc., Providence, RI,
  1998.

\bibitem[Lam01]{Lam01}
T.~Y. Lam.
\newblock {\em A first course in noncommutative rings}, volume 131 of {\em
  Graduate Texts in Mathematics}.
\newblock Springer-Verlag, New York, second edition, 2001.
\newblock \href {https://doi.org/10.1007/978-1-4419-8616-0}
  {\path{doi:10.1007/978-1-4419-8616-0}}.

\bibitem[Mit06]{Mit06}
H.~Mitsuhashi.
\newblock Schur-{W}eyl reciprocity between the quantum superalgebra and the
  {I}wahori-{H}ecke algebra.
\newblock {\em Algebr. Represent. Theory}, 9:309--322, 2006.
\newblock \href {https://arxiv.org/abs/math/0506156}
  {\path{arXiv:math/0506156}}, \href
  {https://doi.org/10.1007/s10468-006-9014-5}
  {\path{doi:10.1007/s10468-006-9014-5}}.

\bibitem[QS19]{QS19}
H.~Queffelec and A.~Sartori.
\newblock Mixed quantum skew {H}owe duality and link invariants of type {$A$}.
\newblock {\em J. Pure Appl. Algebra}, 223(7):2733--2779, 2019.
\newblock \href {https://arxiv.org/abs/1504.01225} {\path{arXiv:1504.01225}},
  \href {https://doi.org/10.1016/j.jpaa.2018.09.014}
  {\path{doi:10.1016/j.jpaa.2018.09.014}}.

\bibitem[RSS24]{RSS24}
H.~Rui, M.~Si, and L.~Song.
\newblock The {J}ucys-{M}urphy basis and semisimplicity criteria for the
  {$q$}-{B}rauer algebra.
\newblock {\em Lett. Math. Phys.}, 114(4):Paper No. 104, 36, 2024.
\newblock \href {https://arxiv.org/abs/2211.14756} {\path{arXiv:2211.14756}},
  \href {https://doi.org/10.1007/s11005-024-01850-8}
  {\path{doi:10.1007/s11005-024-01850-8}}.

\bibitem[Sel11]{Sel11}
P.~Selinger.
\newblock A survey of graphical languages for monoidal categories.
\newblock In {\em New structures for physics}, volume 813 of {\em Lecture Notes
  in Phys.}, pages 289--355. Springer, Heidelberg, 2011.
\newblock \href {https://arxiv.org/abs/0908.3347} {\path{arXiv:0908.3347}},
  \href {https://doi.org/10.1007/978-3-642-12821-9_4}
  {\path{doi:10.1007/978-3-642-12821-9_4}}.

\bibitem[SSS25]{SSS25}
H.~Salmasian, A.~Savage, and Y.~Shen.
\newblock The disoriented skein and iquantum {B}rauer categories.
\newblock {\em Forum Math. Sigma}, 2025.
\newblock To appear.
\newblock \href {https://arxiv.org/abs/2507.12328} {\path{arXiv:2507.12328}}.

\bibitem[tD98]{tom98}
T.~tom Dieck.
\newblock Categories of rooted cylinder ribbons and their representations.
\newblock volume 494, pages 35--63. 1998.
\newblock Dedicated to Martin Kneser on the occasion of his 70th birthday.
\newblock \href {https://doi.org/10.1515/crll.1998.010}
  {\path{doi:10.1515/crll.1998.010}}.

\bibitem[tDHO98]{tH98}
T.~tom Dieck and R.~H\"aring-Oldenburg.
\newblock Quantum groups and cylinder braiding.
\newblock {\em Forum Math.}, 10(5):619--639, 1998.
\newblock \href {https://doi.org/10.1515/form.10.5.619}
  {\path{doi:10.1515/form.10.5.619}}.

\bibitem[Tur89]{Tur89}
V.~G. Turaev.
\newblock Operator invariants of tangles, and {$R$}-matrices.
\newblock {\em Izv. Akad. Nauk SSSR Ser. Mat.}, 53(5):1073--1107, 1135, 1989.
\newblock \href {https://doi.org/10.1070/IM1990v035n02ABEnH000711}
  {\path{doi:10.1070/IM1990v035n02ABEnH000711}}.

\end{thebibliography}
